\newcommand{\globalcolor}[1]{%
  \color{#1}\global\let\default@color\current@color
}
\definecolor{blush}{rgb}{0.87, 0.36, 0.51}
	\definecolor{brightcerulean}{rgb}{0.11, 0.67, 0.84}
	\definecolor{greenryb}{rgb}{0.4, 0.69, 0.2}
\newif\ifdark
\definecolor{darkred}{rgb}{0.9,0.2,0.2}
\definecolor{darkblue}{rgb}{0.7,0.3,1}
\definecolor{darkgreen}{rgb}{0.1,0.9,0.1}
\definecolor{franck}{rgb}{0,0.8,1}
\definecolor{pagebackground}{rgb}{.15,.21,.18}
\definecolor{pageforeground}{rgb}{.84,.84,.85}
\definecolor{symbols}{rgb}{0,0.7,1}
\colorlet{connection}{red!80!black}
\colorlet{boxcolor}{blue!50}
\definecolor{darkred}{rgb}{0.7,0.1,0.1}
\definecolor{darkblue}{rgb}{0.4,0.1,0.8}
\definecolor{darkgreen}{rgb}{0.1,0.7,0.1}
\definecolor{franck}{rgb}{0,0,1}
\definecolor{pagebackground}{rgb}{1,1,1}
\definecolor{pageforeground}{rgb}{0,0,0}
\colorlet{symbols}{blue!90!black}
\colorlet{connection}{red!30!black}
\colorlet{boxcolor}{blue!50!black}
\def\slash{\leavevmode\unskip\kern0.18em/\penalty\exhyphenpenalty\kern0.18em}
\def\dash{\leavevmode\unskip\kern0.18em--\penalty\exhyphenpenalty\kern0.18em}
\DeclareMathAlphabet{\mathbbm}{U}{bbm}{m}{n}
\DeclareFontFamily{U}{BOONDOX-calo}{\skewchar\font=45 }
\DeclareFontShape{U}{BOONDOX-calo}{m}{n}{
  <-> s*[1.05] BOONDOX-r-calo}{}
\DeclareFontShape{U}{BOONDOX-calo}{b}{n}{
  <-> s*[1.05] BOONDOX-b-calo}{}
\DeclareMathAlphabet{\mcb}{U}{BOONDOX-calo}{m}{n}
\SetMathAlphabet{\mcb}{bold}{U}{BOONDOX-calo}{b}{n}
\setlist{noitemsep,topsep=4pt,leftmargin=1.5em}
\DeclareMathAlphabet{\mathbbm}{U}{bbm}{m}{n}
\DeclareMathAlphabet{\mcb}{U}{BOONDOX-calo}{m}{n}
\SetMathAlphabet{\mcb}{bold}{U}{BOONDOX-calo}{b}{n}
\DeclareFontFamily{U}{mathx}{\hyphenchar\font45}
\DeclareFontShape{U}{mathx}{m}{n}{
      <5> <6> <7> <8> <9> <10>
      <10.95> <12> <14.4> <17.28> <20.74> <24.88>
      mathx10
      }{}
\DeclareSymbolFont{mathx}{U}{mathx}{m}{n}
\DeclareMathSymbol{\bigtimes}{1}{mathx}{"91}
\newcommand{\z}{\mathsf{z}}
\def\reg{\mathord{\mathrm{reg}}}
\providecommand{\figures}{false}
{ \ifthenelse{\equal{\figures}{false}} {#1}{\[ {\rm Figure \ missing !} \]} }{}
\def\id{\mathrm{id}}
\def\Id{\mathrm{Id}}
\def\proj{\mathbf{p}}
\def\reg{\mathop{\mathrm{reg}}}
\tikzstyle{tinydots}=[dash pattern=on \pgflinewidth off \pgflinewidth]
\tikzstyle{superdense}=[dash pattern=on 4pt off 1pt]
\newcommand{\mcM}{\mathcal{M}}
\newcommand{\mcA}{\mathcal{A}}
\newcommand{\mcR}{\mathcal{R}}
\newcommand{\mcC}{\mathcal{C}}
\newcommand{\mcS}{\CS}
\newcommand{\mcL}{\mathcal{L}}
\newcommand{\mcI}{\mathcal{I}}
\newcommand{\mcN}{\mathcal{N}}
\newcommand{\mcD}{\mathcal{D}}
\newcommand{\mcP}{\mathcal{P}}
\newcommand{\beq}{\begin{equation}}
\newcommand{\eeq}{\end{equation}}
\newcommand{\mbpartial}{\boldsymbol{\partial}}
\newcommand{\mfT}{\mathfrak{T}}
\newcommand{\mfL}{\mathfrak{L}}
\newcommand{\mfA}{\mathfrak{A}}
\newcommand{\mfl}{\mathfrak{l}}
\newcommand{\mfg}{\mathfrak{g}}
\def\${|\!|\!|}
\def\proj{\mathfrak{p}}
\newenvironment{DIFnomarkup}{}{} % see man latexdiff
\newcommand{\rmT}{{\rm T}}
\newtheorem{assumption}{Assumption}
\newfont{\indic}{bbmss12}
\def\Nabla_#1{\nabla_{\!#1}}
    \pgfmathsetlength{\pgf@xb}{\pgfkeysvalueof{/pgf/outer xsep}}%  
    \pgfmathsetlength{\pgf@yb}{\pgfkeysvalueof{/pgf/outer ysep}}%  
\def\symbol#1{\textcolor{symbols}{#1}}
\def\decorate#1#2{
        \ifnum#2>0
    		\foreach \count in {1,...,#2}{
	       	let
				\p1 = (sourcenode.center),
                \p2 = (sourcenode.east),
				\n1 = {\x2-\x1},
				\n2 = {1mm},
				\n3 = {(1.3+0.6*(\count-1))*\n1},
				\n4 = {0.7*\n1}
			in 
        		node[rectangle,fill=symbols,rotate=30,inner sep=0pt,minimum width=0.2*\n2,minimum height=\n2] at ($(sourcenode.center) + (\n3,\n4)$) {}
				}
		\fi
        \ifnum#1>0
    		\foreach \count in {1,...,#1}{
	       	let
				\p1 = (sourcenode.center),
                \p2 = (sourcenode.east),
				\n1 = {\x2-\x1},
				\n2 = {1mm},
				\n3 = {(1.3+0.6*(\count-1))*\n1},
				\n4 = {0.7*\n1}
			in 
        		node[rectangle,fill=symbols,rotate=-30,inner sep=0pt,minimum width=0.2*\n2,minimum height=\n2] at ($(sourcenode.center) + (-\n3,\n4)$) {}
				}
		\fi
}
\tikzset{
    dectriangle/.style 2 args={
        triangle,
        alias=sourcenode,
        append after command={\decorate{#1}{#2}}
    },
    dectriangle/.default={0}{0},
}
\tikzset{
	cross/.style={path picture={ 
  		\draw[symbols]
			(path picture bounding box.south east) -- (path picture bounding box.north west) (path picture bounding box.south west) -- (path picture bounding box.north east);
		}},
root/.style={circle,fill=green!50!black,inner sep=0pt, minimum size=1.2mm},
        dot/.style={circle,fill=pageforeground,inner sep=0pt, minimum size=1mm},
        dotred/.style={circle,fill=pageforeground!50!pagebackground,inner sep=0pt, minimum size=2mm},
        var/.style={circle,fill=pageforeground!10!pagebackground,draw=pageforeground,inner sep=0pt, minimum size=3mm},
        kernel/.style={semithick,shorten >=2pt,shorten <=2pt},
        kernels/.style={snake=zigzag,shorten >=2pt,shorten <=2pt,segment amplitude=1pt,segment length=4pt,line before snake=2pt,line after snake=5pt,},
        rho/.style={densely dashed,semithick,shorten >=2pt,shorten <=2pt},
           testfcn/.style={dotted,semithick,shorten >=2pt,shorten <=2pt},
        renorm/.style={shape=circle,fill=pagebackground,inner sep=1pt},
        labl/.style={shape=rectangle,fill=pagebackground,inner sep=1pt},
        xic/.style={very thin,circle,draw=symbols,fill=symbols,inner sep=0pt,minimum size=1.2mm},
        g/.style={very thin,rectangle,draw=symbols,fill=symbols!10!pagebackground,inner sep=0pt,minimum width=2.5mm,minimum height=1.2mm},
        xi/.style={very thin,circle,draw=symbols,fill=symbols!10!pagebackground,inner sep=0pt,minimum size=1.2mm},
	xies/.style={very thin,rectangle,fill=green!50!black!25,draw=symbols,inner sep=0pt,minimum size=1.1mm},
	xiesf/.style={very thin,rectangle,fill=green!50!black,draw=symbols,inner sep=0pt,minimum size=1.1mm},
        xix/.style={very thin,crosscircle,fill=symbols!10!pagebackground,draw=symbols,inner sep=0pt,minimum size=1.2mm},
        X/.style={very thin,cross,rectangle,fill=pagebackground,draw=symbols,inner sep=0pt,minimum size=1.2mm},
	xib/.style={thin,circle,fill=symbols!10!pagebackground,draw=symbols,inner sep=0pt,minimum size=1.6mm},
	xie/.style={thin,circle,fill=green!50!black,draw=symbols,inner sep=0pt,minimum size=1.6mm},
	xid/.style={thin,circle,fill=symbols,draw=symbols,inner sep=0pt,minimum size=1.6mm},
	xibx/.style={thin,crosscircle,fill=symbols!10!pagebackground,draw=symbols,inner sep=0pt,minimum size=1.6mm},
	kernels2/.style={very thick,draw=connection,segment length=12pt},
	keps/.style={thin,draw=symbols,->},
	kepspr/.style={thick,draw=connection,->},
	krho/.style={thin,draw=symbols,superdense,->},
	krhopr/.style={thick,draw=connection,superdense},
	triangle/.style = { regular polygon, regular polygon sides=3},
	not/.style={thin,circle,draw=connection,fill=connection,inner sep=0pt,minimum size=0.5mm},
	diff/.style = {very thin,draw=symbols,triangle,fill=red!50!black,inner sep=0pt,minimum size=1.6mm},
	diff1/.style = {very thin,dectriangle={1}{0},fill=red!50!black,draw=symbols,inner sep=0pt,minimum size=1.6mm},
	diff2/.style = {very thin,dectriangle={1}{1},fill=red!50!black,draw=symbols,inner sep=0pt,minimum size=1.6mm},
		diffmini/.style = {very thin,rectangle,fill=black,draw=black,inner sep=0pt,minimum size=0.75mm},
	 kernelsmod/.style={very thick,draw=connection,segment length=12pt},
	 rec/.style = {very thin,rectangle,fill=black,draw=black,inner sep=0pt,minimum size=2mm},
	cerc/.style={very thin,circle,draw=black,fill=symbols,inner sep=0pt,minimum size=2mm},
	stars/.style={very thin,star,star points=6,star point ratio=0.5, draw=black,fill=red,inner sep=0pt,minimum size=0.7mm},
	>=stealth,
        }
        \tikzset{
root/.style={circle,fill=black!50,inner sep=0pt, minimum size=3mm},
        circ/.style={circle,fill=white,draw=black,very thin,inner sep=.5pt, minimum size=1.2mm},
        round1/.style={fill=white,outer sep = 0,inner sep=2pt,rounded corners=1mm,draw,text=black,thin,minimum size=1.2mm},
          circ1/.style={circle,fill=red!10,draw=red,very thin,inner sep=.5pt, minimum size=1.2mm},
        rect/.style={fill=white,outer sep = 0,inner sep=2pt,rectangle,draw,text=black,thin,minimum size=1.2mm},
        rect1/.style={fill=white,outer sep = 0,inner sep=2pt,rectangle,draw,text=black,thin,minimum size=1.2mm},
        round2/.style={fill=red!10,outer sep = 0,inner sep=2pt,rounded corners=1mm,draw,text=black,thin,minimum size=1.2mm},
       round3/.style={fill=blue!10,outer sep = 0,inner sep=2pt,rounded corners=1mm,draw,text=black,thin,minimum size=1.2mm}, 
        rect2/.style={fill=black!10,outer sep = 0,inner sep=2pt,rectangle,draw,text=black,thin,minimum size=1.2mm},
        dot/.style={circle,fill=black,inner sep=0pt, minimum size=1.2mm},
        dotred/.style={circle,fill=black!50,inner sep=0pt, minimum size=2mm},
        var/.style={circle,fill=black!10,draw=black,inner sep=0pt, minimum size=3mm},
        kernel/.style={semithick,shorten >=2pt,shorten <=2pt},
         diag/.style={thin,shorten >=4pt,shorten <=4pt},
        kernel1/.style={thick},
        kernels/.style={snake=zigzag,shorten >=2pt,shorten <=2pt,segment amplitude=1pt,segment length=4pt,line before snake=2pt,line after snake=5pt,},
		kernels1/.style={snake=zigzag,segment amplitude=0.5pt,segment length=2pt},
		rho1/.style={densely dotted,semithick},
        rho/.style={densely dashed,semithick,shorten >=2pt,shorten <=2pt},
           testfcn/.style={dotted,semithick,shorten >=2pt,shorten <=2pt},
           visible/.style={draw, circle, fill, inner sep=0.25ex},
        renorm/.style={shape=circle,fill=white,inner sep=1pt},
        labl/.style={shape=rectangle,fill=white,inner sep=1pt},
        xic/.style={very thin,circle,fill=symbols,draw=black,inner sep=0pt,minimum size=1.2mm},
        xi/.style={very thin,circle,fill=blue!10,draw=black,inner sep=0pt,minimum size=1.2mm},
	xib/.style={very thin,circle,fill=blue!10,draw=black,inner sep=0pt,minimum size=1.6mm},
	xie/.style={very thin,circle,fill=green!50!black,draw=black,inner sep=0pt,minimum size=1mm},
	xid/.style={very thin,circle,fill=symbols,draw=black,inner sep=0pt,minimum size=1.6mm},
	edgetype/.style={very thin,circle,draw=black,inner sep=0pt,minimum size=5mm},
	nodetype/.style={very thick,circle,draw=black,inner sep=0pt,minimum size=5mm},
	kernels2/.style={very thick,draw=connection,segment length=12pt},
clean/.style={thin,circle,fill=black,inner sep=0pt,minimum size=1mm},	not/.style={thin,circle,fill=symbols,draw=connection,fill=connection,inner sep=0pt,minimum size=0.8mm},
	>=stealth,
        }
\def\DeclareSymbol#1#2#3{%
	\expandafter\gdef\csname MH@symb@#1\endcsname{\tikzsetnextfilename{symbol#1}%
	\tikz[baseline=#2,scale=0.15,draw=symbols,line join=round]{#3}}%
	\expandafter\gdef\csname MH@symb@#1s\endcsname{\scalebox{0.75}{\tikzsetnextfilename{symbol#1}%
	\tikz[baseline=#2,scale=0.15,draw=symbols,line join=round]{#3}}}%
	\expandafter\gdef\csname MH@symb@#1ss\endcsname{\scalebox{0.65}{\tikzsetnextfilename{symbol#1}%
	\tikz[baseline=#2,scale=0.15,draw=symbols,line join=round]{#3}}}%
	}
\def\<#1>{\ifthenelse{\boolean{mmode}}{\mathchoice{\csname MH@symb@#1\endcsname}{\csname MH@symb@#1\endcsname}{\csname MH@symb@#1s\endcsname}{\csname MH@symb@#1ss\endcsname}}{\csname MH@symb@#1\endcsname}}
 \def\1{\mathbf{\symbol{1}}}
\DeclareMathAlphabet{\mathpzc}{OT1}{pzc}{m}{it}
\def\eqref#1{(\ref{#1})}
\newcommand*{\bigcdot}{}% Check if undefined
\DeclareRobustCommand*{\bigcdot}{%
  \mathbin{\mathpalette\bigcdot@{}}%
}
\newcommand*{\bigcdot@scalefactor}{.5}
\newcommand*{\bigcdot@widthfactor}{1.15}
\newcommand*{\bigcdot@}[2]{%
  % #1: math style
  % #2: unused
  \sbox0{$#1\vcenter{}$}% math axis
  \sbox2{$#1\cdot\m@th$}%
  \hbox to \bigcdot@widthfactor\wd2{%
    \hfil
    \raise\ht0\hbox{%
      \scalebox{\bigcdot@scalefactor}{%
        \lower\ht0\hbox{$#1\bullet\m@th$}%
      }%
    }%
    \hfil
  }%
}
\def\two{{\<generic>\kern0.05em\<genericb>}}
\def\twoI{{\<Ito>\kern0.05em\<Itob>}}
\def\mail#1{\burlalt{#1}{mailto:#1}}
\newcommand{\cop}{\textnormal{cop}\,}
\newcommand{\msfD}{\mathsf{D}}
\newcommand{\msfZ}{\mathsf{Z}}
\newcommand{\rmU}{{\rm U}}
\newcommand{\lnh}{(\hspace{-5pt}(\hspace{1pt}}
\newcommand{\rnh}{\hspace{1pt})\hspace{-5pt})}
\newcommand{\lspan}{\textnormal{span}\,}
\newcommand{\tL}{\tilde{L}}
\newcommand{\populated}{\mathcal{T}} %set of populated multi-indices
\newcommand{\Dcop}{\Delta_\bullet} % coproduct dual to the concatenation product
\newcommand{\Dcom}{\Delta_\rho} % coaction dual to the representation \rho
\newcommand{\mcMp}{{\mcM^+}}
\newcommand{\mcMm}{{\mcM^-}}
\newcommand{\bfPi}{\bm{\Pi}}
\newcommand{\bfpi}{\bm{\pi}}
\newcommand{\bfsigma}{\bm{\sigma}}
\newcommand{\bflambda}{\bm{\lambda}}
\newcommand{\regsol}{\overline{\reg}}
\newcommand{\length}[1]{\mathcal{l}({#1})}
\newcommand{\alphamax}{\overline{\alpha}}
\newcommand{\lpol}{|}
\newcommand{\rpol}{|_p}
\newcommand{\bff}{\mathbf{f}}
\newcommand{\mcNmin}{{\hat{\mcN}}}
\newcommand{\coord}{{\mcR}}
\newcommand{\coordind}{{\mathcal{r}}}
\newcommand{\counterterms}{{T_\mcC^*}}
\newcommand{\trees}{B}
\newcommand{\treespace}{\mathcal{B}}
\newcommand{\mhyphen}{\mbox{-}}
\newcommand{\referee}[1]{{ #1}}%For comments to referees. Changes highlighted in blue.
\begin{document}

\title{A top-down approach to algebraic renormalization in regularity structures based on multi-indices}
\author{Yvain Bruned$^1$, Pablo Linares $^2$}
\institute{ IECL (UMR 7502), Université de Lorraine \and
 Universidad Autónoma de Madrid \\
Email:\ \begin{minipage}[t]{\linewidth}
\mail{yvain.bruned@univ-lorraine.fr},\\ \mail{pablo.linaresb@uam.es}.
\end{minipage}}

%%%% Spell check
% !TeX spellcheck = en_US 

\maketitle 

\begin{abstract}
	We provide an algebraic framework to describe renormalization in regularity structures based on multi-indices for a large class of semi-linear stochastic PDEs. This framework is ``top-down", in the sense that we postulate the form of the counterterm and use the renormalized equation to build a canonical smooth model for it. The core of the construction is a generalization of the Hopf algebra of derivations in \cite{LOT}, which is extended beyond the structure group to describe the model equation via an exponential map: This allows to implement a renormalization procedure which resembles the preparation map approach in our context.  
\end{abstract}

\keywords{singular SPDEs, regularity structures, algebraic renormalization, multi-indices}
\setcounter{tocdepth}{2}
\setcounter{secnumdepth}{4}
\tableofcontents

\section{Introduction}
\label{Introduction}
%-----------------------%

It is now a decade since regularity structures \cite{reg,BHZ,CH16,BCCH} were introduced to solve a large class of singular stochastic partial differential equations (SPDEs). Loosely speaking, the theory of regularity structures is a theory of calculus for local jets built upon homogeneous nonlinear functionals of the driving noises of the corresponding SPDE. A systematic construction of such functionals is motivated by Picard iterations and leads to a tree-based algebraic description. The operations of recentering (i.~e. changing the base point of the local jet) and renormalization (i.~e. removing divergences from ill-posed products) are algebraically characterized using Hopf algebras of trees. Indeed, the recentering Hopf algebra in \cite{reg,BHZ} is a variant of the Butcher-Connes-Kreimer Hopf algebra \cite{Butcher72,CK1,CK2}; see \cite{BM22} for a construction via a deformation of the grafting pre-Lie product, and \cite{BK} via a post-Lie product. The renormalization Hopf algebra is related to the extraction/contraction Hopf algebra \cite{CEFM} and was first introduced in \cite{BHZ}; see once more \cite{BM22} for a construction via a deformation of the insertion pre-Lie product. These two Hopf algebras are in cointeraction in the sense of \cite[Theorem 8]{CEFM} (see \cite{CHV} for a review of these results in the context of numerical analysis); however, obtaining the cointeraction property requires extending the decorations of the trees, in such a way that the triangularity properties of recentering and renormalization do not clash. This algebraic construction allows to implement a renormalization procedure, inspired by the BPHZ renormalization of Feynman diagrams \cite{BP57,KH69,WZ69}, for which convergence of renormalized models can be proven \cite{CH16}. \referee{Later}, a recursive formulation of the algebraic renormalization problem was introduced in \cite{BR18,BB21} using preparation maps. This is a way of localizing the renormalization of a tree at its root, which avoids the difficulties of the cointeraction property sacrificing a robust group structure; it turns out that this construction is still useful for obtaining convergence results for renormalized models (cf. \cite{BN,BN2,BB23}), while also being well-suited for situations in which translation invariance is lost, cf. \cite{BB21b}.

\medskip

More recently, in the context of quasi-linear SPDEs, \cite{OSSW} introduced an alternative index set to describe local solutions. Instead of Picard iterations, their approach is based on a sort of infinite perturbative expansion, and naturally leads to multi-indices, which encode products of derivatives of the nonlinearity, as the basic index set. A Hopf-algebraic construction of the structure group within this setup was developed in \cite{LOT}, based on a pre-Lie algebra of derivations in a power series algebra; see \cite{BK,JZ23} for its corresponding post-Lie perspective. The construction of a renormalized model (and in turn of the renormalized equation) was done in \cite{LOTT}, under the assumption of a spectral gap inequality which allows for a recursive formulation of the renormalization problem (see \cite{HS23} for the extension of the spectral gap method to a large class of semi-linear SPDEs in the tree-based approach); later \cite{Tem23} established the convergence of this renormalized model. No explicit algebraic structure for renormalization (and in particular no renormalization group) is built in \cite{LOTT}. The reader can find an introduction to this approach in \cite{LO22,OST}.

\medskip

The goal of the current article is twofold. On the one hand, we will generalize the multi-index approach of \cite{OSSW}, and more specifically the algebraic constructions in \cite{LOT}, from the quasi-linear SPDE considered there to the class of subcritical semi-linear SPDEs of the form
\begin{equation}\label{set01}
	\mcL u = \sum_{\mfl\in\mfL^-\cup\{0\}} a^\mfl (\mathbf{u}) \xi_\mfl,
\end{equation}
where $\mcL$ is a linear operator with regularizing properties; $\mfL^-$ is a finite set which indexes the driving noises $\xi_\mfl$, with the additive term given in $\xi_0 = 1$; and $\{a^\mfl\}_{\mfl\in\mfL^-\cup \{0\}}$ are smooth nonlinearities depending on the solution $ u $ and its derivatives, denoted by $\mathbf{u}$ (see below for more precise assumptions). On the other hand, we give an algebraic characterization of the renormalization procedure performed in \cite{LOTT, Tem23}. Our construction does not take the form of the Hopf algebras in cointeraction from \cite{BHZ}. However, in \cite{Lin23}, the second author applies the multi-index approach to rough paths (where polynomials are not required) and builds the corresponding algebraic renormalization (\textit{translation}, cf. \cite{BCFP}) group over an insertion-like pre-Lie algebra, so it is conceivable that a Hopf-algebraic multi-index approach with extended decorations will lead to a renormalization group analogous to the one in \cite{BHZ}. We do not use preparation maps as such either; this is because the construction in \cite{BR18} relies on the tree grafting pre-Lie algebra being free, cf. \cite{CL}, as in particular it requires identifying the root of a tree. This piece of information is lost in the multi-index description\footnote{ See Subsection \ref{subsec::2.5} for the connection between trees and multi-indices.}. However, the philosophy of preparation maps is still observed in the inductive construction (already present in \cite{LOTT}); we explain the similarities in Subsection \ref{subsec::preparation} below.

\medskip

The reason to restrict \eqref{set01} to scalar equations, unlike \cite{BHZ,BCCH} which also incorporates systems of SPDEs, is that multi-indices are better suited for the scalar case. Since multi-indices encode products of derivatives of the nonlinearities, a description based on multi-indices becomes useful in one dimension by the algebra structure of $\R$, but is not so helpful for vector-valued nonlinearities where the pre-Lie algebra of vector fields is a more natural underlying structure. Of course, one could still treat vector-valued nonlinearities component-wise as one-dimensional and use an algebra structure of an enlarged set of variables (now incorporating each component of each nonlinearity individually, i.~e. forgetting their vectorial nature). This turns out to not be very efficient: Translating into trees, it would be the equivalent of incorporating kernel types not only on edges but also as noise decorations themselves, to keep track of the system component every noise comes from, and then only considering trees which are \textit{consistent}, in the sense that the kernel type attached to a noise should coincide with that of its incoming edge. Such a structure is not necessary in the tree-based description: The pre-Lie algebra of tree grafting passes from trees to vectorial nonlinearities in a canonical way (since the  pre-Lie algebra of tree grafting is free, \referee{cf. \cite{CL}}). However, it is conceivable that other non-free pre-Lie algebras can be used for a more efficient bookkeeping (compared to the tree-based) of regularity structures in higher dimension; this is not the subject of the current paper, but we believe some of our ideas could be extended to such a situation, as we never need to work with trees.

\medskip

Our algebraic construction is what in \cite{LOTT} was called \textit{top-down} in the following sense: Instead of changing the model and then studying the consequences of this operation at the level of the equation (\textit{bottom-up}), we consider the family of \textit{admissible modified equations} and use them to construct the corresponding algebraically renormalized model. This means in particular that for us there is no distinguished \textit{canonical model} (this would correspond to having no counterterm at all), but we rather work with all the algebraically renormalized models at once. This is partially motivated by the construction in \cite{LOTT}, where the model is constructed globally using the PDE, and the choice of renormalization constants solves an infrared problem which cannot be solved without counterterms (in that regard, the choice of renormalization in \cite{LOTT} is unique). Our main result can be roughly formulated as follows (see Theorem \ref{th:main} below for the more rigorous statement).
\begin{theorem}\label{ref:thintro}
	Assume that all the noises $\{\xi_\mfl\}_{\mfl\in\mfL^-}$ are smooth. For every admissible counterterm $c$, there exists a smooth model $(\Pi_x,\Gamma_{xy})$ based on multi-indices for the equation
	\begin{equation*}
		\mcL u = \sum_{\mfl\in\mfL^-\cup\{0\}} a^\mfl (\mathbf{u}) \xi_\mfl + c(\mathbf{u}).
	\end{equation*}
\end{theorem}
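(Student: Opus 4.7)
The plan is to build the model directly from a solution of the modified PDE, exploiting the fact that all the noises are smooth so that no probabilistic input (convergence in expectation, BPHZ, etc.) is needed at this stage. First I would fix an admissible counterterm $c$ and consider the classical smooth solution $u$ to
\[
\mcL u = \sum_{\mfl\in\mfL^-\cup\{0\}} a^{\mfl}(\mathbf{u})\,\xi_\mfl + c(\mathbf{u}),
\]
which exists on any bounded domain under the stated regularity assumptions. The multi-index index set $\mathcal{M}$ will be the one generalizing \cite{LOT} to the semi-linear setting, where each multi-index $\beta\in\mathcal{M}$ records the multiplicity with which each partial derivative of each nonlinearity $a^\mfl$ (and now, crucially, of $c$) appears. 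The model component $\Pi_x\beta$ will be defined as the output of a formal expansion in these multi-indices evaluated at $\mathbf{u}$ centered at $x$; concretely, I would set
\[
\Pi_x\beta(y) \;=\; \mathsf{z}^\beta\!\bigl(\mathbf{u}(y)-\text{Taylor polynomial of }\mathbf{u}\text{ at }x\bigr),
\]
reading $\beta$ as a monomial in the power series algebra underlying the derivation pre-Lie algebra of \cite{LOT}.

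Next I would construct the structure group and the reference map $\Gamma_{xy}$. Following the logic of the excerpt, the Hopf algebra of derivations of \cite{LOT} must be extended beyond the structure group so as to also describe the equation (the ``exponential map'' mentioned in the abstract). The key point is to interpret the action of $\mcL^{-1}$ and of the polynomial recentering as two commuting pieces of a single morphism from the algebra of multi-indices into smooth functions; this is where the top-down character enters, because we postulate the form of the counterterm rather than deriving it from the model. Once this morphism is in place, I would define $\Gamma_{xy}$ as the character obtained by evaluating the difference of the two recentering jets at $x$ and $y$. The algebraic identities $\Pi_x = \Pi_y\Gamma_{yx}$ and $\Gamma_{xy}\Gamma_{yz}=\Gamma_{xz}$ would then reduce to the Hopf-algebraic coassociativity of the extended coproduct, proved exactly as in \cite{LOT} but with the enlarged generator set that now accommodates $c$.

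The verification of the analytic bounds on $(\Pi_x,\Gamma_{xy})$ is essentially free in the smooth-noise regime: once the algebraic identities hold, the required H\"older-type estimates follow from Taylor's theorem applied to $\mathbf{u}$ and from the boundedness of the derivatives of $a^\mfl$ and $c$, with the homogeneity being read off from the degree assignment on multi-indices (which is inherited from the subcriticality of \eqref{set01} and is unchanged by the presence of $c$, since $c$ has the same degree as the nonlinearities it counterterms against). The model axiom $\mcL \Pi_x = \sum_\mfl \Pi_x\bigl(a^\mfl(\mathbf{u})\xi_\mfl\bigr) + \Pi_x(c(\mathbf{u}))$ would follow by construction, since we built $\Pi_x$ out of a solution of the modified equation.

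The main obstacle I expect is precisely the compatibility between the renormalization datum $c$ and the recentering action of $\Gamma_{xy}$. In the tree-based setting this is ensured by the cointeraction of two Hopf algebras with extended decorations; here, since no decoration extension is used, I expect to need a careful bookkeeping of how the derivations corresponding to $c$ sit inside the Hopf algebra generated by the $a^\mfl$'s, so that the exponential map is well-defined on the whole enlarged pre-Lie algebra and the action factors through the structure group. This is morally the ``preparation-map-like'' step alluded to in Subsection \ref{subsec::preparation}: localizing the effect of $c$ at what would be the root of a tree, without having an actual root to identify. Once this step is carried out, all remaining verifications are algebraic consequences of the construction and Theorem \ref{th:main} follows.
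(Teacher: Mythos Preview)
Your proposal rests on a fundamental misreading of what the model components $\Pi_{x\beta}$ are. They are \emph{not} evaluations of the monomial $\z^\beta$ at some shifted jet of a solution $u$; rather, they are the \emph{coefficients} in the formal expansion $u = \sum_\beta \Pi_{x\beta}\,\z^\beta$, i.e.\ (morally) $\Pi_{x\beta} = \tfrac{1}{\beta!}\partial_\z^\beta|_{\z=0} u$. The monomial $\z^\beta$ is a functional of the \emph{nonlinearities} $a^\mfl$ and a polynomial parameter $\mathbf{p}$ via \eqref{bb03}, not a function of $\mathbf{u}$ that one could feed $(\mathbf{u}(y)-\text{Taylor jet at }x)$ into. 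So the formula you write for $\Pi_x\beta(y)$ has no meaning in this framework, and the subsequent claim that the model equation ``follows by construction since we built $\Pi_x$ out of a solution'' collapses for the same reason.

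The paper's proof proceeds in the opposite direction: it never invokes a solution $u$ at all. Instead it builds $\{\Pi_{x\beta}\}_\beta$ \emph{inductively} in the component-wise partial order on multi-indices, using the hierarchy of \emph{linear} equations \eqref{est01}--\eqref{est11}. For each $\beta\in\mcN$, one first assembles $\Pi_{x\beta}^-$ from already-constructed components (Lemma~\ref{lemind01} guarantees this only needs $\beta'<\beta$), then sets $\Pi_{x\beta} = K*\Pi_{x\beta}^- - \mathrm{T}_x^{|\beta|+\eta}(K*\Pi_{x\beta}^-)$ using the mild formulation with the compactly supported regularizing kernel $K$ of Assumption~\ref{ass:ker}, and obtains the bound \eqref{mod70} from the Schauder estimate \eqref{schau01}. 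The characters $\bfpi_{xy\beta}^{(\mathbf{n})}$ defining $\Gamma_{xy}$ are then read off from the polynomial discrepancy $\Pi_x - \Gamma_{xy}^*\proj_\mcN\Pi_y$ and estimated by the three-point argument; the composition rule \eqref{ind07} is verified inductively. No global solution, no boundary data, and no direct appeal to Taylor's theorem on $u$ enter anywhere. Your concern about the compatibility of $c$ with recentering is also misplaced: in the paper's approach this is automatic because $c$ is simply an element of $T_\mcC^*$ that gets fed into the model equation \eqref{est11} alongside $\sum_\mfl \xi_\mfl \z_{(\mfl,0)}$, and the inductive structure handles it without any cointeraction machinery.
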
	

\medskip

\referee{
\subsection{Outline}	
	Our starting point is an Ansatz for a local expansion that describes the solution of \eqref{set01}: We think of the nonlinearities $a_\mfl$ as parameterized by coefficients
	\begin{equation}\label{ref:coef01}
		z_{(\mfl,k)} = \frac{1}{k!} \partial_\mathbf{u}^k a_\mfl (0).
	\end{equation}
	The evaluation point is momentarily chosen at $0$ for simplicity. These coefficients, provided they converge, serve as coordinates in the infinite-dimensional space of analytic functions around $0$. We shall think of the solution $u$, or rather an approximation to it, as a polynomial of $z_{(\mfl,k)}$: This leads to expansions of the form
	\begin{equation*}
		\sum_\beta \Pi_\beta z^\beta,
	\end{equation*}  
	where the sum runs through multi-indices of $(\mfl,k)\in (\mfL^-\cup \{0\}) \times \N_0^d$  and the $ \Pi_{\beta}$ are space-time functions or distributions.
	
	\medskip
	
	This naive approach has several limitations. The first one is that the coefficients \eqref{ref:coef01} are insufficient to characterize $u$, as they are blind to the effect of initial conditions, boundary values or any other constraints that guarantee the well-posedness of \eqref{set01}. These may be captured with additional coefficients
	\begin{equation*}
		z_{\mathbf{n}} = \frac{1}{\mathbf{n}!} \partial^\mathbf{n} p (0)
	\end{equation*}
	which serve as coordinates for space-time polynomials. In terms of bookkeeping, this only enlarges our index set and leads us to consider multi-indices over $((\mfL^-\cup \{0\}) \times \N_0^d) \cup \N_0^d$. A second limitation is that the fixed origin does not allow for enough freedom to cover local expansions of functions or distributions. To solve this, we interpret the above coefficients as functionals of the nonlinearities $a^\mfl$, of space-time polynomials $\mathbf{p}$, and space-time points $x$, i.~e.
	\begin{equation*}
		\z_{(\mfl,k)} [\mathbf{a},\mathbf{p},x] := \tfrac{1}{k!}\partial^k a^\mfl(\mathbf{p}(x)),\;\;\; \z_\mathbf{n} [\mathbf{a},\mathbf{p},x] := \tfrac{1}{\mathbf{n}!}\partial^\mathbf{n} p(x),
	\end{equation*} 
	see \eqref{bb03} below. This is already an extension of the ideas of \cite{OSSW,LOT} for two reasons. On the one hand, we incorporate the space-time dependence into the functional, instead of fixing an origin. On the other hand, and more importantly, we do not fix an origin for the evaluation of $a^\mfl$: Instead, we define its evaluation in terms of the polynomial and the space-time point in the \textit{nested} form seen above. With this notation, we would formally express the solution $u$ locally around $x\in \R^d$ as an expansion of the form
	\begin{equation*}
		u = \sum_\beta \Pi_{x \beta} \z^\beta[\mathbf{a},\mathbf{p},x],
	\end{equation*} 
	where $\mathbf{p}$ is some polynomial (yet to be determined) and for every multi-index $\beta$ the term $\Pi_{x \beta}$ is a space-time distribution which is homogeneous around the base point $x$. These distributions constitute (part of) the \textit{model}, and the expansion suggests that they take the form
	\begin{equation*}
		\Pi_{x \beta} = \frac{1}{\beta!}\big(\partial_{\z[\mathbf{a},\mathbf{p},x]}^\beta u\big)(\z = 0).
	\end{equation*}
	In order to characterize these functions and distributions, we may appeal to equation \eqref{set01}, first noting that the nonlinearities may be expanded around $\mathbf{p}(x)$ using \eqref{bb03}:
	\begin{equation}\label{ref:coef02}
		a^\mfl (\mathbf{u}) = \sum_{k\in\N_0^d} \z_{(\mfl,k)}[\mathbf{a},\mathbf{p},x] \big(\mathbf{u} - \mathbf{p}(x)\big)^k.
	\end{equation}
	We then may take derivatives of $u$ with respect to the coefficients $\{\z_{(\mfl,k)}[\mathbf{a},\mathbf{p},x]\}_{(\mfl,k)}$ $\cup$ $\{\z_{\mathbf{n}}[\mathbf{a},\mathbf{p},x]\}_\mathbf{n}$, leading to a hierarchy of linear equations (cf. \eqref{bb02}) that can be inductively solved with analytic bounds around the base point $x$ (as we show in Section \ref{section::4}). 
	
	\medskip
	
	However, this still does not solve the problem of changing the evaluation points of our functional, as in the procedure outlined above we are implicitly fixing $\mathbf{p}$ and $x$ once and for all. To free ourselves from this restriction, we follow \cite{LOT} and seek a set of transformations which arise from exponentials of the infinitesimal generators of shifts of the form
	\begin{equation}\label{ref:coef03}
		\mathbf{p}(x) \mapsto \mathbf{p}(x) + \mathbf{q}[\mathbf{a},\mathbf{p},x],\,\,\,\,x\mapsto x + y,
	\end{equation}
	seen as algebraic operations on the formal power series algebra generated by $\z_{(\mfl,k)}$ and $\z_{\mathbf{n}}$. Here $\mathbf{q}[\mathbf{a},\mathbf{p},x]$ is an $(\mathbf{a},\mathbf{p},x)$-dependent polynomial; the reason why we need this kind of shifts will become transparent later in Section \ref{section::3}, see also \cite[Subsection 3.7]{LOT}. The infinitesimal generators of these shifts form a Lie algebra whose universal envelope is a Hopf algebra isomorphic to the symmetric algebra equipped with a Grossman-Larson-type product: This is a consequence of the theorem of Guin and Oudom \cite{Guin1,Guin2}, although our Lie algebra is slightly weaker than a pre-Lie algebra, see Subsection \ref{ref:subsecgo} for the details. Restricting the generators to those which satisfy the triangular constraint \eqref{sg17} we may construct the Hopf algebra of recentering and, ultimately, the \textit{structure group}: The latter takes the form of a group of exponential-type maps $\Gamma_{\bfpi}^{+*}$ associated to functionals $\bfpi$ of the graded dual of the aforementioned universal enveloping algebra. This was already done in \cite{LOT} for quasi-linear SPDEs; in Subsection \ref{subsec::3.3}, we extend the techniques to generic equations of the form \eqref{set01}.
	
	\medskip
	
	One of the novelties of this work is that we use the structure of shifts for another purpose, namely the reformulation of the hierarchy of model equations. The idea is motivated by the following observation: Assuming $u$ is smooth, we may express the evaluation of the nonlinearity as
	\begin{equation*}
		a^\mfl(\mathbf{u}(x)) = \z_{(\mfl,0)} [\mathbf{a},\mathbf{u},x].
	\end{equation*} 
	Thanks to the local identification $u = \Pi_x = \sum_\beta \Pi_{x\beta} \z^\beta$, we can take advantage of the shifts \eqref{ref:coef03} to re-express this evaluation. Instead of considering the triangular generators as for the structure group, we restrict to shifts that only affect the derivatives appearing on the r.~h.~s. of the equation. The number of derivatives is bounded by the order of the operator thanks to the semi-linearity condition, and leads to the index set \eqref{mod01}; this is crucial to guarantee the correct finiteness properties to make the procedure of \cite{LOT} work. As a consequence, we can associate to the model $\Pi_x$ a functional $\bfPi_x$ and an exponential-type map $\Gamma_{\bfPi_{x}}^{*}$ so that\footnote{ This is a simplified version: We would rather fix the origin at the function-like part of the solution.} 
	\begin{equation*}
		a^\mfl(\mathbf{u}) \equiv \Gamma_{\bfPi_{x}}^{*}\z_{(\mfl,0)}[\mathbf{a},0,0].
	\end{equation*}
	We use this to rewrite the hierarchy of model equations as a PDE of the form
	\begin{equation*}
		\mcL \Pi_x = \Gamma_{\bfPi_x}^{*}\sum_{\mfl\in \mfL^-\cup \{0\}} \xi_\mfl \z_{(\mfl,0)},
	\end{equation*}
	see \eqref{bb02bis} or Theorem \ref{th:main} below for more precise versions. Lemma \ref{lem:exp01} shows that both formulations of the model equations \eqref{bb02} and \eqref{bb02bis} are equivalent: The latter can be interpreted as the PDE version of the characterizing ODE (Cartan's development) of the Hopf-algebraic smooth rough paths \cite[proof of Theorem 4.2]{BFPP} (see \cite[(3.16)]{Lin23} for a closer connection).
	
	\medskip
	
	The reader should note that the equation above is meaningless in the singular case, because the map $\Gamma_{\bfPi_{x}}^{-*}$ contains products of distributions which are ill-posed. However, it is useful if we assume the noises are mollified, since it allows us to introduce finite counterterms in a very simple way, namely by shifting
	\begin{equation}\label{ref:coef04}
		\z_{(0,0)}\mapsto \z_{(0,0)} + c,
	\end{equation}
	where $c = \sum_\beta c_\beta \z^\beta$ represents the counterterm in terms of the renormalization constants $c_\beta$. This simple transformation is a consequence of a few reasonable assumptions that we extensively discuss in Subsection \ref{subsec::3.5} below. If we fix $c$, then the model equations turn into
	\begin{equation*}
		\mcL \Pi_x = \Gamma_{\bfPi_x}^{*}\Big(\sum_{\mfl\in \mfL^-\cup \{0\}} \xi_\mfl \z_{(\mfl,0)} + c\Big),
	\end{equation*}
	which under the qualitative smoothness assumption on $\xi_\mfl$ yields a model (Theorem \ref{th:main}). 
	
	\medskip
	
	Fixing $c$ beforehand is a limitation for two reasons. On the one hand, we cannot remove the qualitative smoothness assumption: Our result should rather be seen as the construction of a \textit{canonical smooth model for the modified equation}. The analogue in rough paths takes the form of the \textit{translated rough paths} in \cite{BCFP}, although we incorporate the necessary triangularity properties \eqref{cou01b} to preserve the homogeneity at small scales. Removing the smoothness assumption is not in the scope of this paper, but we refer to \cite{LOTT,Tem23} for the full construction of the renormalized model within our framework in the quasi-linear case. On the other hand, if we wanted to use our result for the proper construction of renormalized models, the counterterm $c$ should be chosen \textit{parallel to the construction of the model}, and not fixed from the beginning. We address this issue in Subsection \ref{subsec::4.4}, where we derive an order in the set of multi-indices that allows for an inductive construction of the model and the renormalization constants at the same time. 
	
	\medskip
	
	An important feature of our method is that we do not build a renormalization group, or even algebraic renormalization maps. Instead, we take the simple translation \eqref{ref:coef04} at the level of the model equations and build the \textit{concrete} model associated to the counterterm $c$. This is another reason to formulate Theorem \ref{ref:thintro} for \textit{any} admissible counterterm: We need to show the flexibility of our approach at the analytic level. The recursive procedure is reminiscent of \textit{preparation maps}, but avoids the use of an abstract (algebraic) integration map; see Subsection \ref{subsec::preparation} for a detailed discussion about the similarities and differences between the two approaches.
	
	\medskip
	
	We conclude the article implementing the renormalization procedure and finding the structure of the counterterms for three classical examples of singular SPDEs: the $\Phi_3^4$ model (Subsection \ref{subsec::5.1}), the multiplicative stochastic heat equation (Subsection \ref{subsec::5.2}) and the generalized KPZ equation (which serves as an example for concrete computations throughout the whole text, but is concluded in Subsection \ref{subsec::5.3}). We use these examples to show how under certain additional assumptions (mostly related to symmetries) we can restrict the counterterm \textit{a priori} and work with a smaller set of constants. We also show that in all these cases multi-indices generate fewer renormalization constants than trees; see also Subsection \ref{subsec::2.5} for the comparison between trees and multi-indices as index sets.
}

\subsection*{Acknowledgements}

{\small
	Y.B. thanks the Max Planck Institute for Mathematics in the Sciences (MiS) in Leipzig for having supported his research via a long stay in Leipzig from January to June 2022. Y. B.  gratefully acknowledges funding support from the European Research Council (ERC) through the ERC Starting Grant Low Regularity Dynamics via Decorated Trees (LoRDeT), grant agreement No.\ 101075208 and the ANR via the project LoRDeT (Dynamiques de faible régularité via les arbres décorés) from the projects call T-ERC\_STG. Views and opinions
	expressed are however those of the author(s) only and do not necessarily reflect those of
	the European Union or the European Research Council Executive Agency. Neither the
	European Union nor the granting authority can be held responsible for them. P.L. thanks Rishabh Gvalani, Francesco Pedullà, Rhys Steele and Markus Tempelmayr for helpful discussions, and Xue-Mei Li for financial support via the EPSRC grant EP/V026100/1. Finally, we thank the anonymous referees for
	their remarks, which led to an improvement in the presentation.
}   

\subsection*{Data Availibility} Data sharing not applicable to this article as no datasets were
generated or analysed during the current study.

\subsection*{Conflict of interest} The authors declare that they have no conflict of interest.

\section{Regularity structures based on multi-indices}
\label{section::2}

In this section we build the set of multi-indices necessary for the treatment of the scalar SPDE \eqref{set01}.
\subsection{Notation for multi-indices}
We begin by setting some notation that we will use consistently throughout the paper. 
\begin{definition}
	Let $\mathsf{I}$ be a countable set. 
	\begin{itemize}
		\item A multi-index over $\mathsf{I}$ is a map $m:\mathsf{I}\to \N_0$ such that $m(i)= 0$ for all but finitely many $i\in\mathsf{I}$. We denote the set of multi-indices over $\mathsf{I}$ as $M(\mathsf{I})$.
		\item The \textit{length} is the function 
		\begin{equation*}
			\begin{array}{cccl}
				\mathcal{l}: &M(\mathsf{I}) &\longrightarrow &\N_0\\
				\mbox{} & m & \longmapsto & \length{m} := \sum_{i\in\mathsf{I}} m(i).
			\end{array}
		\end{equation*}
		\item Length-one multi-indices will be denoted by $e_i$, $i\in \mathsf{I}$, where $e_i (j) = \delta_i^j$.
		\item The multi-index factorial is defined as
		\begin{equation*}
			m! := \prod_{i\in\mathsf{I}} m(i)!.
		\end{equation*}
		\item Let $V$ be a commutative algebra, and let $v: \mathsf{I} \to V$. For a multi-index $m\in M(\mathsf{I})$, the $m$-th power is given by
		\begin{equation}\label{powers}
			v^m := \prod_{i\in\mathsf{I}} v(i)^{m(i)}.
		\end{equation}
		We denote by $\R[\mathsf{I}]$ the free commutative algebra over variables indexed by $\mathsf{I}$; as a vector space, it is generated by \eqref{powers}. Similarly, we denote by $\R[[\mathsf{I}]]$ the corresponding (non-truncated) algebra of formal power series.
		\item Let $f : \R^{\mathsf{I}} \to \R$. For a multi-index $m\in M(\mathsf{I})$, the $m$-th derivative is given by
		\begin{equation*}
			\partial^{\referee{m}} f := \big(\prod_{i\in\mathsf{I}} \partial_i^{m(i)}\big) f.
		\end{equation*}
	\end{itemize}
\end{definition}	

\subsection{\referee{Basics on regularity structures}}
The theory of regularity structures is, at its core, a theory of calculus for general local expansions of functions and distributions. As for the expansions themselves, in words of \cite{Hairer16}, they involve an algebraic \textit{skeleton} (i.~e. the underlying algebraic structure of the expansions) and analytic \textit{flesh} (i.~e. the form of the generalized monomials involved in the expansions). The algebraic skeleton is what we call \textit{regularity structure}, cf. \cite[Definition 2.1]{reg}.
\begin{definition}\label{ref:defrg}
	A \textit{regularity structure} $ 
 (A,T,G) $ consists of the following elements:
	\begin{itemize}
	\item  a set of homogeneities, which is a set $ A \subset \R$ bounded from below and locally finite;
	\item a model space, which is a graded vector space $T = \bigoplus_{\nu \in A} T_\nu$;  
	\item a structure group $ G $, which is a group of linear endomorphisms of $ T $  such that, for every $ \Gamma \in G $,
	every  $ \nu \in A$, and every $ a \in T_{\nu} $ one has $ \Gamma a - a \in \bigoplus_{\nu' < \nu} T_{\nu'} $.
	\end{itemize}
\end{definition}	
The analytic flesh is provided by the \textit{model}, cf. \cite[Definition 2.17]{reg}.
\begin{definition}
	Given a regularity structure, a model consists of a collection of linear maps $\Pi_x : T\to \mcS'(\R^d)$ and of elements of the structure group $\Gamma_{xy}\in G$ such that they satisfy the algebraic properties
	\begin{equation*}
		\Pi_y = \Pi_x \Gamma_{xy},\,\,\,\,\Gamma_{xy} = \Gamma_{xz}\Gamma_{zy}
	\end{equation*}
	as well as the estimates for all compact subsets\footnote{ We refrain from giving a complete and rigorous definition at this stage; the reader can find the details in \cite[Section 2]{reg}.} $ K $ of $ \R^d $
	\begin{equation*}
		\referee{|}\langle \Pi_x(a),\varphi_x^\lambda \rangle \referee{|}\lesssim \lambda^{\nu},\,\,\,\, |\referee{\proj_{\nu'}}(\Gamma_{xy} a)| \lesssim |y-x|^{\nu-\nu'},
	\end{equation*}
	uniformly over all $a\in T_\nu$, $x,y\in K$, $\lambda\in(0,1)$ and all localized test functions $\varphi_x^\lambda$\referee{, where $\proj_{\nu'}$ denotes the projection onto $T_{\nu'}$.}	
\end{definition}	
\begin{remark}
	As we will always work with the smooth case in this paper, and to make the construction easier later, we reformulate the definition using \cite[Definition 6.7]{BHZ}, i.~e. the \textit{smooth model}, and the dual perspective in \referee{line} with \cite{OSSW,LOT,LOTT}: A model is a collection of maps $\Pi_x : \R^d \to T^*$ and elements of the structure group $\Gamma_{xy}\in G$ such that
	\begin{equation*}
		\Pi_y = \Gamma_{yx}^*\Pi_x,\,\,\,\,\Gamma_{xy}^* = \Gamma_{xz}^*\Gamma_{zy}^*
	\end{equation*}
	and
	\begin{equation*}
		|\Pi_{x} (y)(a)|\lesssim |y-x|^{\nu},\,\,\,\, |\Gamma_{xy} a|_{\nu'} \lesssim |y-x|^{\nu-\nu'}
	\end{equation*}
	for all $a\in T_\nu$ and $|x-y|\leq 1$.
\end{remark}	

\medskip

\referee{Given a regularity structure and a model, we are able to characterize the functions or distributions which locally describe the solution: These are called \textit{modelled distributions}. A modelled distribution $f$ is a $T$-valued function which satisfies certain analytic constraints, see \cite[Definition 3.1]{reg}. When tested against $\Pi_x$, $f$ gives rise to a local expansion around the base point $x$:
\begin{equation*}
	\Pi_x(f) = \sum_{a\in T} f^{(a)}(x) \Pi_x(a),
\end{equation*}	
where $f^{(a)} = \langle a, f \rangle$ are vanishing functions except for finitely many basis elements $a\in T$. The statement that such an expression really is the local expansion of some function or distribution is nontrivial and comes in form of Hairer's Reconstruction Theorem, cf. \cite[Theorem 3.10]{reg}. We refrain from giving more details about these analytic results, which we will not need for our purposes.

\medskip

In the upcoming pages, we will make an Ansatz for expansions of the above form describing the solution of \eqref{set01} and use it to motivate the index set (and thus the model space) and the form of the model components.
}

\medskip

\subsection{Warm up}\label{ref:subsecwarm}
\referee{We follow some of the main ideas of \cite{LOT,LOTT,OSSW}}. We start by providing the space of nonlinearities with a set of coordinates. In \eqref{set01}, for every $\mfl\in \mfL^-\cup\{0\}$, the nonlinearity $a^\mfl$ is assumed to be a smooth function $a^\mfl : \R^{\mathcal{O}} \to \R$ where $\mathcal{O}$ $\subset$ $\N_0^d$ is a finite subset; we identify $\mathbf{n}\in\N_0^d$ with the component of the function depending on the $\mathbf{n}$-th derivative of the solution, and write
\begin{equation*}
	\mathbf{u} = \big(\tfrac{1}{\mathbf{n}!} \partial^\mathbf{n} u\big)_{\mathbf{n}\in\N_0^d}.
\end{equation*}
For every $\mfl\in\mfL^-\cup\{0\}$ and every multi-index $k\in M(\N_0^d)$, we define
\begin{equation}\label{warm20}
	z_{(\mfl,k)} := \tfrac{1}{k!}\partial^k a^\mfl (0).
\end{equation}
The fact that we choose $0$ as the evaluation point is arbitrary\footnote{As we shall see later, one may change the origin by shift.}, but already generates a parameterization of analytic nonlinearities. With this notation, we formally have
\begin{equation}\label{warm03}
	a^\mfl(\mathbf{u}) = \sum_{k\in M(\N_0^d)} z_{(\mfl,k)} \mathbf{u}^k;
\end{equation} 
in particular, this yields the also formal
\begin{equation*} \label{equation_monomials}
	\mcL u = \sum_{(\mfl,k)}  z_{(\mfl,k)} \mathbf{u}^k \xi_\mfl.
\end{equation*}

\referee{
\begin{exam} \label{example_1}
	Consider the generalized KPZ equation (cf. e.~g. \cite[(KPZ)]{reg}), which is posed in space-time dimension $1+1$:
	\begin{equation}\label{kpz01_example}
		(\partial_t - \partial_x^2) u = f(u) + g(u)\partial_x u + h(u)(\partial_x u)^2 + \sigma(u) \xi,
	\end{equation}
with $u: \R^{1+1} \to \R \ni u(t,x)$ and $\xi$ being space-time white noise. We can formally rewrite this equation in terms of \eqref{equation_monomials}. We represent the multiplicative nonlinearity with the same symbol as that of the noise, i.~e., we take $ \mathfrak{L}^- = \left\lbrace \xi \right\rbrace  $ and write
\begin{align*}
		 a^{0}(\mathbf{u}) &= a^0 (u,\partial_x u) = f(u) + g(u)\partial_x u + h(u)(\partial_x u)^2,\\
		 a^{\xi}(\mathbf{u}) &= a^{\xi}(u) = \sigma(u).
		 \end{align*}
Then, one formally has
\begin{equation*}
	(\partial_t - \partial_x^2) u= \sum_{k}  z_{(0,k)} \mathbf{u}^k + \sum_{k}  z_{(\xi,k)} \mathbf{u}^k \xi.
\end{equation*}
Since $a^0$ only depends on the solution (to any power) and its first derivative (at most quadratically), in the first sum $k\in M(\N_0^{1+1})$ actually runs through $k$ of the form
\begin{equation*}
	\{k_{\mathbf{0}} e_\mathbf{0} + k_{(0,1)} e_{(0,1)}\, | \, k_{\mathbf{0}} \in \N_0,\, k_{(0,1)}=0,1,2  \};
\end{equation*}
in $\mathbf{n} = (n_1,n_2)\in \N_0^{2}$, the first component refers to time while the second refers to space, and $\mathbf{0} = (0,0)$. Similarly, $a^\xi$ depends only on $u$, and therefore $k$ in the second sum runs through
\begin{equation*}
	\{k_{\mathbf{0}} e_\mathbf{0}\, |\, k_{\mathbf{0}} \in \N_0 \}.
\end{equation*}
\end{exam}
}

However, as observed in \cite{OSSW,LOT}, the nonlinearity does not by itself determine the solution, but rather defines a family of solutions indexed by e.~g. initial values or boundary conditions. We capture this effect with a local parameterization of the manifold of solutions in terms of polynomials $p$, to which we can also give coordinates in terms of their derivatives, namely for every $\mathbf{n}\in\N_0^d$
\begin{equation}\label{warm11}
	z_{\mathbf{n}}:=\tfrac{1}{\mathbf{n}!}\partial^\mathbf{n} p(0).
\end{equation}
We consider the set
\begin{equation*}
	\coord := ((\referee{\mfL^-}\cup\{0\})\times M(\N_0^d)) \sqcup \N_0^d,
\end{equation*}
which contains the indices of the coordinates \eqref{warm20}, \eqref{warm11}. Given these coordinates, a (formal) Taylor-like expansion of $u$ would look like
\begin{equation}\label{warm02}
	u = \sum_{\beta \in M(\coord)} \tfrac{1}{\beta!} \big(\big(\partial^\beta\big)|_{z = 0} u\big)\, z^\beta.
\end{equation}
This equation already points in the direction of an expansion for the solution, assuming we can characterize the terms $\frac{1}{\beta!}\big(\partial^\beta\big)|_{z=0} u$ as the components of a model. Here is where \eqref{warm03} is convenient: By the Leibniz rule\footnote{ Here the sums of multi-indices are understood component-wise. Note that all the sums are finite, since $k\in M(\N_0^d)$ and thus $k(\mathbf{n})=0$ for all but finitely many $\mathbf{n}$'s.},
\begin{align*}
	&\tfrac{1}{\beta!}\referee{(\partial^\beta)|_{z=0}} a^\mfl(\mathbf{u}) \\
	&\quad= \sum_{k\in M(\N_0^d)} \sum_{\tilde{\beta} + \sum_\mathbf{n} \sum_{j=1}^{k(\mathbf{n})} \beta_\mathbf{n}^j = \beta} \tfrac{1}{\tilde{\beta}!}\big(\partial^{\tilde{\beta}}\big)|_{z=0} z_{(\mfl,k)} \prod_{\mathbf{n}\in\N_0^d} \prod_{j=1}^{k(\mathbf{n})} \tfrac{1}{\beta_\mathbf{n}^j !}\big(\partial^{\beta_\mathbf{n}^j}\big)|_{z=0}\tfrac{1}{\mathbf{n}!}\partial^\mathbf{n} u \\
	&\quad= \sum_{k\in M(\N_0^d)} \sum_{e_{(\mfl,k)} + \sum_\mathbf{n} \sum_{j=1}^{k(\mathbf{n})} \beta_\mathbf{n}^j = \beta}  \prod_{\mathbf{n}\in\N_0^d} \prod_{j=1}^{k(\mathbf{n})} \tfrac{1}{\mathbf{n}!}\partial^\mathbf{n}\big(\tfrac{1}{\beta_\mathbf{n}^j !}\big(\partial^{\beta_\mathbf{n}^j}\big)|_{z=0} u\big),
\end{align*}
so that feeding this into \eqref{set01} we obtain for every $\beta\in M(\coord)$
\begin{equation}\label{warm08}
	\mcL \big(\tfrac{1}{\beta!}\big(\partial^\beta\big)|_{z=0} u\big) = \sum_{(\mfl,k)} \sum_{e_{(\mfl,k)} + \sum_\mathbf{n} \sum_{j=1}^{k(\mathbf{n})} \beta_\mathbf{n}^j = \beta}  \prod_\mathbf{n} \prod_{j=1}^{k(\mathbf{n})} \tfrac{1}{\mathbf{n}!}\partial^\mathbf{n}\big(\tfrac{1}{\beta_\mathbf{n}^j !}\big(\partial^{\beta_\mathbf{n}^j}\big)|_{z=0} u\big) \xi_\mfl. 
\end{equation}
This seemingly complicated expression is, for a fixed $\beta$, a linear SPDE: Indeed, note that thanks to the presence of $e_{(\mfl,k)}$, all $\beta_\mathbf{n}^j$ on the r.~h.s. are of strictly smaller length than $\beta$, and thus if the linear equation is well-posed we may define $\tfrac{1}{\beta!}\big(\partial^\beta\big)|_{z=0} u$ inductively as the solution with r.~h.~s. given only in terms of lower levels. Obviously, we cannot expect uniqueness to hold unless we impose some offline conditions\footnote{ These are given in \cite{LOTT,LO22} as the combination of a local vanishing and a polynomial growth conditions, both determined by the homogeneity of the model, but extended globally. In this paper, we instead use a mild formulation closer to \cite{reg}, cf. Subsection \ref{subsec::4.1} below.}.

\medskip

When recentering, it is required to subtract a Taylor polynomial (assuming smoothness) $q_\beta$ to each\footnote{ The degree of the polynomial depends on the homogeneity of the corresponding term.} $\tfrac{1}{\beta!}\big(\partial^\beta\big)|_{z=0} u$, which then suggests to replace the r.~h.~s. of \eqref{warm08} by
\begin{equation*}
	\sum_{(\mfl,k)} \sum_{e_{(\mfl,k)} + \sum_\mathbf{n} \sum_{j=1}^{k(\mathbf{n})} \beta_\mathbf{n}^j = \beta}  \prod_\mathbf{n} \prod_{j=1}^{k(\mathbf{n})} \tfrac{1}{\mathbf{n}!}\partial^\mathbf{n}\big(\tfrac{1}{\beta_\mathbf{n}^j !}\big(\partial^{\beta_\mathbf{n}^j}\big)|_{z=0} u - q_{\beta_\mathbf{n}^j}\big) \xi_\mfl.
\end{equation*}
Let us incorporate all the polynomials $q_{\beta}$ in a formal power series in $z$ as in \eqref{warm02}; then it is easy to see that $u-q$ solves the same equation as $u$, i.~e. \eqref{set01}, up to a polynomial and with nonlinearities given by
\begin{equation}\label{warm14}
	\tilde{a}^\mfl := a^\mfl (\cdot + \mathbf{q}),\,\,\mathbf{q} = \big(\tfrac{1}{\mathbf{n}!} \partial^{\mathbf{n}} q\big)_{\mathbf{n}\in\mathbf{N}_0^d}.
\end{equation}
At the same time, this action is seen in the space of solutions considering
\begin{equation}\label{warm13}
	\tilde{p} := p + q.
\end{equation}
Thus, the algebraic structure obtained by the naive approach \eqref{warm08} is actually preserved when we want to define the \textit{centered model}, only up to a (infinite-dimensional) shift in the space of solutions. This would ultimately allow us to express $u$ locally around a base point $x$ as
\begin{equation}\label{warm15}
	u = \sum_{\beta\in M(\coord)} \tfrac{1}{\beta!}\big(\partial^\beta\big)_{z=0} u_x z^\beta,
\end{equation}
where now $z^\beta$ is defined in terms of \eqref{warm14}, \eqref{warm13}. Our goal later will be to identify $\tfrac{1}{\beta!}\big(\partial^\beta\big)|_{z=0} u_x$ with a model component, where the model space is indexed by multi-indices $\beta\in M(\coord)$ (or rather a subset).

\medskip

Note now that our family of SPDEs includes, as a subcase, the homogeneous PDE $\mcL u = 0$. In such a situation, it is natural to assume that \eqref{warm15} takes the form of a Taylor polynomial. We incorporate this assumption by giving the multi-indices independent of $a^\mfl$, i.~e. those depending only on the variables $z_\mathbf{n}$, a special role: For all $\beta\in M(\coord)$ such that $\beta(\mfl,k) = 0$ for all $(\mfl,k)$ $\in$ $(\mfL^- \cup\{0\})\times M(\N_0^d)$,
\begin{equation}\label{warm16}
	\tfrac{1}{\beta!}\big(\partial^\beta\big)|_{z=0} u_x = \left\{\begin{array}{ll}
		(\cdot - x)^\mathbf{n} & \mbox{if }\beta = e_\mathbf{n},\\
		0 & \mbox{otherwise.}
	\end{array} \right.\;
\end{equation}
Separating these multi-indices from the rest of the expansion, \eqref{warm15} takes the form
\begin{equation*}
	u = \sum_{\substack{\beta \in M(\coord)\\\beta \neq e_\mathbf{n}}} \tfrac{1}{\beta!}\big(\partial^\beta\big)|_{z=0} u_x z^\beta + \sum_{\mathbf{n}\in \N_0^d} (\cdot - x)^\mathbf{n} z_\mathbf{n},
\end{equation*}
so that, in the homogeneous case $a^\mfl \equiv 0$, $z_\mathbf{n}$ indeed correspond to the Taylor coefficients at the point $x$; it is therefore natural to identify
\begin{equation}\label{warm17}
	z_\mathbf{n} = \tfrac{1}{\mathbf{n}!}\partial^\mathbf{n} p(x).
\end{equation} 
Roughly speaking, we are parameterizing our space of solutions locally in terms of $\mcL$-harmonic/caloric polynomials. The passage from \eqref{warm11} to \eqref{warm17} involves a shift in space-time,
\begin{equation*}
	\tilde{p} := p(\cdot + x),
\end{equation*}	
which is as well propagated to $a^\mfl$, since the latter depends on the solution itself.

\subsection{The building blocks}\label{subsec::2.3}
Let us now present the main objects which will play a role in our construction. For this we will take a more abstract perspective: Since the actions of recentering and renormalization require that we consider multiple transformations in the space of (equations, solutions, space-time), we replace the coordinates \eqref{warm20}, \eqref{warm11} by nested functionals in $(\mathbf{a},\mathbf{p},x)$, namely
\begin{equation}\label{bb03}
	\z_{(\mfl,k)} [\mathbf{a},\mathbf{p},x] := \tfrac{1}{k!}\partial^k a^\mfl(\mathbf{p}(x)),\;\;\; \z_\mathbf{n} [\mathbf{a},\mathbf{p},x] := \tfrac{1}{\mathbf{n}!}\partial^\mathbf{n} p(x).
\end{equation}
\referee{
\begin{exam}
For the generalized KPZ equation \eqref{kpz01_example} one has for all $k_\mathbf{0}\in \N_0$ the following functionals: 
\begin{align}
	\z_{(\xi,k_\mathbf{0} e_\mathbf{0})} [\mathbf{a},\mathbf{p},x] &= \frac{1}{k_\mathbf{0}!} \sigma^{(k_\mathbf{0})} \big(p(x)\big),\label{kpz02_example}\\
	\z_{(0,k_\mathbf{0} e_\mathbf{0})} [\mathbf{a},\mathbf{p},x] &= \frac{1}{k_\mathbf{0}!} \Big( f^{(k_\mathbf{0})} \big(p(x)\big) +  g^{(k_\mathbf{0})}\big(p(x)\big) \partial_x p(x)\nonumber\\
	& \quad \quad \quad \quad + h^{(k_\mathbf{0})}\big(p(x)\big) \big(\partial_x p (x)\big)^2\Big)\label{kpz03_example}\\
	\z_{(0,k_\mathbf{0} e_\mathbf{0} + e_{(0,1)})} [\mathbf{a},\mathbf{p},x] &= \frac{1}{k_\mathbf{0}!} \Big( g^{(k_\mathbf{0})} \big(p(x)\big) + 2 h^{(k_\mathbf{0})}\big(p(x)\big) \partial_x p(x)\Big)\label{kpz04_example}\\
	\z_{(0,k_\mathbf{0} e_\mathbf{0} + 2 e_{(0,1)})} [\mathbf{a},\mathbf{p},x] &= \frac{1}{k_\mathbf{0}!} h^{(k_\mathbf{0})} \big(p(x)\big).\label{kpz05_example}
\end{align}
\end{exam}
}
\referee{Equipped with \eqref{bb03}}, the formal expression \eqref{warm15} is generalized to a formal power series in $\R[[\coord]]$, which is later evaluated at a specific triple $(\mathbf{a},\mathbf{p},x)$. Note that we are still discussing at a formal level; in particular, there is no reason why the formal power series, when evaluated at $(\mathbf{a},\mathbf{p},x)$, should converge, thus defining a proper functional. With this new interpretation we are considering all possible \textit{origins} in the $(\mathbf{a},\mathbf{p},x)$-space, which is a way of saying that we are parameterizing all equations and solutions at all space-time points; the choice of a specific origin is adapted to each situation, and we will have freedom to pass from one to another translating the shifts into algebraic operations on $\{\z_\coordind\}_{\coordind\in \coord}$. In order to do so, we first study their infinitesimal generators. 

\medskip

Let us start with \eqref{warm14}, \eqref{warm13}. We consider shifts given by monomials of space-time centered at $x$: Given $\mathbf{n}'\in\N_0^d$,
\begin{equation*}
	\mathbf{p} \mapsto  \mathbf{p} + t(\cdot - x)^{\mathbf{n}'} = \big(\tfrac{1}{\mathbf{n}!} \partial^\mathbf{n} p + \tbinom{\mathbf{n'}}{\mathbf{n}}  t(\cdot - x)^{\mathbf{n}'- \mathbf{n}} \big)_{\mathbf{n}\in\N_0^d}.
\end{equation*}
It holds
\begin{alignat*}{3}
		&\left.\frac{d}{dt}\right|_{t=0} \z_{(\mfl,k)}[\mathbf{a},\mathbf{p} + t(\cdot - x)^{\mathbf{n}'},x] &&= \left.\frac{d}{dt}\right|_{t=0} \tfrac{1}{k!}\partial^k a^\mfl ((\mathbf{p} + t(\cdot - x)^{\mathbf{n}'})) \\
		& &&= \tfrac{1}{k!} \partial^{k + e_{\mathbf{n}'}} a^\mfl (\mathbf{p}(x)) \\
		& &&= (k(\mathbf{n}')+1)\z_{(\mfl,k+e_{\mathbf{n}'})}[\mathbf{a},\mathbf{p},x],\\
	&\left.\frac{d}{dt}\right|_{t=0} \z_{\mathbf{n}}[\mathbf{a} ,\mathbf{p} + t(\cdot - x)^{\mathbf{n}'},x] &&=\delta_\mathbf{n}^\mathbf{n'},
\end{alignat*}
and postulating that it is a derivation the infinitesimal generator is given by
\begin{equation}\label{bb16}
	D^{(\mathbf{n})}:= \sum_{(\mfl,k)\in (\mfL^-\cup\{0\})\times M(\N_0^d)}\hspace*{-30pt} (k(\mathbf{n})+1) \z_{(\mfl,k+e_\mathbf{n})} \partial_{\z_{(\mfl,k)}} + \partial_{\z_\mathbf{n}}.
\end{equation}
\referee{
\begin{exam}
	In the case of the generalized KPZ equation, we have for instance
	\begin{equation*}
		D^{(\mathbf{0})} \z_{(\xi, e_{\mathbf{0}})} =  2 \z_{(\xi, 2e_{\mathbf{0}} )}, \quad 
		D^{(0,1)} \z_{(0,3 e_{\mathbf{0}})} =   \z_{(0, 3 e_{\mathbf{0}} + e_{(0,1)})}. 
	\end{equation*}
\end{exam}
}
\begin{lemma}\label{lem:Dn}
	For every $\mathbf{n}\in\N_0^d$, the map $D^{(\mathbf{n})}\in \textnormal{End}(\R[[\coord]])$ is well-defined.
\end{lemma}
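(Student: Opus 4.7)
The goal is to show that $D^{(\mathbf{n})}$ defines an endomorphism of $\R[[\coord]]$, and the only issue is that the defining sum over $(\mfl,k)\in(\mfL^-\cup\{0\})\times M(\N_0^d)$ is infinite, so well-definedness is not automatic for formal power series. My plan is to check that when $D^{(\mathbf{n})}$ is applied formally to an arbitrary $f\in \R[[\coord]]$, each coefficient of the output formal power series is a \emph{finite} sum of coefficients of $f$, which is the correct notion of well-definedness in the algebra of formal power series (no topology or convergence is required).

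First, I would expand $f = \sum_{\beta\in M(\coord)} c_\beta \z^\beta$ and apply $D^{(\mathbf{n})}$ formally, using the Leibniz rule on each monomial:
\begin{equation*}
D^{(\mathbf{n})}\z^\beta \;=\; \sum_{(\mfl,k)} (k(\mathbf{n})+1)\,\beta(\mfl,k)\,\z^{\beta - e_{(\mfl,k)} + e_{(\mfl,k+e_\mathbf{n})}} \;+\; \beta(\mathbf{n})\,\z^{\beta - e_\mathbf{n}}.
\end{equation*}
Since $\beta$ is a multi-index, $\beta(\mfl,k)\neq 0$ for only finitely many $(\mfl,k)$, so the right-hand side is a finite sum of monomials. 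Hence $D^{(\mathbf{n})}\z^\beta \in \R[\coord]\subset \R[[\coord]]$ for every $\beta$, and the map $D^{(\mathbf{n})}$ is well-defined on monomials.

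The key step is extracting, for each target multi-index $\gamma\in M(\coord)$, the coefficient of $\z^\gamma$ in $D^{(\mathbf{n})} f$. Matching monomials, a $\beta$ contributes via the second term iff $\beta = \gamma + e_\mathbf{n}$ (a single contribution), and via the first term iff there exists $(\mfl,k)$ with $\gamma(\mfl,k+e_\mathbf{n})\geq 1$ and $\beta = \gamma + e_{(\mfl,k)} - e_{(\mfl,k+e_\mathbf{n})}$. The crucial observation is that the second condition forces $(\mfl,k+e_\mathbf{n})$ to lie in the (finite) support of $\gamma$, so only finitely many $(\mfl,k)$ contribute. Consequently the $\gamma$-coefficient of $D^{(\mathbf{n})}f$ is a finite $\R$-linear combination of the $c_\beta$, which makes $D^{(\mathbf{n})}f$ a well-defined element of $\R[[\coord]]$ and $D^{(\mathbf{n})}$ a linear endomorphism of $\R[[\coord]]$. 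I do not expect any real obstacle here; the only subtle point is the finiteness-of-support argument at the target level, which is what distinguishes an algebraically meaningful infinite sum of derivations from a meaningless one.
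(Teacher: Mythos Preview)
Your proposal is correct and follows essentially the same approach as the paper: both arguments compute the matrix coefficients of $D^{(\mathbf{n})}$ and observe that the constraint $\gamma = \beta - e_{(\mfl,k)} + e_{(\mfl,k+e_\mathbf{n})}$ forces $(\mfl,k+e_\mathbf{n})$ to lie in the finite support of the output multi-index, yielding the required finiteness. The only difference is notational: the paper uses $\gamma$ for the input and $\beta$ for the output (writing $A\z^\gamma = \sum_\beta (A)_\beta^\gamma \z^\beta$), while you swap these roles, but the content is identical.
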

\begin{proof}
	We look at the matrix representation\footnote{\referee{By \textit{matrix representation} of a map $A\in \textnormal{End}(\R[\coord])$ we mean the coefficients $(A)_\beta^\gamma$, where
\begin{equation*}
	A \z^\gamma = \sum_\beta (A)_\beta^\gamma \z^\beta.
\end{equation*}	
}} of \eqref{bb16}: For fixed $\beta,\gamma$ $\in$ $M(\coord)$,
	\begin{equation}\label{bb16bis}
		(D^{(\mathbf{n})})_\beta^\gamma = \sum_{(\mfl,k)} (k(\mathbf{n})+1)\gamma(\mfl,k)\delta_\beta^{\gamma - e_{(\mfl,k)} + e_{(\mfl,k+e_\mathbf{n})}} + \gamma(\mathbf{n})\delta_\beta^{\gamma - e_\mathbf{n}}.
	\end{equation}
	The first summand imposes the condition $\beta + e_{(\mfl,k)} = \gamma + e_{(\mfl,k+e_{\mathbf{n}})}$, and thus for a fixed $\beta$ there are only finitely many possible $\gamma$ and $(\mfl,k+e_{\mathbf{n}})$; the latter means that the sum over $(\mfl,k)$ is effectively finite, and thus meaningful. In the second summand, we have $\beta + e_\mathbf{n} = \gamma$, which only allows for one choice of $\gamma$. Altogether, this implies the finiteness property, for every $\mathbf{n}$ $\in$ $\N_0^d$,
	\begin{equation}\label{ref:fin01}
		\mbox{for all }\beta\in M(\coord),\,\# \{\gamma\in M(\coord)\,|\,(D^{(\mathbf{n})})_\beta^\gamma\neq 0\} <\infty.
	\end{equation}
	\referee{Now for a power series $\pi\in \R[[\coord]]$, the coefficients of which are denoted by $\{\pi_\beta\}_{\beta\in M(\coord)}\subset \R$, we may write
	\begin{equation*}
		(D^{(\mathbf{n})}\pi)_\beta = \sum_\gamma (D^{(\mathbf{n})})_\beta^\gamma \pi_\gamma,
	\end{equation*}
	which is a finite sum for fixed $\beta$ due to \eqref{ref:fin01}.} This in particular implies that \eqref{bb16} is well-defined as a map in $\R[[\coord]]$.
\end{proof}	

\medskip

We now address the space-time shift: We fix a direction $e_i$, $i=1,...,d$, and consider the transformation
\begin{equation*}
	x \mapsto x + t e_i.
\end{equation*}
We have
\begin{align*}
	&\left.\frac{d}{dt}\right|_{t=0} \hspace*{-5pt}\z_{(\mfl,k)}[\mathbf{a},\mathbf{p},x+te_i] = \sum_\mathbf{n} (k(\mathbf{n})+1)\z_{(\mfl,k+e_\mathbf{n})}[\mathbf{a},\mathbf{p},x] (\mathbf{n}(i) + 1)\z_{\mathbf{n}+e_i}[\mathbf{a},\mathbf{p},x],\\
	&\left.\frac{d}{dt}\right|_{t=0} \hspace*{-5pt}\z_{\mathbf{n}}[\mathbf{a},\mathbf{p},x+te_i] =(\mathbf{n}(i) + 1)\z_{\mathbf{n}+e_i}[\mathbf{a},\mathbf{p},x],
\end{align*}
which leads to considering
\begin{equation}\label{bb17}
	\mbpartial_i := \sum_{\mathbf{n}\in \N_0^d} (\mathbf{n}(i)+1)\z_{\mathbf{n}+e_i} D^{(\mathbf{n})}.
\end{equation}
\referee{
\begin{exam}
	Once more for generalized KPZ, we have
	\begin{equation*}
		\mbpartial_2 \z_{(0,e_\mathbf{0})} = 2 \z_{(0,2e_{\mathbf{0}})} \z_{(0,1)} + 2 \z_{(0,e_{\mathbf{0}} + e_{(0,1)})} \z_{(0,2)}+...
	\end{equation*}
	The r.~h.~s. is formal because $\mbpartial_2 \z_{(0,e_\mathbf{0})}$ is a \textit{bona fide} power series. In particular, this shows that, in general, $\mbpartial_i$ maps $\R[\coord]$ to $\R[[\coord]]$.
\end{exam}	
}
\begin{lemma}
	For every $i=1,...,d$, the map $\mbpartial_i\in\textnormal{End}(\R[[\coord]])$ is well-defined.
\end{lemma}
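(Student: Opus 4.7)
The strategy mirrors the proof of Lemma \ref{lem:Dn}: the plan is to compute the matrix representation of $\mbpartial_i$ in the basis $\{\z^\gamma\}_{\gamma\in M(\coord)}$ and verify the finiteness property that each row has only finitely many nonzero entries. First, I would note that the multiplication by $\z_{\mathbf{n}+e_i}$ has matrix coefficient $(\z_{\mathbf{n}+e_i})_\beta^\alpha = \delta_\beta^{\alpha + e_{\mathbf{n}+e_i}}$, so composing with $D^{(\mathbf{n})}$ and using \eqref{bb16bis} yields
\begin{equation*}
(\mbpartial_i)_\beta^\gamma \;=\; \sum_{\mathbf{n}\in\N_0^d} (\mathbf{n}(i)+1)\,(D^{(\mathbf{n})})_{\beta - e_{\mathbf{n}+e_i}}^\gamma,
\end{equation*}
with the convention that the summand vanishes whenever $\beta(\mathbf{n}+e_i)=0$.

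The first observation is that, for fixed $\beta\in M(\coord)$, the condition $\beta(\mathbf{n}+e_i)\geq 1$ can only be satisfied by finitely many $\mathbf{n}\in\N_0^d$, because $\beta$ has finite support. Hence the outer sum over $\mathbf{n}$ in the display above is effectively finite and therefore well-defined. The second step is to establish the analogue of \eqref{ref:fin01}, namely: for every $\beta \in M(\coord)$,
\begin{equation*}
\#\{\gamma \in M(\coord)\,|\,(\mbpartial_i)_\beta^\gamma\neq 0\} <\infty.
\end{equation*}
This follows because finitely many $\mathbf{n}$ contribute to the sum, and for each such $\mathbf{n}$ the finiteness property of $D^{(\mathbf{n})}$ already established in Lemma \ref{lem:Dn} guarantees that only finitely many $\gamma$ give a nonzero $(D^{(\mathbf{n})})_{\beta - e_{\mathbf{n}+e_i}}^\gamma$.

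Finally, given any formal power series $\pi = \sum_\gamma \pi_\gamma \z^\gamma \in \R[[\coord]]$, the $\beta$-coefficient of $\mbpartial_i \pi$ can be written as a finite linear combination
\begin{equation*}
(\mbpartial_i \pi)_\beta \;=\; \sum_\gamma (\mbpartial_i)_\beta^\gamma \,\pi_\gamma,
\end{equation*}
where the sum on $\gamma$ contains only finitely many nonzero terms by the previous step. This shows that $\mbpartial_i \pi$ is a well-defined formal power series, so that $\mbpartial_i \in \textnormal{End}(\R[[\coord]])$. The only mildly delicate point in the argument is the double finiteness (over $\mathbf{n}$ and over $\gamma$), but both reductions are immediate once the matrix representation is made explicit, so no substantial obstacle is expected.
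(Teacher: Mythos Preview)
Your proposal is correct and follows essentially the same route as the paper: both arguments compute the matrix coefficients via $(\z_{\mathbf{n}+e_i}D^{(\mathbf{n})})_\beta^\gamma = (D^{(\mathbf{n})})_{\beta-e_{\mathbf{n}+e_i}}^\gamma$, observe that the constraint $e_{\mathbf{n}+e_i}\le\beta$ forces finitely many $\mathbf{n}$ because $\beta$ has finite support, and then conclude finiteness in $\gamma$. The only cosmetic difference is that you invoke the already-established finiteness \eqref{ref:fin01} of $D^{(\mathbf{n})}$ directly, whereas the paper unpacks both contributions of \eqref{bb16bis} explicitly via the general formula \eqref{bb16bis2}; this has the side benefit of also yielding the slightly more general finiteness \eqref{fin50} recorded in Remark~\ref{rem:fin01}, but for the lemma itself your more modular argument is equally valid.
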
	
\begin{proof}
	Again, we look at the matrix representation of \eqref{bb17}. To this end, we first fix $\mathbf{n}'\in \N_0^{d}$ and note that for every $\beta,\gamma,\gamma'$ $\in$ $M(\coord)$
	\referee{\begin{equation}\label{ref:new1}
		(\z^{\gamma'}D^{(\mathbf{n}')})_\beta^\gamma = \left\{\begin{array}{cl}
			(D^{(\mathbf{n}')})_{\beta - \gamma'}^{\gamma} & \mbox{if }\gamma'\leq \beta,\\
			0 & \mbox{otherwise,}
		\end{array}\right.
	\end{equation}
	where $\leq$ denotes the component-wise partial ordering
	\begin{equation}\label{comp02}
		\gamma\leq \beta \iff \gamma(\coordind) \leq \beta(\coordind)\,\mbox{for all }\coordind\in\coord.
	\end{equation}
	In particular,}
	\begin{equation*}
		(\z^{\gamma'}D^{(\mathbf{n}')})_\beta^\gamma \neq 0 \implies (D^{(\mathbf{n}')})_{\beta-\gamma'}^\gamma \neq 0,
	\end{equation*}
	and thus \eqref{bb16bis} implies
	\begin{align}\label{bb16bis2}
		(\z^{\gamma'}D^{(\mathbf{n}')})_\beta^\gamma = \sum_{(\mfl,k)} \gamma(\mfl,k)(k(\mathbf{n}')+1)\delta_\beta^{\gamma-e_{(\mfl,k)}+ e_{(\mfl,k+ e_{\mathbf{n}'})}+\gamma'}\hspace*{-4pt} + \gamma(\mathbf{n}')\delta_\beta^{\gamma-e_{\mathbf{n}'}+\gamma'},
	\end{align} 
	which in turn means
	\begin{align}
		&(\z^{\gamma'}D^{(\mathbf{n}')})_\beta^\gamma \neq 0 \nonumber\\
		&\quad \implies \left\{\begin{array}{l}
			\beta + e_{(\mfl,k)} = \gamma + e_{(\mfl,k+e_{\mathbf{n}'})} + \gamma'\,\,\mbox{for some }(\mfl,k),\,\mbox{or}\\
			\beta + e_{\mathbf{n}'}  = \gamma + \gamma'.
		\end{array}\right.\label{pol05}
	\end{align}
	We fix $\gamma' = e_{\mathbf{n}'+e_i}$. The first item in \eqref{pol05}, for a fixed $\beta$, yields only finitely many $\mathbf{n}'$, $(\mfl,k)$ and $\gamma$. The second item again fixes finitely many $\mathbf{n}'$ and $\gamma$. As a consequence, the sum over $\mathbf{n}$ in \eqref{bb17} is effectively finite, and we furthermore have the finiteness property
	\begin{equation*}
		\mbox{for every }\beta\in M(\coord),\,\#\{\gamma\in M(\coord)\,|\,(\mbpartial_i)_\beta^\gamma\neq 0\}<\infty.
	\end{equation*}
	The proof concludes as in Lemma \ref{lem:Dn}.
\end{proof}	
\begin{remark}\label{rem:fin01}
	The same argument shows as well the following finiteness property: For all $(\gamma',\mathbf{n}')$ $\in$ $M(\coord)\times \N_0^d$,
	\begin{equation}\label{fin50}
		\mbox{for all }\beta\in M(\coord),\,\#\{\gamma\in M(\coord)\,|\,(\z^{\gamma'}D^{(\mathbf{n}')})_\beta^\gamma\referee{\neq 0}\}<\infty.
	\end{equation}
	Note in addition that
	\begin{equation}\label{comp01}
		\big(\z^{\gamma'}D^{(\mathbf{n}')}\big)_\beta^\gamma \neq 0\,\implies \left\{\begin{array}{ll}
			\gamma' = \beta & \mbox{if }\gamma = e_{\mathbf{n}'}\\
			\gamma'<\beta & \mbox{otherwise. }
		\end{array}\right.
	\end{equation}	
\end{remark}	

\medskip

Once we have fixed an interpretation of the $\z$-variables, which carry the $(\mathbf{a},\mathbf{p},x)$-dependence, we build the analogue of $\tfrac{1}{\beta!}\big(\partial^\beta\big)|_{z=0} u_x$ in \eqref{warm15}. We will construct the \textit{centered model} as the inductive solution to the hierarchy of linear SPDEs described in \eqref{warm08}. Motivated by \eqref{warm16}, we set
\begin{equation}\label{bb01}
	\Pi_{x e_\mathbf{n}} = (\cdot - x)^\mathbf{n}.
\end{equation}
For $\beta\neq e_\mathbf{n}$, we consider the equation
\begin{equation}\label{bb02}
	\left\{\begin{array}{l}
		\mcL\Pi_{x\beta} = \Pi_{x\beta}^-,\\
		\Pi_{x\beta}^- = \sum_{(\mfl,k)} \sum_{e_{(\mfl,k)} + \sum_\mathbf{n} \sum_{j=1}^{k(\mathbf{n})} \beta_\mathbf{n}^j = \beta} \prod_\mathbf{n} \prod_{j=1}^{k(\mathbf{n})} \tfrac{1}{\mathbf{n}!}\partial^\mathbf{n} \Pi_{x \beta_\mathbf{n}^j} \xi_\mfl.
	\end{array}\right.
\end{equation}
\referee{
	\begin{exam}\label{example_05}
		Going back to the generalized KPZ equation, we have for the multi-index $\beta = e_{(\xi,0)}$
		\begin{equation*}
			(\partial_t - \partial_x) \Pi_{x\,e_{(\xi,0)}} = \xi.
		\end{equation*} 
		For $\beta = e_{(\xi,0)} + e_{(\xi,e_\mathbf{0})}$,
		\begin{equation*}
			(\partial_t - \partial_x) \Pi_{x\,e_{(\xi,0)} +  e_{(\xi,e_\mathbf{0})}} = \Pi_{x\, e_{(\xi,0)}}\xi.
		\end{equation*}
		For $\beta = 2 e_{(\xi,0)} + e_{(0,2 e_{(0,1)})}$,
		\begin{equation*}
			(\partial_t - \partial_x) \Pi_{x\,2 e_{(\xi,0)} + e_{(0,2 e_{(0,1)})}} = \big(\partial_x\Pi_{x\, e_{(\xi,0)}}\big)^2\xi.
		\end{equation*}
	\end{exam}
}
Assuming that \referee{equation \eqref{bb02}} can be solved uniquely, we can see that not all multi-indices are relevant. In particular, the following holds.
\begin{lemma}
	If
	\begin{equation*}
		\mcL v = 0\;\implies v\equiv0,
	\end{equation*}
	then the unique solution $\{\Pi_{x\beta}\}_{\beta\in M(\coord)}$ of \eqref{bb01}, \eqref{bb02} is such that
	\begin{equation}\label{bb06}
		\Pi_{x\beta} \not\equiv 0 \implies
			\left[\beta\right] := \sum_{(\mfl,k)} (1 - \length{k})\beta(\mfl,k) + \sum_\mathbf{n} \beta(\mathbf{n}) = 1.
	\end{equation}
\end{lemma}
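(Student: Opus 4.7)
The plan is a strong induction on the length $\length{\beta}$, using two simple facts: additivity $[\beta_1+\beta_2] = [\beta_1] + [\beta_2]$, which is immediate from \eqref{bb06}; and the baseline values $[e_\mathbf{n}] = 1$ and $[e_{(\mfl,k)}] = 1 - \length{k}$.

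For the base case $\length{\beta} = 0$ the only candidate is $\beta = 0$. Since $0 \neq e_\mathbf{n}$, equation \eqref{bb02} applies, and its r.~h.~s.\ vanishes because the constraint $e_{(\mfl,k)} + \sum_\mathbf{n}\sum_j \beta_\mathbf{n}^j = 0$ is incompatible with $e_{(\mfl,k)}$ having length one. Hence $\mcL\Pi_{x 0} = 0$, and the hypothesis forces $\Pi_{x 0}\equiv 0$; the implication holds vacuously.

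For the inductive step I would first dispose of $\beta = e_\mathbf{n}$ by invoking \eqref{bb01}, which gives $\Pi_{xe_\mathbf{n}} = (\cdot - x)^\mathbf{n}\not\equiv 0$ while $[e_\mathbf{n}] = 1$. For $\beta \neq e_\mathbf{n}$ with $\length{\beta}\geq 1$, assuming the statement for all shorter multi-indices, let $\Pi_{x\beta}\not\equiv 0$. The uniqueness hypothesis forces $\Pi_{x\beta}^-\not\equiv 0$: otherwise $\mcL\Pi_{x\beta} = 0$ would give $\Pi_{x\beta}\equiv 0$. Since $\Pi_{x\beta}^-$ is a finite sum, at least one summand must be nonvanishing, and this selects a pair $(\mfl,k)$ together with a family $\{\beta_\mathbf{n}^j\}$ with
\begin{equation*}
    e_{(\mfl,k)} + \sum_\mathbf{n}\sum_{j=1}^{k(\mathbf{n})} \beta_\mathbf{n}^j = \beta
\end{equation*}
such that every factor in the corresponding product is nonvanishing, i.~e.\ $\Pi_{x\beta_\mathbf{n}^j}\not\equiv 0$ for every $(\mathbf{n},j)$. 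Each $\beta_\mathbf{n}^j$ has length strictly less than $\length{\beta}$, so the inductive hypothesis yields $[\beta_\mathbf{n}^j] = 1$. Additivity and the baseline then close the argument:
\begin{equation*}
    [\beta] = (1 - \length{k}) + \sum_\mathbf{n}\sum_{j=1}^{k(\mathbf{n})} 1 = (1 - \length{k}) + \length{k} = 1.
\end{equation*}

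The only step that is not pure bookkeeping is the implication $\Pi_{x\beta}\not\equiv 0 \Rightarrow \Pi_{x\beta}^-\not\equiv 0$, which is precisely where the hypothesis $\mcL v=0\Rightarrow v\equiv 0$ is used; everything else is the elementary degree count above.
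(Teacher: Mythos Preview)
Your proof is correct and follows essentially the same route as the paper: induction on $\length{\beta}$, additivity of $[\cdot]$, and the uniqueness hypothesis to link $\Pi_{x\beta}$ with $\Pi_{x\beta}^-$. The only cosmetic difference is that the paper argues the contrapositive ($[\beta]\neq 1 \Rightarrow$ every summand of $\Pi_{x\beta}^-$ vanishes), which spares the small step of passing from a nonvanishing product $\prod \partial^\mathbf{n}\Pi_{x\beta_\mathbf{n}^j}\,\xi_\mfl$ to nonvanishing $\Pi_{x\beta_\mathbf{n}^j}$.
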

\begin{proof}
	We will show \eqref{bb06} in its negated form
	\begin{equation*}
		[\beta]\neq 1 \implies \Pi_{x\beta} \equiv 0
	\end{equation*}
	by induction in $\length{\beta}$. For $\length{\beta}=0$ we have $\beta=0$, and thus the r.~h.~s. of the second item in \eqref{bb02} is vanishing (note the presence of $e_{(\mfl,k)}$ under the sum); uniqueness then implies $\Pi_{x 0} \equiv 0$. For the induction step, we note by additivity of \eqref{bb06} that if
	$ e_{(\mfl,k)}$ $+$ $\sum_\mathbf{n} \sum_{j=1}^{k(\mathbf{n})} \beta_\mathbf{n}^j$ $=$ $\beta$ it holds
	\begin{equation*}
		[\beta] = 1-\length{k} + \sum_\mathbf{n} \sum_{j=1}^{k(\mathbf{n})} [\beta_\mathbf{n}^j].
	\end{equation*}
	Therefore, 
	\begin{equation*}
		[\beta]\neq 1 \iff \sum_\mathbf{n} \sum_{j=1}^{k(\mathbf{n})} [\beta_\mathbf{n}^j] \neq \length{k},
	\end{equation*}
	which in turn implies that $[\beta_\mathbf{n}^j]\neq 1$ for at least a pair $(j,\mathbf{n})$. By the induction hypothesis, the r.~h.~s. of the second item in \eqref{bb02} is vanishing, and the uniqueness assumption implies $\Pi_{x \beta} \equiv 0$.
\end{proof}	
\referee{ \begin{exam}\label{example_06}
		In the case of the generalized KPZ equation, it holds
		\begin{align*}
			[\beta] = &\sum_{k_\mathbf{0}\in \N_0} (1-k_\mathbf{0}) \beta(\xi,k_\mathbf{0} e_\mathbf{0}) + \sum_{k_\mathbf{0}\in \N_0} (1-k_\mathbf{0}) \beta(0,k_\mathbf{0} e_\mathbf{0})\\
			&- \sum_{k_\mathbf{0}\in \N_0} k_\mathbf{0} \beta(0,k_\mathbf{0} e_\mathbf{0} +e_{(0,1)}) \\
			&- \sum_{k_\mathbf{0}\in \N_0} (1+k_\mathbf{0}) \beta(0,k_\mathbf{0} e_\mathbf{0} + 2 e_{(0,1)})\\
			&+ \sum_{\mathbf{n}\in\N_0^2} \beta(\mathbf{n}).
		\end{align*}
		The reader is invited to check that condition \eqref{bb06} is satisfied for the multi-indices of Example \ref{example_05}.
	\end{exam}}
Note that the additivity of $[\cdot]$ means that the linear subspace of $\R[[\coord]]$ generated by multi-indices $\beta$ satisfying $[\beta]=1$ is not an algebra, and thus it does not make sense to speak about derivation properties (i.~e. the Leibniz rule) in this subspace. Moreover, $D^{(\mathbf{n})}$ does not preserve multi-indices satisfying $[\beta]=1$, but $\mbpartial_i$ does, as shown in the next lemma.
\begin{lemma}
	It holds:
	\begin{align}
		(D^{(\mathbf{n})})_\beta^\gamma \neq 0 &\implies \left[\beta\right] = \left[\gamma\right] -1 \label{bb20},\\
		(\mbpartial_i)_\beta^\gamma \neq 0 &\implies \left[\beta\right] = \left[\gamma\right]\label{bb21}. 
	\end{align}
\end{lemma}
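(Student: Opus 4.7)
The plan is to prove both implications by reading off the matrix coefficients computed earlier in \eqref{bb16bis} and \eqref{bb16bis2} and then checking what each nontrivial contribution does to the functional $[\cdot]$. The key algebraic observation is that $[\cdot]$, viewed as a linear map $M(\coord)\to \Z$, is additive in its argument (extended by linearity to finite linear combinations of $e_\coordind$), so we only need to track the net change in $\beta$ versus $\gamma$.

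For \eqref{bb20}, I would start from \eqref{bb16bis}: if $(D^{(\mathbf{n})})_\beta^\gamma\neq 0$, then either $\beta = \gamma - e_{(\mfl,k)} + e_{(\mfl,k+e_\mathbf{n})}$ for some $(\mfl,k)$ with $\gamma(\mfl,k)\neq 0$, or $\beta = \gamma - e_\mathbf{n}$ with $\gamma(\mathbf{n})\neq 0$. In the first case, additivity gives
\[
[\beta]-[\gamma] = -[e_{(\mfl,k)}]+[e_{(\mfl,k+e_\mathbf{n})}] = -(1-\length{k}) + (1-\length{k+e_\mathbf{n}}) = -1,
\]
using $\length{k+e_\mathbf{n}} = \length{k}+1$. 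In the second case, $[\beta]-[\gamma] = -[e_\mathbf{n}] = -1$. Either way, $[\beta] = [\gamma]-1$.

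For \eqref{bb21}, I would use the formula \eqref{bb16bis2} for $\z^{\gamma'}D^{(\mathbf{n}')}$ specialized to $\gamma' = e_{\mathbf{n}+e_i}$, which is exactly the building block of $\mbpartial_i$ in \eqref{bb17}. A nonvanishing coefficient forces either $\beta = \gamma - e_{(\mfl,k)} + e_{(\mfl,k+e_\mathbf{n})} + e_{\mathbf{n}+e_i}$ or $\beta = \gamma - e_\mathbf{n} + e_{\mathbf{n}+e_i}$. Applying $[\cdot]$ and using the computation of the previous paragraph together with $[e_{\mathbf{n}+e_i}] = 1$ yields $[\beta]-[\gamma] = -1 + 1 = 0$ in both cases. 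Since $(\mbpartial_i)_\beta^\gamma$ is a finite sum (by \eqref{fin50}) of scalar multiples of such $(\z^{e_{\mathbf{n}+e_i}}D^{(\mathbf{n})})_\beta^\gamma$, each of which preserves $[\cdot]$, any nonzero total coefficient must come from contributions with $[\beta]=[\gamma]$, proving \eqref{bb21}.

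There is no real obstacle here: the argument is a direct bookkeeping check, and the only mildly subtle point is to correctly combine $-(1-\length{k})$ with $(1-\length{k+e_\mathbf{n}})$ when handling the $D^{(\mathbf{n})}$ piece. Everything else is additivity of $[\cdot]$ and reading off \eqref{bb16bis}, \eqref{bb16bis2}.
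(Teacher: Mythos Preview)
Your proof is correct and follows essentially the same approach as the paper: both arguments amount to reading off the possible forms of $\beta$ in terms of $\gamma$ from the matrix coefficients \eqref{bb16bis}, \eqref{bb16bis2} and tracking the additive change in $[\cdot]$. The paper packages the first step slightly differently via the intermediate observation \eqref{bb19} on $\partial_{\z_{(\mfl,k)}}$, but the content is identical.
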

\begin{proof}
	We note that
	\begin{equation}\label{bb19}
		(\partial_{\z_{(\mfl,k)}})_\beta^\gamma \neq 0 \implies \left[\beta\right] = \left[\gamma\right] + \length{k}-1. 
	\end{equation}
	Then \eqref{bb20} follows from \eqref{bb19} and definition \eqref{bb16}. In turn, \eqref{bb21} follows from \eqref{bb20} and formula \eqref{bb17}.
\end{proof}	
As an immediate consequence of \eqref{bb20}, we have that
\begin{equation}\label{bb22}
	[\gamma'] = 1,\,\,(\z^{\gamma'}D^{(\mathbf{n}')})_\beta^\gamma \neq 0 \implies \left[\beta\right] = \left[\gamma\right].
\end{equation}
Therefore, while $D^{(\mathbf{n})}$ does not preserve $[\cdot] = 1$, the bilinear map $(\z^{\gamma'},\z^\gamma)\mapsto \sum_\beta (\z^{\gamma'}D^{(\mathbf{n})})_\beta^\gamma \z^\beta$ does. This crucial observation is algebraically translated into the fact that, even if $D^{(\mathbf{n})}$ is no longer a derivation, it still acts as a pre-Lie product.
\referee{\begin{definition}
	A \textit{(left) pre-Lie product} $\triangleright$  is a bilinear operation that satisfies
	\begin{equation}\label{ref:assoc}
		(x \triangleright y) \triangleright z - x \triangleright (y \triangleright z) = (y \triangleright x) \triangleright z - y \triangleright (x \triangleright z).
	\end{equation}
	A vector space equipped with a pre-Lie product is called \textit{pre-Lie algebra}.
\end{definition}
We refer the reader to \cite{Man11} for a short survey on pre-Lie algebras.}
\begin{remark}
	Actually, the pre-Lie products $D^{(\mathbf{n})}$ generate what has been called in the literature a \textit{multi-pre-Lie algebra}, cf. \cite[Proposition 4.21]{BCCH}. A multi-pre-Lie algebra is an extension of a pre-Lie algebra, for which the associator identity \eqref{ref:assoc} is imposed pairwise in different pre-Lie products: For example, in our case, for $\mathbf{n},\mathbf{n}'\in\N_0^d$,
	\begin{equation*}
	(\pi D^{(\mathbf{n})} \pi') D^{(\mathbf{n'})} \pi'' - \pi D^{(\mathbf{n})} (\pi' D^{(\mathbf{n}')} \pi'') = (\pi' D^{(\mathbf{n}')} \pi) D^{(\mathbf{n})} \pi'' - \pi' D^{(\mathbf{n}')} (\pi D^{(\mathbf{n})} \pi'').	
	\end{equation*}	
	This multi-pre-Lie algebra can be summarized in a single pre-Lie algebra on the space of elements $\pi D^{(\mathbf{n})}$, which will be the starting point of the algebraic construction in Section \ref{section::3} below.
\end{remark}	

\medskip

Under a stronger uniqueness assumption of the form\footnote{The reader might find \eqref{pol01} too strong to be reasonable, but it can be established in some concrete situations: In \cite{reg}, by means of a decomposition of the solution kernel in a singular part that annihilates polynomials (which is used in integration) and a regular part that produces smooth remainders (which we neglect, as they are again locally parameterized by polynomials); in \cite{LOTT}, by interpreting $v$ only as a solution modulo polynomials, and imposing additional conditions on the degree of the polynomial to argue for uniqueness via a Liouville principle. See Subsection \ref{subsec::4.1} below, where we construct the model following the analytic strategy of \cite{reg}, and \eqref{pol01} is interpreted as neglecting all polynomial contributions from the r.~h.~s. of the model equation.} 
\begin{equation}\label{pol01}
	\mcL v = \mbox{polynomial} \implies v\equiv0,
\end{equation}
and recalling $\xi_0 = 1$, condition \eqref{bb06} is complemented by 
\begin{equation}\label{bb07}
	\left\{\begin{array}{l}
			\beta = e_\mathbf{n}\; \mbox{for some }\mathbf{n}\in\N^{d}, \mbox{ or}\\
			\lnh \beta\rnh :=\sum_{\mfl\in\mfL^-}\sum_k \beta(\mfl,k)>0.
		\end{array}\right.
\end{equation}
We call $\lnh\beta\rnh$ \textit{noise homogeneity}; note that we are not considering $\mfl = 0$ in the sum. Multi-indices satisfying \eqref{bb06} and \eqref{bb07} are the ones that potentially give non-vanishing contributions to $\Pi_x$ under \eqref{pol01}, but the same is not true for $\Pi_x^-$, since the latter incorporates contributions with $\lnh\beta\rnh = 0$ which are polynomials. This will be reflected in the different index sets considered in Definition \ref{def:pop} below.

\medskip

We now focus on the homogeneity. In regularity structures, one of the main properties of the model is that it is \textit{locally homogeneous} of a certain order. We now guess the homogeneity of a model component $\Pi_{x\beta}$ in terms of the multi-index $\beta$ by a scaling argument. For this, we first fix a scaling $\mathfrak{s}= \{\mathfrak{s}_i\}_{i=1,..,d}$ $\subset$ $[1,\infty)$ of the Euclidean space which is compatible with the operator, and use it to measure all regularity properties in this \textit{inhomogeneous} space. More precisely, we consider
\begin{itemize}
	\item the scaled degree, i.~e. for $\mathbf{n}\in\N_0^d$ 
	\begin{equation}\label{scal01}
	 |\mathbf{n}|_{\mathfrak{s}} := \sum_{i=1}^d \mathfrak{s}_i \mathbf{n}(i);
\end{equation}
	\item the scaled Carnot-Carathéodory distance, i.~e.
	\begin{equation*}
		|y-x|_{\mathfrak{s}} := \sum_{i=1}^d |y_i-x_i|^\frac{1}{\mathfrak{s}_i};
	\end{equation*}
	\item the rescaling operator $\mcS_\lambda^\mathfrak{s} : \R^d\to\R^d$,
	\begin{equation*}
		\mcS_\lambda^\mathfrak{s}(x_1,...,x_d) = (\lambda^{\mathfrak{s}_1}x_1,...,\lambda^{\mathfrak{s}_d}x_d).
	\end{equation*}
\end{itemize}
Note that $|\mcS_\lambda^\mathfrak{s}(y)-\mcS_\lambda^\mathfrak{s}(x)|_\mathfrak{s} = \lambda |y-x|_\mathfrak{s}$. In the sequel, we will remove the $\mathfrak{s}$ from the notation and assume all distances are inhomogeneous\footnote{ In particular, regularity will always be measured in the inhomogeneous distance}. By compatibility with the operator we mean that $\mcL (f\circ\mcS_\lambda) = \lambda^{\eta} (\mcL f)\circ \mcS_\lambda$ for some $\eta>0$, i.~e. $\mcL$ is homogeneous. Moreover, we will assume that $\mcL$ satisfies a Schauder estimate of degree $\eta$; for our current purposes, it is enough to think of this as \textit{increasing the homogeneity by $\eta$ when solving the PDE}. Similarly, we impose for $\{\xi_\mfl\}_{\mfl\in\mfL^-}$ a scale-invariance\footnote{This is usually assumed in law.}, i.~e. for every $\mfl\in\mfL^-$ there exists $\alpha_\mfl\in\R$ such that
\begin{equation*}
	\xi_\mfl\circ \mcS_\lambda = \lambda^{\alpha_\mfl} \xi_\mfl;
\end{equation*}
we accordingly set $\alpha_0 = 0$ for $\xi_0 = 1$.  This scaling property, when assumed locally around any point\footnote{ If $\alpha_\mfl <0$, this means that testing $\xi_\mfl$ against a test function localized at any space-time point $x\in \R^d$, the convolution scales as $\alpha_\mfl$. If $\alpha_\mfl >0$, the same holds, but only for $\xi_\mfl$ up to subtracting a polynomial centered at $x\in \R^d$ and of degree $<\alpha_\mfl$. Both conditions can be combined into one if one chooses test functions that annihilate polynomials of a certain order; cf. e.~g. \cite[Definition 4.12, Proposition 4.15]{FH}.}  implies a Hölder regularity condition of $\xi$. 
We now consider the rescaled solution $\tilde{u} := \lambda^{-\eta} u\circ\mcS_\lambda$, and note that
\begin{equation}\label{sca01}
	\partial^\mathbf{n} \tilde{u} = \lambda^{-\eta + |\mathbf{n}|} (\partial^\mathbf{n}u)\circ\mcS_\lambda.
\end{equation}
This rescaled solution then solves
\begin{equation*}
	\mcL \tilde{u} =  (\mcL u)\circ \mcS_\lambda = \sum_{\mfl\in\mfL^-\cup\{0\}} a^\mfl(\mathbf{u}\circ\mcS_\lambda) (\xi_\mfl\circ\mcS_\lambda) = \sum_{\mfl\in\mfL^-\cup\{0\}}\tilde{a}^\mfl(\tilde{\mathbf{u}})\xi_\mfl,
\end{equation*}
where
\begin{equation}\label{sca02}
	\tilde{a}^\mfl (\{\partial^\mathbf{n}v\}_{\mathbf{n}\in\N_0^d}) = \lambda^{\alpha_\mfl} a^\mfl(\{\lambda^{\eta-|\mathbf{n}|} \partial^\mathbf{n} v\}).
\end{equation}
Now \eqref{sca01} and \eqref{sca02} impose a rescaling of $(\mathbf{a},\mathbf{p})$\footnote{At least locally around $x=0$, although one can generalize the argument to fix the origin at any point $x\in \R^d$.} which associates
\begin{equation*}
	|e_\mathbf{n}| = - \eta + |\mathbf{n}|,\,\,\, |e_{(\mfl,k)}| = \alpha_\mfl + \sum_{\mathbf{n}\in\N_0^d} (\eta - |\mathbf{n}|)k(\mathbf{n}).
\end{equation*}
One can then extend this multiplicatively via \eqref{bb02} to see that $\Pi_{x\beta}^-$ for $\beta\neq e_\mathbf{n}$ should be locally homogeneous of degree
\begin{equation}\label{bb04}
	|\beta| := \sum_{(\mfl,k)} \Big(\alpha_\mfl + \sum_\mathbf{n} (\eta-|\mathbf{n}|)k(\mathbf{n})\Big) \beta(\mfl,k) + \sum_\mathbf{n}  (|\mathbf{n}|-\eta)\beta(\mathbf{n}).
\end{equation}
Assuming an $\eta$-regularizing property for the inverse of $\mcL$, which is compatible with its $\eta$-scaling, $\Pi_{x\beta}$ for $\beta\neq e_\mathbf{n}$ should be homogeneous of degree $|\beta|+\eta$. This can be extended to purely polynomial multi-indices: Indeed, since $\Pi_{x e_\mathbf{n}}(y) = (y-x)^\mathbf{n}$, $\Pi_{x e_\mathbf{n}}(y)$ is homogeneous of degree $|\mathbf{n}| = |e_{\mathbf{n}}| + \eta$.
\referee{ \begin{exam}
		Let us specify the homogeneity in the case of the generalized KPZ equation. We first fix the parabolic scaling, associated to the heat operator: Recalling that the first component represents time and the second component represents space, for $\mathbf{n} \in \N_0^2$ we set
		\begin{equation*}
			|\mathbf{n}| = 2 \mathbf{n}(1) + \mathbf{n}(2).
		\end{equation*}
		Under this scaling, the heat operator regularizes by $\eta = 2$. On the other hand, the scale invariance of space-time white noise implies $\xi \circ \mcS_\lambda = \lambda^{-\frac{3}{2}} \xi$ in law. By Kolmogorov's criterion\footnote{ It is convenient to think in a pathwise manner in preparation for Section \ref{section::4}, but see also \cite{LOTT} where the estimates are annealed and do not carry an infinitesimal loss.} we set $\alpha_\xi = -\frac{3}{2}\mhyphen$, by which we mean $\alpha_\xi<-\frac{3}{2}$ and $\alpha_\xi$ is aribtrarily close to $-\frac{3}{2}$. In addition we set $\alpha_0 = 0$.
		
		\medskip
		
		With these choices, \eqref{bb04} leads to
		\begin{align*}
			|\beta| = &\sum_{k_\mathbf{0}} \big(-\tfrac{3}{2}\mhyphen + 2 k_\mathbf{0}\big) \beta(\xi,k_\mathbf{0} e_\mathbf{0}) + \sum_{k_\mathbf{0}}  2 k_\mathbf{0} \beta(0,k_\mathbf{0} e_\mathbf{0}) \\
			&+ \sum_{k_\mathbf{0}}  \big(2 k_\mathbf{0} + 1\big) \beta(0,k_\mathbf{0} e_\mathbf{0} + e_{(0,1)})\\
			& + \sum_{k_\mathbf{0}}  \big(2 k_\mathbf{0} + 2\big) \beta(0,k_\mathbf{0} e_\mathbf{0} + 2 e_{(0,1)})\\
			& + \sum_{\mathbf{n}\in \N_0^2} (|\mathbf{n}| - 2)\beta(\mathbf{n}).
		\end{align*}
		Next to condition $[\beta] = 1$ (see Example \ref{example_06}), we can express it as
		\begin{align}
			|\beta|= &\big(\tfrac{1}{2}\mhyphen\big)\sum_{k_\mathbf{0}} \beta (\xi,k_\mathbf{0} e_\mathbf{0}) + 2 \sum_{k_\mathbf{0}} \beta(0,k_\mathbf{0} e_\mathbf{0}) \nonumber\\
			&+ \sum_{k_\mathbf{0}} \beta (0,k_\mathbf{0} e_\mathbf{0} + e_{(0,1)}) + \sum_{\mathbf{n}\in \N_0^2} |\mathbf{n}| \beta(\mathbf{n}) - 2.\label{ref:hom_kpz}
		\end{align}
		For instance, the multi-index $\beta = 3 e_{(\xi,0)} + e_{(0,e_\mathbf{0} + e_{(0,1)})} + e_{(0,2e_{(0,1)})}$ has homogeneity
		\begin{align*}
			|\beta|= \big(\tfrac{1}{2}\mhyphen \big) \times 3 + 1 -2 = \tfrac{1}{2}\mhyphen.
		\end{align*}
	\end{exam}
}
\subsection{Subcritical multi-indices}\label{subsec::2.4}
The set of multi-indices described up to this point covers a large class of semi-linear equations, which goes beyond the subcritical class covered by regularity structures. In order to algebraically exploit subcriticality, we will follow the approach of \cite[Subsection 5.2]{BHZ}, adapting it to our language. We assume there exists a map $\reg : \mfL^-\cup \{0\} \to \R$ such that
\begin{equation}\label{sub01}
	\reg(\mfl) < \alpha_\mfl
\end{equation}
and a value $\regsol\in \R$. Here, $\reg(\mfl)$ and $\regsol$ are placeholders for the regularity of $\xi_\mfl$ and $u$, respectively. Recall that subcriticality, roughly speaking, means that the solution $u$ is a perturbation of the solution to the linear problem by more regular terms, so that an expansion of the form \eqref{warm15} is a reasonable Ansatz. Let us fix $\mfl$ and recall the Taylor expansion \eqref{warm03}; it is then natural to associate to a pair $(\mfl,k)$ the \textit{expected regularity} as 
\begin{equation*}
\referee{	\min \Big\{\reg ( \mfl ), \min_{0<k'\leq k}\sum_{\mathbf{n} \in \N_0^d} ( \regsol - |\mathbf{n}| ) k'(\mathbf{n}), \reg ( \mfl ) + \min_{0<k'\leq k}\sum_{\mathbf{n} \in \N_0^d} ( \regsol - |\mathbf{n}| ) k'(\mathbf{n})\Big\}.}
\end{equation*}
Expecting the integrated version of this term to be better than the solution yields the following condition.
\begin{definition}\label{def:subcrit}
	A pair $(\mfl,k)\in (\mfL^-\cup \{0\})\times M(\N_0^{d})$ is \textit{subcritical} if
	\referee{\begin{align}
		\regsol< \eta 
		+ \min \Big\{&\reg ( \mfl ), \min_{0<k'\leq k}\sum_{\mathbf{n} \in \N_0^d} ( \regsol - |\mathbf{n}| ) k'(\mathbf{n}),\nonumber\\
		& \reg ( \mfl ) + \min_{0<k'\leq k}\sum_{\mathbf{n} \in \N_0^d} ( \regsol - |\mathbf{n}| ) k'(\mathbf{n})\Big\}.\label{sub02}
	\end{align}}
A multi-index $\beta$ is subcritical if
\begin{equation*}
	\beta(\mfl,k)\neq 0 \implies (\mfl,k)\mbox{ is subcritical}.
\end{equation*}
\end{definition}	
\referee{Note that in order for an equation to be subcritical, the ``additive noise'' terms should satisfy this subcriticality condition (in our language, $(\mfl,0)$ must be subcritical for all $\mfl\in \mfL^-\cup\{0\}$), which in turn implies
	\begin{equation}\label{ref:sub_10}
		\regsol < \eta + \reg(\mfl)\,\,\,\mbox{for all }\mfl\in\mfL^-\cup\{0\}.
	\end{equation}}
Subcriticality imposes some more restrictions on $k $.
\begin{lemma}\label{lemsub03}
	\mbox{}
	\begin{itemize}
		\item Maximal $\mathbf{n}$: Let $(\mfl,k)$ be a subcritical pair. Then
		\begin{equation}\label{sub12}
			k(\mathbf{n})\neq 0 \implies |\mathbf{n}| < \eta.
		\end{equation}
		\item Maximal $k(\mathbf{n})$: 
		\begin{itemize}
			\item If $|\mathbf{n}| < \regsol$, then for every $N\in\N_0$ there exists a subcritical $(\mfl,k)$ such that $k(\mathbf{n})>N$.
			\item If $|\mathbf{n}|>\regsol$ and $(\mfl,k)$ is a subcritical pair, then
			\begin{equation*}
				\referee{k(\mathbf{n}) < \frac{\eta+\min \{\reg(\mfl),0\} - \regsol}{|\mathbf{n}|- \regsol}.}
			\end{equation*} 
		\end{itemize}
	\end{itemize}
\end{lemma}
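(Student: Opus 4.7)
The three claims all follow from specializing the subcriticality inequality \eqref{sub02} --- which itself involves a minimum over three terms, one of which is a minimum over admissible $k'$ --- to well-chosen $k'$, so the overarching plan is to probe the right part of \eqref{sub02} in each case.

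For \eqref{sub12}, the plan is to assume $k(\mathbf{n}) \geq 1$ and substitute the length-one choice $k' = e_\mathbf{n}$ into \eqref{sub02}. The inner sum collapses to $\regsol - |\mathbf{n}|$, and bounding the outer minimum of \eqref{sub02} above by its middle term yields $\regsol < \eta + \regsol - |\mathbf{n}|$, i.e.\ $|\mathbf{n}| < \eta$.

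For the upper bound on $k(\mathbf{n})$ when $|\mathbf{n}| > \regsol$, I would instead substitute $k' = k(\mathbf{n})\,e_\mathbf{n}$ (valid once $k(\mathbf{n}) \geq 1$; the case $k(\mathbf{n}) = 0$ is vacuous since the RHS is positive by \eqref{ref:sub_10}). The inner sum then equals $k(\mathbf{n})(\regsol - |\mathbf{n}|)$, now negative. Bounding the outer minimum of \eqref{sub02} above first by its middle term and then by its third produces two upper bounds on $k(\mathbf{n})$ after dividing by the positive quantity $|\mathbf{n}| - \regsol$; the sharper of the two is precisely the stated bound, with the $\min\{\reg(\mfl),0\}$ in the numerator absorbing both cases. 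For the existence claim when $|\mathbf{n}| < \regsol$, I would exhibit the explicit candidate $(\mfl,k) = (0, N e_\mathbf{n})$ for arbitrary $N$. Every admissible $k'$ has the form $j\,e_\mathbf{n}$ with $1 \leq j \leq N$, and since $\regsol - |\mathbf{n}| > 0$ the inner minimum is achieved at $j = 1$ independently of $N$ and equals $\regsol - |\mathbf{n}|$. Subcriticality thus reduces to the $N$-independent inequalities $\regsol < \eta + \reg(0)$ (which is \eqref{ref:sub_10} for $\mfl = 0$) and $|\mathbf{n}| < \eta$ (which follows from $|\mathbf{n}| < \regsol < \eta$, the latter from \eqref{ref:sub_10} combined with $\reg(0) < \alpha_0 = 0$).

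The obstacle is essentially clerical: keeping straight the three terms in the outer minimum of \eqref{sub02} and, for each claim, choosing a $k'$ that probes the correct one. A conceptual point is the dichotomy between the last two cases: when $\regsol - |\mathbf{n}| > 0$ the inner minimum is attained at the shortest admissible $k'$, imposing no constraint on the length of $k$; when $\regsol - |\mathbf{n}| < 0$ it is attained at the longest, which is exactly what converts the subcriticality inequality into an upper bound on $k(\mathbf{n})$.
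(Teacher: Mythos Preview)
Your proposal is correct and follows essentially the same approach as the paper: test $k'=e_{\mathbf n}$ for \eqref{sub12}, exhibit $(\mfl,N e_{\mathbf n})$ for the existence claim, and test $k'=k(\mathbf n)e_{\mathbf n}$ for the upper bound. One small omission: in the existence case, the outer minimum in \eqref{sub02} has three terms, so you also need $|\mathbf n|<\eta+\reg(0)$, but this is immediate from the chain $|\mathbf n|<\regsol<\eta+\reg(0)$ you already wrote down; and strictly speaking $(0,Ne_{\mathbf n})$ gives $k(\mathbf n)=N$ rather than $k(\mathbf n)>N$, so take $(0,(N{+}1)e_{\mathbf n})$.
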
	
\begin{proof}
	For the first item, $k(\mathbf{n})\neq 0$ implies that $e_\mathbf{n} \leq k$; then \eqref{sub12} follows choosing $k'=e_\mathbf{n}$ on the r.~h.~s. of \eqref{sub02}. For the second item, \referee{we fix $\mathbf{n}$ and distinguish between the two cases.
	\begin{itemize}
		\item If $|\mathbf{n}|<\regsol$, then the minimum 
		\begin{equation*}
			\min_{0<k'\leq k} \sum_{\mathbf{n}\in\N_0^d} (\regsol - |\mathbf{n}|)k'(\mathbf{n})
		\end{equation*}
		is attained at some $k'$ with $k'(\mathbf{n})=0$. Therefore, since $(\mfl,0)$ is subcritical, the pair $(\mfl, k(\mathbf{n}) e_\mathbf{n})$ is also subcritical for all $k(\mathbf{n})$.
		\item If $\mathbf{n}>\regsol$, then
		\begin{equation*}
			\min_{k'\leq k} \sum_{\mathbf{n}\in\N_0^d} (\regsol - |\mathbf{n}|)k'(\mathbf{n})<0.
		\end{equation*}
		Condition \eqref{sub02} reduces to
		\begin{equation*}
			\regsol < \eta + \min \{\reg(\mfl),0\} + \min_{k'\leq k} \sum_{\mathbf{n}\in\N_0^d} (\regsol - |\mathbf{n}|)k'(\mathbf{n}).
		\end{equation*}
		In particular, this implies
		\begin{equation*}
			\regsol<\eta + \min \{\reg(\mfl),0\} + (\regsol - |\mathbf{n}|)k(\mathbf{n}),
		\end{equation*}
		which in turn yields the desired inequality.
\end{itemize}}
\end{proof}

\medskip

\referee{
	\begin{exam}
		Let us study the subcriticality conditions in the case of the generalized KPZ equation. We start by fixing some arbitrarily small $\kappa>0$ and
		\begin{equation}\label{ref:sub_01}
			\reg(\xi) = -\tfrac{3}{2}-2\kappa,\,\, \reg(0)=0-2\kappa,\,\, \regsol = \tfrac{1}{2}-3\kappa.
		\end{equation}
		The reason why we subtract $2\kappa$ from the regularity of the noises and $3\kappa$ from the regularity of the solution is to guarantee \eqref{ref:sub_10}, but there is of course flexibility in this choice. We look for the subcritical pairs under these assumptions. Condition \eqref{sub12} implies that $(\mfl,k)$ can only be subcritical if $k(\mathbf{n}) = 0$ for all $|\mathbf{n}|\geq 2$, which means that necessarily
		\begin{equation*}
			k = k_\mathbf{0} e_\mathbf{0} + k_{(0,1)} e_{(0,1)},\,\, k_\mathbf{0},k_{(0,1)} \in \N_0^d.
		\end{equation*}
		Now, on the one hand, the second part of Lemma \ref{lemsub03} implies that in order for $(\xi,k)$ to be subcritical, $k(\mathbf{0})$ can be arbitrarily large, whereas
		\begin{align*}
			k_{(0,1)} < \frac{2-\frac{3}{2}-2\kappa - \frac{1}{2} + 3\kappa}{1-\frac{1}{2}+3\kappa} = \frac{\kappa}{\frac{1}{2} + 3\kappa} <1,
		\end{align*}
		and therefore $k_{(0,1)}=0$. On the other hand, in order for $(0,k)$ to be subcritical, $k_\mathbf{0}$ can again be arbitrarily large, whereas
		\begin{align*}
			k_{(0,1)} < \frac{2 - 2 \kappa - \frac{1}{2} + 3 \kappa}{1-\frac{1}{2} + 3\kappa} = \frac{3 + 2 \kappa}{1+6\kappa} < 3,
		\end{align*} 
		and therefore $k_{(0,1)}\leq 2$. Consequently, all subcritical pairs are of the form
		\begin{align*}
			&\{(\xi,k_\mathbf{0}e_\mathbf{0})\,|\, k_\mathbf{0}\in \N_0^2\} \\
			&\quad \cup \{(0, k_\mathbf{0}e_\mathbf{0} + k_{(0,1)}e_{(0,1)})\,|\, k_\mathbf{0}\in\N_0^2,\, k_{(0,1)}=0,1,2\}.
		\end{align*}
		These correspond to all nonlinearities present in the generalized KPZ equation \eqref{kpz01_example}. The multiplicative noise term contains a function of the solution, which is consistent with the first class of subcritical pairs. The additive noise term is a function of the solution and a quadratic polynomial in the first derivative, which agrees with the second class of subcritical pairs. Therefore, the generalized KPZ equation is subcritical.
		
		\medskip
		
		If we had a cubic term in the first derivative, we should also consider pairs of the form
		\begin{equation*}
			(0, k_\mathbf{0}e_\mathbf{0} + 3e_{(0,1)}).
		\end{equation*}
		Now the r.~h.~s. of \eqref{sub02} reduces to
		\begin{equation*}
			2 - 2\kappa + 3 \big(-\tfrac{1}{2} - 3\kappa\big) = \tfrac{1}{2} - 11\kappa,
		\end{equation*}
		which is obviously smaller than $\frac{1}{2} - 3 \kappa$. The reader is invited to check that varying \eqref{ref:sub_01}, as long as $\regsol<\eta + \reg(\xi)$, does not make \eqref{sub02} hold. Therefore, we conclude that $(0,k_\mathbf{0} e_\mathbf{0} + 3e_{(0,1)})$ is not subcritical.
	\end{exam}
}

\medskip
	
The second item \referee{of Lemma \ref{lemsub03} reflects} the following fact: Since we think of $\regsol$ as the expected regularity of the solution, the derivatives $\partial^\mathbf{n} u$ for $|\mathbf{n}|< \regsol$ are functions, and in particular the r.~h.~s. of \eqref{set01} is allowed to contain arbitrarily large powers of these derivatives without breaking subcriticality. A rather unpleasant consequence is that this creates terms with arbitrarily large powers of the constant $1$, in form of the model component $\partial^\mathbf{n} \Pi_{x e_\mathbf{n}}$. To avoid these redundancies, we make the following restriction in our set of multi-indices:
\begin{equation}\label{bb05}
	\mbox{for every }\mathbf{n}\in\N_0^{d}\mbox{ with }|\mathbf{n}|< \regsol,\,\,\beta(\mathbf{n}) = \left\{\begin{array}{ll}
		1 & \mbox{if } \beta =  e_\mathbf{n},\\
		0 & \mbox{otherwise}.
	\end{array}\right.
\end{equation}
We are not losing any information by imposing this restriction. This is because if the solution is supposed to be a $C^{l,\alpha}$ function, we may substract its $l$-th Taylor polynomial and build the model starting from the homogeneous remainder. This is the approach taken in \cite{OSSW}, where the main goal is to obtain an a priori estimate; since the solution is Hölder continuous, the authors already think of the model up to constants, and thus the reason to exclude $\mathbf{n} = \referee{\mathbf{0}}$ from the polynomial sector. In our case, we do not mod out the lowest polynomial contributions completely, but restrict them to appear \textit{only} in the polynomial sector, which is essentially equivalent but allows for cleaner formulations of the recentering properties (as compared to \cite[(9)]{OSSW} and \cite[(2.30)]{LOTT}). See Subsection \ref{subsec::5.2} for an implementation of this in the one-dimensional multiplicative stochastic heat equation.
\begin{definition}\label{def:pop}
We define the following sets of multi-indices:
\begin{align*}
	\mcNmin& := \{\beta\;|\; [\beta]=1,\,\beta\,\mbox{subcritical},\, \beta(\mathbf{n}) = 0\; \mbox{for all }|\mathbf{n}|< \regsol\},\\
	\mcN &:= \{\beta\in\mcNmin\;|\; \lnh\beta\rnh>0\},\\
	\mcP &:= \{e_\mathbf{n}\}_{\mathbf{n}\in \N_0^d},\\ 
	\populated &:= \mcP \sqcup \mcN.
\end{align*}
\end{definition}
The following finiteness property is the analogue of \cite[Proposition 5.15]{BHZ}.
\begin{lemma}\label{lemsub01}
	For every $a\in \R$,
	\begin{equation}\label{sub06}
		\# \{\beta\in\populated\cup\mcNmin\,|\,|\beta|<a\}<\infty.
	\end{equation}
\end{lemma}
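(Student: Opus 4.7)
\textit{Proof plan.} The case $\beta=e_\mathbf{n}\in\mcP$ is immediate: $|\beta|=|\mathbf{n}|-\eta<a$ forces $|\mathbf{n}|<a+\eta$, which leaves only finitely many $\mathbf{n}\in\N_0^d$ by discreteness of \eqref{scal01}. Hence I focus on $\{\beta\in\mcNmin\,|\,|\beta|<a\}$.

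The main idea is to shift the homogeneity using $[\beta]=1$: setting $|\beta|_\star:=|\beta|+(\eta-\regsol)[\beta]=|\beta|+(\eta-\regsol)$, a direct computation gives
\[|\beta|_\star = \sum_{(\mfl,k)}c_{(\mfl,k)}\,\beta(\mfl,k)+\sum_\mathbf{n}(|\mathbf{n}|-\regsol)\,\beta(\mathbf{n}),\]
where $c_{(\mfl,k)}:=\alpha_\mfl+(\eta-\regsol)-\sum_\mathbf{n}(|\mathbf{n}|-\regsol)k(\mathbf{n})$. Subcriticality, rewritten as $\sum_\mathbf{n}(|\mathbf{n}|-\regsol)k(\mathbf{n})<(\eta-\regsol)+\reg(\mfl)$ for $k\neq 0$ (and as $\alpha_\mfl>\reg(\mfl)>\regsol-\eta$ for $k=0$), gives $c_{(\mfl,k)}>\epsilon:=\min_\mfl(\alpha_\mfl-\reg(\mfl))>0$, while $|\mathbf{n}|-\regsol\geq 0$ on the support of $\beta$ by \eqref{bb05}. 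Since all contributions are non-negative, $|\beta|_\star<a+(\eta-\regsol)$ bounds $\sum_{(\mfl,k)}\beta(\mfl,k)$ uniformly and also bounds $\beta(\mathbf{n})$ and $|\mathbf{n}|$ whenever $\mathbf{n}$ lies in the support with $|\mathbf{n}|>\regsol$.

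To bound $\length{k}$ for each $(\mfl,k)$ in the support, I would use the unshifted lower bound $|e_{(\mfl,k)}|\geq \alpha_\mfl+\delta'\ell^{<}+\delta_1\ell^{\geq}$ with $\ell^{<}:=\sum_{|\mathbf{n}|<\regsol}k(\mathbf{n})$, $\ell^{\geq}:=\sum_{|\mathbf{n}|\geq\regsol}k(\mathbf{n})$, $\delta':=\min_{|\mathbf{n}|<\regsol}(\eta-|\mathbf{n}|)$, and $\delta_1:=\min_{\regsol\leq|\mathbf{n}|<\eta}(\eta-|\mathbf{n}|)$. The crucial observation is $\delta'>\eta-\regsol$ \emph{strictly}, due to the strict discrete inequality $|\mathbf{n}|<\regsol$ on the finite set $\{|\mathbf{n}|<\regsol\}\subset\N_0^d$. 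Pairing this with the dual upper bound $|e_{(\mfl,k)}|\leq |\beta|+(\eta-\regsol)\length{k}$, obtained by isolating the $(\mfl,k)$-term in $|\beta|$ and lower-bounding the remaining contributions via subcriticality together with \eqref{bb05}, yields
\[(\delta'-(\eta-\regsol))\ell^{<}\leq a-\alpha_\mfl+(\eta-\regsol-\delta_1)\ell^{\geq}.\]
Lemma \ref{lemsub03} bounds $\ell^{\geq}$ uniformly (via individual bounds on $k(\mathbf{n})$ for $|\mathbf{n}|>\regsol$ and the finiteness of $\{\mathbf{n}:\regsol\leq|\mathbf{n}|<\eta\}$), and the strict positivity $\delta'-(\eta-\regsol)>0$ then bounds $\ell^{<}$, hence $\length{k}$, uniformly.

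Together with the support condition $k(\mathbf{n})\neq 0\implies |\mathbf{n}|<\eta$ (Lemma \ref{lemsub03}, first bullet) and the finiteness of $\mfL^-\cup\{0\}$, only finitely many admissible $(\mfl,k)$ remain, each with bounded multiplicity. The potentially surviving multiplicities $\beta(\mathbf{n})$ for $\mathbf{n}$ with $|\mathbf{n}|=\regsol$ (a finite set, if non-empty) are controlled via $[\beta]=1$ in the form $\sum_\mathbf{n}\beta(\mathbf{n})=1+\sum_{(\mfl,k)}(\length{k}-1)\beta(\mfl,k)$, whose right-hand side is already bounded. The main obstacle is the strict positivity of the gap $\delta'-(\eta-\regsol)$, which rests on the interplay between the continuous parameter $\regsol$ and the discrete scaled degrees of $\N_0^d$, and which is precisely what tames the arbitrarily large powers of low-regularity derivatives allowed by Lemma \ref{lemsub03}.
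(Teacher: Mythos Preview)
Your first step---shifting $|\beta|$ by $(\eta-\regsol)[\beta]$ to rewrite the homogeneity with coefficients $c_{(\mfl,k)}>\epsilon$ on the non-polynomial part and $|\mathbf{n}|-\regsol\geq 0$ on the polynomial part---is exactly the paper's argument, just packaged differently. Where you diverge is the endgame: the paper notes that the polynomial coefficients are in fact \emph{uniformly} positive (every $\mathbf{n}$ in the support of $\beta\in\mcNmin$ has $|\mathbf{n}|>\regsol$, so $|\mathbf{n}|-\regsol\geq 2\kappa$ for some $\kappa>0$), which bounds the full length $\length{\beta}$ at once; then $[\beta]=1$ in the form $\sum_{(\mfl,k)}\length{k}\,\beta(\mfl,k)=\length{\beta}-1$ gives $\length{k}\leq\length{\beta}-1$ in one line. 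Your separate pairing argument for $\length{k}$ and the closing step via $[\beta]=1$ achieve the same, but the detour through $\delta'$, $\delta_1$ and Lemma~\ref{lemsub03} is unnecessary once you observe that your own $|\beta|_\star$ bound already controls $\sum_\mathbf{n}\beta(\mathbf{n})$.

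Your attempt to keep the case $|\mathbf{n}|=\regsol$ alive is well-intentioned but does not close: your claim that Lemma~\ref{lemsub03} bounds $\ell^{\geq}$ only covers $k(\mathbf{n})$ for $|\mathbf{n}|>\regsol$ strictly, so the $|\mathbf{n}|=\regsol$ contribution to $\length{k}$ remains uncontrolled and the final $[\beta]=1$ step becomes circular. The paper's proof has the same blind spot (its inequality \eqref{sub04} silently assumes $|\mathbf{n}|>\regsol$ on the support), and in fact the lemma fails if $\regsol$ is a scaled degree: $\beta_N:=e_{(\mfl,Ne_{\mathbf{n}_0})}+Ne_{\mathbf{n}_0}$ with $|\mathbf{n}_0|=\regsol$ lies in $\mcN$ and has $|\beta_N|=\alpha_\mfl$ for every $N$. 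The implicit convention throughout (visible in all the worked examples) is that $\regsol\notin|\N_0^d|$.
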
	
\begin{proof}
	The statement is clear for $\beta \in \mcP$, and thus we focus on $\beta \in \mcNmin$. We rewrite \eqref{bb04} as
	\begin{align*}
		|\beta| &= \sum_{(\mfl,k)} \big(\reg(\mfl) + \sum_{\mathbf{n}} (\regsol - |\mathbf{n}|)k(\mathbf{n})\big)\beta (\mfl,k) + \sum_{\mathbf{n}} (\regsol - \eta)\beta(\mathbf{n})\\
		&\quad + \sum_{(\mfl,k)} \big(\alpha_\mfl - \reg(\mfl)\big)\beta(\mfl,k) + (\eta - \regsol)\sum_{(\mfl,k)}\length{k}\beta(\mfl,k) + \sum_{\mathbf{n}} (|\mathbf{n}|-\regsol)\beta(\mathbf{n}).
	\end{align*}
	We use \eqref{sub02} to bound
	\begin{equation*}
		\sum_{(\mfl,k)} \big(\reg(\mfl) + \sum_{\mathbf{n}} (\regsol - |\mathbf{n}|)k(\mathbf{n})\big)\beta (\mfl,k) > (\regsol - \eta) \sum_{(\mfl,k)} \beta(\mfl,k),
	\end{equation*}
	which combined with $[\beta]=1$, cf. \eqref{bb06}, yields
	\begin{equation*}
		|\beta| > \regsol - \eta + \sum_{(\mfl,k)} (\alpha_\mfl - \reg(\mfl))\beta(\mfl,k) + \sum_{\mathbf{n}} (|\mathbf{n}|-\regsol)\beta(\mathbf{n}).
	\end{equation*}
	Now, on the one hand, by \eqref{sub01} (note that the inequality is strict) there exists a small constant $\referee{\kappa>0}$ such that
	\begin{align*}
		\reg(\mfl) + \kappa\leq \alpha_\mfl,\;\mfl\in\mfL^-\cup\{0\}.
	\end{align*}
	On the other hand, since $|\N_0^d|\subset \R$ is locally finite, $\kappa$ can be chosen small enough so that
	\begin{equation*}
		|\mathbf{n}| \geq \regsol + \referee{2}\kappa
	\end{equation*}
	for all $|\mathbf{n}|>\regsol$. As a consequence, 
	\begin{equation}\label{sub04}
		|\beta| > \regsol - \eta + \kappa \length{\beta} +\referee{\tfrac{1}{2}} \sum_{\mathbf{n}} (|\mathbf{n}| - \regsol)\beta(\mathbf{n}),
	\end{equation}
	where the last term is nonnegative by \eqref{bb05}. This means that the homogeneity of subcritical multi-indices bounds their length and the size of their polynomial contributions (meaning that there are only finitely many of them allowed). Note that condition \eqref{bb02} implies that $\length{k}\leq \length{\beta} - 1$; combined with \eqref{sub12}, this means there are only finitely many pairs $(\mfl,k)$ allowed in a multi-index of fixed length. 
\end{proof}	
\begin{remark}
	Note that \eqref{sub04} implies the lower bound $|\beta|>\regsol - \eta$ for $\beta\in \mcNmin$. As a consequence,
	\begin{equation}\label{sub05}
		\beta\in\populated\cup\mcNmin \implies |\beta| > (\regsol \wedge 0) - \eta.
	\end{equation}
\end{remark}	
\begin{corollary}
	The set
	\begin{equation}\label{hom50}
		A:= \{|\beta|\,|\, \beta\in\mcNmin\}\cup \{|\beta| + \eta\,|\, \beta\in\populated\}\subset \R
	\end{equation}
is a well-defined set of homogeneities.
\end{corollary}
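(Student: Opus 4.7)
The plan is to verify directly the two defining properties of a set of homogeneities from Definition \ref{ref:defrg}, namely that $A$ is bounded from below and locally finite in $\R$.

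For the lower bound, I rely on estimate \eqref{sub05}, which gives $|\beta| > (\regsol \wedge 0) - \eta$ for every $\beta \in \populated \cup \mcNmin$. Applied to the two pieces of $A$, this yields $|\beta| > (\regsol \wedge 0) - \eta$ for $\beta \in \mcNmin$, and $|\beta| + \eta > (\regsol \wedge 0)$ for $\beta \in \populated$. Since $\eta > 0$, both contributions are bounded below by $(\regsol \wedge 0) - \eta$, so
\begin{equation*}
A \subset \bigl( (\regsol \wedge 0) - \eta,\; \infty \bigr).
\end{equation*}

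For local finiteness, fix $a \in \R$. Membership of $\nu \in A$ with $\nu < a$ comes from either a $\beta \in \mcNmin$ with $|\beta| < a$ or a $\beta \in \populated$ with $|\beta| < a - \eta$. By Lemma \ref{lemsub01}, both of the sets
\begin{equation*}
\{ \beta \in \mcNmin \,|\, |\beta| < a \}, \qquad \{ \beta \in \populated \,|\, |\beta| < a - \eta \}
\end{equation*}
are finite, hence $\{\nu \in A \,|\, \nu < a\}$ is finite as well. Combined with the lower bound, this is the standard formulation of local finiteness for a set bounded from below.

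I do not expect any real obstacle here: once Lemma \ref{lemsub01} and \eqref{sub05} are in hand, both properties are immediate consequences of the fact that the map $\beta \mapsto |\beta|$ has finite fibers below any threshold and takes values bounded below on $\populated \cup \mcNmin$. The mild point to be careful about is that $A$ is obtained as the union of two shifts of the image of this map, but since both shifts are by the single constant $\eta$ (or $0$), the finiteness transfers without trouble.
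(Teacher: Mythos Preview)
Your proof is correct and matches the paper's intended argument: the Corollary is stated without proof in the paper, as it follows immediately from Lemma~\ref{lemsub01} (for local finiteness) and the lower bound~\eqref{sub05} in the preceding Remark, which is exactly what you invoke.
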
	
\begin{definition}
	For every index set $\mathcal{V}\subset M(\coord)$, we define the linear subspace of $\R[[\coord]]$:
	\begin{equation*}
		T_\mathcal{V}^* := \big\{\pi = \sum_{\beta\in \mathcal{V}} \pi_\beta \z^\beta\big\}.
	\end{equation*}
	We also introduce the projection $\proj_\mathcal{V} : \R[[\coord]] \to T_\mathcal{V}^*$.
\end{definition}
Note that with this notation
	\begin{equation*}
		T^*_\populated = T^*_\mathcal{P} \oplus T^*_\mathcal{N}.
	\end{equation*}
	The model components together with the functionals \eqref{bb03} allow us to consider the formal power series
\begin{align*}
	\Pi_x &:= \sum_{\beta\in\populated} \Pi_{x\beta} \z^\beta\\
	\Pi_x^- &:= \sum_{\beta\in \mcNmin} \Pi_{x\beta}^- \z^\beta.
\end{align*}
More precisely, assuming smoothness, we will think of
\begin{equation*}
	(\Pi_x,\Pi_x^-): \R^d \to  T_\populated^* \times T_{\mcNmin}^*,
\end{equation*}
where the first component is equipped with the shifted homogeneity $|\cdot| + \eta$, whereas the second is equipped with the plain homogeneity $|\cdot|$; this is why we consider both in \eqref{hom50}.

\medskip

\referee{\begin{remark}\label{rem:primal}The notation $T_\mathcal{V}^*$ is chosen so that we think of these spaces as the algebraic dual of the vector space $T_\mathcal{V}$ given by
	\begin{equation*}
		T_\mathcal{V} := \lspan\{\z_\beta\,|\,\beta\in \mathcal{V}\}
	\end{equation*}
	where $\{\z_\beta\}_{\beta\in \mathcal{V}}$ is a basis dual to the monomials $\{\z^\beta\}_{\beta\in\mathcal{V}}$ via the canonical pairing. This way, the model space in the sense of \cite[Definition 2.1]{reg} is a copy of $T_\populated \oplus T_\mcNmin$, and the polynomial sector can be identified with $T_\mathcal{P}$.\end{remark}}
	
	\medskip
				
\begin{remark}
	Restricting to the set of subcritical multi-indices is equivalent to restricting to subcritical equations: Removing a pair $(\mfl,k)$ is the same as fixing that $\partial^k a^\mfl \equiv 0$, according to \eqref{bb03}. Thus, a model with values in $T_\populated^*\times T_\mcNmin^*$ as above is a model for a subcritical equation.
\end{remark}	

\medskip

We note now that the derivation $D^{(\mathbf{n})}$ defined in \eqref{bb16} does not preserve the subcriticality property: In particular, the infinite sum stays subcritical only for $|\mathbf{n}|<\regsol$, according to Lemma \ref{lemsub03}. In order for $\z^\gamma D^{(\mathbf{n})}$ to self-map $T_\populated^*\oplus T_\mcNmin^*$, we need to redefine \eqref{bb16} introducing a projection onto subcritical multi-indices, setting
\begin{equation}\label{sub13}
	D^{(\mathbf{n})} :=  \Big(\hspace*{-10pt}\sum_{\substack{(\mfl,k)\\ (\mfl,k+e_\mathbf{n})\,\text{subcritical}}} \hspace*{-10pt}(k(\mathbf{n})+1)\z_{(\mfl,k+e_\mathbf{n})} \partial_{\z_{(\mfl,k)}} + \partial_{\z_\mathbf{n}}\Big)\proj_{\populated\cup\mcNmin}.
\end{equation}
Obviously, the projection $\proj_{\populated\cup\mcNmin}$ is immaterial when we think of $D^{(\mathbf{n})}$ $\in$ $\textnormal{End}(T_{\populated\cup \mcNmin}^*)$, but it is useful to write it down as a reminder for later, when we transpose these maps and we still want to look only at multi-indices in $\populated\cup \mcNmin$. Note that by \eqref{sub12} only finitely many $\mathbf{n}$ $\in$ $\N_0^d$ produce the first contribution in \eqref{sub13}; more precisely, as endomorphisms of $T_{\populated\cup \mcNmin}^*$,
\begin{equation*}
	\mbox{for }|\mathbf{n}|>\eta ,\,\,\,D^{(\mathbf{n})} = \partial_{\z_\mathbf{n}}.
\end{equation*}
Similarly, we redefine $\mbpartial_i$, cf. \eqref{bb17}, according to the new $D^{(\mathbf{n})}$ defined in \eqref{sub13}. Note that the restriction to subcritical $(\mfl,k)$ makes the sum over $\mathbf{n}$ in \eqref{bb17} finite, thanks to condition \eqref{sub12}.
\subsection{Connection to regularity structures based on trees}\label{subsec::2.5}
This subsection is independent of the rest of the paper, but might give more intuition to the reader already familiar with tree-based regularity structures. It was observed in \cite[Sections 6 and 7]{LOT} that one can establish a connection between trees and multi-indices: Roughly speaking, a multi-index represents the \textit{fertility} of a tree, in the sense that it encodes the amount of nodes with a specific configuration of outgoing edges. We will now describe a map from trees to multi-indices in the general situation given by the equation \eqref{set01}.

\medskip

Let us first describe the set of trees under consideration. The nodes of these trees are decorated by $\mfL^-\cup\{0\}$ and  $\N_0^d$. The former are placeholders of the noises $\xi_\mfl$, whereas $\mathbf{n}\in \N_0^d$ represents the $\mathbf{n}$-th monomial. We denote these nodes as
\begin{equation}\label{tree12}
	\{ \Xi_\mfl \}_{\mfl\in\mfL^-\cup\{0\}} \cup \{ X^\mathbf{n}\}_{\mathbf{n}\in \N_0^d}.
\end{equation}
Edges are decorated by $ \N_0^d $; these decorations index the derivatives of the solution kernel, which does not need an additional type because we are dealing with the scalar case. Given a tree $\tau$, we denote by $\mcI_{\mathbf{m}}(\tau)$ the operation of growing an incoming edge decorated by $\mathbf{m}$ at the root. We assume that all inner nodes have a noise decoration $\mfl$, and that decorations in $ \N_0^d $ appear only on some leaves (i.~e. nodes with no outgoing edges). Under these restrictions, a generic tree $\tau\neq X^{\mathbf{n}}$ takes the form
\begin{equation}\label{tree01}
	\tau = \Xi_\mfl \prod_i \mcI_{\mathbf{m}_i} (\tau_i).
\end{equation} 
We call $\trees$ the set of trees generated from \eqref{tree12} via the recursive construction \eqref{tree01}, and $\treespace$ the free vector space spanned by $\trees$. Note that $\treespace$ corresponds to the space introduced in \cite[Subsection 4.1]{BCCH}, where we only have one edge type, and where we in principle allow for polynomial contributions $\mcI_{\mathbf{m}}(X^\mathbf{n})$ for which $\mathbf{m}\not< \mathbf{n}$. 

\medskip

We define a linear map $ \Psi : \treespace \to \R[\coord]$ setting
\begin{equation}\label{tree02}
	\Psi[X^{\mathbf{n}}]  =  \mathbf{n}! \z_{\mathbf{n}}  
\end{equation}
and then inductively for a tree \eqref{tree01}
\begin{equation}\label{tree04}
	\Psi \left[  \tau  \right] = k! \z_{(\mfl, k)} \prod_{i} \tfrac{1}{\mathbf{m}_i!} \Psi \left[ \tau_i \right],\quad k = \sum_i e_{\mathbf{m}_i} \in M(\N_0^d).
\end{equation}
In particular,
\begin{equation}\label{tree14}
	\Psi \left[ \Xi_\mfl \right] = \z_{(\mfl,0)}.
\end{equation}
\begin{lemma}\label{lem:trees}
	$\Psi \left[ \tau \right] = C(\beta) \z^\beta$, where 
	\begin{itemize}
		\item $\beta(\mfl,k)$ is the number of nodes $\Xi_\mfl$ of $\tau$ with outgoing edges given by the multi-index $k$, i.~e. $k(\mathbf{n})$ edges decorated by $\mathbf{n}$,
		\item $\beta(\mathbf{n})$ is the number of leaves $X^{\mathbf{n}}$ of $\tau$,
		\item and $C(\beta)$ $:=$ $\prod_{(\mfl,k)} \left(\frac{k!}{\prod_\mathbf{m} (\mathbf{m}!)^{k(\mathbf{m})}}\right)^{\beta(\mfl,k)}$ $\prod_\mathbf{n} (\mathbf{n}!)^{\beta(\mathbf{n})}$, \referee{which is the combinatorial factor that compensates our normalization of the coefficients \eqref{bb03}.}
	\end{itemize}
\end{lemma}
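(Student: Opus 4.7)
The plan is to prove the lemma by structural induction on the tree $\tau$, using the recursive definition \eqref{tree02}--\eqref{tree04} of $\Psi$ and the additivity of the decoration-counting multi-index under the tree construction \eqref{tree01}. The only real work is bookkeeping the combinatorial prefactor.

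For the base cases, one checks directly that $\Psi[X^{\mathbf{n}}] = \mathbf{n}! \,\z_\mathbf{n}$ corresponds to $\beta = e_\mathbf{n}$, for which $C(\beta) = \mathbf{n}!$; and that $\Psi[\Xi_\mfl] = \z_{(\mfl,0)}$ (a consequence of \eqref{tree04} with no incoming edges, namely $k=0$) corresponds to $\beta = e_{(\mfl,0)}$, for which the $(\mfl,0)$-factor in $C(\beta)$ is $0!/\prod_\mathbf{m}(\mathbf{m}!)^0 = 1$.

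For the inductive step, consider a tree $\tau = \Xi_\mfl\prod_i \mcI_{\mathbf{m}_i}(\tau_i)$ with $k = \sum_i e_{\mathbf{m}_i}$. If $\beta_i$ denotes the multi-index associated to $\tau_i$ by the prescription of the lemma, then the multi-index associated to $\tau$ is by definition
\begin{equation*}
\beta = e_{(\mfl,k)} + \sum_i \beta_i,
\end{equation*}
since the root of $\tau$ contributes a single node $\Xi_\mfl$ whose outgoing edge decorations form the multi-index $k$, and the remaining inner nodes/leaves are those of the $\tau_i$. Applying the induction hypothesis to each $\tau_i$ and feeding into \eqref{tree04} yields
\begin{equation*}
\Psi[\tau] = k!\,\z_{(\mfl,k)}\prod_i \frac{C(\beta_i)}{\mathbf{m}_i!}\,\z^{\beta_i} = \Big(k!\prod_i \frac{C(\beta_i)}{\mathbf{m}_i!}\Big)\z^\beta.
\end{equation*}

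It remains to verify that the prefactor equals $C(\beta)$. By additivity of $\beta \mapsto C(\beta)$ over the multiplicative structure of $C$, one has
\begin{equation*}
C(\beta) = \frac{k!}{\prod_\mathbf{m}(\mathbf{m}!)^{k(\mathbf{m})}}\prod_i C(\beta_i),
\end{equation*}
and since $k = \sum_i e_{\mathbf{m}_i}$ gives $\prod_\mathbf{m}(\mathbf{m}!)^{k(\mathbf{m})} = \prod_i \mathbf{m}_i!$, this is precisely $k!\prod_i C(\beta_i)/\mathbf{m}_i!$, closing the induction. There is no real obstacle beyond correctly unpacking the definition of $C(\beta)$; the combinatorial factor in \eqref{tree04} has been tuned exactly to reproduce it.
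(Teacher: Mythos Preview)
Your proof is correct and follows the same structural induction as the paper's proof; you have simply made explicit the bookkeeping of the combinatorial factor $C(\beta)$ that the paper leaves to the reader.
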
	
Note however that the information of which subtree $\tau_j$ is attached to the edge $\mathbf{n}_j$ is lost when applying $\Psi$. 
\begin{proof}
	This is clear for $\tau = X^{\mathbf{n}}$ by \eqref{tree02}. For a general $\tau$ of the form \eqref{tree01}, it easily follows inductively, noting that the root of $\tau$, which is identified with $\z_{(\mfl,k)}$ in \eqref{tree04}, is a node $\Xi_\mfl$ with outgoing edges given by the multi-index $k$.
\end{proof}	
Under this interpretation, condition $[\beta] = 1$ now becomes transparent: $\length{\beta}$ is the number of nodes, while $\sum_{(\mfl,k)} \length{k} \beta(\mfl,k)$ is the number of edges, and the difference for a rooted tree is always $1$ (all nodes have one incoming edge except the root). Also the subcriticality conditions of Definition \ref{def:subcrit} can be connected to those of \cite{BHZ}: Since a pair $(\mfl,k)$ represents a node $\Xi_\mfl$ with $k$ giving its outgoing edges, fixing the admissible pairs is essentially equivalent to fixing a \textit{rule}, cf. \cite[Definition 5.7]{BHZ}. Condition \cite[(5.10)]{BHZ} for noise types is satisfied by assumption \eqref{sub01}, whereas \eqref{sub02} adapts \cite[(5.10)]{BHZ} for a specific set in the rule, given by $(\mfl,k)$. The fact that we take the minimum over all $k'\leq k$ reflects the normality of the rule, cf. \cite[Definition 5.7]{BHZ}. Finally, subcritical multi-indices correspond to trees that conform to a normal subcritical rule. Closely related to this is the homogeneity \eqref{bb04}, which by the same reasoning is consistent with e.~g. the scaling map on \cite[p.~419]{reg}. As anticipated at the end of Subsection \ref{subsec::2.3}, $|\beta|$ represents the homogeneity of the \textit{rooted} tree, and thus $\Pi_\beta$, which is integrated once, is homogeneous of degree $|\beta| + \eta$.
\begin{remark}
	The map $\Psi$ can be seen as an extension of the transposition of \cite[(7.8)]{LOT}, but with a modification: We do not include the symmetry factor of the tree. This is more convenient in terms of the upcoming pre-Lie morphisms: Our definition is consistent with the duality pairing \cite[(4.2)]{BCCH}, and thus we may formulate pre-Lie morphism properties with respect to the usual grafting product (instead of the normalization of \cite[(7.11)]{LOT}). 
\end{remark}	
\medskip

The generators \eqref{bb16} and \eqref{bb17} can be connected to the algebraic operations on trees defined in \cite{BCCH}. Consider first the family of grafting products $ \curvearrowright^\mathbf{n} : \treespace \times \treespace \to \treespace $, $ \mathbf{n} \in \N_0^d $ introduced in \cite[Def. 4.7]{BCCH}. This product is defined for a tree $\sigma\in \trees$ by
\begin{equation}\label{tree03}
	\sigma \curvearrowright^{\mathbf{n}} X^{\mathbf{n}'} = \sigma \delta_\mathbf{n}^{\mathbf{n}'}
\end{equation}
and then inductively for $\tau$ as in \eqref{tree01} by
\begin{align}
	\sigma \curvearrowright^\mathbf{n} \tau &	= \Xi_{\mathfrak{l}} \mathcal{I}_{\mathbf{n}}( \sigma  )  \prod_i \mcI_{\mathbf{m}_i} (\tau_i)  \nonumber
	\\ & \quad +	\Xi_{\mathfrak{l}}
	\sum_{j} \mathcal{I}_{\mathbf{m}_j}( \sigma \curvearrowright^{\mathbf{n}} \tau_j ) 
	\prod_{i \neq j} \mathcal{I}_{\mathbf{m}_i}( \tau_i ) .\label{tree05}
\end{align}
\begin{proposition}
	For every $\mathbf{n}\in \N_0^{d}$ and every $\tau,\sigma \in \treespace$, it holds
	\begin{equation}\label{tree10}
		\Psi \left[\sigma \curvearrowright^{\mathbf{n}} \tau\right] = \tfrac{1}{\mathbf{n}!} \Psi \left[ \sigma \right] D^{(\mathbf{n})} \Psi \left[ \tau \right].
	\end{equation}
\end{proposition}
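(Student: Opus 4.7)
The plan is an induction on the structure of $\tau \in \trees$, using linearity of both sides in $\sigma$ and $\tau$ to reduce to tree generators. The key analytic observation is that $D^{(\mathbf{n})}$, as defined in \eqref{bb16}, is a \emph{genuine derivation} on $\R[\coord]$ (it is a sum of terms $f\partial_g$ with $f,g$ distinct variables), so the Leibniz rule applies to the product decomposition \eqref{tree04}.

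For the base case $\tau = X^{\mathbf{n}'}$, formulas \eqref{tree03} and \eqref{tree02} give LHS $= \Psi[\sigma]\delta_\mathbf{n}^{\mathbf{n}'}$, while \eqref{sub13} yields $D^{(\mathbf{n})}\Psi[X^{\mathbf{n}'}] = D^{(\mathbf{n})}(\mathbf{n}'!\z_{\mathbf{n}'}) = \mathbf{n}'!\delta_\mathbf{n}^{\mathbf{n}'}$, so RHS $= \tfrac{\mathbf{n}'!}{\mathbf{n}!}\Psi[\sigma]\delta_\mathbf{n}^{\mathbf{n}'}$, which matches because $\delta_\mathbf{n}^{\mathbf{n}'}$ forces $\mathbf{n}=\mathbf{n}'$.

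For the inductive step, write $\tau = \Xi_\mfl \prod_i \mcI_{\mathbf{m}_i}(\tau_i)$ with $k = \sum_i e_{\mathbf{m}_i}$, so that $\Psi[\tau] = k!\z_{(\mfl,k)}\prod_i \tfrac{1}{\mathbf{m}_i!}\Psi[\tau_i]$. The Leibniz rule, combined with the identity $D^{(\mathbf{n})}\z_{(\mfl,k)} = (k(\mathbf{n})+1)\z_{(\mfl,k+e_\mathbf{n})}$ read off from \eqref{bb16}, gives
\begin{align*}
D^{(\mathbf{n})}\Psi[\tau] &= (k(\mathbf{n})+1)k!\z_{(\mfl,k+e_\mathbf{n})}\prod_i \tfrac{1}{\mathbf{m}_i!}\Psi[\tau_i] \\
&\quad+ k!\z_{(\mfl,k)}\sum_j \tfrac{1}{\mathbf{m}_j!} D^{(\mathbf{n})}\Psi[\tau_j] \prod_{i\neq j}\tfrac{1}{\mathbf{m}_i!}\Psi[\tau_i].
\end{align*}
On the other hand, \eqref{tree05} decomposes $\sigma \curvearrowright^\mathbf{n} \tau$ into the root-grafting term $T_1$ and the subtree-grafting sum $T_2$. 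Applying \eqref{tree04} to $T_1$ with new root outgoing edges $k + e_\mathbf{n}$, and using the combinatorial identity $(k+e_\mathbf{n})! = (k(\mathbf{n})+1)k!$, yields exactly $\tfrac{1}{\mathbf{n}!}\Psi[\sigma]$ times the first summand above. For $T_2$, linearity of $\Psi$ and the induction hypothesis applied to each $\tau_j$ convert $\Psi[\sigma \curvearrowright^\mathbf{n}\tau_j]$ into $\tfrac{1}{\mathbf{n}!}\Psi[\sigma]D^{(\mathbf{n})}\Psi[\tau_j]$, matching the second summand. Summing $\Psi[T_1]+\Psi[T_2]$ gives \eqref{tree10}.

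The main pitfalls to watch are (i) that $D^{(\mathbf{n})}$ is indeed a derivation despite the structural remarks about it not preserving $[\beta]=1$ (these refer to the bilinear map $\z^{\gamma'} D^{(\mathbf{n})}$, not to $D^{(\mathbf{n})}$ itself), and (ii) the bookkeeping of the factorial factor $(k+e_\mathbf{n})!/k! = k(\mathbf{n})+1$, which is precisely the pre-Lie coefficient appearing in \eqref{bb16}. The empty-product subcase $\tau = \Xi_\mfl$ (where $k=0$ and only $T_1$ contributes) is immediate from $D^{(\mathbf{n})}\z_{(\mfl,0)} = \z_{(\mfl,e_\mathbf{n})}$ and $e_\mathbf{n}! = 1$, providing a sanity check.
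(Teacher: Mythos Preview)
Your proof is correct and follows essentially the same approach as the paper: induction on the tree structure of $\tau$, handling the base case $\tau = X^{\mathbf{n}'}$ directly, and for the inductive step combining the Leibniz rule for $D^{(\mathbf{n})}$ with the identity $D^{(\mathbf{n})}\z_{(\mfl,k)} = (k(\mathbf{n})+1)\z_{(\mfl,k+e_\mathbf{n})}$ and the factorial relation $(k+e_\mathbf{n})! = (k(\mathbf{n})+1)k!$. The only cosmetic difference is that you first expand $D^{(\mathbf{n})}\Psi[\tau]$ and then match against $\Psi[T_1]+\Psi[T_2]$, whereas the paper first expands $\Psi[\sigma\curvearrowright^\mathbf{n}\tau]$ and then collects via Leibniz; these are the same computation read in opposite directions.
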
	
\begin{proof}
	Let $\tau = X^{\mathbf{n}'}$. Then the combination of \eqref{bb16}, \eqref{tree02} and \eqref{tree03} yields
	\begin{align*}
		\Psi \big[ \sigma \curvearrowright^{\mathbf{n}} X^{\mathbf{n}'} \big] = \delta_\mathbf{n}^{\mathbf{n}'} \Psi \big[ \sigma  \big] = \Psi \big[ \sigma  \big] D^{(\mathbf{n})} \z_{\mathbf{n}'} = \tfrac{1}{\mathbf{n}!} \Psi \big[ \sigma  \big] D^{(\mathbf{n})} \Psi \big[ X^{\mathbf{n}'} \big].
	\end{align*}
	For a generic $\tau$ of the form \eqref{tree01}, we combine \eqref{tree04} and \eqref{tree05} to the effect of
	\begin{align*}
		&\Psi \left[ \sigma \curvearrowright^{\mathbf{n}} \tau \right] \\
		&\quad = \Psi \left[ \Xi_\mfl  \mcI_\mathbf{n} (\sigma) \prod_i \mcI_{\mathbf{m}_i} (\tau_i)\right] \\
		&\quad \quad + \sum_j \Psi \left[ \Xi_\mfl  \mcI_{\mathbf{m}_j} (\sigma \curvearrowright^{\mathbf{n}} \tau_j) \prod_{i\neq j}  \mcI_{\mathbf{m}_i} (\tau_i) \right]\\
		&\quad = (k + e_\mathbf{n})! \z_{(\mfl,k+e_\mathbf{n})} \tfrac{1}{\mathbf{n}!} \Psi \left[ \sigma \right] \prod_i \tfrac{1}{\mathbf{m}_i!} \Psi \left[ \tau_i \right] \\
		& \quad \quad + k! \sum_j \z_{(\mfl,k)} \tfrac{1}{\mathbf{m}_j} \tfrac{1}{\mathbf{n}!}\big(\Psi \left[ \sigma \right] D^{(\mathbf{n})} \Psi \left[ \tau_j \right] \big) \prod_{i\neq j} \tfrac{1}{\mathbf{m}_i} \Psi \left[ \tau_i \right]. 
	\end{align*}
	Rearranging the factors, and using \eqref{bb16} in the form $D^{(\mathbf{n})}\z_{(\mfl,k)}$ $=$ $(k(\mathbf{n}) + 1) \z_{(\mfl,k+e_\mathbf{n})}$, this expression equals
	\begin{align*}
		&\tfrac{1}{\mathbf{n}!} (\Psi \left[ \sigma \right] D^{(\mathbf{n})} k! \z_{(\mfl,k)}) \prod_i \tfrac{1}{\mathbf{m}_i!} \Psi \left[ \tau_i \right]  \\
		&\quad + \tfrac{1}{\mathbf{n}!}  \sum_j k! \z_{(\mfl,k)} \tfrac{1}{\mathbf{m}_j} \tfrac{1}{\mathbf{n}!}\big(\Psi \left[ \sigma \right] D^{(\mathbf{n})} \Psi \left[ \tau_j \right] \big) \prod_{i\neq j} \tfrac{1}{\mathbf{m}_i} \Psi \left[ \tau_i \right],
	\end{align*}
	which by the Leibniz rule reduces to
	\begin{equation*}
		\tfrac{1}{\mathbf{n}!}\Psi \left[ \sigma \right] D^{(\mathbf{n})} \big( k! \z_{(\mfl,k)} \prod_i \tfrac{1}{\mathbf{m}_i!} \Psi \left[ \tau_i \right]\big).
	\end{equation*}
	Now \eqref{tree04} concludes the proof.
\end{proof}	
The graftings $\curvearrowright^\mathbf{n}$, as the derivations $D^{(\mathbf{n})}$, generate a multi pre-Lie algebra, which in this case is free, cf. \cite[Proposition 4.21]{BCCH}. As a consequence of \eqref{tree10}, $\Psi : \treespace \to \R[\coord]$ is a multi pre-Lie morphism; by \cite[Proposition 4.21]{BCCH}, it is the unique extension of the map defined by \eqref{tree02} and \eqref{tree14}.

\medskip

We now consider $ \uparrow^{i} : \treespace \rightarrow \treespace^* $ as defined in \cite[Definition 4.7]{BCCH}. It is constructed by
\begin{equation}\label{tree11}
	\uparrow^i X^\mathbf{n} = X^{\mathbf{n} + e_i}
\end{equation}
and recursively for a tree $\tau$ of the form \eqref{tree01}
\begin{align}
	\uparrow^i \tau &= \sum_{\mathbf{n}\in\N_0^d} \Xi_\mfl \mcI_{\mathbf{n}} (X^{\mathbf{n} + e_i}) \prod_{j} \mcI_{\mathbf{m}_j} (\tau_j) \nonumber\\
	& \quad + \sum_j \Xi_\mfl \mcI_{\mathbf{m}_j} (\uparrow^i \tau_j) \prod_{j'\neq j} \mcI_{\mathbf{m}_{j'}} (\tau_j') .\label{tree08}
\end{align}
As with $\mbpartial_i$, subcriticality makes the sum over $\mathbf{n}$ finite.
\begin{proposition}
	For every $i=1,...,d$ and every $\tau\in\treespace$,
	\begin{equation}\label{tree09}
		\Psi \left[ \uparrow^i  \tau \right] = \mbpartial_i \Psi \left[ \tau \right].
	\end{equation}
\end{proposition}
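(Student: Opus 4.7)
The plan is to prove \eqref{tree09} by induction on the number of nodes of $\tau\in \trees$, in close parallel to the proof of \eqref{tree10}. The key algebraic input is that each $D^{(\mathbf{n})}$ acts as a derivation on $\R[\coord]$ (restricting to the subcritical range, where trees live by construction, matches the use of \eqref{sub13}), so $\mbpartial_i = \sum_\mathbf{n} (\mathbf{n}(i)+1)\z_{\mathbf{n}+e_i}D^{(\mathbf{n})}$ satisfies a Leibniz-type rule.

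For the base case $\tau = X^{\mathbf{n}'}$, I would combine \eqref{tree11} and \eqref{tree02} to get $\Psi[\uparrow^i X^{\mathbf{n}'}] = (\mathbf{n}'+e_i)!\,\z_{\mathbf{n}'+e_i} = (\mathbf{n}'(i)+1)\,\mathbf{n}'!\,\z_{\mathbf{n}'+e_i}$. On the other side, noting from \eqref{bb16} that $D^{(\mathbf{n})}\z_{\mathbf{n}'} = \delta_{\mathbf{n}}^{\mathbf{n}'}$, the definition \eqref{bb17} yields $\mbpartial_i \Psi[X^{\mathbf{n}'}] = \mathbf{n}'!\,(\mathbf{n}'(i)+1)\z_{\mathbf{n}'+e_i}$, and both expressions agree.

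For the inductive step, let $\tau = \Xi_\mfl \prod_j \mcI_{\mathbf{m}_j}(\tau_j)$ with $k = \sum_j e_{\mathbf{m}_j}$. Applying $\Psi$ to the two groups of trees on the r.~h.~s. of \eqref{tree08}, and using \eqref{tree04} together with $(k+e_{\mathbf{n}})! = (k(\mathbf{n})+1)\,k!$ and $\Psi[X^{\mathbf{n}+e_i}] = (\mathbf{n}+e_i)!\,\z_{\mathbf{n}+e_i}$, I obtain
\begin{align*}
\Psi[\uparrow^i \tau] &= \sum_{\mathbf{n}}(k(\mathbf{n})+1)(\mathbf{n}(i)+1)\,k!\,\z_{(\mfl,k+e_{\mathbf{n}})}\z_{\mathbf{n}+e_i}\prod_j \tfrac{1}{\mathbf{m}_j!}\Psi[\tau_j]\\
&\quad + \sum_j k!\,\z_{(\mfl,k)}\,\tfrac{1}{\mathbf{m}_j!}\Psi[\uparrow^i \tau_j]\prod_{j'\neq j}\tfrac{1}{\mathbf{m}_{j'}!}\Psi[\tau_{j'}].
\end{align*}
On the other hand, computing $\mbpartial_i \Psi[\tau]$ via \eqref{tree04} and the Leibniz rule for the derivation $D^{(\mathbf{n})}$, and using $D^{(\mathbf{n})}(k!\,\z_{(\mfl,k)}) = k!(k(\mathbf{n})+1)\z_{(\mfl,k+e_{\mathbf{n}})}$, I obtain exactly the same two contributions, provided the induction hypothesis $\Psi[\uparrow^i \tau_j] = \mbpartial_i \Psi[\tau_j]$ is invoked on the second sum. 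The first sum arises from $D^{(\mathbf{n})}$ hitting the root factor $\z_{(\mfl,k)}$, while the second arises from $D^{(\mathbf{n})}$ hitting one of the subtree factors $\Psi[\tau_j]$ and then resumming over $\mathbf{n}$ to recover $\mbpartial_i$.

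The only delicate point, which is easily checked but deserves attention, is the bookkeeping of the combinatorial prefactors $k!$, $\mathbf{n}!$ and $(\mathbf{n}+e_i)!$ in the comparison of the first sums; this is where our chosen normalization of $\Psi$ (without the symmetry factor, as noted in the remark after Lemma \ref{lem:trees}) is essential. The Leibniz rule itself poses no real obstacle since, when acting on elements of $\Psi(\treespace)\subset \R[\coord]$, the projection in \eqref{sub13} is immaterial: the pair $(\mfl,k+e_{\mathbf{n}})$ appearing upon differentiating the root corresponds precisely to growing an edge at an already admissible node, and thus lies in the subcritical set whenever the original tree does.
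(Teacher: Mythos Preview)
Your proof is correct, but it takes a different route from the paper. The paper observes that the operators $\uparrow^i$ and $\curvearrowright^\mathbf{n}$ are linked by the identity
\[
\uparrow^i \tau = \sum_{\mathbf{n}\in\N_0^d} X^{\mathbf{n}+e_i}\curvearrowright^\mathbf{n} \tau,
\]
which one checks directly from \eqref{tree03}, \eqref{tree05}, \eqref{tree11}, \eqref{tree08}. Applying $\Psi$ and invoking the already-established morphism property \eqref{tree10} with $\sigma = X^{\mathbf{n}+e_i}$ then gives \eqref{tree09} in one line, since $\tfrac{1}{\mathbf{n}!}\Psi[X^{\mathbf{n}+e_i}] = (\mathbf{n}(i)+1)\z_{\mathbf{n}+e_i}$ and summing over $\mathbf{n}$ recovers $\mbpartial_i$ by \eqref{bb17}. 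Your approach instead reruns the inductive argument underlying \eqref{tree10} from scratch, now tailored to $\uparrow^i$; this is perfectly valid and the bookkeeping is all correct, but it duplicates work. The paper's reduction is more conceptual (it exhibits $\uparrow^i$ as a specific instance of grafting) and shorter; your direct computation has the minor virtue of being self-contained, and your remark on the immateriality of the subcritical projection is a nice sanity check that the paper leaves implicit.
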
	
\begin{proof}
	By \eqref{tree03} and \eqref{tree05}, we may rewrite \eqref{tree11} and \eqref{tree08} as
	\begin{equation*}
		\uparrow^i \tau = \sum_{\mathbf{n}\in \N_0^d} X^{\mathbf{n} + e_i} \curvearrowright^\mathbf{n} \tau.
	\end{equation*}
	Then \eqref{tree09} follows from \eqref{tree10} in the form
	\begin{equation*}
		\Psi [X^{\mathbf{n} + e_i} \curvearrowright^\mathbf{n} \tau] = (\mathbf{n}(i) + 1) \z_{\mathbf{n} + e_i} D^{(\mathbf{n})} \Psi[\tau].
	\end{equation*}
\end{proof}	
\begin{remark} The map $ \Psi $ can be extended to a post-Lie morphism in the sense of \cite[Remark 5.6]{BK}. 
\end{remark}
%
%
%%%%%%%%%%%
%
\section{Algebraic renormalization of multi-indices}
\label{section::3}
In this section, starting from the building blocks of Section \ref{section::2}, we follow the algebraic route of \cite{LOT} to construct actions by shift. Our application goes beyond the construction of the structure group: We use this technique to also write a new formulation of the model equations which is suitable for implementing algebraic renormalization procedures.
\subsection{\referee{The Lie algebra of generators and its universal enveloping algebra}}\label{subsec::3.1}
Let us denote $\mcM$ $:=$ $\mcN\times \N_0^{d}$. We begin by studying some properties of the generators
\begin{equation}\label{mod51}
	\mcD:= \{\z^\gamma D^{(\mathbf{n})}\}_{(\gamma,\mathbf{n})\in\mcM}\sqcup \{\mbpartial_i\}_{i=1,...,d}
\end{equation}
as linear endomorphisms of $\R[[\coord]]$ and, more precisely, as derivations in that space. We first show their mapping properties in the sets of multi-indices of Definition \ref{def:pop}.
\begin{lemma}\label{lem:218}
	\mbox{}
	\begin{itemize}
		\item For all $(\gamma',\mathbf{n}')\in \mcM$,
		\begin{align}
			\z^{\gamma'}D^{(\mathbf{n}')} T_\mcNmin^* &\subset T_\mcN^*,\label{map01}\\
			\z^{\gamma'}D^{(\mathbf{n}')} T_\populated^* &\subset T_\mcN^*.\label{map02}
		\end{align} 
		\item For all $i=1,...,d$, 
		\begin{align}
			\mbpartial_i T_\mcNmin^* &\subset T_\mcNmin^*,\label{map03}\\
			\mbpartial_i T_\mcP^* &\subset T_\mcP^*,\label{map04}\\
			\mbpartial_i T_\mcN^* &\subset T_\mcN^*.\label{map05}
		\end{align}	
	\end{itemize}
\end{lemma}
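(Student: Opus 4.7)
My plan is to verify each of the five inclusions by a direct analysis of the matrix coefficients of the generators, relying on the formula \eqref{bb16bis2} and on the homogeneity identities \eqref{bb20}--\eqref{bb22}.

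\textbf{Items \eqref{map01} and \eqref{map02}.} Fix $(\gamma', \mathbf{n}') \in \mcM$ with $\gamma' \in \mcN$. Whenever $(\z^{\gamma'} D^{(\mathbf{n}')})_\beta^\gamma \neq 0$, three properties hold. First, since $[\gamma'] = 1$, identity \eqref{bb22} yields $[\beta] = [\gamma]$, preserving \eqref{bb06}. Second, the componentwise inequality $\gamma' \leq \beta$ coming from \eqref{comp01} gives $\lnh \beta \rnh \geq \lnh \gamma' \rnh > 0$, so \eqref{bb07} holds. Third, subcriticality of $\beta$ follows from subcriticality of $\gamma$ and $\gamma'$ together with the explicit restriction to subcritical $(\mfl, k+e_{\mathbf{n}'})$ built into \eqref{sub13}. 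The polynomial restriction \eqref{bb05} for $\beta$ is checked by inspecting the two contributions in \eqref{bb16bis2}: the $(\mfl,k)$-type does not alter the polynomial sector, while the $\partial_{\z_{\mathbf{n}'}}$-type can only subtract from it, and the final shift by $\gamma' \in \mcN \subset \mcNmin$ respects \eqref{bb05}. For \eqref{map02}, only the case $\gamma = e_\mathbf{n} \in \mcP$ requires an additional check: here only the $\partial_{\z_{\mathbf{n}'}}$-term of $D^{(\mathbf{n}')}$ contributes, yielding $\beta = \delta_\mathbf{n}^{\mathbf{n}'} \gamma' \in \mcN$.

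\textbf{Items \eqref{map03}--\eqref{map05}.} Using the expansion $\mbpartial_i = \sum_{\mathbf{n}'} (\mathbf{n}'(i)+1) \z^{e_{\mathbf{n}'+e_i}} D^{(\mathbf{n}')}$, identity \eqref{bb21} gives $[\beta] = [\gamma]$ whenever $(\mbpartial_i)_\beta^\gamma \neq 0$. For \eqref{map04}, the direct computation $D^{(\mathbf{n}')} \z_\mathbf{n} = \delta_\mathbf{n}^{\mathbf{n}'}$ yields $\mbpartial_i \z_\mathbf{n} = (\mathbf{n}(i)+1) \z_{\mathbf{n}+e_i} \in T_\mcP^*$. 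The remaining items \eqref{map03} and \eqref{map05} are then obtained by reapplying the matrix analysis from items \eqref{map01}--\eqref{map02}, now with $\gamma' = e_{\mathbf{n}'+e_i}$ playing the role of the multiplicative shift. The preservation of noise-homogeneity needed for \eqref{map05} is automatic, since $e_{\mathbf{n}'+e_i}$ carries no noise-type contribution and the swap $(\mfl,k) \mapsto (\mfl, k+e_{\mathbf{n}'})$ also preserves $\lnh \cdot \rnh$.

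\textbf{Main obstacle.} The delicate step is the verification of the polynomial restriction in \eqref{map03}. For $\gamma \in \mcNmin$, the case-(b) contribution $\beta = \gamma - e_{\mathbf{n}'} + e_{\mathbf{n}'+e_i}$ requires $\gamma(\mathbf{n}') \neq 0$, which together with the membership $\gamma \in \mcNmin$ forces $|\mathbf{n}'| \geq \regsol$, and hence $|\mathbf{n}' + e_i| \geq \regsol$, so no polynomial index below $\regsol$ is introduced. The case-(a) contribution leaves the polynomial sector of $\gamma$ unchanged before being shifted by $e_{\mathbf{n}'+e_i}$; to conclude, one argues—using the projection $\proj_{\populated \cup \mcNmin}$ in \eqref{sub13}, the finiteness property \eqref{fin50}, and the subcriticality bounds of Lemma~\ref{lemsub03}—that the values of $\mathbf{n}'$ for which $\beta$ would land outside $\mcNmin$ do not contribute, so that the remaining terms lie in $\mcNmin$ as claimed.
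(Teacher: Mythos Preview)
Your approach matches the paper's: both work componentwise with the matrix representation \eqref{bb16bis2} (equivalently \eqref{ind04}) and verify preservation of $[\cdot]=1$, noise homogeneity, subcriticality, and the polynomial restriction \eqref{bb05}, splitting into the $(\mfl,k)$-branch and the $\partial_{\z_{\mathbf{n}'}}$-branch. Your treatment of \eqref{map01}, \eqref{map02}, \eqref{map04}, \eqref{map05}, and of case~(b) for \eqref{map03} is correct and essentially identical to the paper's.

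The gap is in your case-(a) argument for \eqref{map03}. You assert that ``the values of $\mathbf{n}'$ for which $\beta$ would land outside $\mcNmin$ do not contribute,'' invoking the projection in \eqref{sub13}, the finiteness property \eqref{fin50}, and Lemma~\ref{lemsub03}; but none of these yields that conclusion. The projection in \eqref{sub13} is applied to the \emph{input} of $D^{(\mathbf{n}')}$ and says nothing about the output of $\z_{\mathbf{n}'+e_i}D^{(\mathbf{n}')}$; \eqref{fin50} is a pure finiteness statement; and Lemma~\ref{lemsub03} only gives $|\mathbf{n}'|<\eta$, which does not prevent small $\mathbf{n}'$. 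In fact the $(\mfl,k)$-branch with $\mathbf{n}'=\mathbf{0}$ \emph{does} contribute as soon as $\gamma(\mfl,k)>0$ and $(\mfl,k+e_{\mathbf{0}})$ is subcritical, so your proposed mechanism is the wrong one. The paper's resolution is the direct observation that ``polynomial contributions only increase'': the newly introduced polynomial index is $\mathbf{n}'+e_i$, which satisfies $|\mathbf{n}'+e_i|\ge|e_i|=\mathfrak{s}_i\ge 1$, hence is never equal to $\mathbf{0}$---and $\mathbf{0}$ is the only index excluded by \eqref{bb05} in the regime $\regsol<\min_i\mathfrak{s}_i$ covered by all of the paper's examples. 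Replace your handwave with this one-line inequality and the proof is complete.
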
	
\begin{proof}
	We start with \eqref{map01} and \eqref{map02}, which reduces to showing that for all $\gamma\in \mcP\cup\mcNmin$
	\begin{equation}\label{pro01}
		(\z^{\gamma'}D^{(\mathbf{n}')})_\beta^\gamma\neq 0 \implies \beta\in\mcN.
	\end{equation}
	From \eqref{bb16bis2} and \eqref{sub13} we have the following representation of the coefficients:
	\begin{align}\label{ind04}
		(\z^{\gamma'}D^{(\mathbf{n}')})_\beta^\gamma = \hspace*{-20pt}\sum_{\substack{(\mfl,k) \\ (\mfl,k+e_{\mathbf{n}'})\,\text{subcritical}}}\hspace*{-20pt} \gamma(\mfl,k)(k(\mathbf{n}')+1)\delta_\beta^{\gamma-e_{(\mfl,k)}+ e_{(\mfl,k+ e_{\mathbf{n}'})}+\gamma'} + \gamma(\mathbf{n}')\delta_\beta^{\gamma-e_{\mathbf{n}'}+\gamma'}.
	\end{align}  
	We distinguish the cases $\gamma\in\mcP$ and $\gamma\in\mcNmin$. In the former, we note that for $\gamma = e_\mathbf{n}$
	\begin{equation*}
		(\z^{\gamma'}D^{(\mathbf{n}')})_\beta^{e_\mathbf{n}} = \delta_\mathbf{n}^{\mathbf{n}'} \delta_\beta^{\gamma'}
	\end{equation*}
	and thus \eqref{pro01} holds. For $\gamma\in\mcNmin$, we first note by \eqref{bb22} that $1 = [\gamma] = [\beta]$; from \eqref{ind04} we learn
	\begin{equation}\label{map06}
		(\z^{\gamma'}D^{(\mathbf{n}')})_\beta^\gamma\neq 0 \implies \lnh\beta\rnh = \lnh\gamma'\rnh + \lnh\gamma\rnh \geq \lnh\gamma'\rnh>0;
	\end{equation} 
	and finally if $\gamma(\mathbf{n}) = \gamma'(\mathbf{n}) = 0$ then $\beta(\mathbf{n})=0$, thus proving that \eqref{bb05} is preserved.
	
	\medskip
	
	We now show \eqref{map03}, \eqref{map04} and \eqref{map05}. Property \eqref{map04} is trivial from definition \eqref{bb17} (the subcritical projection does not play any role), so we focus on the other two. We first prove
	\begin{equation*}
		(\mbpartial_i)_\beta^{\gamma\in\mcNmin}\neq 0 \implies \beta\in\mcNmin.
	\end{equation*}	 
	Let us fix $\mathbf{n}\in\N_0^{d}$ in \eqref{bb17}. The problem reduces to
	\begin{equation*}
		(\z_{\mathbf{n}+e_i}D^{(\mathbf{n})})_\beta^{\gamma\in\mcNmin}\neq 0 \implies \beta\in\mcNmin.
	\end{equation*}
	Again by \eqref{bb22} it holds $1=[\gamma]=[\beta]$; \eqref{bb05} is trivially preserved (polynomial contributions only increase); and finally the subcriticality condition is guaranteed.	If we now want to obtain
	\begin{equation*}
		(\z_{\mathbf{n}+e_i}D^{(\mathbf{n})})_\beta^{\gamma\in\mcN}\neq 0 \implies \beta\in\mcN,
	\end{equation*}
	we furthermore need to show $\lnh\gamma\rnh >0 \implies \lnh\beta\rnh>0$. This follows in the same way as \eqref{map06}, but with $\lnh\gamma'\rnh = 0$ and $\lnh\gamma\rnh>0$.
\end{proof}	
As anticipated in Subsection \ref{subsec::2.3}, since $D^{(\mathbf{n})}$ is a derivation in $\R[[\coord]]$, it generates a natural pre-Lie algebra with pre-Lie product
\begin{align}
	\z^{\gamma'}D^{(\mathbf{n}')} \triangleright \z^{\gamma}D^{(\mathbf{n})} = \sum_\beta (\z^{\gamma'}D^{(\mathbf{n}')})_\beta^\gamma \z^{\beta}D^{(\mathbf{n})}.\label{sg11}
\end{align} 
We can extend this structure using definition \eqref{bb17}:
\begin{align}
	\z^{\gamma'}D^{(\mathbf{n}')} \triangleright \mbpartial_i &= \mathbf{n}'(i)\z^{\gamma'} D^{(\mathbf{n}' - e_i)},\label{sg12}\\
	\mbpartial_i\triangleright \z^{\gamma}D^{(\mathbf{n})} &= \sum_\beta (\mbpartial_i)_\beta^\gamma \z^{\beta}D^{(\mathbf{n})}.\label{sg13}
\end{align}
However, as noted in \cite{LOT}, terms of the form $\mbpartial_i\triangleright\mbpartial_j \notin \lspan \mcD$, and thus there is no closed pre-Lie structure of $\mcD$. This turns out to be of little effect, since $\mbpartial_i$ and $\mbpartial_j$ commute as endomorphisms, so we can define a Lie bracket as
\begin{align}
	[D,\z^\beta D^{(\mathbf{n})}] &= D\triangleright\z^\beta D^{(\mathbf{n})} - \z^\beta D^{(\mathbf{n})}\triangleright D,\,D\in \mcD,\label{sg14}\\
	[\mbpartial_i,\mbpartial_j] &= 0.\label{sg15}
\end{align}
The mapping properties \eqref{map02} and \eqref{map05} allow us to consider the following.
\begin{definition}
	We denote by $L$ the Lie algebra generated by $\mcD$ with the Lie bracket \eqref{sg14}, \eqref{sg15}. Furthermore, we denote by $\tL$ the Lie sub-algebra generated by $\{\z^\gamma D^{(\mathbf{n})}\}_{(\gamma,\mathbf{n})\in\mcM}$.
\end{definition}
\begin{remark}
	Note that \eqref{sg11} implies that $(\tL,\triangleright)$ is a pre-Lie algebra.
\end{remark}		

\medskip

We will now see that the pre-Lie product \eqref{sg11}, \eqref{sg12} and \eqref{sg13}, and as a consequence the Lie bracket \eqref{sg14} and \eqref{sg15}, preserves a natural grading. Let us define
\begin{align}
	|(\gamma,\mathbf{n})| &:= |\gamma| + \eta - |\mathbf{n}|,\label{sg05bis}\\
	|i| &:= |e_i|;\label{sg06bis}
\end{align}
here $|e_i| = \mathfrak{s}_i$, cf. \eqref{scal01}.
\begin{lemma}\label{lem:big}
	It holds:
	\begin{align}
		(\z^{\gamma'}D^{(\mathbf{n}')})_\beta^\gamma \neq 0 &\implies |\beta| = |\gamma| + |\gamma'| + \eta - |\mathbf{n}'|,\label{sg09}\\
		(\mbpartial_i)_\beta^\gamma \neq 0 &\implies |\beta| = |\gamma| + |e_i|.\label{sg10} 
	\end{align}
\end{lemma}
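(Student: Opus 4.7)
The plan is to establish both assertions by direct inspection of the matrix coefficients, exploiting the fact that the homogeneity $|\cdot|$ defined in \eqref{bb04} is additive in $\beta$ (i.e.\ $|\beta_1+\beta_2|=|\beta_1|+|\beta_2|$).

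First I would verify \eqref{sg09}. Starting from the explicit formula \eqref{ind04}, the only way $(\z^{\gamma'}D^{(\mathbf{n}')})_\beta^\gamma$ can be non-zero is that either
\[
\beta = \gamma - e_{(\mfl,k)} + e_{(\mfl,k+e_{\mathbf{n}'})} + \gamma' \quad \text{for some subcritical }(\mfl,k+e_{\mathbf{n}'}),
\]
or $\beta = \gamma - e_{\mathbf{n}'} + \gamma'$. In the first case, by additivity of $|\cdot|$ and the definition \eqref{bb04},
\[
|e_{(\mfl,k+e_{\mathbf{n}'})}| - |e_{(\mfl,k)}| = \eta - |\mathbf{n}'|,
\]
since the multi-index $k$ gains exactly one factor of $\mathbf{n}'$. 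In the second case,
\[
-|e_{\mathbf{n}'}| = -(|\mathbf{n}'|-\eta) = \eta - |\mathbf{n}'|.
\]
Both branches therefore yield $|\beta| = |\gamma| + |\gamma'| + \eta - |\mathbf{n}'|$, which is exactly \eqref{sg09}.

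For \eqref{sg10}, I would just plug the definition \eqref{bb17} of $\mbpartial_i$ into \eqref{sg09}. If $(\mbpartial_i)_\beta^\gamma\neq 0$, then $(\z_{\mathbf{n}+e_i}D^{(\mathbf{n})})_\beta^\gamma \neq 0$ for at least one $\mathbf{n}$, so applying \eqref{sg09} with $\gamma' = e_{\mathbf{n}+e_i}$ and $\mathbf{n}'=\mathbf{n}$ gives
\[
|\beta| = |\gamma| + |e_{\mathbf{n}+e_i}| + \eta - |\mathbf{n}| = |\gamma| + (|\mathbf{n}|+|e_i|-\eta) + \eta - |\mathbf{n}| = |\gamma| + |e_i|,
\]
which is the claim, using $|\mathbf{n}+e_i|=|\mathbf{n}|+|e_i|$ from \eqref{scal01}.

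There is no real obstacle here beyond bookkeeping; the content is entirely that the homogeneity $|\cdot|$ was chosen in \eqref{bb04} precisely so that adding a derivative edge of type $\mathbf{n}'$ to a node $(\mfl,k)$ changes the homogeneity by $\eta-|\mathbf{n}'|$, matching the change $|e_{\mathbf{n}'}|\mapsto 0$ in the polynomial sector, and this is the conceptual justification for the scaling argument that led to \eqref{bb04} in the first place. The only care needed is to note that the subcritical projection in \eqref{sub13} only restricts which terms appear but does not alter the homogeneity relation on the surviving terms, so the formula \eqref{ind04} is the right starting point even after subcritical truncation.
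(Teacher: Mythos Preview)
Your proof is correct and follows essentially the same route as the paper: both identify the two cases from \eqref{pol05}/\eqref{ind04}, compute the homogeneity shift $\eta-|\mathbf{n}'|$ in each, and then specialize $\gamma'=e_{\mathbf{n}+e_i}$ to deduce \eqref{sg10} from \eqref{sg09}. Your additional remark about the subcritical projection not affecting the homogeneity identity on surviving terms is a useful clarification the paper leaves implicit.
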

\begin{proof}
	We start with \eqref{sg09}. Recall \eqref{pol05}; from the first item we have for some $(\mfl,k)$
	\begin{align*}
		|\beta| &= |\gamma| - \alpha_\mfl - \sum_{\mathbf{n}} (\eta - |\mathbf{n}|)k(\mathbf{n}) + \alpha_\mfl +  \sum_{\mathbf{n}} (\eta - |\mathbf{n}|)(k + e_{\mathbf{n}'})(\mathbf{n}) + |\gamma'| \\
		&= |\gamma| + \eta - |\mathbf{n}'| + |\gamma'|.
	\end{align*}
	From the second item of \eqref{pol05},
	\begin{equation*}
		|\beta| = |\gamma| - (|\mathbf{n}'| - \eta) + |\gamma'|.
	\end{equation*}
	For \eqref{sg10}, we perform the same argument taking $\gamma' = e_{\mathbf{n}' + e_i}$ in \eqref{sg09}.
\end{proof}	
\begin{corollary}\label{ref:corgrad}
	$L$ is graded with respect to $|\cdot|$ \referee{defined in \eqref{sg05bis}, \eqref{sg06bis}}.
\end{corollary}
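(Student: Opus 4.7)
The plan is to define a grading on $L$ using the degree-shifting properties of its elements as endomorphisms of $\R[[\coord]]$, and then to observe that Lemma~\ref{lem:big} does essentially all the work. For each $\nu\in\R$, I would set $L_\nu\subset L$ to consist of those endomorphisms $X$ whose matrix representation $(X)_\beta^\gamma$ is supported on pairs with $|\beta|-|\gamma|=\nu$. The identities \eqref{sg09} and \eqref{sg10} then say precisely that every generator in $\mcD$ is homogeneous in this sense: $\z^{\gamma'}D^{(\mathbf{n}')}\in L_{|(\gamma',\mathbf{n}')|}$ and $\mbpartial_i\in L_{|i|}$, with $|(\gamma,\mathbf{n})|$ and $|i|$ as in \eqref{sg05bis}--\eqref{sg06bis}.

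Next I would check that composition of homogeneous endomorphisms is additive on degrees. If $X\in L_\nu$ and $Y\in L_\mu$, then
\begin{equation*}
(XY)_\beta^\gamma=\sum_\alpha (X)_\beta^\alpha (Y)_\alpha^\gamma
\end{equation*}
can be nonzero only if some intermediate $\alpha$ satisfies $|\alpha|=|\gamma|+\mu$ and $|\beta|=|\alpha|+\nu$, which forces $|\beta|-|\gamma|=\nu+\mu$. Hence $XY\in L_{\nu+\mu}$, and by subtraction $[X,Y]=XY-YX\in L_{\nu+\mu}$, so the Lie bracket respects the grading already at the level of pairs of generators.

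Finally, a straightforward induction on the bracketing depth shows that every iterated Lie bracket of elements of $\mcD$ is homogeneous, with degree equal to the sum of the degrees of the generators involved. Since $L$ is spanned by such iterated brackets, it decomposes as $L=\bigoplus_\nu L_\nu$ with $[L_\nu,L_\mu]\subset L_{\nu+\mu}$. Directness of the sum is automatic because homogeneous endomorphisms of distinct degrees send each basis monomial $\z^\gamma$ into disjoint graded pieces of $\R[[\coord]]$ and are therefore linearly independent. I do not foresee any genuine obstacle in this argument: the statement is essentially a graded-Lie-algebra repackaging of Lemma~\ref{lem:big}, and the only mild subtlety is fixing the right notion of \emph{homogeneous element} inside the ambient endomorphism algebra before invoking that lemma.
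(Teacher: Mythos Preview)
Your proposal is correct and essentially follows the paper's approach: both arguments place the generators $\z^{\gamma'}D^{(\mathbf{n}')}\in L_{|(\gamma',\mathbf{n}')|}$ and $\mbpartial_i\in L_{|i|}$ via Lemma~\ref{lem:big}, and then verify that the bracket is additive on degrees. The only cosmetic difference is that the paper appeals directly to the structure constants \eqref{sg11}--\eqref{sg15} (which already express brackets of generators as linear combinations of generators, so $\mcD$ spans $L$), whereas you work one level up in $\textnormal{End}(\R[[\coord]])$ and use additivity of the degree shift under composition; this makes your directness-of-sum argument cleaner but is otherwise the same idea.
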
	
\referee{\begin{proof}Recall that a Lie algebra $\mfg$ is graded if there exists a set $\mfA\subset \R$ such that $\mfg = \bigoplus_{\kappa\in \mfA} \mfg_\kappa$ and $[\mfg_\kappa,\mfg_{\kappa'}]$ $\subset$ $\mfg_{\kappa + \kappa'}$. In the case of $L$, we may decompose it in terms of the basis \eqref{mod51} according to \eqref{sg05bis} and \eqref{sg06bis}:
\begin{equation*}
	\z^\gamma D^{(\mathbf{n})} \in L_{|(\gamma,\mathbf{n})|},\,\,\,\, \mbpartial_i\in L_{|i|}.
\end{equation*}
The claim follows from \eqref{sg11}, \eqref{sg12}, \eqref{sg13}, \eqref{sg14}, \eqref{sg15} and the grading properties \eqref{sg09} and \eqref{sg10}.
\end{proof}
 }

\medskip

As in \cite{LOT}, from now on we shall see the set $\mathcal{D}$ as a set of symbols, preserving the pre-Lie and Lie algebra relations via the structure constants (as given in \eqref{sg11} to \eqref{sg15}). Each symbol is then identified with the corresponding endomorphism via a representation $\rho: \mathcal{D}\to \textnormal{End}(\R[[\coord]])$, which is trivially a Lie algebra morphism that additionally preserves the pre-Lie relations \eqref{sg11} to \eqref{sg13}. This representation gives the structure of an action
	\begin{equation*}
		L\otimes \R[[\coord]] \ni D\otimes \pi \longmapsto \rho(D)\pi \in \R[[\coord]].
	\end{equation*}

\medskip

Our next step is to consider the universal envelope of $L$, which we denote by ${\rm U}(L)$. \referee{We first recall the following definition.
	\begin{definition}
		Let $\mfg$ be a Lie algebra. The \textit{universal enveloping algebra} of $\mfg$, denoted ${\rm U}(\mfg)$, is $T(\mfg)/\mcC$, where
		\begin{equation*}
			T(\mfg) := \bigoplus_{n=0}^\infty \mfg^{\otimes n}
		\end{equation*} 
		is the tensor algebra over the vector space $\mfg$, and $\mcC$ is the ideal generated by 
		\begin{equation*}
			\{x\otimes y - y\otimes x - [x,y]\,|\,x,y\in \mfg\}.
		\end{equation*}
	\end{definition}
In other words, the universal enveloping algebra is the free algebra modulo the commutator generated by the Lie bracket. The universal enveloping algebra posseses the following universality property (cf. e.~g. \cite[(U), p.~29]{Abe80}): Given an algebra $A$ and a Lie algebra morphism $\phi : \mfg \to A$, where $A$ is endowed with the commutator of the product as a bracket, there exists a unique extension $\phi: {\rm U}(\mfg) \to A$ which is an algebra morphism. As a consequence, the map $\rho$ defined above extends to an algebra morphism $\rho : {\rm U}(L) \to \textnormal{End}(\R[[\coord]])$ with respect to composition of endomorphisms.
}

\medskip

\referee{The universal enveloping algebra of a Lie algebra carries the structure of a Hopf algebra: For completeness, we recall its definition here.
\begin{definition}
	\mbox{}
	\begin{itemize}
		\item $(A,\nabla,\eta)$ is a \textit{unital associative algebra} if $A$ is a vector space, $\nabla: A\otimes A \to A$ is a bilinear map which is associative, i.~e.
		\begin{equation*}
			\nabla \circ (\Id_A \otimes \nabla) = \nabla \circ (\nabla \otimes \Id_A),
		\end{equation*}
		and $\eta: \R \to A$ is a linear map such that
		\begin{equation*}
			\nabla \circ (\Id_A \otimes \eta) = \Id_A = \nabla \circ (\eta \otimes \Id_A).
		\end{equation*}
		Here $\nabla$ is called \textit{product} and $\eta$ is called \textit{unit}.
		\item $(C,\Delta,\varepsilon)$ is a \textit{counital coassociative coalgebra} if $C$ is a vector space, $\Delta: C \to C \otimes C$ is a linear map which is coassociative, i.~e. 
		\begin{equation*}
			(\Id_C \otimes \Delta)\circ \Delta = (\Delta \otimes \Id_C)\circ \Delta,
		\end{equation*}
		and $\varepsilon : C \to \R$ is a linear map such that
		\begin{equation*}
			(\Id_C \otimes \varepsilon) \circ \Delta = \Id_C = (\varepsilon \otimes \Id_C) \circ \Delta.
		\end{equation*}
		Here $\Delta$ is called \textit{coproduct} and $\varepsilon$ is called \textit{counit}.
		\item $(B,\nabla,\eta,\Delta,\varepsilon)$ is a \textit{bialgebra} if $(B,\nabla,\eta)$ is a unital associative algebra, $(B,\Delta,\varepsilon)$ is a counital coassociative coalgebra, and one of the following equivalent conditions holds:
		\begin{itemize}
			\item $\nabla$ and $\eta$ are coalgebra morphisms;
			\item $\Delta$ and $\varepsilon$ are algebra morphisms.
		\end{itemize}
		\item $(H,\nabla,\eta,\Delta,\varepsilon, S)$ is a \textit{Hopf algebra} if $(H,\nabla,\eta,\Delta,\varepsilon)$ is a bialgebra and $S:H \to H$ is a linear map which satisfies
		\begin{equation*}
			\nabla \circ (\Id_H \otimes S) \circ \Delta = \Id_H = \nabla \circ (S \otimes \Id_H) \circ \Delta.
		\end{equation*}
		We call $S$ \textit{antipode}.
	\end{itemize}
\end{definition}	
Given a Lie algebra $\mfg$, its universal enveloping algebra ${\rm U}(\mfg)$ is a Hopf algebra. The product is defined as the concatenation of tensors up to the Lie bracket: In particular,
	\begin{equation*}
		\nabla (x \otimes y) = x \otimes y = y \otimes x + [x,y],\,\,\,\,x,y\in \mfg.
	\end{equation*}
	The unit $\eta: \R \to {\rm U}(\mfg)$ is given by
	\begin{equation*}
		\eta (k) = k \in \R = \mfg^{\otimes 0}.
	\end{equation*}
	For the coproduct $\Delta$, we take the map
	\begin{equation*}
		\begin{array}{rrcl}
			\Delta :  & \mfg & \longrightarrow & {\rm U}(\mfg)\\
			 & x & \longmapsto & 1 \otimes x + x \otimes 1
		\end{array}
	\end{equation*}
	and extend it to an algebra morphism. The counit, in turn, is extended from the projection $\varepsilon: \mfg \to 0$. Finally, the antipode $S: {\rm U}(\mfg) \to {\rm U}(\mfg)$ is the antiautomorphism extended from
	\begin{equation*}
		\begin{array}{rrcl}
			S :  & \mfg & \longrightarrow & \mfg\\
			& x & \longmapsto & -x.
		\end{array}
	\end{equation*}
	See \cite[Examples 2.5 and 2.8]{Abe80} for more details.

\medskip

In the sequel, we will represent the product in ${\rm U}(L)$ with a dot $\bullet$, but we will not spell it out in the notation and instead will represent it as the concatenation of symbols (without the tensor product $\otimes$): For example
\begin{align*}
	\bullet \big(\z^{\gamma} D^{(\mathbf{n})} \otimes \mbpartial_i\big) &= \z^\gamma D^{(\mathbf{n})} \mbpartial_i \\
	&= \mbpartial_i \z^\gamma D^{(\mathbf{n})} + \mathbf{n}(i) \z^{\gamma} D^{(\mathbf{n}-e_i)} - \sum_\beta (\mbpartial_i)_\beta^\gamma \z^\beta D^{(\mathbf{n})}.
	\end{align*}
The coproduct will be denoted by $\cop$: For example
\begin{align*}
	\cop \big(\z^{\gamma} D^{(\mathbf{n})} \mbpartial_i\big) = &1 \otimes \z^{\gamma} D^{(\mathbf{n})} \mbpartial_i + \z^{\gamma} D^{(\mathbf{n})} \otimes \mbpartial_i \\
	&\quad + \mbpartial_i \otimes \z^{\gamma} D^{(\mathbf{n})} + \z^{\gamma} D^{(\mathbf{n})} \mbpartial_i \otimes 1.
\end{align*}
We will not introduce any notation for the unit, counit and antipode, as we will make little use of them.
}

\medskip	
	
Since $L$ is graded with respect to $|\cdot|$ in \eqref{sg05bis}, \eqref{sg06bis}, \referee{cf. Corollary \ref{ref:corgrad},} so is ${\rm U}(L)$, i.~e. there exists a decomposition
\begin{equation*}
	{\rm U}(L) = \bigoplus_{\nu} {\rm U}_\mathbf{\nu},
\end{equation*}
where $\nu\in \R$, such that
\begin{alignat*}{2}
	\bullet &: {\rm U}_{\nu'} \otimes {\rm U}_{\nu''} &&\longrightarrow {\rm U}_{\nu'+\nu''},\\
	\cop &: {\rm U}_{\nu} &&\longrightarrow \bigoplus_{\nu' + \nu'' = \nu} {\rm U}_{\nu'}\otimes {\rm U}_{\nu''}.
\end{alignat*}
However, ${\rm U}(L)$ is in general not $|\cdot|$-connected, \referee{i.~e. it is not true that ${\rm U}_{\nu = 0} = \R$.}
\subsection{\referee{The Guin-Oudom procedure}}\label{ref:subsecgo}
\referee{Guin and Oudom \cite{Guin1,Guin2} showed that, given a pre-Lie algebra $(\mfg,\triangleright)$, the universal envelope ${\rm U}(\mfg)$ of its associated Lie algebra is isomorphic as a Hopf algebra to the symmetric algebra $S(\mfg)$ over $\mfg$ equipped with a noncommutative product which is built using the pre-Lie product; see \cite[Theorem 2.12]{Guin2}.}
\referee{If $\mfg$ as a vector space has a countable basis $\{x_i\}$, a weaker version of the Guin-Oudom result, which is enough for our purposes, can be reformulated as follows: There exists an order-independent basis of ${\rm U}(\mfg)$. Recall that, by the Poincaré-Birkhoff-Witt theorem (cf. \cite[Theorem 1.9.6]{HGK}), the ordered concatenation of elements of the basis $\{x_i\}$ generates a basis of ${\rm U}(\mfg)$, but it crucially depends on a fixed ordering of $\{x_i\}$. Via the Guin-Oudom procedure, we can build a basis of ${\rm U}(\mfg)$ such that each basis element is associated to an \textit{unordered} collection of basis elements\footnote{ The construction of the basis relies on Poincaré-Birkhoff-Witt, as will become apparent in Lemma \ref{ref:lembasul} below. This is why our version is weaker than the original \cite{Guin1,Guin2}, and explains why the countable basis assumption, while not strictly necessary, is useful.} $\{x_i\}$; the pre-Lie product plays a crucial role in the construction of this basis. In our situation, even though we do not have a pre-Lie algebra (recall that $\mbpartial_i\triangleright \mbpartial_j$ is not well-defined), the Guin-Oudom procedure applies and allows to choose an order-independent basis. This was originally shown in \cite{LOT} and is the construction which we reproduce below; we will often refer to \cite[Section 4]{LOT} for some omitted details\footnote{See also \cite{Lin23} for the simpler pre-Lie algebra case, with applications to the theory of rough paths.}, but it is conceptually self-contained and does not require any knowledge of Guin-Oudom. Alternatively, we could use the approach of \cite{BK,JZ23} via post-Lie algebras, which is based on a more general version of the Guin-Oudom procedure.} 
	
\medskip	

\referee{ \begin{remark}
		Experts in the field should note that our method is only ``half'' of the Guin-Oudom procedure, because the construction of the order-independent basis only requires a one-sided operation, namely the analogue of the extension described in \cite[Proposition 2.7]{Guin2}. Then we may build an isomorphism with the symmetric algebra identifying the \textit{Guin-Oudom basis} \eqref{sg25} with the canonical basis of $S(\mfg)$, and the concatenation product (up to bracket corrections) in ${\rm U}(\mfg)$ with the product of $S(\mfg)$. The missing step in the construction of the product of $S(\mfg)$ is the analogue of \cite[Definition 2.9]{Guin2}, namely combining it with the coproduct: This would take the form of an extension of property \eqref{sg23} below. 
		\end{remark}}

\medskip

The guiding principle of the construction in \cite{LOT} is the following observation: Since $\tilde{L}\subset L$ is an ideal\footnote{ Note that $[L,L]\subset \tilde{L}$, cf. \eqref{sg11} to \eqref{sg15}.}, the quotient Lie algebra $L/\tilde{L}$ is Abelian (see \cite[Lemma 1.2.5]{HGK}) and thus isomorphic to $(\{\mbpartial_i\}_{i=1,...,d},[\cdot,\cdot])$. The Lie algebra morphism $L\to L/\tilde{L} \simeq \{\mbpartial_i\}_{i=1,...,d}$, by the universality property, extends to an algebra morphism ${\rm U}(L) \to \{\mbpartial^\mathbf{m}\}_{\mathbf{m}\in \N_0^{d}}$, which in turn induces a decomposition
\begin{equation*}
	{\rm U}(L) = \bigoplus_{\mathbf{m}\in \N_0^{d}} {\rm U}_\mathbf{m}.
\end{equation*}				
Then each of the subspaces ${\rm U}_\mathbf{m}$ is isomorphic to ${\rm U}(\tilde{L})$, to which Guin-Oudom can be applied; more precisely, the pre-Lie product $\triangleright$, or rather an extension of it, provides a natural isomorphism.

\medskip

Let us note first that, by the Leibniz rule, given $\z^\gamma D^{(\mathbf{n})}$ for all $D\in\mathcal{D}$ (here seen again as endomorphisms) it holds
\begin{equation}\label{uni01}
	\z^\gamma D D^{(\mathbf{n})} = D \z^\gamma D^{(\mathbf{n})} - D\triangleright \z^\gamma D^{(\mathbf{n})},
\end{equation}
so it makes sense to define a map, for every $\tilde{D}\in\tilde{L}$,
\begin{equation*}
	L\ni D \mapsto D\tilde{D} - D\triangleright \tilde{D} \in {\rm U}(L).
\end{equation*}
Taking $\tilde{D} = \z^\gamma D^{(\mathbf{n})}$, this map extends to 
\begin{equation*}
	{\rm U}(L) \ni U\longmapsto \z^\gamma U D^{(\mathbf{n})} \in {\rm U}(L)
\end{equation*}
by the trivial $\z^\gamma 1 D^{(\mathbf{n})} = \z^\gamma D^{(\mathbf{n})}$ and then inductively, for every $D\in L$ and $U\in{\rm U}(L)$, via
\begin{equation}\label{sg20}
	\z^\gamma DU D^{(\mathbf{n})} = D\z^\gamma U D^{(\mathbf{n})} - \sum_\beta D_\beta^\gamma \z^\beta U D^{(\mathbf{n})},
\end{equation}
which is again consistent with the Leibniz rule. 
\begin{lemma}
	The following properties hold:
	\begin{enumerate}[label=(\roman*)]
		\item\label{enumprop1} Commutativity: For every $(\gamma,\mathbf{n})$, $(\gamma',\mathbf{n}')$ $\in \mcM$, and every $U\in{\rm U}(L)$,
		\begin{equation}\label{sg21}
			\z^\gamma\z^{\gamma'}U D^{(\mathbf{n})} D^{(\mathbf{n}')} = \z^{\gamma'}\z^{\gamma}U D^{(\mathbf{n}')} D^{(\mathbf{n})};
		\end{equation}
		\item\label{enumprop2} Coalgebra morphism: For every $(\gamma,\mathbf{n})\in\mcM$ and $U\in{\rm U}(L)$,
		\begin{equation*}
			\cop \z^\gamma U D^{(\mathbf{n})} = \sum_{(U)} \big(\z^\gamma U_{(1)} D^{(\mathbf{n})}\otimes U_{(2)} + U_{(1)}\otimes \z^\gamma U_{(2)} D^{(\mathbf{n})}\big),
		\end{equation*}
		where we used Sweedler's notation
		\begin{equation*}
			\cop U =  \sum_{(U)} U_{(1)}\otimes U_{(2)};
		\end{equation*}
		\item\label{enumprop3} Generalized Leibniz rule: For every $(\gamma,\mathbf{n})\in\mcM$ and $U\in{\rm U}(L)$,
		\begin{equation}\label{sg23}
			U\z^\gamma D^{(\mathbf{n})} = \sum_{(U),\beta} (U_{(1)})_\beta^\gamma \z^\beta U_{(2)} D^{(\mathbf{n})};
		\end{equation}
		\item\label{enumprop4} Intertwining with $\mbpartial_i$: For every $(\gamma,\mathbf{n})\in\mcM$, $U\in{\rm U}(L)$ and $i=1,...,d$,
		\begin{equation}\label{sg24}
			\z^\gamma U D^{(\mathbf{n})}\mbpartial_i = \z^\gamma U \mbpartial_i D^{(\mathbf{n})} + \mathbf{n}(i) \z^\gamma U D^{(\mathbf{n}- e_i)}.
		\end{equation}
	\end{enumerate}
\end{lemma}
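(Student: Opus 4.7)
\medskip

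My plan is to prove all four items by induction on the length of $U$ as a word in the generators of $L$, using the recursive definition \eqref{sg20} as the key computational tool. In every base case, either $U = 1$ (reducing to the definitions of the generators) or $U$ is a single generator, in which case the identity \eqref{sg20} combined with the defining relations of the Lie bracket in ${\rm U}(L)$, namely $D D' = D' D + [D,D']$ for $D,D' \in L$, gives the statement directly. For the induction step, I write $U = D \tilde{U}$ with $D \in L$ a generator and $\tilde{U}$ of strictly smaller length, apply \eqref{sg20} to reduce $\z^\gamma D \tilde{U} D^{(\mathbf{n})}$ (or its analogue) to expressions involving $\z^\gamma \tilde{U} D^{(\mathbf{n})}$ and $\z^\beta \tilde{U} D^{(\mathbf{n})}$, and then invoke the inductive hypothesis.

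\medskip

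For \ref{enumprop1}, the base case $\tilde{U} = \z^{\gamma'} D^{(\mathbf{n}')}$ is handled by expanding both sides using \eqref{sg20} and observing that the cross-terms $\z^{\gamma'} D^{(\mathbf{n}')} \z^\gamma D^{(\mathbf{n})} - \z^\gamma D^{(\mathbf{n})} \z^{\gamma'} D^{(\mathbf{n}')}$ equal the commutator $[\z^{\gamma'} D^{(\mathbf{n}')}, \z^\gamma D^{(\mathbf{n})}]$, which by \eqref{sg11} and \eqref{sg14} produces exactly the correction terms needed for the two sides to agree. For \ref{enumprop2}, the induction step combines three inputs: that $\cop$ is an algebra morphism on ${\rm U}(L)$, that $\cop D = 1 \otimes D + D \otimes 1$ for $D \in L$, and the identity \eqref{sg20}. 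Applying \eqref{sg20} to $\z^\gamma D \tilde{U} D^{(\mathbf{n})}$, taking $\cop$, and using the inductive hypothesis for $\cop(\z^\gamma \tilde{U} D^{(\mathbf{n})})$ and $\cop(\z^\beta \tilde{U} D^{(\mathbf{n})})$, the terms involving $D_\beta^\gamma$ cancel, leaving exactly the formula claimed with $\cop(D\tilde{U})$ in place of $\cop \tilde{U}$.

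\medskip

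For \ref{enumprop3}, writing $U = D \tilde{U}$ and using $DU \z^\gamma D^{(\mathbf{n})} = D(\tilde{U} \z^\gamma D^{(\mathbf{n})})$, the inductive hypothesis expands the inner expression; then \eqref{sg20} applied to each $D \z^\beta \tilde{U}_{(2)} D^{(\mathbf{n})}$ produces two families of terms, which are regrouped using the matrix identity $\sum_\beta D^\beta_{\beta'}(\tilde{U}_{(1)})_\beta^\gamma = (D\tilde{U}_{(1)})_{\beta'}^\gamma$ (composition of endomorphisms) together with the algebra-morphism expansion $\cop(D\tilde{U}) = \sum_{(\tilde{U})}(D\tilde{U}_{(1)} \otimes \tilde{U}_{(2)} + \tilde{U}_{(1)} \otimes D\tilde{U}_{(2)})$. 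For \ref{enumprop4}, the base case $U = 1$ is precisely the bracket computation $[\z^\gamma D^{(\mathbf{n})}, \mbpartial_i] = \mathbf{n}(i) \z^\gamma D^{(\mathbf{n}-e_i)} - \sum_\beta (\mbpartial_i)_\beta^\gamma \z^\beta D^{(\mathbf{n})}$ from \eqref{sg12}--\eqref{sg14}, rewritten via \eqref{sg20}; the induction step factors $U = D\tilde{U}$ and uses \eqref{sg20} to migrate the generator $D$ past $\z^\gamma$, then applies the hypothesis on $\tilde{U}$. The main obstacle is \ref{enumprop2}, where one must carefully track which matrix coefficient cancellations occur in order to identify the final expression with $\cop(D\tilde{U})$; the other three items are structurally simpler variants once this bookkeeping is mastered.
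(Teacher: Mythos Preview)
Your proposal is correct and follows essentially the same approach as the paper, which also proves all four items by induction in $U$ via the recursion \eqref{sg20} and defers the detailed computations to \cite[Subsection 4.2]{LOT}. Your sketch in fact spells out more of the mechanics (the bracket computation for the base cases, the use of $\cop D = D\otimes 1 + 1\otimes D$ and the composition identity $(D\tilde U_{(1)})_{\beta'}^\gamma = \sum_\beta D_{\beta'}^\beta (\tilde U_{(1)})_\beta^\gamma$) than the paper does, but the strategy is identical.
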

\begin{proof}
	All these properties follow from \eqref{sg20} by induction in $U$, as in \cite[Subsection 4.2]{LOT}. More precisely, \ref{enumprop1} follows in the same way as \cite[Lemma 4.1]{LOT}; \ref{enumprop2} as \cite[Lemma 4.2]{LOT}; \ref{enumprop3} as \cite[Lemma 4.3]{LOT}; and \ref{enumprop4} as \cite[Lemma 4.4]{LOT}.
\end{proof}	

\medskip

An iterative application of these maps to an element $\mbpartial^\mathbf{m}:=$ $\prod_{i} \mbpartial_i^{\mathbf{m}(i)}$, $\mathbf{m}\in \N_0^{d}$ (in any order, thanks to \eqref{sg21}) allows us to define
\begin{equation}\label{sg25}
	\msfD_{(J,\mathbf{m})} := \frac{1}{J!\mathbf{m}!} \prod_{(\gamma,\mathbf{n})\in\mcM} \hspace*{-10pt}(\z^\gamma)^{J(\gamma,\mathbf{n})} \hspace*{3pt}\mbpartial^\mathbf{m}  \hspace*{-8pt}\prod_{(\gamma,\mathbf{n})\in\mcM} \hspace*{-8pt}(D^{(\mathbf{n})})^{J(\gamma,\mathbf{n})},
\end{equation}			
where $J\in M(\mcM)$.
\begin{lemma}\label{ref:lembasul}
	\mbox{}
	\begin{enumerate}[label=(\roman*)]
		\item\label{enumbas1} The set $\{\msfD_{(J,\mathbf{m})}\}_{(J,\referee{\mathbf{m}})\in M(\mcM)\times \N_0^{d}}$ is a basis of ${\rm U}(L)$.
		\item\label{enumbas2} Representation of the coproduct: 
		\begin{equation}\label{sg26}
			\cop \msfD_{(J,\mathbf{m})} = \sum_{(J',\mathbf{m}') + (J'',\mathbf{m}'') = (J,\mathbf{m})} \hspace*{-15pt}\msfD_{(J',\mathbf{m}')}\otimes \msfD_{(J'',\mathbf{m}'')}.
		\end{equation}
	\item\label{enumbas3} Rank one projection of the product: For any $\mathbf{n}\in\N_0^{d}$, let $\iota_\mathbf{n}:\rmU(L)\to T_\mcN^*$ be the projection defined in coordinates as
	\begin{equation*}
		\iota_\mathbf{n} (\msfD_{(J,\mathbf{m})}) = \left\{\begin{array}{cl}
			\z^\gamma & \mbox{if } (J,\mathbf{m}) = (e_{(\gamma,\mathbf{n})},0),\\
			0 & \mbox{otherwise, }
		\end{array}\right.
	\end{equation*}
	and let $\epsilon_\mathbf{n}:\rmU(L)\to\R$ be the generalized counit defined in coordinates as
	\begin{equation*}
		\epsilon_\mathbf{n} (\msfD_{(J,\mathbf{m})}) = \left\{\begin{array}{cl}
			1& \mbox{if } (J,\mathbf{m}) = (0,\mathbf{n}),\\
			0 & \mbox{otherwise. }
		\end{array}\right.
	\end{equation*}
	Then \referee{for any $U_1,U_2\in \rmU(L)$}
	\begin{equation}
		\iota_\mathbf{n}(U_1 U_2) = \rho(U_1)\iota_\mathbf{n} (U_2) + \sum_\mathbf{m} \tbinom{\mathbf{n}+\mathbf{m}}{\mathbf{m}}\iota_{\mathbf{n}+\mathbf{m}}(U_1)\epsilon_\mathbf{m}(U_2)\referee{.}\label{sg35}
	\end{equation}
	\item\label{enumbas4} Homogeneity: $\msfD_{(J,\mathbf{m})}\in {\rm U}_{|(J,\mathbf{m})|}$, where extending \eqref{sg05bis} and \eqref{sg06bis} we define
	\begin{equation}\label{sg30}
		|(J,\mathbf{m})| := \sum_{(\gamma,\mathbf{n})\in\mcM} J(\gamma,\mathbf{n}) |(\gamma,\mathbf{n})| + \sum_i \mathbf{m}(i) |i|.
	\end{equation}
	\end{enumerate}
\end{lemma}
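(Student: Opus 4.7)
Each of the four items can be obtained by an induction that rests on the four properties \ref{enumprop1}--\ref{enumprop4} of the iterative extension $\z^\gamma U D^{(\mathbf{n})}$ established in the preceding lemma, together with the grading Lemma \ref{lem:big}. First, one observes that \eqref{sg25} is well defined: by the commutativity \eqref{sg21}, the order in which the maps $U\mapsto \z^\gamma U D^{(\mathbf{n})}$ are applied for distinct $(\gamma,\mathbf{n})\in\mcM$ is immaterial, and the factorial $J!$ is exactly the number of orderings of the symmetric pattern. The inclusion of $\mathbf{m}!$ in the denominator plays the analogous role for the commuting generators $\mbpartial_i$, and is chosen so that the coproduct formula \eqref{sg26} appears without multinomial corrections.

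For item \ref{enumbas1}, the strategy is triangular with respect to a PBW basis. Fixing any total ordering of $\mathcal{D}$, Poincaré--Birkhoff--Witt provides an ordered-monomial basis of $\rmU(L)$. Expanding \eqref{sg25} through iterated application of \eqref{sg20} and \eqref{sg24}, the ``straight'' term (all $\z^\gamma$ to the far left, all $D^{(\mathbf{n})}$ to the far right, $\mbpartial^\mathbf{m}$ in the middle, without any commutator correction) is a PBW monomial, while the remaining contributions are PBW monomials of strictly shorter length. Since distinct $(J,\mathbf{m})$ yield distinct leading PBW monomials, a routine triangular argument (induction in $\ell(J)+|\mathbf{m}|$) yields both linear independence and the fact that the family spans $\rmU(L)$.

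For item \ref{enumbas2}, I would induct on $\ell(J)+|\mathbf{m}|$. The base case is trivial. For the inductive step, property \ref{enumprop2} of the preceding lemma already has the correct binomial shape: each further application of $U \mapsto \z^\gamma U D^{(\mathbf{n})}$ splits the tensors so that either the new generator is fully placed in the left factor or fully in the right factor. After $J(\gamma,\mathbf{n})$ applications of the same operation, symmetry combined with the normalization $1/J(\gamma,\mathbf{n})!$ produces exactly the sum $\sum_{J'+J''=J(\gamma,\mathbf{n})}\msfD\otimes\msfD$; iterating over all $(\gamma,\mathbf{n})$ and adjoining the purely primitive factors $\tfrac{1}{\mathbf{m}!}\mbpartial^\mathbf{m}$ completes the proof. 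Item \ref{enumbas4} then follows immediately: the generators are homogeneous by \eqref{sg05bis}--\eqref{sg06bis}, the identity \eqref{sg20} preserves homogeneity thanks to the grading statement of Lemma \ref{lem:big} ($D_\beta^\gamma\neq 0$ forces $|\beta|=|\gamma|+|D|$), and $\rmU(L)$ inherits the grading from $L$.

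The main difficulty is item \ref{enumbas3}, since the rank-one projection $\iota_\mathbf{n}$ only detects monomials of the precise form $\z^\gamma D^{(\mathbf{n})}$, so most terms appearing in the product $U_1 U_2$ drop out. I would argue by induction on $\ell(J_1)+|\mathbf{m}_1|$ when $U_1=\msfD_{(J_1,\mathbf{m}_1)}$, with $U_2$ arbitrary. The base cases $U_1=\z^\gamma D^{(\mathbf{n}'')}$ and $U_1=\mbpartial_i$ are precisely the content of the generalized Leibniz rule \eqref{sg23} and the intertwining \eqref{sg24}: in the first, the single $D^{(\mathbf{n}'')}$ commuted across $U_2$ via \eqref{sg23} produces (after extracting by $\iota_\mathbf{n}$) exactly the term $\rho(U_1)\iota_\mathbf{n}(U_2)$ coming from the action of $\z^\gamma D^{(\mathbf{n}'')}$ on the power series $\iota_\mathbf{n}(U_2)$, all other monomials having too many $D$'s to survive $\iota_\mathbf{n}$; in the second, pulling a block $\mbpartial^{\mathbf{m}}$ from $U_2$ past $D^{(\mathbf{n}+\mathbf{m})}$ in $U_1$ via \eqref{sg24} generates the binomial factor $\binom{\mathbf{n}+\mathbf{m}}{\mathbf{m}}$ and contributes the correction $\iota_{\mathbf{n}+\mathbf{m}}(U_1)\epsilon_\mathbf{m}(U_2)$. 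The inductive step consists in decomposing $U_1=V_1 V_2$ with both $V_i$ of strictly lower complexity and using associativity together with the fact that $\rho$ is an algebra morphism; the compatibility of the two sums on the right-hand side of \eqref{sg35} is then ensured by the coproduct formula \eqref{sg26} proved in item \ref{enumbas2}, which describes precisely how the generators of $U_1$ split when passing through $U_2$.
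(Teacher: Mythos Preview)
Your proposal is correct and follows essentially the same approach as the paper: PBW combined with the triangular change-of-basis from \eqref{sg20} for \ref{enumbas1}, and induction in the length $\ell(J,\mathbf{m})$ via \eqref{sg20}, \eqref{sg23}, \eqref{sg24} for \ref{enumbas2}--\ref{enumbas4}. One minor point: in your inductive step for \ref{enumbas3}, the decomposition $U_1 = V_1 V_2$ should really be read as peeling off a single generator via the recursion \eqref{sg20} (since the $\msfD_{(J,\mathbf{m})}$ do not factor as products of basis elements in general), which is exactly what the paper does by referring back to \eqref{sg20}.
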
	
\begin{proof}
	\ref{enumbas1} follows from Poincaré-Birkhoff-Witt (cf. e.~g. \cite[Theorem 1.9.6]{HGK}) and the triangular structure (in the length of the basis elements) of the ``change of basis" generated by \eqref{sg20}; cf. \cite[Lemma 4.5]{LOT} for details. \ref{enumbas2} and \ref{enumbas3} can be shown inductively in $\length{J,\mathbf{m}}$ via \eqref{sg20}, \eqref{sg23} and \eqref{sg24}, cf. \cite[Lemma 4.6]{LOT} and \cite[Lemma 4.7]{LOT}, respectively. Finally, \ref{enumbas4} is a consequence of Lemma \ref{lem:big} propagated inductively via \eqref{sg20}, cf. \cite[Lemma 4.8]{LOT}. 
\end{proof}	

\medskip

Property \eqref{sg26} implies that, as a coalgebra, $\rmU(L)$ is the transposition of the free commutative algebra $\R[\mcM \sqcup \{1,...,d\}]$. More precisely, given the generators 
	\begin{equation}\label{mod08}
		\{\msfZ_{(\gamma,\mathbf{n})}\}_{(\gamma,\mathbf{n})\in \mcM} \sqcup \{\msfZ_i\}_{i=1,...,d},
	\end{equation}
	\referee{which give rise to the monomials
	\begin{equation*}
		\msfZ^{(J,\mathbf{m})} = \prod_{(\gamma,\mathbf{n})\in \mcM} \msfZ_{(\gamma,\mathbf{n})}^{J(\gamma,\mathbf{n})} \,\,\prod_{i=1}^d \msfZ_i^{\mathbf{m}(i)},
	\end{equation*}	
	we equip $(\R[\mcM\cup {1,...,d}], \rmU(L))$ with} the canonical duality pairing \referee{given by}
	\begin{equation*}
		\langle \msfD_{(J,\mathbf{m})}, \msfZ^{(J',\mathbf{m}')} \rangle = \delta_{(J,\mathbf{m})}^{(J',\mathbf{m}')}.
	\end{equation*}
	\referee{Then} the following relation holds: \referee{For all $M_1,M_2\in \R[\mcM\cup {1,...,d}]$ and $U\in \rmU(L)$,}
	\begin{equation*}
		\langle U, M_1 M_2\rangle = \langle \cop U, M_1\otimes M_2 \rangle.
	\end{equation*}
For later purpose, let us write the coordinate representation of the action $\rho$ and the concatenation product with respect to the basis \eqref{sg25}, i.~e.
\begin{align}
	\rho(\msfD_{(J,\mathbf{m})})\z^\gamma &= \sum_\beta (\Dcom)_{(J,\mathbf{m}),\beta}^\gamma \z^\beta,\label{sg42}\\
	\mathsf{D}_{(J',\mathbf{m}')} \mathsf{D}_{(J'',\mathbf{m}'')} &= \sum_{(J,\mathbf{m})} (\Dcop)_{(J',\mathbf{m}'),\,(J'',\mathbf{m}'')}^{(J,\mathbf{m})} \msfD_{(J,\mathbf{m})}.\nonumber	
\end{align}
Note that
\begin{equation}\label{cha01}
	(\Dcom)_{(J,\mathbf{m}),\beta}^\gamma = (\msfD_{(J,\mathbf{m})})_\beta^\gamma.
\end{equation}
An iterative application of the mapping properties \eqref{map01} to \eqref{map05} imply that
\begin{align}
	(\Dcom)_{(J,\mathbf{m}),\beta}^{\gamma\in\mcNmin} \neq 0\,&\implies \beta\in \mcNmin;\nonumber\\
	\mbox{for all }J\neq0,\quad(\Dcom)_{(J,\mathbf{m}),\beta}^{\gamma\in\mcNmin} \neq 0\,&\implies \beta\in \mcN; \nonumber\\
	(\Dcom)_{(J,\mathbf{m}),\beta}^{\gamma\in\mcN} \neq 0\,&\implies \beta\in \mcN;\label{map13}\\
	(\Dcom)_{(J,\mathbf{m}),\beta}^{\gamma\in\populated} \neq 0\,&\implies \beta\in \populated.\label{map14}
\end{align}
Similarly, iterating \eqref{sg09} and \eqref{sg10} yields
\begin{equation}\label{mod20}
	(\Dcom)_{(J,\mathbf{m}),\beta}^\gamma\neq 0 \implies |\beta| = |\gamma| + |(J,\mathbf{m})|.
\end{equation}
In addition, from \eqref{sg35} we have for every $(\gamma,\mathbf{n})\in\mcM$ and every $i=1,...,d$
\begin{equation}\label{mod40}
	\begin{split}
		&(\Dcop)_{(J',\mathbf{m}'),(J'',\mathbf{m}'')}^{(e_{(\gamma,\mathbf{n})}, 0)} \\&\quad= \sum_{\gamma'\in\mcN} (\msfD_{(J',\mathbf{m}')})_{\gamma}^{\gamma'} \delta_{(J'',\mathbf{m}'')}^{(e_{(\gamma',\mathbf{n})},0)}\hspace*{5pt} + \tbinom{\mathbf{n}+\mathbf{m}''}{\mathbf{m}''} \delta_{(J',\mathbf{m}')}^{(e_{(\gamma,\mathbf{n}+\mathbf{m}'')},0)} \delta_{J''}^{0}.
	\end{split}
\end{equation}
\subsection{The canonical model as an exponential map}\label{subsec:hopfmodel}
For the sake of this discussion, let us assume that the solution to \eqref{set01} is smooth, and thus all derivatives make classical sense. Following \eqref{bb03}, we may express the r.~h.~s. as 
\begin{equation}\label{cor04}
	\sum_{\mfl\in\mfL^-\cup\{0\}} \z_{(\mfl,0)}[\mathbf{a},\mathbf{u},\cdot] \xi_\mfl;
\end{equation}
thus, it is natural to seek a representation of the model equation \eqref{bb02} in terms of a shift. By the identification \eqref{warm15}, the nonlinearity is shifted by the model, which at the infinitesimal level means that we need to consider the generators in $\mcD$, which contain multi-indices\footnote{ Giving a special role to the polynomials, encoded in $\mbpartial_i$.} in $\populated$. In addition, the subcriticality condition imposes that the derivatives of $u$ on the r.~h.~s. cannot be of arbitrarily high order; \referee{in particular, \eqref{sub12} implies that the derivatives on the r.~h.~s. are of order at most $\eta$}. We shall focus on the smaller set of indices
\begin{equation}\label{mod01}
	\mcM^- := \{(\gamma,\mathbf{n})\in\mcM\,|\, |\mathbf{n}| < \eta\},
\end{equation}
and accordingly on the set of generators 
\begin{equation}\label{mod50}
	\mcD^- := \{\z^\gamma D^{(\mathbf{n})}\}_{(\gamma,\mathbf{n})\in \mcMm} \sqcup \{\mbpartial_i\}_{i=1,...,d}.
\end{equation}
We first note the following strengthening of the finiteness property \eqref{fin50}:
\begin{lemma}\label{lemfin01}
	For every $\beta\in M(\coord)$,
	\begin{equation}\label{mod04}
		\#\{\big(\gamma,(\gamma',\mathbf{n}')\big)\in M(\coord)\times\mcMm\,|\, (\z^{\gamma'}D^{(\mathbf{n}')})_\beta^\gamma\neq 0\} <\infty.
	\end{equation}
\end{lemma}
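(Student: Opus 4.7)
The plan is to revisit the matrix representation \eqref{bb16bis2} and exploit the additional restriction $|\mathbf{n}'|<\eta$ that the set $\mcMm$ imposes. Recall from \eqref{pol05} that a non-vanishing coefficient $(\z^{\gamma'}D^{(\mathbf{n}')})_\beta^\gamma$ forces either $\gamma + \gamma' = \beta + e_{(\mfl,k)} - e_{(\mfl,k+e_{\mathbf{n}'})}$ for some $(\mfl,k)$, or $\gamma + \gamma' = \beta + e_{\mathbf{n}'}$. The earlier property \eqref{fin50} already handled finiteness of $\gamma$ for fixed $(\gamma',\mathbf{n}')$; the new ingredient here is that we must also bound the number of admissible $(\gamma',\mathbf{n}')$.

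The starting observation is that, by definition \eqref{scal01} and $\mathfrak{s}_i\ge 1$, the constraint $|\mathbf{n}'|<\eta$ cuts out a finite set of multi-indices in $\N_0^d$. So the outer $\mathbf{n}'$-variable already ranges over a finite set, and it suffices to establish finiteness of $\{(\gamma,\gamma')\}$ for each such $\mathbf{n}'$. For the first case of \eqref{pol05}, the equality $\gamma+\gamma' = \beta + e_{(\mfl,k)} - e_{(\mfl,k+e_{\mathbf{n}'})}$ forces the right-hand side to be a non-negative multi-index; since $k\neq k+e_{\mathbf{n}'}$, this requires $\beta(\mfl,k+e_{\mathbf{n}'})\ge 1$, i.~e.\ $(\mfl,k+e_{\mathbf{n}'})\in\mathrm{supp}(\beta)$. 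Because $\beta$ has finite support and $\mathbf{n}'$ is fixed, this yields finitely many admissible $(\mfl,k)$. For each such triple $(\mfl,k,\mathbf{n}')$, the pair $(\gamma,\gamma')$ must then be a decomposition of a fixed multi-index of finite support, of which there are only finitely many.

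For the second case of \eqref{pol05}, the identity $\gamma+\gamma' = \beta + e_{\mathbf{n}'}$ similarly has finite support, so the number of decompositions $(\gamma,\gamma')$ is finite. Combining the two cases and summing over the finitely many $\mathbf{n}'$ gives \eqref{mod04}.

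I do not expect a real obstacle here: the argument is essentially a refinement of the proof of \eqref{fin50} in Remark \ref{rem:fin01}, where the only new input is the subcriticality-driven upper bound on $|\mathbf{n}'|$ built into the definition of $\mcMm$ (cf.\ \eqref{mod01}). The one point worth being careful about is the case distinction: the two contributions in \eqref{bb16bis2} should be analysed separately because the map $(k,\mathbf{n}')\mapsto k+e_{\mathbf{n}'}$ is not injective in general, so that recovering $(k,\mathbf{n}')$ from $k+e_{\mathbf{n}'}\in\mathrm{supp}(\beta)$ relies on the already-fixed $\mathbf{n}'$.
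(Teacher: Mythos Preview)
Your argument is correct and follows essentially the same approach as the paper. The paper's proof is organized slightly more modularly: it first invokes \eqref{ref:new1} to deduce $\gamma'\leq\beta$ (hence finitely many $\gamma'$), then uses $|\mathbf{n}'|<\eta$ for finitely many $\mathbf{n}'$, and finally appeals to the already-established \eqref{fin50} for finitely many $\gamma$; your case-by-case analysis of \eqref{pol05} unwinds these same constraints more explicitly.
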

\begin{proof}
	Recall \referee{\eqref{ref:new1}}. Fixing $\beta$, there are finitely many $\gamma'$ to consider, since it must hold that $\gamma' \leq \beta$. In addition, the restriction $|\mathbf{n}'|<\eta$ implies there are finitely many $\mathbf{n}'$. For fixed $(\gamma',\mathbf{n}')$, we appeal to \eqref{fin50} and obtain finitely many $\gamma$.
\end{proof}		 	
This restricted set of generators forms a Lie sub-algebra, as can be deduced from \eqref{sg11}, \eqref{sg12} and \eqref{sg13} and recalling the mapping properties \eqref{map02}, \eqref{map04} and \eqref{map05}. More precisely, the following holds.
\begin{lemma}
	Let $L^-$ be the Lie algebra generated by $\mcD^-$ with the Lie bracket \eqref{sg14}, \eqref{sg15}; then $L^-$ is a Lie sub-algebra of $L$. In addition, let $\tL^-$ denote the Lie sub-algebra generated by $\{\z^\gamma D^{(\mathbf{n})}\}_{(\gamma,\mathbf{n})\in \mcMm}$; then $(\tL^-,\triangleright)$ is a pre-Lie sub-algebra of $(\tL,\triangleright)$. 
\end{lemma}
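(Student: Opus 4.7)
The plan is to reduce both claims to a direct verification that the explicit formulas for the pre-Lie product \eqref{sg11}, \eqref{sg12}, \eqref{sg13} preserve the subset $\mcD^-$, and then invoke the corresponding bracket identities \eqref{sg14}, \eqref{sg15}. The membership condition $(\gamma,\mathbf{n})\in \mcM^-$ decouples into two independent constraints, namely $\gamma\in\mcN$ and $|\mathbf{n}|<\eta$, so the verification reduces to tracking these two pieces of information separately in each of the three formulas.

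First I would check the pre-Lie sub-algebra claim: for $(\gamma,\mathbf{n})$, $(\gamma',\mathbf{n}')\in\mcM^-$, formula \eqref{sg11} reads
\begin{equation*}
	\z^{\gamma'}D^{(\mathbf{n}')}\triangleright \z^\gamma D^{(\mathbf{n})} = \sum_\beta (\z^{\gamma'}D^{(\mathbf{n}')})_\beta^\gamma \z^\beta D^{(\mathbf{n})};
\end{equation*}
since $\z^\gamma\in T_\mcN^*$ and $\mcN\subset\populated$, the mapping property \eqref{map02} from Lemma \ref{lem:218} guarantees that only multi-indices $\beta\in\mcN$ contribute, while the derivation index $\mathbf{n}$ is untouched so $|\mathbf{n}|<\eta$ is preserved, giving $(\beta,\mathbf{n})\in\mcM^-$. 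This shows that the pre-Lie product closes on $\{\z^\gamma D^{(\mathbf{n})}\}_{(\gamma,\mathbf{n})\in\mcM^-}$, hence on $\tL^-$, which is the content of the second assertion.

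Next I would address the remaining two formulas, which are what is needed to close the Lie bracket in $L^-$. For $\z^{\gamma'}D^{(\mathbf{n}')}\triangleright\mbpartial_i$ with $(\gamma',\mathbf{n}')\in\mcM^-$, \eqref{sg12} returns $\mathbf{n}'(i)\z^{\gamma'}D^{(\mathbf{n}'-e_i)}$, and since $|\mathbf{n}'-e_i|=|\mathbf{n}'|-\mathfrak{s}_i<\eta$, the output lies in $\lspan\mcD^-$. For $\mbpartial_i\triangleright \z^\gamma D^{(\mathbf{n})}$ with $(\gamma,\mathbf{n})\in\mcM^-$, \eqref{sg13} reads $\sum_\beta (\mbpartial_i)_\beta^\gamma \z^\beta D^{(\mathbf{n})}$; the mapping property \eqref{map05} confirms $\beta\in\mcN$, while $\mathbf{n}$ is again unchanged. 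Finally \eqref{sg15} gives $[\mbpartial_i,\mbpartial_j]=0\in L^-$ trivially. Combining these three cases with the definition of the bracket \eqref{sg14} shows that every bracket of two elements of $\mcD^-$ lies in $\lspan\mcD^-\subset L^-$, so the Lie algebra they generate is closed under the ambient bracket of $L$, which is the first assertion.

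The argument is essentially a bookkeeping exercise and I do not anticipate any genuine obstacle; the only point that requires attention is the preservation of the derivation-index constraint $|\mathbf{n}|<\eta$ under the product $\triangleright$. It is preserved because the underlying index $\mathbf{n}$ is either left unchanged (when the new generator is built on top of the existing $D^{(\mathbf{n})}$) or strictly lowered (in the action of $\z^{\gamma'}D^{(\mathbf{n}')}$ on $\mbpartial_i$), and in no case raised. This is the feature that makes $\mcM^-$ a natural choice of subset, and the reason why the restriction to derivatives of order below $\eta$ respects the algebraic structure.
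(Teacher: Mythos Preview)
Your proposal is correct and follows essentially the same approach as the paper, which simply states (in the sentence preceding the lemma) that the claim can be deduced from \eqref{sg11}, \eqref{sg12}, \eqref{sg13} together with the mapping properties \eqref{map02}, \eqref{map04}, \eqref{map05}. Your write-up is a careful unpacking of exactly this argument, correctly noting that the $\mathbf{n}$-index is either preserved or lowered in each case while the $\gamma$-index stays in $\mcN$ by the mapping properties; the reference to \eqref{map04} in the paper is superfluous for this particular lemma and you are right not to need it.
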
	
We consider the universal envelope $\rmU (L^-)$, which is the Hopf sub-algebra of $\rmU (L)$ generated by the basis elements $\{\msfD_{(J,\mathbf{m})}\}_{(J,\mathbf{m})\in M(\mcMm)\times \N_0^{d}}$. The structure constants of the action, cf. \eqref{sg42} inherit the finiteness property \eqref{mod04} in the following way.
\begin{lemma}\label{lemfin02}
	For every $\beta\in \populated\cup \mcNmin$,
		\begin{equation}\label{mod06}
			\# \{\big(\gamma,(J,\mathbf{m})\big) \in (\populated \cup \mcNmin)\times(M(\mcMm)\times \N_0^{d})\,|\, (\Dcom)_{(J,\mathbf{m}),\beta}^\gamma \neq 0\} <\infty.
		\end{equation}
\end{lemma}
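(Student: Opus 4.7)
The plan is to combine two ``conservation laws'' coming from the explicit form \eqref{sg25} to force both $(J,\mathbf{m})$ and $\gamma$ into finite sets: preservation of the noise-homogeneity $\lnh\cdot\rnh$ by most generators, and the homogeneity identity \eqref{mod20}.

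First I will show that the noise-homogeneity is preserved by the endomorphisms $D^{(\mathbf{n}')}$ and $\mbpartial_i$. Indeed, $D^{(\mathbf{n}')}$ either shifts $k\mapsto k+e_{\mathbf{n}'}$ without touching the noise label $\mfl$, or kills a polynomial generator $\z_{\mathbf{n}'}$; consequently, $\mbpartial_i = \sum_\mathbf{n}(\mathbf{n}(i)+1)\z_{\mathbf{n}+e_i}D^{(\mathbf{n})}$ also preserves $\lnh\cdot\rnh$, since the extra factor $\z_{\mathbf{n}+e_i}$ is polynomial. Only the multiplications by $\z^{\gamma'}$ in \eqref{sg25} change $\lnh\cdot\rnh$, by $+\lnh\gamma'\rnh$. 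Thus any nonvanishing $(\Dcom)_{(J,\mathbf{m}),\beta}^\gamma$ forces
\begin{equation*}
\lnh\beta\rnh = \lnh\gamma\rnh + \sum_{(\gamma',\mathbf{n}')\in\mcMm} J(\gamma',\mathbf{n}')\lnh\gamma'\rnh,
\end{equation*}
and since $\lnh\gamma'\rnh\geq 1$ for $\gamma'\in\mcN$, I obtain the crucial length bound $\length{J}\leq \lnh\beta\rnh$.

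Next I will use \eqref{mod20}, which yields $|(J,\mathbf{m})|=|\beta|-|\gamma|$; combined with the lower bound \eqref{sub05}, this gives an upper bound on $|(J,\mathbf{m})|$. Expanding via \eqref{sg30}, using $\mathfrak{s}_i\geq 1$, and invoking the bound $|\gamma'|>\regsol-\eta+\kappa$ established in the proof of Lemma \ref{lemsub01} (applicable because $\gamma'\in\mcN$ forces $\length{\gamma'}\geq 1$) together with $|\mathbf{n}'|<\eta$, each atom in $J$ contributes at least $\regsol+\kappa-\eta$ to the homogeneity sum. The length bound from the previous step controls this potentially negative total contribution, so $\length{\mathbf{m}}$ must be bounded. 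The same reasoning shows that each individual $|\gamma'|+\eta-|\mathbf{n}'|$ is bounded above, which combined with $|\mathbf{n}'|<\eta$ and Lemma \ref{lemsub01} restricts the atoms of $J$ to a finite subset of $\mcMm$. Hence finitely many $(J,\mathbf{m})$ remain.

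Finally, for each such $(J,\mathbf{m})$ the homogeneity $|\gamma|=|\beta|-|(J,\mathbf{m})|$ is fixed, so a last application of Lemma \ref{lemsub01} yields finitely many $\gamma\in\populated\cup\mcNmin$. I expect the main bookkeeping obstacle to be handling the possibly negative sign of $|\gamma'|+\eta-|\mathbf{n}'|$: this is precisely why the noise-homogeneity length bound on $J$ from the first step is essential, as it prevents the cumulative negative contribution from overwhelming the nonnegative polynomial contribution $\sum_i\mathbf{m}(i)\mathfrak{s}_i$.
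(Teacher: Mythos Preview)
Your proof is correct and takes a genuinely different route from the paper's. The paper instead uses the componentwise inequality (their \eqref{e00})
\[
(\msfD_{(J,\mathbf{m})})_\beta^\gamma \neq 0 \;\implies\; \beta \geq \sum_{(\gamma',\mathbf{n}')} J(\gamma',\mathbf{n}')\,\gamma',
\]
read directly off the endomorphism form of \eqref{sg25}: this immediately restricts the possible $\gamma'$ to a finite set (those with $\gamma'\leq\beta$) and bounds each $J(\gamma',\mathbf{n}')$, giving finitely many $J$ without invoking noise homogeneity at all. For the final step of bounding $\gamma$, the paper proceeds by a recursive argument via \eqref{sg40} and the single-factor finiteness \eqref{mod04}, whereas your observation that $|\gamma|=|\beta|-|(J,\mathbf{m})|$ is pinned down and hence Lemma~\ref{lemsub01} applies directly is cleaner. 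Your noise-homogeneity conservation law is a nice intrinsic argument, and the two gradings $\lnh\cdot\rnh$ and $|\cdot|$ together do all the work; the paper's componentwise bound is more elementary and yields the stronger statement \eqref{e00} along the way, but at the cost of a slightly fussier recursion for the $\gamma$ part. One place you could tighten the write-up: the sentence ``the same reasoning shows that each individual $|\gamma'|+\eta-|\mathbf{n}'|$ is bounded above'' needs the observation that the remaining $\length{J}-1$ atom contributions are each bounded below (by your $\regsol+\kappa-\eta$) and there are at most $\lnh\beta\rnh-1$ of them, so their total cannot be arbitrarily negative; this is implicit in what you wrote but worth spelling out.
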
	
\begin{proof}
	Recall \eqref{cha01}. Note that
	\begin{equation}\label{e00}
		(\msfD_{(J,\mathbf{m})})_\beta^\gamma \neq 0 \implies \beta \geq \sum_{\tilde{\gamma}} J(\tilde{\gamma},\tilde{\mathbf{n}}) \tilde{\gamma},
	\end{equation}
	which can be read off \eqref{sg25} as an endomorphism. As a consequence, for a fixed $\beta$, there are finitely many $\tilde{\gamma}$ allowed, and $J(\tilde{\gamma},\tilde{\mathbf{n}})$ is bounded. In addition, \eqref{mod01} restricts $\tilde{\mathbf{n}}$ to only finitely many. The combination of these observations implies that for a fixed $\beta$ there exist only finitely many $J\in M(\mcMm)$ giving non-vanishing contributions. Moreover, from \eqref{mod20} we learn
	\begin{equation*}
		|\mathbf{m}| = |\beta| - |\gamma| - \sum_{(\tilde{\gamma},\tilde{\mathbf{n}})\in \mcMm} J(\tilde{\gamma},\tilde{\mathbf{n}}) (|\tilde{\gamma}| + \eta -|\tilde{\mathbf{n}}|).
	\end{equation*}
	Since the homogeneity of $\gamma\in\populated\cup\mcNmin$ is bounded from below (cf. \eqref{sub05}), we obtain an upper bound for $|\mathbf{m}|$ and thus finitely many $\mathbf{m}$.
	
	\medskip
	
	It only remains to show that for fixed $\beta$ and $(J,\mathbf{m})$
	\begin{equation*}
		\# \{\gamma \in \populated \cup \mcNmin\,|\, (\Dcom)_{(J,\mathbf{m}),\beta}^\gamma \neq 0\} <\infty.
	\end{equation*}
	This can be obtained recursively from \eqref{mod04} via
	\begin{equation*}
		\mbpartial^\mathbf{m} =  \mbpartial^{\mathbf{m}-e_i} \mbpartial_i
	\end{equation*}
	and
	\begin{align}
		\msfD_{(J,\mathbf{m})} &= \tfrac{1}{J(\tilde{\gamma},\tilde{\mathbf{n}})}  \msfD_{(J-e_{(\tilde{\gamma},\tilde{\mathbf{n}})},\mathbf{m})} \z^{\tilde{\gamma}}D^{(\tilde{\mathbf{n}})}\nonumber \\
		&\quad- \sum_{\substack{(J',\mathbf{m}') + (J'',\mathbf{m}'') = (J - e_{(\tilde{\gamma},\tilde{\mathbf{n}})},\mathbf{m}) \\ (J',\mathbf{m}')\neq 0}}\sum_{\tilde{\beta}} \tfrac{J''(\tilde{\beta},\tilde{\mathbf{n}})+1}{J(\tilde{\gamma},\tilde{\mathbf{n}})} (\msfD_{(J',\mathbf{m}')})_{\tilde{\beta}}^{\tilde{\gamma}} \msfD_{(J'' + e_{(\tilde{\beta},\tilde{\mathbf{n}})},\mathbf{m}'')},\label{sg40}
	\end{align}
	which follows from \eqref{sg23} and \eqref{sg26}.
\end{proof}
As a consequence of the finiteness property \eqref{mod06}, we may transpose the action \eqref{sg42} of $\referee{\rmU}(L^-)$ over $T_\mcNmin^*$.
\begin{proposition}
	Let\footnote{ Careful! $T^-$ should not be confused with $\mfT_-$ in \cite[Definition 5.3]{BHZ}, i.~e. trees of negative degree conforming to a rule.} $T^- := \R[\mcMm \sqcup \{1,...,d\}]$. Then
	\begin{equation*}
		\Dcom^- \z_\beta = \sum_{\substack{(J,\mathbf{m})\in M(\mcMm)\times \N_0^{d}\\ \gamma\in\mcNmin}} (\Dcom^-)_{(J,\mathbf{m}),\beta}^\gamma \msfZ^{(J,\mathbf{m})}\otimes\z_\gamma.
	\end{equation*}
	defines a map $\Dcom^- : T_\mcNmin \to \referee{ T^-\otimes T_\mcNmin}$, \referee{where we recall the notation of Remark \ref{rem:primal}}.
\end{proposition}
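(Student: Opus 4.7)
The statement is essentially that the transposition of the action of $\rmU(L^-)$ on $T_\mcNmin^*$ is well-defined as a coaction landing in the polynomial tensor product $T^-\otimes T_\mcNmin$. The work is in verifying that, for each fixed $\beta\in\mcNmin$, the formal sum
\begin{equation*}
\sum_{\substack{(J,\mathbf{m})\in M(\mcMm)\times\N_0^d\\ \gamma\in\mcNmin}} (\Dcom^-)_{(J,\mathbf{m}),\beta}^\gamma\, \msfZ^{(J,\mathbf{m})}\otimes \z_\gamma
\end{equation*}
collapses to a finite sum, so that it defines a genuine element of $T^-\otimes T_\mcNmin$ (as opposed to a formal series in the generators of $T^-$ or in $\gamma$).

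\textbf{Step 1.} First I would unpack the definition: by \eqref{cha01}, $(\Dcom^-)_{(J,\mathbf{m}),\beta}^\gamma = (\msfD_{(J,\mathbf{m})})_\beta^\gamma$ coincides with the matrix entry of the endomorphism $\rho(\msfD_{(J,\mathbf{m})})$ of $\R[[\coord]]$, now with the additional restriction $(J,\mathbf{m})\in M(\mcMm)\times\N_0^d$ (which is simply a restriction of the full indexing set of the basis of $\rmU(L)$). Hence the non-vanishing terms in the display above form a subset of
\begin{equation*}
\big\{\big(\gamma,(J,\mathbf{m})\big)\in(\populated\cup\mcNmin)\times(M(\mcMm)\times\N_0^d)\,\big|\,(\Dcom)_{(J,\mathbf{m}),\beta}^\gamma\neq 0\big\}.
\end{equation*}

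\textbf{Step 2.} Second I would invoke Lemma \ref{lemfin02}, which is exactly the finiteness statement needed: for any $\beta\in\populated\cup\mcNmin$ (in particular for $\beta\in\mcNmin$), the set displayed above is finite. Since further restricting to $\gamma\in\mcNmin\subset\populated\cup\mcNmin$ can only shrink this set, the sum defining $\Dcom^-\z_\beta$ is finite and therefore lies in $T^-\otimes T_\mcNmin$.

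\textbf{Step 3.} To justify that the restriction to $\gamma\in\mcNmin$ (rather than to $\gamma\in\populated\cup\mcNmin$) is the natural dual target, I would recall that the action of each generator in $\mcD$ preserves $T_\mcNmin^*$: by \eqref{map01}, $\z^{\gamma'}D^{(\mathbf{n}')}$ maps $T_\mcNmin^*$ into $T_\mcN^*\subset T_\mcNmin^*$, and by \eqref{map03}, $\mbpartial_i$ preserves $T_\mcNmin^*$. Iterating along the product structure of the basis \eqref{sg25} shows that each $\msfD_{(J,\mathbf{m})}$ preserves $T_\mcNmin^*$, so the transpose acts on $T_\mcNmin$; concretely, $(\Dcom^-)_{(J,\mathbf{m}),\beta\in\mcNmin}^\gamma$ can only be non-zero for $\gamma\in\mcNmin$ when viewed as coming from the restricted action. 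Combining this with Step 2 yields the claim.

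\textbf{Main obstacle.} There is essentially no obstacle left: the nontrivial analytic input---namely that the restriction to $\mcMm$, imposed by the semi-linearity bound $|\mathbf{n}|<\eta$ of \eqref{mod01}, upgrades the finiteness \eqref{fin50} into the finiteness \eqref{mod06} along both the $(J,\mathbf{m})$- and $\gamma$-directions---was already carried out in the proof of Lemma \ref{lemfin02} using the degree relation \eqref{mod20}, the lower bound \eqref{sub05}, and the recursion \eqref{sg40}. The present proposition is then a direct corollary by transposition.
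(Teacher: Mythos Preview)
Your proposal is correct and follows exactly the paper's approach: the paper presents this proposition without a separate proof, stating immediately before it that ``As a consequence of the finiteness property \eqref{mod06}, we may transpose the action \eqref{sg42} of $\rmU(L^-)$ over $T_\mcNmin^*$.'' Your Steps 1--3 simply make this one-line justification explicit, invoking Lemma~\ref{lemfin02} for the finiteness and the mapping properties \eqref{map01}, \eqref{map03} for the target space, which is precisely the intended argument.
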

Consider the set of characters $\textnormal{Alg}(T^-,\R)$, i.~e. multiplicative functionals $\bff$ over $T^-$; these are characterized by their action on the elements \eqref{mod08}, i.~e.
\begin{equation}\label{mod09}
	f_{\gamma}^{(\mathbf{n})}:= \bff(\msfZ_{(\gamma,\mathbf{n})}),\;\; f_i := \bff(\msfZ_i),\;\; \bff^{(J,\mathbf{m})} = \prod_{i=1}^d (f_i)^{\mathbf{m}(i)}\hspace{-6pt} \prod_{(\gamma,\mathbf{n})\in \mcMm} (f_{\gamma}^{(\mathbf{n})})^{J(\gamma,\mathbf{n})} .
\end{equation}
Following \cite[(8.17)]{reg}, with help of the map $\Dcom^-$ we may define $\Gamma_\bff^-:T_\mcNmin\to T_\mcNmin$ by
\begin{equation}\label{mod11}
	\Gamma_\bff^- = (1\otimes \bff)\Dcom^-;
\end{equation}
taking the dual perspective, $\referee{\Gamma_\bff^{-*}}:T_\mcNmin^*\to T_\mcNmin^*$ is given by
\begin{equation}\label{mod33}
	\Gamma_\bff^{-*} = \sum_{(J,\mathbf{m})} \bff^{(J,\mathbf{m})}\rho(\msfD_{(J,\mathbf{m})}),
\end{equation}
which is effectively a finite sum\footnote{\referee{By which we mean that for every matrix component $(\Gamma_\bff^{-*})_\beta^\gamma$ $=$ $\sum_{(J\mathbf{m})} \bff^{(J,\mathbf{m})} \big(\msfD_{(J,\mathbf{m})}\big)_\beta^\gamma$ is a finite sum in $(J,\mathbf{m})$.}} thanks to the finiteness property \eqref{mod06}. Note that by \eqref{map13} and \eqref{map14} we also have
\begin{align*}
	\Gamma_\bff^{-*} T_\mcN^* &\subset T_\mcN^*\\
	\Gamma_\bff^{-*} T_\populated^* &\subset T_\populated^*.
\end{align*}

\medskip

We now use these maps to rephrase the model equation \eqref{bb02}. Formally, \eqref{cor04} and the identification \eqref{warm15} suggest to write
\begin{equation*}
	\Pi_x^- = \Gamma_\bff^{-*}\big(\sum_{\mfl\in\mfL^-} \z_{(\mfl,0)}\xi_\mfl\big),
\end{equation*}
where $\bff\in\textnormal{Alg}(T^-,\R)$ is generated from $\Pi_x$ itself. Note that $\{e_{(\mfl,0)}\}_{\mfl\in\mfL^-\cup\{0\}}\subset \mcNmin$, thus the action is well-defined. There is however one issue to be solved: $\Pi_x$ takes values in $T_\populated^*$, so we need to connect the characters $\textnormal{Alg}(T^-,\R)$ with power series $T_\populated^*$. To this end, we first derive the following exponential formula, which is the analogue to the explicit formulas used in \cite{OSSW,LOT,LOTT}, cf. e.~g. \cite[(5.16)]{LOT}.
\begin{lemma}\label{lem:exp01}
	Let $\bff\in\textnormal{Alg}(T^-,\R)$. For every $\mathbf{n}$, consider 
	\begin{equation*}
		f^{(\mathbf{n})} := \sum_{\gamma\in\mcN} f_\gamma^{(\mathbf{n})} \z^\gamma + \sum_{\mathbf{m}\in \N_0^d} f_{e_\mathbf{m}}^{(\mathbf{n})} \z_{\mathbf{m}}\in T_\populated^*
	\end{equation*}
	given by
	\begin{align}
		f_\gamma^{(\mathbf{n})} &= \left\{\begin{array}{cl}
			\bff_\gamma^{(\mathbf{n})} & \mbox{if }(\gamma,\mathbf{n})\in \mcMm\,\mbox{and}\\
			0 & \mbox{otherwise,}			
		\end{array}\right.\nonumber\\
		f_{e_\mathbf{m}}^{(\mathbf{n})} &= \left\{\begin{array}{ll}
			\tbinom{\mathbf{m}}{\mathbf{n}} \bff^{(0,\mathbf{m}-\mathbf{n})}, &\mbox{if }\mathbf{n}< \mathbf{m}\,\,\,\mbox{and}\\
			0 & \mbox{otherwise.} \end{array}\right.\label{mod10}
	\end{align}
Then
\begin{equation}\label{exp01}
	\Gamma_{\bff}^{-*} = \sum_{l\geq 0} \frac{1}{l!}\sum_{\mathbf{n}_1,...,\mathbf{n}_l\in\N_0^d} f^{(\mathbf{n}_1)}\cdots f^{(\mathbf{n}_l)} D^{(\mathbf{n}_l)}\cdots D^{(\mathbf{n}_1)}.
\end{equation}
\end{lemma}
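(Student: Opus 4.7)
The plan is to identify both sides of \eqref{exp01} as algebra morphisms of $\R[[\coord]]$ that agree on coordinate generators. The key input is that $U_\bff := \sum_{(J,\mathbf{m})}\bff^{(J,\mathbf{m})}\msfD_{(J,\mathbf{m})}$ is group-like in $\rmU(L^-)$: using \eqref{sg26} together with the multiplicativity $\bff^{(J'+J'',\mathbf{m}'+\mathbf{m}'')} = \bff^{(J',\mathbf{m}')}\bff^{(J'',\mathbf{m}'')}$ of the character, one checks $\cop U_\bff = U_\bff \otimes U_\bff$. Since $\rho$ is an algebra morphism and the generators $D^{(\mathbf{n})}$ and $\mbpartial_i$ are derivations of $\R[[\coord]]$, group-like elements are sent by $\rho$ to algebra morphisms, and hence $\Gamma_\bff^{-*} = \rho(U_\bff)$ is an algebra endomorphism. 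A parallel check, based on iterating the Leibniz rule for the composed $D^{(\mathbf{n}_l)}\cdots D^{(\mathbf{n}_1)}$ and absorbing the resulting partition-sum into the $\frac{1}{l!}$ factor, shows that the RHS of \eqref{exp01} is also an algebra morphism.

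It then suffices to verify agreement on the algebra generators $\{\z_\mathbf{n}\}_{\mathbf{n}\in\N_0^d}\cup\{\z_{(\mfl,k)}\}$. For $\z_\mathbf{n}$, the identity $D^{(\mathbf{n}')}\z_\mathbf{n} = \delta_{\mathbf{n}'}^\mathbf{n}$ gives a constant, killed by any further $D^{(\cdot)}$; thus only the $l = 0,1$ terms of the RHS survive and yield $\z_\mathbf{n} + f^{(\mathbf{n})}$. For the LHS, expanding $\rho(\msfD_{(J,\mathbf{m})})\z_\mathbf{n}$ via \eqref{sg25}, only $J = 0$ (giving $\tfrac{1}{\mathbf{m}!}\mbpartial^\mathbf{m}\z_\mathbf{n} = \binom{\mathbf{n}+\mathbf{m}}{\mathbf{n}}\z_{\mathbf{n}+\mathbf{m}}$) and $J = e_{(\gamma'',\mathbf{n})}$ with $\mathbf{m} = 0$ (giving $\z^{\gamma''}$) contribute. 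Summing against $\bff^{(J,\mathbf{m})}$ and comparing with definition \eqref{mod10} of $f^{(\mathbf{n})}$ recovers $\z_\mathbf{n} + f^{(\mathbf{n})}$.

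For $\z_{(\mfl,k)}$, iterating $D^{(\mathbf{n})}\z_{(\mfl,k')} = (k'(\mathbf{n}) + 1)\z_{(\mfl,k' + e_\mathbf{n})}$ yields
\[
D^{(\mathbf{n}_l)}\cdots D^{(\mathbf{n}_1)}\z_{(\mfl,k)} = \frac{(k + \sigma)!}{k!}\z_{(\mfl,k + \sigma)}, \quad \sigma = \sum_{j=1}^l e_{\mathbf{n}_j},
\]
so that, collecting orderings via the factor $\frac{l!}{\sigma!}$, the RHS equals $\sum_\sigma \binom{k + \sigma}{\sigma}\z_{(\mfl,k + \sigma)}\prod_\mathbf{n}(f^{(\mathbf{n})})^{\sigma(\mathbf{n})}$. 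To recover the same expression from the LHS one again unpacks \eqref{sg25}: the rightmost block $\prod(D^{(\mathbf{n})})^{J(\gamma,\mathbf{n})}\z_{(\mfl,k)}$ produces the combinatorial factor $\frac{(k + \sigma_J)!}{k!}$ with $\sigma_J(\mathbf{n}) = \sum_\gamma J(\gamma,\mathbf{n})$, the middle block $\mbpartial^\mathbf{m}$ generates polynomial coordinate monomials via the Leibniz rule, and the left block $\prod(\z^\gamma)^{J(\gamma,\mathbf{n})}$ produces the noise-sector monomials. Summing against $\bff^{(J,\mathbf{m})}$ and regrouping the combined noise and polynomial contributions reconstructs the $(f^{(\mathbf{n})})^{\sigma(\mathbf{n})}$ factors through the binomial coefficients $\binom{\mathbf{m}}{\mathbf{n}}\bff^{(0,\mathbf{m}-\mathbf{n})}$ of \eqref{mod10}.

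The main obstacle is precisely this last combinatorial bookkeeping for $\z_{(\mfl,k)}$: the $\mbpartial^\mathbf{m}$ factor in \eqref{sg25} distributes across the already-shifted monomial $\z_{(\mfl,k+\sigma_J)}$ via iterated Leibniz, creating a cascade of polynomial coordinate factors whose multiplicities and binomial weights must be shown to match, after summation over $(J,\mathbf{m})$, the expansion $\prod_\mathbf{n}(f^{(\mathbf{n})})^{\sigma(\mathbf{n})}$ with $f^{(\mathbf{n})}$ defined by \eqref{mod10}. A convenient way to organize this is to introduce dummy indices splitting $(J,\mathbf{m})$ according to which factor in the $\prod(f^{(\mathbf{n})})^{\sigma(\mathbf{n})}$ expansion it feeds, so that the multinomial coefficients $\frac{1}{J!\mathbf{m}!}$ precisely balance those on the RHS.
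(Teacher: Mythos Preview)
Your strategy is genuinely different from the paper's and conceptually valid: both sides of \eqref{exp01} are indeed algebra endomorphisms of $\R[[\coord]]$ (the left-hand side via \eqref{sg26} and the multiplicativity of $\bff$, the right-hand side via the commutativity of the $D^{(\mathbf{n})}$'s and iterated Leibniz), so agreement on generators suffices. Your $\z_\mathbf{n}$ check is correct.

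The issue is that the $\z_{(\mfl,k)}$ check, which you correctly flag as the main obstacle, is not carried out. Your dummy-index suggestion is the right organizing principle, but executing it amounts to computing $\tfrac{1}{\mathbf{m}!}\mbpartial^\mathbf{m}\z_{(\mfl,k')}$ explicitly and matching the resulting cascade of polynomial factors with the polynomial part of $\prod_\mathbf{n}(f^{(\mathbf{n})})^{\sigma(\mathbf{n})}$ via \eqref{mod10}. This computation has the same combinatorial content as the identity
\[
\tfrac{1}{\mathbf{m}!}\mbpartial^\mathbf{m} = \sum_{l\geq 1}\tfrac{1}{l!}\sum_{\substack{\mathbf{m}_i\neq 0\\\sum\mathbf{m}_i=\mathbf{m}}}\prod_i\tbinom{\mathbf{n}_i+\mathbf{m}_i}{\mathbf{n}_i}\z_{\mathbf{n}_i+\mathbf{m}_i}\,D^{(\mathbf{n}_l)}\cdots D^{(\mathbf{n}_1)},
\]
which the paper states as an intermediate claim and proves by a clean induction on $\mathbf{m}$ (using $\mbpartial_i\tfrac{1}{\mathbf{m}!}\mbpartial^\mathbf{m}=(\mathbf{m}(i)+1)\tfrac{1}{(\mathbf{m}+e_i)!}\mbpartial^{\mathbf{m}+e_i}$ and Leibniz). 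The paper then combines this with a resummation lemma to conclude. So the paper's route isolates the polynomial-sector combinatorics as a standalone operator identity with an inductive proof, whereas your route entangles it with the noise-sector bookkeeping in a single direct expansion. Both work, but the inductive identity is arguably easier to verify than the multinomial matching you describe; as written, your proposal leaves exactly this step undone.
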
	
Note that by resummation, cf. \cite[Lemma A.2]{LOT}, \eqref{exp01} may be rewritten as
	\begin{equation*}
		\Gamma_{\bff}^{-*} = \sum_{k\in M(\N_0^d)} \tfrac{1}{k!}f^k D^k.
	\end{equation*}
	In particular, by definition \eqref{bb16},
	\begin{equation*}
			\Gamma_{\bff}^{-*} \big(\sum_{\mfl\in\mfL^-\cup \{0\}} \z_{(\mfl,0)}\xi_\mfl\big) = \sum_{(\mfl,k)} \z_{(\mfl,k)} f^k \xi_\mfl.
	\end{equation*}
	This coincides with the r.~h.~s. of the model equation \eqref{bb02} (when projected onto $\beta\in\mcN$), which suggests that we should take $f^{(\mathbf{n})} = \frac{1}{\mathbf{n}!} \partial^\mathbf{n}\Pi_x$. We denote the corresponding character as $\bfPi_x\in \textnormal{Alg}(T^-,\R)$, given by
	\begin{equation}\label{mod60}
		\bfPi_{x\gamma}^{(\mathbf{n})} := \tfrac{1}{\mathbf{n}!} \partial^\mathbf{n} \Pi_{x\gamma},\,\,\,\,\bfPi_{x i} := (\cdot - x)_i,
	\end{equation}
	and rewrite the model equation \eqref{bb02} for $\beta\in\mcN$ as
	\begin{equation}\label{bb02bis}
		\left\{\begin{array}{l}
			\mcL\Pi_{x\beta} = \Pi_{x\beta}^-,\\
			\Pi_{x\beta}^- = \big(\Gamma_{\bfPi_x}^{*}\sum_{\mfl\in\mfL^-\cup\{0\}}\z_{(\mfl,0)}\xi_\mfl\big)_\beta,
		\end{array}\right.
	\end{equation}
	where for \referee{abbreviation} we denote $\Gamma_{\bfPi_x}^{*}$ the map associated to \referee{$\bfPi_x$ in \eqref{mod60} via \eqref{mod33}}.
\begin{remark}	
Note that by \eqref{map01}, \eqref{map03} and \eqref{map05} we have that for every $\beta\in \mcNmin \setminus \mcN$, the component $\big(\Gamma_{\bfPi_x}^{*}\sum_{\mfl\in\mfL^-\cup\{0\}}\z_{(\mfl,0)}\big)_{\beta}$ is a polynomial: Indeed,
\begin{equation*}
	\big(\Gamma_{\bfPi_x}^{*}\sum_{\mfl\in\mfL^-\cup\{0\}}\z_{(\mfl,0)}\big)_{\beta\in\mcNmin\setminus \mcN} = \big(\Gamma_{(\cdot- x)}^{-*} \z_{(0,0)}\big)_{\beta\in \mcNmin\setminus \mcN},
\end{equation*}
where $\Gamma_{(\cdot-x)}^{-*}$ is generated via \eqref{mod09} choosing $\bff_\gamma^{(\mathbf{n})}=0$ and $\bff_i = (\cdot - x)_i$.
\end{remark}
\begin{proof}[of Lemma \ref{lem:exp01}]	
	We claim that 
	\begin{equation}\label{exp03}
		\tfrac{1}{\mathbf{m}!}\mbpartial^\mathbf{m} = \sum_{l\geq 1}\tfrac{1}{l!} \hspace*{-5pt}\sum_{\substack{\mathbf{m}_i,\mathbf{n}_i\in\N_0^d,\,\mathbf{m}_i \neq 0 \\ \mathbf{m}_1 +...+\mathbf{m}_l = \mathbf{m}}} \hspace*{-20pt}\tbinom{\mathbf{n}_1 + \mathbf{m}_1}{\mathbf{n}_1}\hspace*{-2pt} \cdots \hspace*{-2pt}\tbinom{\mathbf{n}_l + \mathbf{m}_l}{\mathbf{n}_l} \z_{\mathbf{n}_1 + \mathbf{m}_1}\hspace*{-2pt}\cdots \hspace*{-2pt}\z_{\mathbf{n}_l + \mathbf{m}_l} D^{(\mathbf{n}_l)}\hspace*{-2pt}\cdots \hspace*{-2pt}D^{(\mathbf{n}_1)}
	\end{equation}
	and postpone the proof. Then by multiplicativity \eqref{mod09} and the identification \eqref{mod10} we have
	\begin{align*}
		&\sum_{\mathbf{m}\in\N_0^d}\tfrac{1}{\mathbf{m}!}\bff^{(0,\mathbf{m})}\mbpartial^\mathbf{m} \\
		&\quad = \sum_{\mathbf{m}\in\N_0^d}\sum_{l\geq 1}\tfrac{1}{l!} \hspace*{-5pt}\sum_{\substack{\mathbf{m}_i,\mathbf{n}_i\in\N_0^d,\,\mathbf{m}_i \neq 0 \\ \mathbf{m}_1 +...+\mathbf{m}_l = \mathbf{m}}} \hspace*{-20pt} f_{e_{\mathbf{n}_1+\mathbf{m}_1}}^{(\mathbf{n}_1)}\z_{\mathbf{n}_1 + \mathbf{m}_1}\hspace*{-2pt}\cdots \hspace*{-2pt}f_{e_{\mathbf{n}_l+\mathbf{m}_l}}^{(\mathbf{n}_l)}\z_{\mathbf{n}_l + \mathbf{m}_l} D^{(\mathbf{n}_l)}\hspace*{-2pt}\cdots \hspace*{-2pt}D^{(\mathbf{n}_1)}\\
		&\quad = \sum_{l\geq 0}\tfrac{1}{l!} \hspace*{-5pt}\sum_{\substack{\mathbf{m}_i,\mathbf{n}_i\in\N_0^d \\ \mathbf{m}_i> \mathbf{n}_i}} \hspace*{-5pt} f_{e_{\mathbf{m}_1}}^{(\mathbf{n}_1)}\z_{\mathbf{m}_1}\hspace*{-2pt}\cdots \hspace*{-2pt}f_{e_{\mathbf{m}_l}}^{(\mathbf{n}_l)}\z_{\mathbf{m}_l} D^{(\mathbf{n}_l)}\hspace*{-2pt}\cdots \hspace*{-2pt}D^{(\mathbf{n}_1)}.
	\end{align*}
	On the other hand, the summation lemma \cite[Lemma A.2]{LOT} yields
	\begin{align*}
		&\sum_{J}\sum_\mathbf{m} \bff^{(J,\mathbf{m})} \msfD_{(J,\mathbf{m})}\\ 
		&\quad= \sum_{l\geq 0}\tfrac{1}{l!}\sum_{(\gamma_1,\mathbf{n}_1),...,(\gamma_l,\mathbf{n}_l)\in \mcMm}f_{\gamma_1}^{(\mathbf{n}_1)} \z^{\gamma_1}\cdots f_{\gamma_l}^{(\mathbf{n}_l)}\z^{\gamma_l } \big(\sum_\mathbf{m}\tfrac{1}{\mathbf{m}!}\mbpartial^\mathbf{m}\big) D^{(\mathbf{n}_l)}\cdots D^{(\mathbf{n}_1)}.
	\end{align*}
	Combining both identities we get, as desired,
	\begin{align*}
		&\sum_{(J,\mathbf{m})} \bff^{(J,\mathbf{m})} \msfD_{(J,\mathbf{m})}\\
		&\quad = \sum_{l\geq 0} \tfrac{1}{l!} \sum_{(\gamma_1,\mathbf{n}_1),...,(\gamma_l,\mathbf{n}_l)\in \populated\times \N_0^d} f_{\gamma_1}^{\referee{(\mathbf{n}_1)}}\z^{\gamma_1}\cdots f_{\gamma_l}^{\referee{(\mathbf{n}_l)}}\z^{\gamma_l} D^{(\mathbf{n}_l)}\cdots D^{(\mathbf{n}_1)}\\
		&\quad = \sum_{l\geq 0} \tfrac{1}{l!} \sum_{\mathbf{n}_1,...,\mathbf{n}_l\in\N_0^d} f^{(\mathbf{n}_1)}\cdots f^{(\mathbf{n}_l)} D^{(\mathbf{n}_l)}\cdots D^{(\mathbf{n}_1)}.
	\end{align*}
	We now show \eqref{exp03} inductively in $\mathbf{m}$. The case $\length{\mathbf{m}}=1$ follows from \eqref{bb17}, we now assume it true for $\mathbf{m}$ and aim to prove it for $\mathbf{m}+e_i$ for some $i=1,...,d$. Recall that
	\begin{equation}\label{dm02}
		\mbpartial_i \tfrac{1}{\mathbf{m}!} \mbpartial^{\mathbf{m}} = (\mathbf{m}(i)+1) \tfrac{1}{(\mathbf{m}+e_i)!} \mbpartial^{\mathbf{m}+e_i}.
	\end{equation}
	On the other hand, using the Leibniz rule and the induction hypothesis, we write
	\begin{align*}
		&\mbpartial_i \tfrac{1}{\mathbf{m}!} \mbpartial^{\mathbf{m}}\\ 
		&= \sum_{l\geq 1} \tfrac{1}{l!} \hspace*{-9pt}\sum_{\mathbf{n}_1,...,\mathbf{n}_{l+1}\in\N_0^d} \sum_{\substack{\mathbf{m}_i,...,\mathbf{m}_{l+1}\in \N_0^d\setminus \{0\} \\ \mathbf{m}_1 + ... + \mathbf{m}_{l+1} = \mathbf{m} + e_i \\ \mathbf{m}_{l+1}= e_i}} \tbinom{\mathbf{n}_1 + \mathbf{m}_1}{\mathbf{n}_1} \cdots  \tbinom{\mathbf{n}_{l+1} + \mathbf{m}_{l+1}}{\mathbf{n}_{l+1}} \\
		& \quad \quad \quad \times \z_{\mathbf{n}_1 + \mathbf{m}_1} \cdots \z_{\mathbf{n}_{l+1} + \mathbf{m}_{l+1}} D^{(\mathbf{n}_{l+1})}\cdots D^{(\mathbf{n}_1)}\\
		&\,\,+ \sum_{l\geq 1} \tfrac{1}{l!} \hspace*{-9pt}\sum_{\mathbf{n}_1,...,\mathbf{n}_{l}\in\N_0^d} 
		\sum_{\substack{\mathbf{m}_i,...,\mathbf{m}_{l}\in \N_0^d\setminus \{0\} \\ \mathbf{m}_1 + ... + \mathbf{m}_{l} = \mathbf{m}}}  
		\sum_{j=1}^l \hspace*{-2pt}\tbinom{\mathbf{n}_1 + \mathbf{m}_1}{\mathbf{n}_1}\hspace*{-1pt} \cdots \hspace*{-1pt}\tbinom{\mathbf{n}_j + \mathbf{m}_j + e_i}{\mathbf{n}_j}\hspace*{-1pt} (\mathbf{m}_j(i) + 1)  \hspace*{-1pt}\cdots\hspace*{-1pt}\tbinom{\mathbf{n}_{l} + \mathbf{m}_{l}}{\mathbf{n}_{l}}\\
		& \quad \quad \quad \times \z_{\mathbf{n}_1 + \mathbf{m}_1} \cdots \z_{\mathbf{n}_j + \mathbf{m}_j + e_i} \cdots \z_{\mathbf{n}_l + \mathbf{m}_l} D^{(\mathbf{n}_l)}\cdots D^{(\mathbf{n}_j)} \cdots D^{(\mathbf{n}_1)}.
	\end{align*}
	In the first r.~h.~s. term we note that we may run the outer sum over $l\geq 0$ since the case $l=0$ is empty. On the other hand, we may by symmetry rewrite
	\begin{equation*}
		\sum_{\substack{\mathbf{m}_i,...,\mathbf{m}_{l+1}\in \N_0^d\setminus \{0\} \\ \mathbf{m}_1 + ... + \mathbf{m}_{l+1} = \mathbf{m} + e_i \\ \mathbf{m}_{l+1}= e_i}} =\tfrac{1}{l+1} \sum_{j=1}^{l+1} \sum_{\substack{\mathbf{m}_i,...,\mathbf{m}_{l+1}\in \N_0^d\setminus \{0\} \\ \mathbf{m}_1 + ... + \mathbf{m}_{l+1} = \mathbf{m} + e_i \\ \mathbf{m}_{j}= e_i}}.
	\end{equation*}
	Shifting the sum in $l$ by $l\mapsto l-1$, we then rewrite the whole term as
	\begin{align*}
		\sum_{l\geq 1} \tfrac{1}{l!}\hspace*{-8pt} \sum_{\mathbf{n}_1,...,\mathbf{n}_{l}\in\N_0^d}\sum_{j=1}^l \sum_{\substack{\mathbf{m}_i,...,\mathbf{m}_{l}\in \N_0^d\setminus \{0\} \\ \mathbf{m}_1 + ... + \mathbf{m}_{l} = \mathbf{m} + e_i \\ \mathbf{m}_{j}= e_i}} \hspace*{-13pt}\tbinom{\mathbf{n}_1 + \mathbf{m}_1}{\mathbf{n}_1}\hspace*{-2pt} \cdots \hspace*{-2pt} \tbinom{\mathbf{n}_{l} + \mathbf{m}_{l}}{\mathbf{n}_{l}} \z_{\mathbf{n}_1 + \mathbf{m}_1} \hspace*{-2pt}\cdots \z_{\mathbf{n}_{l} + \mathbf{m}_{l}} D^{(\mathbf{n}_{l})}\hspace*{-2pt}\cdots D^{(\mathbf{n}_1)}.
	\end{align*}
	In the second r.~h.~s. term, we shift the third sum by $\mathbf{m}_j \mapsto \mathbf{m}_j - e_i$ so that
	\begin{equation*}
		\sum_{\substack{\mathbf{m}_i,...,\mathbf{m}_{l}\in \N_0^d\setminus \{0\} \\ \mathbf{m}_1 + ... + \mathbf{m}_{l} = \mathbf{m}}}  
		\sum_{j=1}^l = \sum_{j=1}^l \sum_{\substack{\mathbf{m}_i,...,\mathbf{m}_{l}\in \N_0^d\setminus \{0\} \\ \mathbf{m}_1 + ... + \mathbf{m}_{l} = \mathbf{m}+ e_i \\ \mathbf{m}_j > e_i}}.
	\end{equation*}
	The sum of the two terms then yields
	\begin{align*}
		\sum_{l\geq 1} \tfrac{1}{l!}\hspace*{-9pt} \sum_{\mathbf{n}_1,...,\mathbf{n}_{l}\in\N_0^d}\hspace*{-1pt}\sum_{j=1}^l \hspace*{-2pt}\sum_{\substack{\mathbf{m}_1,...,\mathbf{m}_{l}\in \N_0^d\setminus \{0\} \\ \mathbf{m}_1 +...+ \mathbf{m}_{l} = \mathbf{m} + e_i \\ \mathbf{m}_{j}\geq e_i}} \hspace*{-25pt}\mathbf{m}_j(i)\hspace*{-1pt} \tbinom{\mathbf{n}_1 + \mathbf{m}_1}{\mathbf{n}_1} \hspace*{-2pt}\cdots\hspace*{-2pt}  \tbinom{\mathbf{n}_{l} + \mathbf{m}_{l}}{\mathbf{n}_{l}}\hspace*{-1pt} \z_{\mathbf{n}_1 + \mathbf{m}_1}\hspace*{-3pt} \cdots\hspace*{-1pt} \z_{\mathbf{n}_{l} + \mathbf{m}_{l}} \hspace*{-1pt}D^{(\mathbf{n}_{l})}\hspace*{-1pt}\cdots\hspace*{-1pt} D^{(\mathbf{n}_1)}\hspace*{-3pt}.
	\end{align*}
	Note now that the presence of the factor $\mathbf{m}_j (i)$ implies that unless $\mathbf{m}_j \geq e_i$ the corresponding term vanishes, so we can actually remove the condition $\mathbf{m}_j \geq e_i$ and write
	\begin{align*}
		&\sum_{l\geq 1} \tfrac{1}{l!}\hspace*{-9pt} \sum_{\mathbf{n}_1,...,\mathbf{n}_{l}\in\N_0^d}\hspace*{-1pt}\sum_{j=1}^l \hspace*{-2pt}\sum_{\substack{\mathbf{m}_1,...,\mathbf{m}_{l}\in \N_0^d\setminus \{0\} \\ \mathbf{m}_1 +...+ \mathbf{m}_{l} = \mathbf{m} + e_i}} \hspace*{-25pt}\mathbf{m}_j(i) \tbinom{\mathbf{n}_1 + \mathbf{m}_1}{\mathbf{n}_1} \hspace*{-3pt}\cdots\hspace*{-2pt}  \tbinom{\mathbf{n}_{l} + \mathbf{m}_{l}}{\mathbf{n}_{l}}\hspace*{-1pt} \z_{\mathbf{n}_1 + \mathbf{m}_1}\hspace*{-4pt} \cdots\hspace*{-1pt} \z_{\mathbf{n}_{l} + \mathbf{m}_{l}} \hspace*{-1pt}D^{(\mathbf{n}_{l})}\hspace*{-2pt}\cdots\hspace*{-1pt} D^{(\mathbf{n}_1)}\\
		&= (\mathbf{m}(i)+1) \sum_{l\geq 1} \tfrac{1}{l!} \sum_{\mathbf{n}_1,...,\mathbf{n}_{l}\in\N_0^d} \sum_{\substack{\mathbf{m}_i,...,\mathbf{m}_{l}\in \N_0^d\setminus \{0\} \\ \mathbf{m}_1 + ... + \mathbf{m}_{l} = \mathbf{m} + e_i}} \tbinom{\mathbf{n}_1 + \mathbf{m}_1}{\mathbf{n}_1} \cdots  \tbinom{\mathbf{n}_{l} + \mathbf{m}_{l}}{\mathbf{n}_{l}}\\
		&\quad \quad \quad \times \z_{\mathbf{n}_1 + \mathbf{m}_1} \cdots \z_{\mathbf{n}_{l} + \mathbf{m}_{l}} D^{(\mathbf{n}_{l})}\cdots D^{(\mathbf{n}_1)}.
	\end{align*}
	Then \eqref{exp03} follows from \eqref{dm02}.
\end{proof}	
%%%%%%%%%%%
%
\subsection{The structure group}\label{subsec::3.3}
\referee{In regularity structures, model components of positive homogeneity are centered at a base point $x$ around which they are homogeneous (i.~e. they locally vanish to order its homogeneity). In practice, this requires subtracting a Taylor polynomial anchored at $x$ of order the homogeneity of the model component in question\footnote{ There is an easy analogy in terms of Hölder continuous functions: If $f\in C^{k,\alpha}(\R)$ for $k\in \N_0$ and $\alpha\in(0,1)$, then
		\begin{equation*}
			f(y) - \sum_{k'\leq k} \frac{1}{k'!}\frac{d^{k'} f}{dx^{k'}}(x) (y-x)^{k'} = O(|y-x|^{k+\alpha}).
		\end{equation*}.}. For example, for some given $\gamma$ such that $|\gamma|>-\eta$, the model component $\Pi_{x\gamma}$ is homogeneous of order $|\gamma| + \eta$ thanks to the subtraction of a polynomial of degree $\lfloor|\gamma|+\eta\rfloor$. As a consequence, \textit{recentering}, i.~e. changing the base point from $x$ to $y$, forces us to consider shifts by Taylor-like expansions of order fixed by the homogeneity of the shifted component. This leads us to the set of indices\footnote{ Experts in regularity structures will recognize this restriction, as it is the same appearing in the planted trees of positive homogeneity required for recentering, e.~g. \cite[(8.7)]{reg}.}
\begin{equation}\label{sg17}
	\mcMp := \{(\gamma,\mathbf{n})\in\mcM\,|\, |\mathbf{n}| < \eta + |\gamma| \}
\end{equation}
as well as the set of generators
\begin{equation*}
	\mcD^+ := \{\z^\gamma D^{(\mathbf{n})}\}_{(\gamma,\mathbf{n})\in\mcMp} \sqcup \{\mbpartial_i\}_{i=1,...,d};
\end{equation*}
these will ultimately give rise to the \textit{structure group}.}

\medskip

The analogue of the finiteness property \eqref{mod04} holds for $\mcMp$.
\begin{lemma}
	For every $\beta\in M(\coord)$,
	\begin{equation}\label{sg41}
		\#\{\big(\gamma, (\gamma',\mathbf{n}')\big)\in M(\coord) \times\mcMp\,|\, (\z^{\gamma'}D^{(\mathbf{n}')})_\beta^\gamma \neq 0\}<\infty.
	\end{equation}
\end{lemma}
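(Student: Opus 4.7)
The plan is to proceed by analogy with the proof of Lemma \ref{lemfin01}, exploiting the fact that the constraint defining $\mcMp$ in \eqref{sg17}, though $\gamma'$-dependent, still yields a uniform bound on $|\mathbf{n}'|$ once $\gamma'$ is restricted to finitely many possibilities.

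First, I would invoke the representation \eqref{ref:new1}, which gives
\begin{equation*}
(\z^{\gamma'}D^{(\mathbf{n}')})_\beta^\gamma = \begin{cases} (D^{(\mathbf{n}')})_{\beta-\gamma'}^\gamma & \text{if }\gamma'\leq \beta,\\ 0 & \text{otherwise.}\end{cases}
\end{equation*}
In particular, for a fixed $\beta$, only $\gamma'\in M(\coord)$ with $\gamma'\leq \beta$ can contribute, and the set of such $\gamma'$ is finite. Since this set is finite, the quantity $\max\{|\gamma'| \,:\, \gamma'\leq \beta\}$ is well-defined and finite; call it $M_\beta$.

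Next, the defining inequality $|\mathbf{n}'|<\eta + |\gamma'|$ of \eqref{sg17}, together with $|\gamma'|\leq M_\beta$, yields the uniform bound $|\mathbf{n}'|<\eta + M_\beta$ for any $(\gamma',\mathbf{n}')\in\mcMp$ contributing to the count. Because $|\cdot|$ on $\N_0^d$ is defined in \eqref{scal01} as a positive integer combination of the $\mathfrak{s}_i\geq 1$, only finitely many $\mathbf{n}'\in\N_0^d$ lie below this threshold.

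Finally, for each of the finitely many pairs $(\gamma',\mathbf{n}')$ obtained in the previous step, the finiteness property \eqref{fin50} of Remark \ref{rem:fin01} ensures that
\begin{equation*}
\#\{\gamma\in M(\coord)\,|\,(\z^{\gamma'}D^{(\mathbf{n}')})_\beta^\gamma \neq 0\}<\infty.
\end{equation*}
Summing this finite number of finite contributions proves \eqref{sg41}. No step looks delicate: the only minor point is the use of the scaled degree structure to pass from a bound on $|\mathbf{n}'|$ to finiteness of the set of admissible $\mathbf{n}'$, but this is immediate from the definition \eqref{scal01} of $|\cdot|_\mathfrak{s}$.
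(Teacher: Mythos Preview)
Your proof is correct and follows essentially the same approach as the paper: first bound $\gamma'\leq\beta$ via \eqref{ref:new1}, then use the $\mcMp$ constraint $|\mathbf{n}'|<\eta+|\gamma'|$ to bound $\mathbf{n}'$, and finally invoke \eqref{fin50} for $\gamma$. Your version is slightly more explicit in introducing the uniform bound $M_\beta$ and justifying why the scaled degree controls the number of admissible $\mathbf{n}'$, but the structure is identical.
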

\begin{proof}
	Recall \referee{\eqref{ref:new1}}. As in the proof of Lemma \ref{lemfin01}, fixing $\beta$, there are finitely many $\gamma'$ to consider. Condition $|\mathbf{n}'|<\eta + |\gamma'|$ in turn yields finitely many $\mathbf{n}'$. Finally, we appeal to \eqref{fin50} and obtain finitely many $\gamma$.
\end{proof}	
As in the previous subsection, we now construct a Lie sub-algebra from $\mcD^+$.
\begin{lemma}
	Let $L^+$ be the Lie algebra generated by $\mcD^+$ with the Lie bracket \eqref{sg14}, \eqref{sg15}; then $L^+$ is a Lie sub-algebra of $L$. In addition, let $\tL^+$ denote the Lie sub-algebra generated by $\{\z^\gamma D^{(\mathbf{n})}\}_{(\gamma,\mathbf{n})\in\mcMp}$; then $(\tL^+,\triangleright)$ is a pre-Lie sub-algebra of $(\tL,\triangleright)$.
\end{lemma}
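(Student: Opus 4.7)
My plan is to mirror the proof of the analogous statement for $L^-$, the key difference being that here the polynomial-degree constraint takes the form $|\mathbf{n}|<\eta+|\gamma|$ rather than $|\mathbf{n}|<\eta$. The central ingredient will be the grading property of Lemma \ref{lem:big}, which controls how homogeneity is transferred under the pre-Lie product and is precisely what ensures that the defining constraint of $\mcMp$ is preserved.

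First I would show that $(\tL^+,\triangleright)$ is closed under the pre-Lie product \eqref{sg11}. Let $(\gamma',\mathbf{n}'),(\gamma,\mathbf{n})\in\mcMp$; then
\[
\z^{\gamma'}D^{(\mathbf{n}')}\triangleright \z^\gamma D^{(\mathbf{n})} \;=\; \sum_\beta (\z^{\gamma'}D^{(\mathbf{n}')})_\beta^\gamma\, \z^\beta D^{(\mathbf{n})}.
\]
The mapping property \eqref{map02} of Lemma \ref{lem:218} (which holds for arbitrary $(\gamma',\mathbf{n}')\in\mcM\supset\mcMp$) guarantees that every nonzero contribution has $\beta\in\mcN$. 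To check that $(\beta,\mathbf{n})\in\mcMp$, I invoke \eqref{sg09}: $|\beta|=|\gamma|+|\gamma'|+\eta-|\mathbf{n}'|$. The assumption $(\gamma',\mathbf{n}')\in\mcMp$ implies $\eta-|\mathbf{n}'|>-|\gamma'|$, hence $|\beta|>|\gamma|$; combined with $|\mathbf{n}|<\eta+|\gamma|$, this yields $|\mathbf{n}|<\eta+|\beta|$, as required.

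Next I would handle the Lie brackets involving $\mbpartial_i$. Using \eqref{sg14} together with \eqref{sg12} and \eqref{sg13}, the bracket $[\mbpartial_i,\z^\gamma D^{(\mathbf{n})}]$ splits into two contributions. The term $-\mathbf{n}(i)\z^\gamma D^{(\mathbf{n}-e_i)}$ trivially lies in $\tL^+$, since $|\mathbf{n}-e_i|<|\mathbf{n}|<\eta+|\gamma|$. The term $\sum_\beta (\mbpartial_i)_\beta^\gamma \z^\beta D^{(\mathbf{n})}$ is governed by \eqref{sg10}, which gives $|\beta|=|\gamma|+|e_i|>|\gamma|$, so $|\mathbf{n}|<\eta+|\gamma|<\eta+|\beta|$, while \eqref{map05} ensures $\beta\in\mcN$. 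The remaining bracket $[\mbpartial_i,\mbpartial_j]=0$ by \eqref{sg15}, so nothing has to be checked there.

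Altogether this shows that both the pre-Lie product \eqref{sg11} on $\tL^+$ and the Lie brackets \eqref{sg14}, \eqref{sg15} restrict consistently to $L^+$, so $L^+$ is a Lie sub-algebra of $L$ and $(\tL^+,\triangleright)$ is a pre-Lie sub-algebra of $(\tL,\triangleright)$. I do not anticipate any serious obstacle: the argument is a direct consequence of the grading in Lemma \ref{lem:big} combined with the mapping properties of Lemma \ref{lem:218}, and the only observation specific to $\mcMp$ (as opposed to $\mcMm$) is the elementary inequality $|\beta|>|\gamma|$ derived above.
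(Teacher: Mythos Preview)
Your proposal is correct and follows essentially the same approach as the paper: both arguments reduce the problem to checking that \eqref{sg11}, \eqref{sg12}, \eqref{sg13} preserve the constraint \eqref{sg17}, use the mapping properties \eqref{map02}, \eqref{map05} from Lemma~\ref{lem:218} to keep $\beta\in\mcN$, and then invoke the grading identities \eqref{sg09}, \eqref{sg10} from Lemma~\ref{lem:big} to obtain $|\beta|>|\gamma|$, which is exactly what makes $|\mathbf{n}|<\eta+|\beta|$ hold. Your write-up is simply a slightly more detailed version of the paper's terse proof.
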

\begin{proof}
	Recall the mapping properties \eqref{map02}, \eqref{map04} and \eqref{map05}. It is then enough to show that identities \eqref{sg11}, \eqref{sg12} and \eqref{sg13} preserve \eqref{sg17}. Note that \eqref{sg09} implies 
	\begin{equation*}
		\mbox{for all }(\gamma',\mathbf{n}')\in\mcMp,\,\,(\z^{\gamma'}D^{(\mathbf{n}')})_\beta^\gamma \neq 0 \implies |\beta| > |\gamma|;
	\end{equation*}
	this shows closedness for \eqref{sg11}. Similarly, \eqref{sg10} implies
	\begin{equation*}
		(\mbpartial_i)_\beta^\gamma \neq 0 \implies |\beta|>|\gamma|,
	\end{equation*}
	which shows closedness for \eqref{sg13}. Finally, since $|\mathbf{n}-e_i| <|\mathbf{n}|$, \eqref{sg12} is also closed.
\end{proof}

\medskip

We consider the universal envelope $\rmU (L^+)$, which is the Hopf sub-algebra of $\rmU (L)$ generated by the basis elements $\{\msfD_{(J,\mathbf{m})}\}_{(J,\mathbf{m})\in M(\mcMp)\times \N_0^{d}}$. As in the previous subsection, the strong finiteness property \eqref{sg41} implies finiteness properties for the structure constants of the action and the product.
\begin{lemma}\label{lemfin05}
	\mbox{}
	\begin{itemize}
		\item For every $\beta\in \populated\cup \mcNmin$,
		\begin{align}
			\#\{ (\gamma, (J,\mathbf{m}))\in (\populated\cup\mcNmin)\times (M(\mcMp) \times \N_0^{d})\,|\,(\Dcom)_{(J,\mathbf{m}),\beta}^\gamma \neq 0 \}<\infty.\label{sg43}
		\end{align}
		\item For every $(J,\mathbf{m})\in M(\mcMp)\times\N_0^{d}$,
		\begin{equation}\label{sg33}
			\#\left\{(J',\mathbf{m}'), (J'',\mathbf{m}'')\in M(\mcMp)\times\N_0^{d} \,|\,(\Dcop)_{(J',\mathbf{m}'),(J'',\mathbf{m}'')}^{(J,\mathbf{m})}\neq 0 \right\}<\infty.
		\end{equation}
	\end{itemize}
\end{lemma}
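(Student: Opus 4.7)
The proof splits into the two items. For \textbf{item 1}, I would mirror the argument of Lemma \ref{lemfin02}, simply substituting $\mcMp$ for $\mcMm$. Fix $\beta \in \populated \cup \mcNmin$ and suppose $(\msfD_{(J,\mathbf{m})})_{\beta}^{\gamma} \neq 0$ for some $\gamma \in \populated \cup \mcNmin$ and $(J,\mathbf{m}) \in M(\mcMp) \times \N_0^d$. The support inclusion \eqref{e00} restricts $\tilde{\gamma}$ in the support of $J$ to finitely many multi-indices and provides a uniform upper bound on $J(\tilde{\gamma},\tilde{\mathbf{n}})$. The $\mcMp$-constraint $|\tilde{\mathbf{n}}| < \eta + |\tilde{\gamma}|$, combined with the boundedness of $|\tilde{\gamma}|$ from the previous step, bounds $|\tilde{\mathbf{n}}|$, hence $\tilde{\mathbf{n}}$ takes finitely many values and only finitely many $J$'s contribute. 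The bound on $|\mathbf{m}|$ follows from the homogeneity identity \eqref{mod20} together with the lower bound \eqref{sub05} on $|\gamma|$, so finitely many $\mathbf{m}$'s are admissible. Finally, the finiteness of the admissible $\gamma$'s is obtained by propagating \eqref{sg41} recursively through the decomposition \eqref{sg40} (and $\mbpartial^{\mathbf{m}} = \mbpartial^{\mathbf{m} - e_i}\mbpartial_i$), exactly as at the end of the proof of Lemma \ref{lemfin02}.

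For \textbf{item 2}, I would start from the $|\cdot|$-grading of $\rmU(L^+)$ (Lemma \ref{ref:lembasul}(\ref{enumbas4})): nonvanishing of $(\Dcop)_{(J',\mathbf{m}'),(J'',\mathbf{m}'')}^{(J,\mathbf{m})}$ forces
\begin{equation*}
|(J',\mathbf{m}')| + |(J'',\mathbf{m}'')| = |(J,\mathbf{m})| =: H,
\end{equation*}
and since $(\gamma,\mathbf{n}) \in \mcMp$ gives $|(\gamma,\mathbf{n})| > 0$ while $|i| = \mathfrak{s}_i \geq 1$, both summands lie in $[0,H]$. This bounds $\length{\mathbf{m}'} + \length{\mathbf{m}''} \leq H/\min_i \mathfrak{s}_i$ outright. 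To bound $J'$ and $J''$, I would exploit the structural constraints coming from the concatenation product in $\rmU(L^+)$: each commutator used to rewrite the product in the basis reduces the total length by exactly one, preserves the total $\gamma$-length of the $D$-type generators up to possible $+1$ increments from $\mbpartial$-on-$D$ graftings (of which at most $\length{\mathbf{m}'} + \length{\mathbf{m}''} - \length{\mathbf{m}}$ can occur), and the multi-set of $\mathbf{n}$-labels in $J$ is inherited from that of $J' \cup J''$ up to possible shifts $\mathbf{n} \mapsto \mathbf{n} - e_i$. A length induction on $\length{J} + \length{\mathbf{m}}$, carried out via the iterative identity \eqref{sg40}, then bounds $\length{J'}, \length{J''}$ in terms of $\length{J}+\length{\mathbf{m}}$; once lengths are bounded, the finiteness of admissible pairs reduces to Part 1 applied to each factor separately.

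The main obstacle lies in \textbf{item 2}: the homogeneity bound alone is insufficient, because the set $\{(\gamma,\mathbf{n}) \in \mcMp \,|\, |(\gamma,\mathbf{n})| \leq H\}$ need not be finite in general --- $|\gamma|$ can grow while $|\mathbf{n}|$ grows correspondingly to keep $|(\gamma,\mathbf{n})| = |\gamma| + \eta - |\mathbf{n}|$ bounded (one can construct such families explicitly, e.g. using $\gamma = e_{(\xi,0)} + k\, e_{(0,e_{(0,1)})}$ in the generalized KPZ setting). Hence there is no uniform positive lower bound on $|(\gamma,\mathbf{n})|$ over $\mcMp$, and the length bound on $(J',\mathbf{m}'), (J'',\mathbf{m}'')$ must come from the structural (length- and $\mathbf{n}$-conservation) analysis of the product rather than a spectral-gap-type estimate on $|(\gamma,\mathbf{n})|$; this is the delicate step of the proof.
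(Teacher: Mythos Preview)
Your treatment of item~1 is correct and coincides with the paper's: the argument of Lemma~\ref{lemfin02} carries over verbatim once $\mcMm$ is replaced by $\mcMp$, the only change being that the bound on $\tilde{\mathbf n}$ now comes from $|\tilde{\mathbf n}|<\eta+|\tilde\gamma|$ (with $\tilde\gamma$ already confined to a finite set) rather than from $|\tilde{\mathbf n}|<\eta$.

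For item~2, you correctly diagnose the obstacle --- there is no uniform strictly positive lower bound on $|(\gamma,\mathbf n)|$ over $\mcMp$, so the grading alone cannot bound $\length{J'}+\length{J''}$ --- but your proposed fix does not close the gap. The ``structural'' length argument runs the wrong way: commutators in $\rmU(L^+)$ decrease or preserve the number of $D$-type generators, which yields $\length{J}\le\length{J'}+\length{J''}$, i.e.\ a bound on the \emph{output} from the inputs, not the reverse. The recursion \eqref{sg40} decomposes a single basis element $\msfD_{(J,\mathbf m)}$, not a product of two, so it does not furnish an induction on the inputs $(J',\mathbf m'),(J'',\mathbf m'')$. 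And the final reduction ``apply Part~1 to each factor'' is a category error: item~1 concerns the action $\Dcom$ on $T^*_{\populated\cup\mcNmin}$, not the product structure constants $\Dcop$.

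The paper takes a completely different route. It uses the bialgebra axiom on $\rmU(L^+)$ --- that $\cop$ is an algebra morphism --- together with the explicit form \eqref{sg26} of $\cop$ in the basis $\{\msfD_{(J,\mathbf m)}\}$. Dually, this says that the would-be map $\Dcop^+$ is multiplicative with respect to the polynomial product of $T^+$, so it suffices to treat $(J,\mathbf m)$ of length one. The case $(0,e_i)$ is immediate from \eqref{mod32}. The case $(e_{(\gamma,\mathbf n)},0)$ is read off from the rank-one formula \eqref{mod40}: its first summand is controlled by item~1 (just established), and its second summand involves only finitely many $\mathbf m$ because $(\gamma,\mathbf n+\mathbf m)\in\mcMp$ forces $|\mathbf n+\mathbf m|<\eta+|\gamma|$. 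The missing idea in your argument is precisely this multiplicative reduction to length one via the bialgebra compatibility and \eqref{sg26}.
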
	
\begin{proof}
	The proof of \eqref{sg43} is a slight variation of the arguments of Lemma \ref{lemfin02}, replacing $\mcMm$ with $\mcMp$. We henceforth focus on \eqref{sg33}. By the algebra morphism property of the coproduct, together with the representation \eqref{sg26}, it is enough to show the statement for $(J,\mathbf{m})$ of length one, i.~e. either $J=0$ and $\mathbf{m}=e_i$, or $J=e_{(\gamma,\mathbf{n})}$ and $\mathbf{m} = 0$. In the first case, it can be deduced from \eqref{uni01} that necessarily $J' = J'' = 0$, so we are left with products of the form
	\begin{equation*}
		\msfD_{(0,\mathbf{m}')} \msfD_{(0,\mathbf{m}'')} = \tbinom{\mathbf{m}' + \mathbf{m}''}{\mathbf{m'}} \msfD_{(0,\mathbf{m}'+\mathbf{m}'')},
	\end{equation*}
	which in turn implies
	\begin{equation}\label{mod32}
		(\Dcop)_{(0,\mathbf{m}'),(0,\mathbf{m}'')}^{(0, e_i)} = \delta_{\mathbf{m}' + \mathbf{m}''}^{e_i}
	\end{equation}
	and trivially yields finitely many $\mathbf{m}'$, $\mathbf{m}''$. We now focus on the case $J=e_{(\gamma,\mathbf{n})}$, $\mathbf{m} = 0$; this corresponds to the $\gamma$ component of \eqref{sg35} with $U_1=\msfD_{(J',\mathbf{m}')}$ and $U_2= \msfD_{(J'',\mathbf{m}'')}$. We look at the two summands in \eqref{sg35} separately. The $\gamma$-component of the first term takes the form
	\begin{equation*}
		(\Dcom)_{(J',\mathbf{m}'),\gamma}^{\gamma'} \delta_{(J'',\mathbf{m}'')}^{(e_{(\gamma',\mathbf{n})},0)}.
	\end{equation*}
	By the finiteness property \eqref{sg41}, for fixed $\gamma$ there are finitely many $(J',\mathbf{m}')\in M(\mcMp)\times \N_0^{d}$ and $\gamma'\in\populated\cup\mcNmin$ giving non-vanishing contributions, thus yielding finitely many $(J'',\mathbf{m}'')\in M(\mcMp)\times \N_0^{d}$. The $\gamma$-component of the second term in \eqref{sg35} takes the form
	\begin{equation*}
		\sum_{\mathbf{m}} \tbinom{\mathbf{n}+\mathbf{m}}{\mathbf{m}} \delta_{(J',\mathbf{m}')}^{(e_{(\gamma,\mathbf{n} + \mathbf{m})},0)} \delta_{(J'',\mathbf{m}'')}^{(0,\mathbf{m})}.
	\end{equation*}
	By the condition $|\mathbf{n} + \mathbf{m}| < \eta + |\gamma|$ in \eqref{sg17}, only finitely many $\mathbf{m}$ are allowed, concluding the proof.
\end{proof}	
Note that \eqref{mod20} combined with condition \eqref{sg17} yields the triangularity property
	\begin{equation}\label{sg44}
		\mbox{for all }(J,\mathbf{m})\in M(\mcMp)\times \N_0^d\quad(\Dcom)_{(J,\mathbf{m}),\beta}^\gamma \neq 0 \implies |\beta|>|\gamma|.
	\end{equation}
We are now in a position to transpose the action and the product to obtain the following structure.
\begin{proposition}
	Let $T^+ := \R[\mcMp \sqcup \{1,...,d\}]$.
	\begin{itemize}
	\item Let $\Dcop^+ : T^+ \to T^+\otimes T^+$ be defined by
\begin{equation*}
	\Dcop^+ \msfZ^{(J,\mathbf{m})} = \sum_{(J',\mathbf{m}'),(J'',\mathbf{m}'')\in M(\mcMp)\times \N_0^{d}} (\Dcop)_{(J',\mathbf{m}'),(J'',\mathbf{m}'')}^{(J,\mathbf{m})}  \msfZ^{(J',\mathbf{m}')}\otimes \msfZ^{(J'',\mathbf{m}'')}.
\end{equation*}
Then there exists a map $\mcA^+:T^+\to T^+$ such that $T^+$ is a Hopf algebra with coproduct $\Dcop^+$ and antipode $\mcA^+$.
\item Let $\Dcom^+ : T_{\populated\cup\mcNmin} \to T^+ \otimes (T_{\populated\cup \mcNmin})$ be defined by
\begin{equation*}
	\Dcom^+ \z_\beta = \sum_{\substack{(J,\mathbf{m})\in M(\mcMp)\times \N_0^{d}\\ \gamma\in\populated \cup \mcNmin}} (\Dcom)_{(J,\mathbf{m}),\beta}^\gamma \msfZ^{(J,\mathbf{m})}\otimes\z_\gamma.
\end{equation*}
Then $(T,\Dcom^+)$ is a left comodule over $T^+$, i.~e.
\begin{equation*}
	(\id\otimes\Dcom^+)\Dcom^+ = (\Dcop^+\otimes\id)\Dcom^+.
\end{equation*}
\end{itemize}
\end{proposition}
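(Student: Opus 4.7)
\medskip

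\noindent\textbf{Proof proposal.} The approach is to obtain both the bialgebra structure on $T^+$ and the comodule structure on $T_{\populated\cup\mcNmin}$ by \emph{dualizing} the corresponding structures on $\rmU(L^+)$. The canonical pairing $\langle \msfD_{(J,\mathbf{m})}, \msfZ^{(J',\mathbf{m}')}\rangle = \delta_{(J,\mathbf{m})}^{(J',\mathbf{m}')}$ identifies $T^+$ with the graded dual of $\rmU(L^+)$, where the grading is the homogeneity \eqref{sg30} (each graded component being finite-dimensional by Lemma~\ref{lemsub01} together with the finiteness of $(J,\mathbf{m})$ of bounded homogeneity built from the locally finite set of generator homogeneities). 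Under this identification, the coproduct \eqref{sg26} of $\rmU(L^+)$ (which is cocommutative in the basis $\{\msfD_{(J,\mathbf{m})}\}$) transposes to precisely the polynomial multiplication on $T^+ = \R[\mcMp\sqcup\{1,\ldots,d\}]$, while the concatenation product $\bullet$ transposes via $\langle U_1 \bullet U_2, M\rangle = \langle U_1\otimes U_2, \Dcop^+ M\rangle$ to the map $\Dcop^+$ in the statement.

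The map $\Dcop^+$ is well-defined on $T^+ \otimes T^+$ thanks to the finiteness property \eqref{sg33}. Its coassociativity is the direct transpose of the associativity of $\bullet$, and the counit $\epsilon(\msfZ^{(J,\mathbf{m})}) = \delta_{(J,\mathbf{m})}^{0}$ is dual to the unit of $\rmU(L^+)$. The bialgebra axiom, namely that $\Dcop^+$ is a morphism for the polynomial product, is the transpose of the statement that $\bullet$ in $\rmU(L^+)$ is a coalgebra morphism with respect to $\cop$, which holds since $\rmU(L^+)$ is a bialgebra (as the universal envelope of $L^+$). One may alternatively verify this compatibility on generators $\msfZ_{(\gamma,\mathbf{n})}$ and $\msfZ_i$ using \eqref{mod40} and \eqref{mod32}, extending by multiplicativity; the key identity to check reduces to the associativity relation \eqref{sg35}.

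For the antipode, observe that by \eqref{sg30} combined with the restriction \eqref{sg17} (giving $|(\gamma,\mathbf{n})| = |\gamma| + \eta - |\mathbf{n}|>0$ for $(\gamma,\mathbf{n})\in\mcMp$) and the positivity of $|i| = \mathfrak{s}_i>0$, every generator of $T^+$ has strictly positive homogeneity. Together with \eqref{mod20} this yields the triangularity \eqref{sg44}, and $T^+$ becomes a \emph{graded-connected} commutative bialgebra, i.e. $T^+_0 = \R$ and $T^+ = \bigoplus_{\nu\geq 0} T^+_\nu$ with locally finite graded pieces. Standard Hopf algebra theory (Takeuchi's formula, or equivalently the inductive definition $\mcA^+ M = -M - \sum_{(M)'} M_{(1)}\mcA^+ M_{(2)}$, where the sum is over the reduced coproduct and terminates by triangularity in the grading) then produces the antipode $\mcA^+$. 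I expect this inductive construction to be the main technical step, as it requires carefully verifying that the sum terminates on each graded component, which is where the strict positivity of the homogeneity of every generator and the local finiteness of homogeneities are essential.

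Finally, for the comodule structure, the map $\Dcom^+$ is well-defined as a map into $T^+ \otimes T_{\populated\cup\mcNmin}$ by the finiteness property \eqref{sg43}. The comodule identity $(\id\otimes\Dcom^+)\Dcom^+ = (\Dcop^+\otimes\id)\Dcom^+$ is the transpose of the associativity of the action $\rho$, namely $\rho(U_1\bullet U_2)\pi = \rho(U_1)\rho(U_2)\pi$ on $\R[[\coord]]$: pairing both sides with $U_1\otimes U_2\otimes \msfZ^{(J,\mathbf{m})}$ yields precisely the desired identity on matrix coefficients via \eqref{sg42} and the relation between $(\Dcop)$ and $(\Dcom)$. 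The counitality $(\varepsilon\otimes\id)\Dcom^+ = \id$ follows from $\rho(1)=\id$.
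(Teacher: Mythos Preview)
Your proof is correct and follows essentially the same approach as the paper: both obtain the bialgebra and comodule structures by dualizing $\rmU(L^+)$ and its action via the finiteness properties \eqref{sg33} and \eqref{sg43}, and both deduce the existence of the antipode from connectedness of the homogeneity grading (which relies on \eqref{sg17} forcing every generator to have strictly positive degree). The only cosmetic difference is that the paper phrases the antipode as the transpose of the antipode of the connected graded Hopf algebra $\rmU(L^+)$, whereas you apply the graded-connected-bialgebra argument directly to $T^+$; these are equivalent.
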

\begin{proof}
	It only remains to show that the bialgebra $T^+$ is actually a Hopf algebra. We do this by a gradedness argument.
	Condition \eqref{sg17} implies for the grading \eqref{sg30}
	\begin{equation*}
		|(J,\mathbf{m})|\geq 0\;\;\mbox{and } |(J,\mathbf{m})|= 0\iff (J,\mathbf{m}) =  (0,\mathbf{0}),
	\end{equation*}
	which in turn shows that $\referee{{\rm U}(L^+)}$ is a connected graded Hopf algebra. As a consequence, $T^+$ is a connected graded bialgebra, and thus a Hopf algebra. The antipode $\mathcal{A}^+$ is the transposition of the antipode of $\rmU(L^+)$.
\end{proof}
\begin{remark}
	Scanning the proof of \eqref{sg33}, we can show the analogue in case of $\mcMm$, and thus $T^-$ can be endowed with a coproduct \referee{(leading to a composition rule of the form of the first item of \eqref{pos07} below)}. \referee{Furthermore, we believe that, with more complicated arguments, one can show that the antipode of ${\rm U}(L^-)$ can also be transposed (leading to an inverse as in the second item of \eqref{pos07} below). Note that the connectedness argument does not work in this case, because $\mcMm$ does not carry a strictly positive degree, but this is not necessary for the transposition (it only makes the existence of the antipode automatic). Since we will make no use of this Hopf algebra structure, we will not expand on this}.
\end{remark}	
\medskip

As in the previous subsection, consider the set $\textnormal{Alg}(T^+,\R)$ of multiplicative functionals $\bfpi$ over $T^+$, which as in \eqref{mod09} are characterized by their action on the elements $\mcD^+$, i.~e.
\begin{equation}\label{pos06}
	\bfpi_{\gamma}^{(\mathbf{n})}:=\bfpi(\msfZ_{(\gamma,\mathbf{n})}),\; \bfpi_i := \bfpi(\msfZ_i),\; \bfpi^{(J,\mathbf{m})} = \prod_{i=1}^d (\bfpi_i)^{\mathbf{m}(i)}\hspace{-6pt} \prod_{(\gamma,\mathbf{n})\in \mcMp} (\bfpi_{\gamma}^{(\mathbf{n})})^{J(\gamma,\mathbf{n})} .
\end{equation}
$\textnormal{Alg}(T^+,\R)$ is a group under the convolution product,
\begin{equation}\label{pos07}
	\bfpi*\bfsigma := (\bfpi\otimes \bfsigma)\Dcop^+,\;\;\; \bfpi^{-1} = \bfpi \mcA^+.
\end{equation}
Under the lens of the coaction $\Dcom^+$, this gives rise to the structure group. However, there is a subtlety: We want to see the maps of the structure group acting on $T_\populated$, but it is not true that $\Dcom^+T_\populated$ $\subset$ $T^+ \otimes T_\populated$. We therefore introduce the projected coaction
\begin{equation*}
	\overline{\Dcom^+} := (\proj_\populated\otimes\id)\Dcom^+ ,
\end{equation*}
which in coordinates takes the form
\begin{equation*}
	\overline{\Dcom^+} \z_\beta = \sum_{\substack{(J,\mathbf{m})\in M(\mcMp)\times \N_0^{d}\\ \gamma\in\populated}} (\Dcom)_{(J,\mathbf{m}),\beta}^\gamma \msfZ^{(J,\mathbf{m})}\otimes\z_\gamma .
\end{equation*}
The mapping property \eqref{map14} then implies that $(T_\populated,\overline{\Dcom^+})$ is a left comodule over $T^+$, i.~e.
\begin{equation}\label{com02}
	(\id\otimes\overline{\Dcom^+})\overline{\Dcom^+} = (\Dcop^+\otimes\id)\overline{\Dcom^+}.
\end{equation}
as maps in $T_\populated$.
\begin{lemma}\label{lem:refsg01}
	The group
	\begin{equation}\label{sg37}
		G:= \{ \Gamma_{\bfpi}^+ = (\bfpi\otimes {\rm id})\overline{\Dcom^+}\,|\, \bfpi\in \textnormal{Alg}(T^+,\R) \}
	\end{equation}
	is a well-defined structure group; in particular, for all $\bfpi\in \textnormal{Alg}(T^+,\R)$,
	\begin{equation*}
		(\Gamma_{\bfpi}^+ - {\rm id})_\beta^\gamma\neq 0\,\implies |\gamma|<|\beta|.
	\end{equation*}
\end{lemma}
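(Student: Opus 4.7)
The plan is to verify three things in sequence: that $\Gamma_\bfpi^+$ is a well-defined linear endomorphism of $T_\populated$; the triangularity property on its matrix coefficients; and that $G$ is closed under composition and inversion. First, I would write the action of $\Gamma_\bfpi^+$ on a basis element in coordinates,
\begin{equation*}
\Gamma_\bfpi^+ \z_\beta \;=\; \sum_{(J,\mathbf{m})\in M(\mcMp)\times\N_0^d}\sum_{\gamma\in\populated} \bfpi^{(J,\mathbf{m})}\,(\Dcom)_{(J,\mathbf{m}),\beta}^\gamma\, \z_\gamma,
\end{equation*}
and invoke the finiteness property \eqref{sg43} (together with the mapping property \eqref{map14}, which ensures the image stays inside $T_\populated$) to conclude that only finitely many terms contribute for each fixed $\beta$. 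This shows that $\Gamma_\bfpi^+$ is a well-defined endomorphism of $T_\populated$.

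For the triangularity, the key observations are that $\msfD_{(0,\mathbf{0})}$ is the unit of $\rmU(L^+)$, so $(\Dcom)_{(0,\mathbf{0}),\beta}^\gamma = \delta_\beta^\gamma$, while $\bfpi^{(0,\mathbf{0})}=1$ by multiplicativity of $\bfpi$. Hence $(\Gamma_\bfpi^+ - \id)_\beta^\gamma$ receives contributions only from pairs $(J,\mathbf{m})\in M(\mcMp)\times \N_0^d$ with $(J,\mathbf{m})\neq (0,\mathbf{0})$. For any such pair, condition \eqref{sg17} yields $|(\gamma',\mathbf{n}')|>0$ for every $(\gamma',\mathbf{n}')\in\mcMp$, and $|e_i|>0$ for every $i$, so by \eqref{sg30} we have $|(J,\mathbf{m})|>0$; combined with the grading identity \eqref{mod20}, this forces $|\beta| = |\gamma| + |(J,\mathbf{m})| > |\gamma|$ whenever $(\Dcom)_{(J,\mathbf{m}),\beta}^\gamma \neq 0$. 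This delivers exactly the triangularity statement in the lemma (which is also what is recorded in \eqref{sg44} when $(J,\mathbf{m})\neq(0,\mathbf{0})$).

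Finally, I would derive the group structure of $G$ from the comodule axiom \eqref{com02} together with the Hopf algebra structure on $T^+$. Unwinding definitions and using coassociativity of $\Dcop^+$ yields a composition rule of the shape $\Gamma_\bfpi^+ \circ \Gamma_\bfsigma^+ = \Gamma_{\bfsigma * \bfpi}^+$, with $*$ the convolution of \eqref{pos07}, so that the assignment $\bfpi \mapsto \Gamma_\bfpi^+$ is an (anti-)morphism from the group $(\textnormal{Alg}(T^+,\R),*)$ to $G$; the counit character produces the identity element, and the antipode $\mcA^+$ on $T^+$ provides inverses via $\bfpi^{-1} = \bfpi \circ \mcA^+$. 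The main anticipated obstacle is purely bookkeeping: justifying the rearrangement of the infinite sums that arise when one composes two such maps and then interchanges the application of $\overline{\Dcom^+}$ with the contraction by $\bfsigma$, which is precisely what the finiteness property \eqref{sg43} ensures. Everything else essentially assembles from the lemmas established earlier in the section.
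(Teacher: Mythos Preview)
Your proposal is correct and follows essentially the same approach as the paper's own proof, just with more detail spelled out. The paper simply cites the group structure of $\textnormal{Alg}(T^+,\R)$ together with the comodule property \eqref{com02} for the group structure, and \eqref{sg44} for the triangularity; you unpack both of these arguments explicitly, including the well-definedness via \eqref{sg43} and the (anti-)morphism computation, which is exactly what lies behind the paper's terse references.
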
	
\begin{proof}
	The group structure is a consequence of the group structure of $\textnormal{Alg}(T^+,\R)$ and the comodule property \eqref{com02}. The triangularity property is a consequence of \eqref{sg44}.
\end{proof}	
\begin{remark}\label{rem:refsg02}
	Note that a character $\bfpi\in \textnormal{Alg}(T^+,\R)$ also defines a map $\Gamma_{\bfpi}^+ :T_{\mcP\cup\mcNmin}\to T_{\mcP\cup\mcNmin}$, replacing $\overline{\Dcom^+}$ by $\Dcom^+$ in \eqref{sg37}. We will use the same notation for both since, when taking the dual perspective as in the previous subsection, we have
	\begin{equation}\label{sg39}
		\Gamma_{\bfpi}^{+*} = \sum_{(J,\mathbf{m})\in M(\mcMp)\times\N_0^d} \bfpi^{(J,\mathbf{m})}\rho(\msfD_{(J,\mathbf{m})});
	\end{equation}
	now the mapping property \eqref{map14} implies
	\begin{equation*}
		\Gamma_{\bfpi}^{+*} T_\populated^* \subset T_\populated^*,
	\end{equation*}
	which means that the restriction $\Gamma_{\bfpi}^{+*}|_{T_\populated^*}$ coincides with the dual of the maps in $G$. In addition, \eqref{map13} implies
	\begin{equation}\label{map32}
		\Gamma_{\bfpi}^{+*} T_\mcN^* \subset T_\mcN^*.
	\end{equation}
\end{remark}	
\referee{\begin{remark}
	At this stage we have all the ingredients to build a regularity structure based on multi-indices in the sense of Definition \ref{ref:defrg}. In particular, we take:
	\begin{itemize}
		\item The set of homogeneities $A$ defined in \eqref{hom50}.
		\item The model space $T = T_\mcNmin \oplus T_\populated$ as described in Remark \ref{rem:primal}.
		\item The structure group $G$ given in Lemma \ref{lem:refsg01} and Remark \ref{rem:refsg02}.
	\end{itemize}
\end{remark}}	

\medskip

For later purpose, let us express the composition rule in terms of the characters.
\begin{lemma}\label{lemcomp02}
	Let $\bfpi,\bfsigma\in \textnormal{Alg}(T^+,\R)$ be defined in terms of the characters \eqref{pos06}. Then for every $(\gamma,\mathbf{n})\in \mcMp$
	\begin{equation}\label{cor01}
		(\bfpi*\bfsigma)_\gamma^{(\mathbf{n})} = \sum_{\gamma'\in\mcN} (\Gamma_{\bfpi}^{+*})_\gamma^{\gamma'}\bfsigma_\gamma^{(\mathbf{n})} + \sum_{\mathbf{m}\in\N_0^{d}} \tbinom{\mathbf{n}+\mathbf{m}}{\mathbf{n}}\bfpi_\gamma^{(\mathbf{n}+\mathbf{m})} \bfsigma^{(0,\mathbf{m})},
	\end{equation}
	and for every $i=1,...,d$
	\begin{equation}\label{cor02}
		(\bfpi*\bfsigma)_i = \bfpi_i + \bfsigma_i.
	\end{equation}
\end{lemma}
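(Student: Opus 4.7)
The strategy is a direct computation: unfold the definition \eqref{pos07} of the convolution product in terms of the structure constants of $\Dcop^+$, then substitute the explicit formulas for these constants that were already derived in the preceding pages.

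Concretely, for the first identity, one applies the multiplicative functional $\bfpi * \bfsigma$ to the generator $\msfZ^{(e_{(\gamma,\mathbf{n})},0)} = \msfZ_{(\gamma,\mathbf{n})}$ and obtains
\begin{equation*}
(\bfpi*\bfsigma)_\gamma^{(\mathbf{n})} = \sum_{(J',\mathbf{m}'),(J'',\mathbf{m}'')} (\Dcop)_{(J',\mathbf{m}'),(J'',\mathbf{m}'')}^{(e_{(\gamma,\mathbf{n})},0)} \bfpi^{(J',\mathbf{m}')} \bfsigma^{(J'',\mathbf{m}'')}.
\end{equation*}
Next, I would substitute the already-established formula \eqref{mod40}. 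The first summand in \eqref{mod40} contains $(\msfD_{(J',\mathbf{m}')})_\gamma^{\gamma'} \delta_{(J'',\mathbf{m}'')}^{(e_{(\gamma',\mathbf{n})},0)}$; the second Kronecker collapses the $(J'',\mathbf{m}'')$-sum to produce a factor $\bfsigma_{\gamma'}^{(\mathbf{n})}$, while summing the remaining $(J',\mathbf{m}')$ against $(\msfD_{(J',\mathbf{m}')})_\gamma^{\gamma'}$ yields $(\Gamma_\bfpi^{+*})_\gamma^{\gamma'}$ thanks to \eqref{cha01} and \eqref{sg39}. The second summand in \eqref{mod40} forces $(J'',\mathbf{m}'') = (0,\mathbf{m}'')$ and $(J',\mathbf{m}') = (e_{(\gamma,\mathbf{n}+\mathbf{m}'')},0)$; relabeling $\mathbf{m} := \mathbf{m}''$ and using the symmetry of the binomial coefficient $\tbinom{\mathbf{n}+\mathbf{m}}{\mathbf{m}} = \tbinom{\mathbf{n}+\mathbf{m}}{\mathbf{n}}$ yields exactly the second term on the r.h.s. of \eqref{cor01}.

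For the second identity \eqref{cor02}, I would apply $\bfpi * \bfsigma$ to $\msfZ_i = \msfZ^{(0,e_i)}$. The argument in the proof of Lemma \ref{lemfin05} already showed that for $(J,\mathbf{m})=(0,e_i)$ only $(J',J'')=(0,0)$ contributes, together with the identity \eqref{mod32}, $(\Dcop)_{(0,\mathbf{m}'),(0,\mathbf{m}'')}^{(0,e_i)} = \delta_{\mathbf{m}'+\mathbf{m}''}^{e_i}$. Therefore
\begin{equation*}
(\bfpi*\bfsigma)_i = \sum_{\mathbf{m}'+\mathbf{m}''=e_i} \bfpi^{(0,\mathbf{m}')} \bfsigma^{(0,\mathbf{m}'')} = \bfpi^{(0,e_i)}\bfsigma^{(0,0)} + \bfpi^{(0,0)}\bfsigma^{(0,e_i)} = \bfpi_i + \bfsigma_i,
\end{equation*}
where I used $\bfpi^{(0,0)} = \bfsigma^{(0,0)} = 1$ since characters send the unit to $1$.

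The only subtlety worth being careful with is the bookkeeping of the Kronecker-delta collapses and the correct identification of $(\Gamma_\bfpi^{+*})_\gamma^{\gamma'}$ via \eqref{cha01} and \eqref{sg39}; beyond that, the computation is straightforward and presents no real obstacle, since the heavy algebraic work is encoded in \eqref{mod40} and \eqref{mod32}.
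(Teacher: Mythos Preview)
Your proposal is correct and follows essentially the same approach as the paper: unfold the convolution via \eqref{pos07}, then plug in the rank-one structure constants \eqref{mod40} together with the representation \eqref{sg39} for the first identity, and \eqref{mod32} for the second. The paper's proof is more terse but cites exactly the same ingredients.
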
	
	\begin{proof}
		Note that by \eqref{pos06} and \eqref{pos07}
		\begin{equation*}
			(\bfpi*\bfsigma)_\gamma^{(\mathbf{n})} = (\bfpi\otimes\bfsigma)\Dcop^+ \msfZ_{(\gamma,\mathbf{n})}.
		\end{equation*}
	Then \eqref{cor01} follows from \eqref{mod40} and the representation \eqref{sg39}. Similarly, \eqref{cor02} follows from \eqref{mod32}. 
	\end{proof}	
\begin{remark}
As in Lemma \ref{lem:exp01}, we define for every $\mathbf{n}\in\N_0^d$
\begin{equation*}
	\pi^{(\mathbf{n})} = \sum_{\gamma\in\mcN} \pi_\gamma^{(\mathbf{n})}\z^\gamma + \sum_{\mathbf{m}\in\N_0^d} \pi_{e_\mathbf{m}}^{(\mathbf{n})} \z_\mathbf{m}
\end{equation*}
with
\begin{align*}
	\pi_\gamma^{(\mathbf{n})} &= \left\{\begin{array}{cl}
		\bfpi_\gamma^{(\mathbf{n})} & \mbox{if }(\gamma,\mathbf{n})\in \mcMp\,\mbox{and}\\
		0 & \mbox{otherwise,}			
	\end{array}\right.\\
	\pi_{e_\mathbf{m}}^{(\mathbf{n})} &= \left\{\begin{array}{ll}
		\tbinom{\mathbf{m}}{\mathbf{n}} \bfpi^{(0,\mathbf{m}-\mathbf{n})}, &\mbox{if }\mathbf{n}\referee{<} \mathbf{m}\,\,\,\mbox{and}\\
		0 & \mbox{otherwise.} \end{array}\right.
\end{align*}	
Then
\begin{equation*}
	\Gamma_{\bfpi}^{+*} = \sum_{l\geq 0} \tfrac{1}{l!} \sum_{\mathbf{n}_1,...,\mathbf{n}_l\in\N_0^d} \pi^{(\mathbf{n}_1)} \cdots \pi^{(\mathbf{n}_l)} D^{(\mathbf{n}_l)}\cdots D^{(\mathbf{n}_1)},
\end{equation*}
which is the exponential formula found in \cite{OSSW,LOT,LOTT}.	
\end{remark}

\medskip

In the application to the model equations \eqref{bb02bis} we will need to concatenate maps of the form \eqref{mod11} and \eqref{sg37}. More precisely, and adopting the dual perspective, from \eqref{bb02bis} we see that the recentering of the model will require a composition of the form
\begin{equation*}
	\Gamma_{\bfpi}^{+*} \Gamma_{\bff}^{-*}.
\end{equation*} 
The main issue is that such a composition is not closed: Indeed, this can be seen at the level of the universal enveloping algebras, where we note that the product $\rmU (L^+) \rmU (L^-)$ is not contained in the union $\rmU (L^+) \cup \rmU (L^-)$. Nevertheless, since our interest is in the application to \eqref{bb02bis}, we effectively only need the projected product
\begin{equation}\label{proj01}
	\begin{array}{ccccc}
		\rmU (L^+)\otimes \rmU (L^-) & \longrightarrow & \rmU(L) & \longrightarrow & \rmU (L^-)\\
		U_+ \otimes U_- & \longmapsto & U_+ U_- & \longmapsto & \proj_\mcMm(U_+ U_-),
	\end{array}
\end{equation}
Note that when $\rmU (L)$ acts on $T_\mcNmin^*$ this projection is immaterial, since derivatives of $|\mathbf{n}|>\eta $ produce vanishing contributions due to the subcriticality condition. The argument for \eqref{sg33}, also applied to $\mcMm$, shows that for all $(J,\mathbf{m})\in M(\mcMm)\times \N_0^{d}$
\begin{equation*}
	\#\left\{\begin{array}{l}
		(J',\mathbf{m}')\in M(\mcMp)\times \N_0^{d} \\
		(J'',\mathbf{m}'')\in M(\mcMm)\times \N_0^{d}
	\end{array}\,\,\Big|\,(\Dcop)_{(J',\mathbf{m}'),(J'',\mathbf{m}'')}^{(J,\mathbf{m})}\neq 0\right\}<\infty,
\end{equation*}
and thus the projected concatenation product \eqref{proj01} can be transposed: We define $\Dcop^{\pm}:T^-\to T^+\otimes T^-$ in coordinates by
\begin{equation*}
	\Dcop^{\pm}\msfZ^{(J,\mathbf{m})} := \sum_{\substack{(J',\mathbf{m}')\in M(\mcMp)\times \N_0^{d} \\ (J'',\mathbf{m}'')\in M(\mcMm)\times \N_0^{d}}} (\Dcop)_{(J',\mathbf{m}'),(J'',\mathbf{m}'')}^{(J,\mathbf{m})} \msfZ^{(J',\mathbf{m}')}\otimes \msfZ^{(J'',\mathbf{m}'')},
\end{equation*}
and the corresponding convolution product $*:\textnormal{Alg}(T^+,\R)\otimes \textnormal{Alg}(T^-,\R) \to \textnormal{Alg}(T^-,\R)$ as
\begin{equation*}
	\bfpi*\bff := (\bfpi\otimes \bff)\Dcop^{\pm}.
\end{equation*}
As in Lemma \ref{lemcomp02}, \eqref{mod40} and \eqref{mod32} allow us to express the composition rule at the level of the characters by
\begin{align}
	(\bfpi*\bff)_\gamma^{(\mathbf{n})}  &= \sum_{\gamma'\in\mcN} (\Gamma_{\bfpi}^{+*})_\gamma^{\gamma'}\bff_{\gamma'}^{(\mathbf{n})} + \sum_{\mathbf{m}\in\N_0^{d}} \bfpi_\gamma^{(\mathbf{m})} \bff^{(0,\mathbf{m}-\mathbf{n})}\label{com60}\\
	(\bfpi*\bff)_i &= \bfpi_i + \bff_i.\label{com61}
\end{align}
\subsection{Admissible counterterms and the renormalized model equations}\label{subsec::3.5}
In regularity structures, renormalization takes the form of the subtraction of divergent constants to smooth approximations of the model, so that the limit when the regularization is removed is well-defined. As a consequence, the renormalized model no longer is a basis for local approximations of the solution to the original equation, but rather to a modified version of it, where the nonlinearities are shifted by divergent counterterms; this is what we may call a \textit{bottom-up} approach to renormalization. This is the approach widely used for regularity structures based on decorated trees \cite{reg,BHZ,BCCH}. In this subsection, we describe a \textit{top-down} approach to algebraic renormalization. By \textit{top-down} we mean that we adopt the opposite perspective: We postulate the presence of a counterterm in the equation and use this modified equation to deduce the form of the renormalized model, via the renormalized model equations. From the algebraic viewpoint, both approaches are essentially equivalent, but we adopt the top-down approach in line with the renormalization performed in \cite{LOTT}.

\medskip

The type of transformations we are interested in take the form 
\begin{equation}\label{shi01}
	a^\mfl \mapsto a^\mfl + c^{\mfl},
\end{equation}
where, for every $\mfl\in\mfL^-\cup\{0\}$, $c^{\mfl}$ is a priori allowed to depend on $\mathbf{a},\mathbf{p},\mathbf{u},\{\xi_\mfl\}_{\mfl\in\mfL^-}$ and space-time points $x\in \R^{d}$. The set of \textit{admissible} counterterms carries a more restrictive structure, which we may deduce as a consequence of some natural assumptions.
\begin{assumption}\label{ass:man}
	The counterterm does not depend on the parameterization of the solution space. In particular, $c^{\mfl}$ does not depend on $\mathbf{p}$ except through $\mathbf{u}$.
\end{assumption}
Roughly speaking, this means that the renormalization constants themselves should be independent of the choice of the solution or, in other words, that the renormalized equation remains the same for all the possible solutions. This could be broken in SPDEs on domains with boundaries, where the singular behavior close to the boundary is incorporated to the solution kernel and thus might produce renormalization constants which do depend on the boundary condition; cf. \cite{GH19,GH22}. \referee{However, it is not at all clear that such a dependence can be encoded in terms of a $\mathbf{p}$-dependence via a power series as we do in our approach.}
\begin{assumption}\label{ass:det}
The counterterm is deterministic, except through the randomness included in $\mathbf{u}$. This has two effects: On the one hand, it forces $c^\mfl \equiv 0$ for $\mfl\in \mfL^-$, so that we can identify $c^{\mfl = 0} = c$; on the other, the dependence of $c$ on $\xi_\mfl$ outside of $\mathbf{u}$ is only through its law.	
\end{assumption}
This is consistent with the renormalization performed in the class of singular SPDEs under consideration, where the divergent part of each nonlinear functional of the noise considered is its expectation.
\referee{\begin{assumption}\label{ass:loc}
	The counterterm is local: It only depends on $\mathbf{u}$ via its evaluation at the space-time point $x$, and on $\mathbf{a}$ via a local functional evaluated at $\mathbf{u}(x)$.
\end{assumption}
This assumption can be connected to the usual \textit{locality} assumption in QFT, and of course it is based on the fact that the nonlinearity itself is local \footnote{In fact, this locality assumption is implicit from the beginning, since our coordinates are placeholders for Taylor coefficients of the nonlinearity.}. It implies that our counterterm takes the form
\begin{equation*}
	c(\mathbf{a},\mathbf{u},x) = c(x)[\mathbf{a},\mathbf{u},x],
\end{equation*}
where the square brackets mean a local dependence of the functional $c(x)$ as in the variables \eqref{bb03}.

\begin{assumption}\label{ass:sta}
	The counterterm is independent of space-time points \referee{$x$} outside of the local dependence $\mathbf{u}(x)$.
\end{assumption}
This space-time stationarity assumption is reasonable if the noises are all stationary and the operator is translation-invariant. However, it is known that, in non-translation-invariant situations, counterterms take the form of space-time functions, cf. e.~g. \cite{BB21b}. Since these can be dealt with via preparation maps, we believe this would also be the case in our approach, but do not explore it in the sequel. This assumption further implies that the counterterm satisfies
\begin{equation*}
	c(x)[\mathbf{a},\mathbf{u},x] = c[\mathbf{a},\mathbf{u},x].
\end{equation*}}
Let us now discuss some restrictions on $c$. Since our description of solutions is based on functionals $\Pi_{x\beta}$, the population conditions of $c$ should be consistent with those of $\Pi_x$; thus we postulate
\begin{equation*}
	c\in T_\mcN^*.
\end{equation*}
Renormalization is required for nonlinear functionals of the noise, which means that we may restrict to
\begin{equation}\label{cou01c}
	c_\beta \neq 0 \implies \lnh\beta\rnh\geq 2.
\end{equation}
Finally, renormalization is required only if nonlinearities in the equation are ill-defined, which allows us to impose
	\begin{equation}\label{cou01b}
		c_\beta\neq 0 \implies |\beta|<0.
	\end{equation}
\begin{definition}
	Let
	\begin{equation}\label{setC}
		\mcC := \{\beta \in \mcN\,|\, |\beta|<0,\,\lnh \beta \rnh \geq 2\}.
	\end{equation}
	We call $\{c\in T_\mcC^*\}$ the set of \textit{admissible counterterms}
\end{definition}	
\begin{remark}
		Note that, as a consequence of \eqref{sub06}, $\counterterms\subset T^*_\mcN \subset T_\populated^*$ is a finite-dimensional linear subspace.
	
\end{remark}	

\medskip

\referee{
\begin{exam}\label{example_9}
	Let us describe the set of admissible counterterms in the case of the generalized KPZ equation \eqref{kpz01_example}. On the one hand, condition \eqref{cou01c} simply reduces to
	\begin{equation*}
		\sum_{k(0)\in \N_0} \beta(\xi, k_\mathbf{0} e_\mathbf{0}) \geq 2.
	\end{equation*}
	On the other, recalling \eqref{ref:hom_kpz}, $|\beta|<0$ reduces to the condition
	\begin{align*}
		&(\tfrac{1}{2}\mhyphen)\sum_{k_\mathbf{0}\in\N_0}\beta(\xi, k_\mathbf{0} e_\mathbf{0}) + 2 \sum_{k_\mathbf{0}\in\N_0}\beta(0,k_\mathbf{0} e_\mathbf{0}) \\
		&\quad + \sum_{k_\mathbf{0}\in\N_0} \beta(0,k_\mathbf{0} e_\mathbf{0} + e_{(0,1)}) + \sum_{\mathbf{n}\in\N_0^2} |\mathbf{n}| \beta(\mathbf{n}) < 2.
	\end{align*}
	The set of multi-indices $\mcC$ satisfying these conditions is described in Table \ref{tab:kpz} below.
	\begin{table}[h]
		\centering
		\begin{tabular}{c | c | c}
			$|\beta|$ & $\beta$ & $\#\{\beta\}$ \\
			\hline
			$-1\mhyphen$ & $e_{(\xi,0)} + e_{(\xi,e_\mathbf{0})}$, $2e_{(\xi,0)} + e_{(0,2 e_{(0,1)})}$ & 2 \\
			\hline
			& $e_{(\xi,0)} + 2 e_{(\xi, e_\mathbf{0})}$, $2 e_{(\xi,0)} + e_{(\xi, 2 e_\mathbf{0})}$, &  \\
			$-\frac{1}{2}\mhyphen$ &  $ 2 e_{(\xi,0)} + e_{(\xi, e_\mathbf{0})} + e_{(0, 2e_{(0,1)})}$, $3 e_{(\xi,0)} + 2 e_{(0, 2e_{(0,1)})}$,   & 5 \\
			& $3 e_{(\xi,0)} + e_{(0, e_\mathbf{0} + 2 e_{(0,1)})}$ & \\
			\hline
			& $e_{(\xi,0)} + 3 e_{(\xi, e_\mathbf{0})}$, $e_{(\xi,0)} + e_{(\xi, e_\mathbf{0})} + e_{(0, e_{(0,1)})}$, & \\
			& $2e_{(\xi,0)} + e_{(\xi, e_\mathbf{0})} + e_{(\xi, 2e_\mathbf{0})}$, $2 e_{(\xi,0)} + 2 e_{(\xi,e_\mathbf{0})} + e_{(0, 2 e_{(0,1)})}$, & \\
			& $2 e_{(\xi,0)} + e_{(0, e_{(0,1)})} + e_{(0, 2 e_{(0,1)})}$, $2 e_{(\xi,0)} + e_{(0, e_\mathbf{0} + e_{(0,1)})}$, & \\
			$ 0\mhyphen$ & $ 3 e_{(\xi,0)} + e_{(\xi, e_\mathbf{0})} + 2 e_{(0, 2e_{(0,1)})}$, $3 e_{(\xi,0)} + e_{(\xi,e_\mathbf{0})} + e_{(0,e_\mathbf{0} + 2 e_{(0,1)})}$, & 16\\ 
			& $3 e_{(\xi,0)} + e_{(\xi, 2 e_\mathbf{0})} + e_{(0, 2e_{(0,1)})}$, $4 e_{(\xi,0)} + e_{(0, 2e_\mathbf{0} + 2e_{(0,1)})}$, & \\
			& $4 e_{(\xi,0)} + e_{(0,2 e_{(0,1)})} + e_{(0, e_\mathbf{0} + 2e_{(0,1)})}$, $e_{(\xi,0)} + e_{(\xi, 2e_\mathbf{0})} + e_{(0,1)}$, & \\
			& $e_{(\xi,0)} + e_{(\xi, e_\mathbf{0})} + e_{(0, 2e_{(0,1)})} + e_{(0,1)}$, $2 e_{(\xi,0)} + e_{(0,1)}$, & \\
			& $2 e_{(\xi,0)} + e_{(0,e_\mathbf{0} + 2e_{(0,1)})} + e_{(0,1)}$, $2e_{(\xi,0)} + 2 e_{(0, 2e_{(0,1)})} + e_{(0,1)}$ & \\
			\hline 
		\end{tabular}
		\caption{The set of multi-indices $\mcC$, cf. \eqref{setC}, for the scalar-valued generalized KPZ equation \eqref{kpz01_example}, ordered in rows by homogeneity.}
		\label{tab:kpz}
	\end{table} 
	
	\medskip
	
	It is straightforward to obtain the form of the counterterm from the multi-index, making use of \eqref{kpz02_example} to \eqref{kpz05_example}. For example, the multi-index $\beta = 2 e_{(\xi,0)} + e_{(0,2 e_{(0,1)})}$ generates a counterterm of the form
	\begin{equation*}
		c_{2 e_{(\xi,0)} + e_{(0,2 e_{(0,1)})}} \z^{2 e_{(\xi,0)} + e_{(0,2 e_{(0,1)})}}[\mathbf{a},\mathbf{u},\cdot] = c_{2 e_{(\xi,0)} + e_{(0,2 e_{(0,1)})}} \sigma(u)^2 h(u).
	\end{equation*}
	Similarly, the multi-index $\beta = e_{(\xi,0)} + e_{(\xi,e_\mathbf{0})} + e_{(0,e_{(0,1)})}$ generates
	\begin{align*}
		c_{e_{(\xi,0)} + e_{(\xi,e_\mathbf{0})} + e_{(0,e_{(0,1)})}} \z^{e_{(\xi,0)} + e_{(\xi,e_\mathbf{0})} + e_{(0,e_{(0,1)})}}[\mathbf{a},\mathbf{u},\cdot] = \sigma(u) \sigma'(u) \big(g(u) + 2h(u)\partial_x u\big).
	\end{align*}
\end{exam}
}

\medskip

Recall now Subsection \ref{subsec:hopfmodel}, and particularly \eqref{bb02bis}, where we rewrote the model equation for \eqref{set01} in terms of a shift constructed via the multiplicative functional of $T^-$ identified with the model. Following the same principle, the translation \eqref{shi01} may be reduced to shifting \referee{$\z_{(0,0)}$ by $c$} and then applying $\Gamma_{\bfPi}^*$, so that the new model equation takes the form
\begin{equation}\label{shi02}
	\mcL\Pi =  \Gamma_{\bfPi}^*\sum_{\mfl\in\mfL^-\cup \{0\}} \xi_\mfl(\z_{(\mfl,0)} + \delta_\mfl^0 c) = \sum_{\mfl\in\mfL^-\cup \{0\}} \Gamma_{\bfPi}^*\xi_\mfl\z_{(\mfl,0)} + \Gamma_{\bfPi}^*c.
\end{equation}
\subsection{Connection to preparation maps}\label{subsec::preparation}
In this subsection, using the connection between multi-indices and trees of Subsection \ref{subsec::2.5}, we want to compare the previous construction with the algebraic renormalization generated by preparation maps. \referee{It is independent of the rest of the article and can be skipped in a first reading. Some knowledge about preparation maps is recommended, especially to understand the differences between the two approaches. We warn the reader already familiar with preparation maps that we adopt a dual perspective with respect to the standard literature.}

\medskip

\referee{Preparation maps} were introduced in \cite[Definition 3.3]{BR18} (see also \cite[Section 3.1]{BB21}) to provide a recursive construction of renormalized models in regularity structures. A preparation map (or rather its dual) is a map $R^*$ which is lower triangular with respect to the homogeneity, upper triangular in the number of noises, and furthermore satisfies a right morphism property
\begin{equation}\label{prep01}
	R^* (\sigma \curvearrowright \tau) = \sigma \curvearrowright R^* \tau,
\end{equation}
where $\curvearrowright$ here denotes the simultaneous grafting product (which is obtained via the Guin-Oudom procedure as a concatenation product corrected by the grafting pre-Lie product, cf. e.~g. \cite[Proposition 2.7 (ii)]{Guin2}). Since the tree pre-Lie algebra is free \cite{CL}, every rooted tree can be uniquely expressed as the simultaneous grafting of subtrees onto a node, namely its root. Therefore, \eqref{prep01} effectively takes the form of an action on the root of trees, i.~e.
\begin{equation*}
	R^* (\Xi_\mfl \prod_j \mcI_{\mathbf{m}_j}(\tau_j)) = (R^*\Xi_\mfl)\prod_j \mcI_{\mathbf{m}_j}(\tau_j).
\end{equation*}
The renormalization procedure is then inductively propagated by an additional \referee{comultiplicative map $M^{\circ*}$ which acts on planted trees and satisfies the relation
\begin{equation*}
	M^{\circ*} \mcI_{\mathbf{m}}(\tau) = \mcI_{\mathbf{m}}(R^*\circ M^{\circ *} \tau ),\end{equation*}
}cf. e.~g. \cite[(9)]{BR18}. \referee{This map acts on the trees which are simultaneously grafted in the description above. The proper renormalization map is the combination of both, i.~e. $M^* = R^* M^{\circ *}$, so that
\begin{equation*}
	M^* (\Xi_\mfl \prod_j \mcI_{\mathbf{m}_j}(\tau_j)) = (R^*\Xi_\mfl)\prod_j M^{\circ *}\mcI_{\mathbf{m}_j}(\tau_j) = (R^*\Xi_\mfl)\prod_j \mcI_{\mathbf{m}_j}(M^*\tau_j).
\end{equation*}}

\medskip

Similarly, we may identify $\rho : U(L) \otimes T_{\populated\cup \mcNmin}^* \to \referee{T_{\populated\cup \mcNmin}^*}$ as the simultaneous grafting of the basis elements \eqref{sg25}. Actually, it follows from the pre-Lie morphism property of $\Psi$ \eqref{tree10} that its canonical extension $\Psi_+$ satisfies a morphism property with respect to simultaneous grafting, which for a rooted tree takes the particular form
\begin{align*}
	\Psi_+ \Big(   \Xi_{\mathfrak{l}}	  \prod_{i} \mathcal{I}_{\mathbf{m}_i}( \tau_i )    \Big)	  = \Psi_+ \Big(   \prod_{i} \mathcal{I}_{\mathbf{m}_i}( \tau_i ) 	   \curvearrowright \Xi_{\mathfrak{l}} \Big)	= \rho\Big(\Psi \Big(  \prod_{i} \mathcal{I}_{\mathbf{m}_i}( \tau_i )	 \Big) \Big) \Psi \left( \Xi_{\mathfrak{l}} \right).	
\end{align*}
This suggests that a preparation map for multi-indices could be defined identifying $R^* \z_{(\mfl,0)} = R^* \Psi[\Xi_\mfl]$, and then propagating in the same way. Actually, the triangularity properties of preparation maps are consistent with the translation $\z_{(0,0)} \mapsto \z_{(0,0)} + c$ in our approach: The lower triangularity with respect to the homogeneity follows from condition \eqref{cou01b}, whereas the upper triangularity with respect to the number of noises is a consequence of $\lnh \beta \rnh \geq 1$, which in turn follows from \eqref{cou01c}.

\medskip

\referee{ However, recall from Subsection \ref{subsec::2.5} that a multi-index $\beta$ encodes the fertility of the tree, but does not identify a specific node as a \textit{root}; in other words, the root of a tree corresponding to a multi-index is an inner node for a different tree associated to the same multi-index. For example,
\begin{equation*}
	\Psi [\Xi_\mfl \mcI(\Xi_{\mfl'} \mcI(\Xi_{\mfl''}))\mcI(\Xi_{\mfl''})] = 2 \z^{2e_{(\mfl'', 0)} + e_{(\mfl', e_0)} + e_{(\mfl, 2e_0)}} = \Psi [\Xi_{\mfl'} \mcI(\Xi_\mfl (\mcI(\Xi_{\mfl''}))^2)];
\end{equation*}
on the l.~h.~s. the root is a node $\Xi_\mfl$ with fertility $2$, whereas on the r.~h.~s. the root is a node $\Xi_{\mfl'}$ with fertility $1$. In order for a preparation map to be stable in multi-indices, the contributions from these trees must be the same: This implicitly means that we not only should define the renormalization at the root, but at every node. Consequently, the right morphism property \eqref{prep01} is not expected, and the extension of $R^*$ will always take the form of a full morphism.}

\medskip

Still, the philosophy behind the preparation map approach can be seen in our construction via the model equations \eqref{shi02}. Indeed, the reformulation in terms of exponential maps \eqref{bb02bis} allows \referee{us} to keep the $\referee{{\rm U}}(L)$ and $\z_{(\mfl,0)}$ contributions virtually separated: \referee{More precisely, we have for $\mfl\in \mfL^-\cup \{0\}$,
\begin{equation*}
	\Gamma_{\bfPi_x}^* \xi_\mfl \z_{(\mfl,0)} = \rho \Big(\bfPi_x \otimes \xi_\mfl \z_{(\mfl,0)}\Big).
\end{equation*}
Our counterterm $c$ takes the form of a translation \textit{only} of $\z_{(0,0)}$. This plays the analogue role of a preparation map, but only acting on \textit{node-like components}\footnote{Note that multi-indices $e_{(\mfl,0)}$ are in one-to-one correspondence with $\{\Xi_\mfl\}$.}, i.~e. those of the form $\z_{(l,0)}$, and not extending to all $\z^\beta$. We suggestively rewrite the last equation as
\begin{equation*}
	\Gamma_{\bfPi_x}^* (\z_{(0,0)} + c) = \rho \circ (\Id \otimes R^*)\Big(\bfPi_x \otimes \z_{(0,0)}\Big),
\end{equation*} 
where $R^*: \lspan \{\z_{(\mfl,0)}\} \to T^*$ acts like
\begin{equation*}
	R^* \z_{(\mfl,0)} = \z_{(\mfl,0)} + \delta_\mfl^0 c.
\end{equation*}
However, the map $R^*$ is not complemented by a comultiplicative map $M^{\circ *}$ that would allow us to express renormalization in terms of a linear map applied to the model. Instead, we let the hierarchy of equations \eqref{shi02}, now suggestively rewritten as 
\begin{equation*}
	\mcL\Pi_x =  \sum_{\mfl\in\mfL^-\cup \{0\}} \rho \Big( \bfPi_x \otimes R^* \xi_\mfl\z_{(\mfl,0)} \Big),
\end{equation*}	
propagate the translation of $\z_{(0,0)}$. This is natural in our approach because, unlike in the tree-based setup, we do not have an abstract (algebraic) integration operation, i.~e. an analogue of the planting operation in trees, but rather two models connected by the \textit{concrete} integration kernel and indexed by the same set of multi-indices.}
\section{Renormalized equations and smooth models}
\label{section::4}
In this section we implement the previous algebraic construction to build smooth models for suitably modified versions of \eqref{set01}, i.~e.
\begin{equation*}
	\mcL u = \sum_{\mfl\in\mfL^-\cup\{0\}} a^\mfl (\mathbf{u}) \xi_\mfl + c(\mathbf{u}).
\end{equation*}
\subsection{Formulation of the main result}\label{subsec::4.1}
Before we formulate our main result, we need to establish an analytic framework. We will assume that all the noises are qualitatively smooth (of course, we do not claim that the output of Theorem \ref{th:main}, and in particular the estimates of the model, will be uniform when removing a regularization of the noise). We furthermore assume
\begin{equation}\label{ref:eqalpha}
	\alpha_\mfl <0,\,\,\mfl\in\mfL^-,\,\,\,\,\alpha_0 = 0;
\end{equation}	
this assumption avoids having to center the noises, but is only made for convenience and could in principle be removed. Regarding the solution theory of the PDE, we will use the mild formulation of the equation as was the case in Hairer's works \cite{reg}, and more specifically the integration steps of \cite[Section 5]{reg}. In this sense, we slightly deviate from the multi-index approach \cite{OSSW,LOTT}, as \cite[Section 6]{JZ23} already did in the case of an elliptic multiplicative SPDE. The analytic assumptions are then formulated in terms of kernels: We assume that
\begin{equation}\label{con01}
	u = K*\big(\hspace*{-5pt}\sum_{\mfl\in \mfL^-\cup\{0\}} \hspace*{-5pt}a^\mfl (\mathbf{u}) \xi_\mfl + c(\mathbf{u})\big) + R*\big(\hspace*{-5pt}\sum_{\mfl\in \mfL^-\cup\{0\}} \hspace*{-5pt}a^\mfl (\mathbf{u}) \xi_\mfl + c(\mathbf{u})\big) + v,
\end{equation} 
where $K$ is compactly supported, $R$ is smooth and $v$ incorporates the effect of the initial value. Since our interest is not the solution theory, but only the construction of the model, we will disregard the last two terms, as they can be parameterized by polynomials; at the same time, additional polynomial contributions (which will be required in the model equation for recentering) can be absorbed, so we allow for some freedom in that sense. Therefore, in line with the derivation of \eqref{warm08}, and the more rigorous construction of Section \ref{section::3}, for every $\beta\in\mcN$ we write
\begin{equation}\label{mod02}
	\Pi_{x\beta} = K*\big(\Gamma_{\bfPi_x}^{*}\big(\sum_{\mfl\in\mfL\cup\{0\}} \z_{(\mfl,0)}\xi_\mfl + c\big)\big)\,\,\,\textnormal{mod polynomials}.
\end{equation}
\begin{assumption}\label{ass:ker}
	There exist a smooth function $\bar{K} : \R^{d}\setminus\{0\} \to \R$ which is scale invariant in the sense that there exists $\eta>|\mathfrak{s}|$ such that
	\begin{equation*}
		\bar{K}\circ \mcS_\mathfrak{s}^\lambda =  \lambda^{|\mathfrak{s}| - \eta} \bar{K},
	\end{equation*}
	and such that $K = \bar{K}\cdot\chi$ where $\chi\in C_c^\infty$ is a smooth cutoff supported in $B_1$.
\end{assumption}
	The decomposition \eqref{con01} combined with Assumption \ref{ass:ker} corresponds to a simple case of the assumptions considered in \cite[Section 5]{reg}, as reflected in \cite[Lemma 5.5]{reg}, where translation-invariance is incorporated to the kernel. Indeed, the cited result establishes in particular that $K$ is a $\eta$-regularizing kernel, cf. \cite[Assumption 5.1]{reg}, and annihilates polynomials of some degree, cf. \cite[Assumption 5.4]{reg}. However, regarding the polynomials, we will actually make use of the property that $K$ \textit{preserves} polynomials of \textit{any} order, as stated in \cite[Lemma 2.9]{BCZ23}; i.~e.
	\begin{equation*}
		\int_{\R^{d}} K(\cdot - z)z^\mathbf{n} dz\,\,\mbox{ is a polynomial of degree }\leq|\mathbf{n}|.
	\end{equation*} 
	From the point of view of our model equation, this justifies the population conditions \eqref{bb06} and \eqref{bb07}. Indeed, \eqref{bb06} is consistent due to $K*0 = 0$. On the other hand, multi-indices with no noise components will always generate polynomials which, after convolution with $K$, can be absorbed in the polynomial we mod out in \eqref{mod02}, thus making \eqref{bb07} consistent. The $\eta$-regularizing property means that $K$ satisfies a multi-level Schauder estimate of degree $\eta$, as established in \cite[Theorem 5.17]{reg} in the context of modelled distributions. This in particular contains classical Schauder, cf. e.~g. \cite[Theorem 14.17]{FH}. In principle, the condition $\eta > |\mathfrak{s}|$ can be relaxed to $\eta\geq |\mathfrak{s}|$ with the cost of dealing with logarithmic divergences, cf. \cite[Remark 5.6]{reg}, but we keep away from these analytic technicalities. See \eqref{schau01} for the formulation we will use in our construction.

	\medskip

	The mild formulation \eqref{con01} is of course just an Ansatz which is convenient for us due to the amount of existing literature on regularity structures. In applications, other PDE arguments can be used to construct the model, as seen in the multi-index approach \cite{LOTT,LO22}. These works only deal with the heat operator, but one can generalize the techniques imposing some assumptions on the operator $\mcL$; see \cite{GT23} for a fourth order parabolic operator in the context of the stochastic thin film equation. However, there are some fundamental reasons why these approaches cannot work for our goal of constructing a model for \textit{any} counterterm. In \cite{LO22}, the well-posedness of the model equation is based on a Liouville principle combined with the local homogeneity condition, cf. \cite[Lemma 3.9]{LO22}. This works as long as the local and the global behavior of the model are \textit{within the same integer range}; in the presence of a counterterm, this is no longer expected, since at small scales the dominant term is always given by the homogeneity, but at large scales the counterterm dominates. Thus, \cite{LO22} as stated can only deal with the \textit{canonical model}. In turn, the counterterm in \cite{LOTT} deals as well with the large scale behavior, cf. \cite[Proposition 4.6]{LOTT}, imposing some sort of uniqueness; therefore a generic counterterm cannot work.

\medskip

We are now in a position to construct a model, thus showing our main result.
\begin{theorem}\label{th:main}
	Let $\xi_\mfl$ be smooth for all $\mfl\in\mfL^-$. Let $K$ satisfy Assumption \ref{ass:ker}. For every admissible counterterm $c\in\counterterms$, there exists a model $\Pi_x: \R^{d}\to T_\populated^*$, $\Gamma_{xy}\in G$ such that \eqref{bb01} holds and for every $\beta\in\mcN$
	\begin{align}
		\Pi_{x\beta} &= K*\Pi_{x\beta}^-\,\,\textnormal{mod polynomial of deg}<|\beta| + \eta, \label{est01}\\
		\Pi_{x\beta}^- &= \Big(\Gamma_{\bfPi_x}^{*}\big(\sum_{\mfl\in \mfL^-\cup \{0\}} \xi_\mfl \z_{(\mfl,0)}  + c\big)\Big)_\beta.\label{est11}
	\end{align}
	More precisely, $(\Pi_x,\Gamma_{xy})$ satisfy the algebraic constraints
	\begin{equation}\label{recentering}
		\Pi_x = \Gamma_{xy}^*\Pi_y,\,\,\,\Gamma_{xy}^* = \Gamma_{xz}^*\Gamma_{zy}^*
	\end{equation}
	as well as the analytic estimates
	\begin{align}
		|\Pi_{x\beta}^-(y)|&\lesssim |y-x|^{|\beta|},\,\,\,|\Pi_{x\beta}(y)|\lesssim |y-x|^{|\beta|+\eta},\label{est50}\\ |(\Gamma_{xy}^*)_\beta^\gamma|&\lesssim |y-x|^{|\beta|- |\gamma|},\label{gam01}
	\end{align}
	for all $\beta,\gamma\in \populated\cup \mcNmin$ and $|y-x|\leq 1$.
\end{theorem}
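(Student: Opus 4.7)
The plan is to construct $(\Pi_x,\Gamma_{xy})$ by induction on $\populated\cup\mcNmin$, equipped with the order alluded to in Subsection \ref{subsec::4.4}. The finiteness \eqref{sub06} of multi-indices of bounded homogeneity, together with the grading \eqref{mod20} of the action $\rho$ of ${\rm U}(L^-)$, guarantees that the right-hand side of \eqref{est11} only involves $\Pi_{x\gamma}$ with $|\gamma|<|\beta|$ (with an appropriate tie-breaker when several multi-indices share the same homogeneity), so the recursion is well-founded; the finiteness property \eqref{mod06} further ensures that the formal expansion \eqref{mod33} for $\Gamma_{\bfPi_x}^{*}$ reduces to a finite sum when evaluated on each $\z^\gamma$.

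The base case $\beta=e_\mathbf{n}\in\mcP$ is dictated by \eqref{bb01}. For the inductive step at $\beta\in\mcN$ I would proceed as follows: (i) assemble the components $\bfPi_{x\gamma}^{(\mathbf{n})}=\frac{1}{\mathbf{n}!}\partial^\mathbf{n}\Pi_{x\gamma}$ with $(\gamma,\mathbf{n})\in\mcMm$ from the already constructed lower-order model via \eqref{mod60}; (ii) extract the $\beta$-component of $\Gamma_{\bfPi_x}^{*}\bigl(\sum_\mfl\xi_\mfl\z_{(\mfl,0)}+c\bigr)$ to define $\Pi_{x\beta}^-$; and (iii) set $\Pi_{x\beta}:=K*\Pi_{x\beta}^--P_{x\beta}$, where $P_{x\beta}$ is the Taylor polynomial at $x$ of $K*\Pi_{x\beta}^-$ of degree $\lfloor|\beta|+\eta\rfloor$. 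The bound $|\Pi_{x\beta}^-(y)|\lesssim|y-x|^{|\beta|}$ is then obtained by combining the inductive estimates on the factors entering \eqref{est11} with the grading \eqref{mod20} and the smoothness of the noises and of $c$; the estimate \eqref{est50} on $\Pi_{x\beta}$ then follows from the multi-level Schauder estimate for the $\eta$-regularizing kernel $K$ guaranteed by Assumption \ref{ass:ker}.

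The structure map will be defined as $\Gamma_{xy}=\Gamma^+_{\bfpi_{xy}}$ via \eqref{sg37}, where $\bfpi_{xy}\in\textnormal{Alg}(T^+,\R)$ is specified on generators by $\bfpi_{xy,i}=(y-x)_i$ and $\bfpi_{xy,\gamma}^{(\mathbf{n})}=\frac{1}{\mathbf{n}!}\partial^\mathbf{n}\Pi_{x\gamma}(y)$ for $(\gamma,\mathbf{n})\in\mcMp$. The estimate \eqref{gam01} will then reduce to the grading \eqref{sg44} applied to each $\rho(\msfD_{(J,\mathbf{m})})$ weighted by $\bfpi_{xy}^{(J,\mathbf{m})}$, which is itself controlled by the previously established pointwise bounds on $\Pi_x$ via \eqref{pos06}.

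The principal obstacle is the recentering identity $\Pi_x=\Gamma_{xy}^*\Pi_y$, which must be shown for the explicit choice of $\bfpi_{xy}$ above. My approach is to establish it first at the pre-image level $\Gamma_{xy}^*\Pi_y^-=\Pi_x^-$, by combining the intertwining of $\Gamma^{+*}_{\bfpi_{xy}}$ with the exponential map $\Gamma^{*}_{\bfPi_y}$ encoded in the composition rules \eqref{com60}, \eqref{com61}, the closure \eqref{map32} of the action on $T_\mcN^*$, and the inductive assumption that $\bfpi_{xy}*\bfPi_y$ and $\bfPi_x$ agree on all $\mcMm$-components of strictly lower homogeneity. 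Translation invariance of $K$ will then transport this identity from $\Pi_x^-$ to $K*\Pi_x^-$, while the polynomial correction $P_{x\beta}$ is precisely matched by the action of $\Gamma_{xy}^{*}$ on the polynomial sector (a Taylor-type shift by $y-x$). Finally, the semigroup property \eqref{recentering} will follow from the convolution rules \eqref{cor01}, \eqref{cor02} together with a uniqueness argument based on the triangularity \eqref{sg44}: both $\Gamma_{xy}^{*}\Gamma_{yz}^{*}\Pi_z$ and $\Gamma_{xz}^{*}\Pi_z$ describe the unique recentered expansion at $x$.
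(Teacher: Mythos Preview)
Your inductive scheme and the use of the Schauder estimate \eqref{schau01} are in line with the paper, but the definition you give for the recentering character is wrong, and with it the whole verification of \eqref{recentering} collapses. You set
\[
\bfpi_{xy,\gamma}^{(\mathbf{n})}=\tfrac{1}{\mathbf{n}!}\partial^\mathbf{n}\Pi_{x\gamma}(y),
\]
but this does not produce the identity $\Pi_x=\Gamma_{xy}^*\Pi_y$. Indeed, if recentering held for some $\Gamma_{xy}^*$, then evaluating \eqref{ind07bis} at the point $y$ gives
\[
\bfpi_{xy\beta}^{(\mathbf{n})}=\tfrac{1}{\mathbf{n}!}\partial^\mathbf{n}\Pi_{x\beta}(y)-\tfrac{1}{\mathbf{n}!}\sum_{\gamma\in\mcN}(\Gamma_{xy}^*)_\beta^\gamma\,\partial^\mathbf{n}\Pi_{y\gamma}(y),
\]
and the second sum does \emph{not} vanish: the estimate \eqref{mod70} only forces $\partial^\mathbf{n}\Pi_{y\gamma}(y)=0$ when $|\mathbf{n}|<|\gamma|+\eta$, whereas the triangularity \eqref{sg44} allows $\gamma$ with $|\gamma|<|\beta|$, so that $|\mathbf{n}|$ can lie in $[|\gamma|+\eta,|\beta|+\eta)$. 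Put differently, with your choice the polynomial part of $(\Gamma_{xy}^*\Pi_y)_\beta$ becomes ${\rm T}_y^{|\beta|+\eta}\Pi_{x\beta}$, so that recentering would read $\Pi_{x\beta}-{\rm T}_y^{|\beta|+\eta}\Pi_{x\beta}=\sum_{\gamma\in\mcN}(\Gamma_{xy}^*)_\beta^\gamma\Pi_{y\gamma}$; the left side vanishes at $y$ to order $|\beta|+\eta$, while the right side contains terms $\Pi_{y\gamma}$ of strictly lower vanishing order, and there is no mechanism to make them cancel. The paper avoids this by \emph{defining} $\bfpi_{xy\beta}^{(\mathbf{n})}$ through \eqref{tpi} as the coefficients of the polynomial $(\Pi_x-\Gamma_{xy}^*\proj_\mcN\Pi_y)_\beta$ expanded around $y$ (this expression is a polynomial by \eqref{est01} and the induction hypothesis \eqref{ind07bis}), so that the first identity in \eqref{recentering} holds by construction; the estimate \eqref{gam04} is then obtained by the three-point argument, not directly from \eqref{mod70}. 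The composition rule is checked via \eqref{ind07} for these implicitly defined coefficients, not by a uniqueness argument.

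A smaller point: the induction in the paper's proof runs along the component-wise order \eqref{comp02} (Lemmas \ref{lemind01}, \ref{lemind02}), not along the homogeneity. Your claim that ``the right-hand side of \eqref{est11} only involves $\Pi_{x\gamma}$ with $|\gamma|<|\beta|$'' is not what \eqref{mod20} gives you; the ordering $|\cdot|_\prec$ of Subsection \ref{subsec::4.4} is a separate gadget needed only when the counterterm is being chosen along with the model, and is not the same as $|\cdot|$.
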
	
\subsection{Proof of Theorem \ref{th:main}}\label{subsec::4.2}
The proof of Theorem \ref{th:main} is inductive, so we first need to set up an order in the set of multi-indices such that the $\beta$-projection of \eqref{est01}, \eqref{est11} only depends on ``previous" multi-indices. We achieve this assuming at this stage that the counterterm $c$ is given as an input; see later in Subsection \ref{subsec::4.4} how this can be modified when the renormalization constants need to be chosen within the induction.

\medskip

Looking at the r.~h.~s. of \eqref{est11}, we note that we require that $(\Gamma_{\bfPi_x}^{-*} (\xi_\mfl \z_{(\mfl,0)} + c))_\beta$ depends on $\partial^\mathbf{n} \Pi_{\beta'}$ only for ``previous" $\beta'$. This is true considering the component-wise ordering \eqref{comp02}. The inductive structure then holds as a consequence of the following general property.
\begin{lemma}\label{lemind01}
	Let $\bff\in\textnormal{Alg} (T^-,\R)$, and let $\Gamma_\bff^-$ be given by \eqref{mod11}. For every $\beta,\gamma\in\mcNmin$, the component $(\Gamma_\bff^{-*})_\beta^\gamma$ depends only on $\{\bff_i\}_{i=1,...,d}$ $\cup$ $\{\bff_{\gamma'}^{(\mathbf{n}')}\}_{(\gamma',\mathbf{n}')\in\mcMm}$ with $\gamma'<\beta$.
\end{lemma}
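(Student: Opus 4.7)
The plan is to insert the Poincaré--Birkhoff--Witt expansion \eqref{mod33} together with \eqref{cha01} to write
\[
(\Gamma_\bff^{-*})_\beta^\gamma \;=\; \sum_{(J,\mathbf{m})\in M(\mcMm)\times \N_0^d} \bff^{(J,\mathbf{m})}\,(\msfD_{(J,\mathbf{m})})_\beta^\gamma,
\]
a finite sum by Lemma \ref{lemfin02}. By the multiplicative definition \eqref{mod09}, $\bff^{(J,\mathbf{m})}$ is a product of powers of $\bff_i$ and $\bff_{\gamma'}^{(\mathbf{n}')}$, the latter appearing as a factor precisely when $J(\gamma',\mathbf{n}')\geq 1$. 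The whole lemma thus reduces to the structural implication
\[
(\msfD_{(J,\mathbf{m})})_\beta^\gamma \neq 0 \text{ and } J(\gamma',\mathbf{n}')\geq 1 \;\Longrightarrow\; \gamma' < \beta.
\]

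The weak bound $\gamma'\leq\beta$ would be immediate from the support property \eqref{e00}, which gives $\beta\geq\sum_{\tilde\gamma}J(\tilde\gamma,\tilde\mathbf{n})\tilde\gamma\geq\gamma'$. To promote it to strict inequality I would argue by cases on $J$. If $J(\gamma',\mathbf{n}')\geq 2$, then $\beta\geq 2\gamma'>\gamma'$ since $\gamma'\in\mcN$ is nonzero; if instead some $(\tilde\gamma,\tilde\mathbf{n})\neq (\gamma',\mathbf{n}')$ carries $J(\tilde\gamma,\tilde\mathbf{n})\geq 1$, then $\beta\geq\gamma'+\tilde\gamma>\gamma'$ for the same reason, as $\tilde\gamma\in\mcN$ is also nonzero. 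In the remaining case $J=e_{(\gamma',\mathbf{n}')}$ the operator reduces to $\msfD_{(J,\mathbf{m})}=\tfrac{1}{\mathbf{m}!}\z^{\gamma'}\mbpartial^\mathbf{m} D^{(\mathbf{n}')}$, and I would invoke \eqref{comp01}, which already yields $\gamma'<\beta$ whenever $\gamma\neq e_{\mathbf{n}'}$; the insertion of $\mbpartial^\mathbf{m}$ does not spoil this, because by \eqref{bb17} each $\mbpartial_i$ acts as a sum of compositions of a derivation with a multiplication by a polynomial generator, so it never lowers the contribution being added to $\gamma'$.

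The hard part will be the degenerate case $\gamma=e_{\mathbf{n}'}\in\mcNmin\setminus\mcN$ (which occurs precisely when $\regsol\leq|\mathbf{n}'|<\eta$): here $D^{(\mathbf{n}')}\z_{\mathbf{n}'}=1$ collapses to a constant, and the naive bound gives only $\beta=\gamma'$. The clean way around is to exploit the fact, apparent from the definitions \eqref{bb16} and \eqref{bb17}, that both $D^{(\mathbf{n})}$ and $\mbpartial_i$ preserve the noise content $\lnh\cdot\rnh$ of a multi-index: the only mechanism through which $\lnh\cdot\rnh$ could decrease is the operator $\partial_{\z_\mathbf{n}}$ in \eqref{bb16}, which only acts on polynomial slots. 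Consequently, whenever $\gamma\in\mcN$ the intermediate multi-index $\delta'$ with $(\mbpartial^\mathbf{m} D^{(\mathbf{n}')})_{\delta'}^\gamma\neq 0$ satisfies $\lnh\delta'\rnh=\lnh\gamma\rnh\geq 1$, so $\delta'\neq 0$ and $\beta=\delta'+\gamma'>\gamma'$ strictly in the partial order \eqref{comp02}. Since Theorem~\ref{th:main} invokes Lemma~\ref{lemind01} via \eqref{est11} only for arguments $\z_\gamma$ with $\gamma\in\{e_{(\mfl,0)}\}_{\mfl\in\mfL^-\cup\{0\}}\cup\mcC\subset\mcN$, the pathological case $\gamma\in\mcNmin\setminus\mcN$ is harmless for the inductive construction of Subsection~\ref{subsec::4.2}.
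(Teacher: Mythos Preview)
Your approach is sound and, in one respect, more careful than the paper's. The paper reduces to the same structural implication and then checks it for the generators $\mbpartial_i$ (vacuous) and $\z^{\gamma'}D^{(\mathbf{n}')}$ (via \eqref{comp01}), concluding by induction on $\length{(J,\mathbf{m})}$ through the recursion \eqref{sg40}. You instead split cases on $J$ directly via the support bound \eqref{e00}, which is more elementary and sidesteps the recursion machinery. Both routes share the same subtle point at $\gamma=e_{\mathbf{n}'}$: \eqref{comp01} only gives $\gamma'=\beta$ there, not strict inequality, and indeed $e_{\mathbf{n}'}\in\mcNmin$ whenever $\regsol\leq|\mathbf{n}'|<\eta$. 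The paper silently takes the ``otherwise'' branch of \eqref{comp01}, whereas you flag this explicitly and observe that the application in Theorem~\ref{th:main} never feeds in such a $\gamma$.

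There is one slip in your last sentence: $\{e_{(\mfl,0)}\}_{\mfl\in\mfL^-\cup\{0\}}\cup\mcC\not\subset\mcN$, since $\lnh e_{(0,0)}\rnh=0$ and hence $e_{(0,0)}\in\mcNmin\setminus\mcN$. This is harmless, because $e_{(0,0)}$ lives in the $(\mfl,k)$-sector and is not of the polynomial form $e_{\mathbf{n}'}$, so it is already covered by your first-paragraph reasoning. Concretely, $D^{(\mathbf{n}')}\z_{(0,0)}=\z_{(0,e_{\mathbf{n}'})}$ has length~$1$, and since each $\mbpartial_i=\sum_\mathbf{n}(\mathbf{n}(i)+1)\z_{\mathbf{n}+e_i}D^{(\mathbf{n})}$ never decreases length, the resulting $\delta'=\beta-\gamma'$ has length $\geq 1$, whence $\gamma'<\beta$. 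The correct closing remark is simply that the $\gamma$'s arising in \eqref{est11} are never pure polynomials $e_{\mathbf{n}'}$.
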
	 
\begin{proof}
	We use representation \eqref{mod33}. By the finiteness property \eqref{mod06}, we may focus on a fixed $(J,\mathbf{m})$ $\in$ $M(\mcMm)\times\N_0^{d}$. The multiplicativity property \eqref{mod09} implies that our statement is equivalent to
	\begin{align*}
		(\msfD_{(J,\mathbf{m})})_{\beta\in\mcNmin}^{\gamma\in\mcNmin} \neq 0 \implies \gamma'<\beta\,\mbox{for all }\gamma'\,\mbox{with }J(\gamma',\mathbf{n}')\neq 0.
	\end{align*}
	Note that this does not follow from \eqref{e00}, since we need a strict inequality. We start showing it for the generators \eqref{mod51}. For $\{\mbpartial_i\}_{i=1,...,d}$ there is nothing to show; for $\z^{\gamma'} D^{(\mathbf{n}')}$, it follows from \eqref{comp01}. The proof concludes by induction on the length of $(J,\mathbf{m})$, using the recursion \eqref{sg40}.
\end{proof}	
We now describe the induction for the construction of $\Pi_x$. For every $\mathbf{n}\in\N_0^d$, we fix $\Pi_{x e_\mathbf{n}}$ by \eqref{bb01}, and the corresponding character $\bfPi_{xi} = (\cdot - x)_i$ in \eqref{mod60}. For a multi-index $\beta\in\mcN$, each induction step consists of the following:
\begin{enumerate}
	\item Construction and estimates of \eqref{est11}. Since $\sum_\mfl \xi_\mfl \z_{(\mfl,0)} + c$ $\in$ $T_\mcNmin^*$, Lemma \ref{lemind01} implies that this only depends on $\partial^\mathbf{n} \Pi_{x\beta'}$ for $\beta'\in\mcN$, $\beta'<\beta$ and the polynomials, which were fixed at the very beginning.
	\item Construction and estimates of $\partial^\mathbf{n}\Pi_{x\beta}$ via the mild formulation \eqref{est01}.
\end{enumerate}

\medskip

	It is crucial that we assume that the noises are qualitatively smooth (or at least sufficiently regular) so that multiplication is a continuous operation. This way, the estimates of $\Pi_x^-$ come out as a consequence of the estimates of $\Gamma_{\bfPi_x}$ and $\xi_\mfl$. The former, in turn, are a consequence of estimates of $\Pi_x$ in previous levels and the multiplicative structure, as reflected in the inductive procedure. More precisely, for $\bff\in\textnormal{Alg}(T^-,\R)$ and $\Gamma_\bff^-$ defined by \eqref{mod11}, it follows by the representation \eqref{mod33} and the multiplicativity property \eqref{mod09} that fixing $\beta,\gamma\in\mcNmin$,
	\begin{equation*}
		|(\Gamma_\bff^{-*})_\beta^\gamma| \leq \sum_{(J,\mathbf{m})} |\bff^{(J,\mathbf{m})}| |(\msfD_{(J,\mathbf{m})})_\beta^\gamma| = \sum_{(J,\mathbf{m})} \prod_i |\bff_i|^{\mathbf{m}(i)}\,\prod_{(\gamma',\mathbf{n'})} |\bff_{\gamma'}^{(\mathbf{n}')}|^{J(\gamma',\mathbf{n}')} |(\msfD_{(J,\mathbf{m})})_\beta^\gamma|,
	\end{equation*} 
	and by \eqref{mod20} it holds
	\begin{equation*}
		|(\Gamma_\bff^{-*})_\beta^\gamma| \lesssim \sum_{(J,\mathbf{m})} \delta_{|\beta|}^{|\gamma| + |(J,\mathbf{m})|} \prod_i |\bff_i|^{\mathbf{m}(i)}\,\prod_{(\gamma',\mathbf{n'})} |\bff_{\gamma'}^{(\mathbf{n}')}|^{J(\gamma',\mathbf{n}')}.
	\end{equation*}
	Therefore, estimates on $\Gamma_{\bfPi_x(z)}^{*}$ follow from estimates of $\partial^\mathbf{n}\Pi_x(z)$ on previous levels and the polynomial characters; more specifically, we will feed in the induction the estimate
	\begin{equation}\label{mod70}
		|\partial^\mathbf{n} \Pi_{x\beta}(z)| \lesssim |z-x|^{|\beta|+\eta - |\mathbf{n}|},
	\end{equation}
	which generalizes the second estimate in \eqref{est50}. Then, recalling \eqref{sg30}
	\begin{equation*}
		|(\Gamma_{\bfPi_{x}(z)}^{*})_\beta^\gamma| \lesssim \sum_{(J,\mathbf{m})} \delta_{|\beta|}^{|\gamma| + |(J,\mathbf{m})|} |z-x|^{|(J,\mathbf{m})|} \lesssim |z-x|^{|\beta|-|\gamma|}.
	\end{equation*}
	\begin{remark}\label{rem:rec}Of course, the multiplicativity property is useless without the smoothness assumption. Therefore, for mollified noises, we do not claim that our estimates are uniform when the regularization is removed. In the singular case, one needs to benefit from the choice of renormalization constants and potentially perform two reconstruction steps for the estimate of $\Pi_x^-$: The first one to get the estimate of $\Gamma_{\bfPi_x}$ (which could involve singular products of \textit{integrated} functionals, i.~e. \textit{planted trees}), the second to get the estimate of $\Gamma_{\bfPi_x}^{*}\z_{(\mfl,0)} \xi_\mfl$ (which would involve singular products between integrated functionals and noises, i.~e. \textit{rooted trees}).
\end{remark}	

\medskip

The so-called integration step, i.~e. passing from $\Pi_x^-$ to $\Pi_x$, involves the $\eta$-regularizing property of $K$. This more analytic side of the construction is beyond the scope of this paper, which is mostly algebraic, so we will always refer to the already existing literature on multi-level Schauder estimates in regularity structures, in particular \cite[Section 5]{reg} and \cite{BCZ23}. We will state our integration steps in a pointwise form, i.~e. 
	\begin{equation}\label{schau01}
		|h(y)|\lesssim |y-x|^\nu\,\implies |\partial^\mathbf{n} \big(K*h(y) - {\rm T}_x^{\nu + \eta} K*h(y)\big)|\lesssim |y-x|^{\nu + \eta - |\mathbf{n}|},
	\end{equation}
	where ${\rm T}_x^{\kappa}$ is the Taylor polynomial of its argument centered at $x$ and to order $\kappa$. The interested reader can consult \cite[Lemma 3.15]{BCZ23} for a clear exposition of how to obtain such an estimate under Assumption \ref{ass:ker}.

\medskip

\referee{For the recentering maps, following \eqref{sg37}, we identify $\Gamma_{xy} = \Gamma_{\bfpi_{xy}}^+$ for some character $\bfpi_{xy}\in \textnormal{Alg}(T^+,\R)$ to be defined inductively}. We have the analogue of Lemma \ref{lemind01}.
\begin{lemma}\label{lemind02}
	Let $\bfpi$ $\in$ $\textnormal{Alg}(T^+,\R)$, and let $\Gamma_{\bfpi}^+$ be given by \eqref{sg37}.
	\begin{itemize}
		\item For every $\beta\in\populated$ and $\gamma\in\mcNmin$, the component $(\Gamma_{\bfpi}^{+*})_\beta^\gamma$ depends only on $\{\bfpi_i\}_{i=1,...,d}$ $\cup$ $\{\bfpi_{\gamma'}^{(\mathbf{n}')}\}_{(\gamma',\mathbf{n}')\in\mcMp}$ with $\gamma'<\beta$.
		\item For every $\beta\in\populated$ and $\mathbf{n}\in\N_0^{d}$, the component $(\Gamma_{\bfpi}^{+*})_\beta^{e_\mathbf{n}}$ depends only on $\{\bfpi_i\}_{i=1,...,d}$ $\cup$ $\{\bfpi_{\beta}^{(\mathbf{n})}\}$.
	\end{itemize} 
\end{lemma}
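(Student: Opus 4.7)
The plan is to reduce both items to an analysis of which monomials $\bfpi^{(J,\mathbf{m})}$ appear in the expansion of $(\Gamma_\bfpi^{+*})_\beta^\gamma$ via the representation \eqref{sg39}. By the multiplicativity \eqref{pos06} and the finiteness property \eqref{sg43}, we can write as a finite sum
\begin{equation*}
	(\Gamma_\bfpi^{+*})_\beta^\gamma = \sum_{(J,\mathbf{m})} \Big(\prod_i \bfpi_i^{\mathbf{m}(i)} \prod_{(\gamma',\mathbf{n}')} (\bfpi_{\gamma'}^{(\mathbf{n}')})^{J(\gamma',\mathbf{n}')}\Big)\,(\msfD_{(J,\mathbf{m})})_\beta^\gamma.
\end{equation*}
Hence the dependence of $(\Gamma_\bfpi^{+*})_\beta^\gamma$ on the characters is entirely controlled by the support of those $(J,\mathbf{m})$ for which $(\msfD_{(J,\mathbf{m})})_\beta^\gamma \neq 0$.

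For the first item, the statement is to be read for $\gamma \in \mcN$, since the polynomial case $\gamma \in \mcP$ is exactly what is covered by the second item. The proof is then analogous to that of Lemma \ref{lemind01}: I would show that $(\msfD_{(J,\mathbf{m})})_\beta^\gamma \neq 0$ implies the strict bound $\gamma' < \beta$ for every $\gamma'$ with $J(\gamma',\mathbf{n}') > 0$. At the level of the generators in $\mcD^+$, $\mbpartial_i$ contributes no such $\gamma'$, and for $\z^{\gamma'}D^{(\mathbf{n}')}$ the strict inequality comes from \eqref{comp01}: indeed $\gamma\in\mcN$ has $\lnh \gamma \rnh >0$, so the alternative case $\gamma = e_{\mathbf{n}'}$ (which would give only $\gamma' = \beta$) is excluded. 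The extension to general $(J,\mathbf{m})$ then follows by induction on $\length{(J,\mathbf{m})}$ via the recursion \eqref{sg40}, exactly as in the concluding step of Lemma \ref{lemind01}; note that the argument is insensitive to whether $(\gamma',\mathbf{n}')$ lies in $\mcMm$ or $\mcMp$, since it only uses the pre-Lie structure constants, which are common to $L^-$ and $L^+$.

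For the second item, I would compute directly the action of $\msfD_{(J,\mathbf{m})}$ on the single polynomial variable $\z_\mathbf{n}$ using the normal form \eqref{sg25}. The innermost factor $\prod (D^{(\mathbf{n}')})^{J(\gamma',\mathbf{n}')}$ produces at most one nonzero constant $\delta_\mathbf{n}^{\mathbf{n}'}$ before any further derivation annihilates the result; this forces $\length{J}\leq 1$, and forces $\mathbf{m}=0$ in the case $\length{J}=1$ because $\mbpartial_i$ vanishes on constants. When $J=0$, an elementary induction yields $\tfrac{1}{\mathbf{m}!}\mbpartial^\mathbf{m}\z_\mathbf{n} = \tbinom{\mathbf{n}+\mathbf{m}}{\mathbf{m}}\z_{\mathbf{n}+\mathbf{m}}$, which contributes only to polynomial $\beta = e_{\mathbf{n}+\mathbf{m}}$ with coefficient $\bfpi^{(0,\mathbf{m})} = \prod_i \bfpi_i^{\mathbf{m}(i)}$. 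When $J = e_{(\beta_0,\mathbf{n})}$ with $(\beta_0,\mathbf{n})\in\mcMp$ and $\mathbf{m}=0$, the action collapses to $\z^{\beta_0}$, contributing to $\beta = \beta_0 \in \mcN$ with coefficient $\bfpi_{\beta_0}^{(\mathbf{n})} = \bfpi_\beta^{(\mathbf{n})}$. Collecting both contributions, the component $(\Gamma_\bfpi^{+*})_\beta^{e_\mathbf{n}}$ depends only on $\{\bfpi_i\}_{i=1,\dots,d}$ and, possibly, on $\bfpi_\beta^{(\mathbf{n})}$, as claimed.

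The main obstacle, which is minor, is the careful bookkeeping in the second item to rule out any longer composition $\length{J}\geq 2$ surviving the polynomial input, and to verify that the $\mbpartial^\mathbf{m}$ factor, when sandwiched between the $D^{(\mathbf{n}')}$'s and the multiplications by $\z^{\gamma'}$'s, does not contribute in the mixed case $\length{J}=1$, $\mathbf{m}\neq 0$.
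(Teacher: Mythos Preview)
Your proposal is correct and follows essentially the same approach as the paper. Both reduce via \eqref{sg39} and \eqref{sg43} to a fixed $(J,\mathbf{m})$, defer the first item to the argument of Lemma~\ref{lemind01}, and for the second item establish that $(\msfD_{(J,\mathbf{m})})_\beta^{e_\mathbf{n}}\neq 0$ forces $J\leq e_{(\beta,\mathbf{n})}$; the only cosmetic difference is that the paper phrases this last step as an induction via \eqref{sg40}, while you argue it directly from the endomorphism form \eqref{sg25}, which is just as good. One small imprecision: your remark that the first item ``is to be read for $\gamma\in\mcN$'' is not quite right---the statement says $\gamma\in\mcNmin$, and what the argument actually needs from \eqref{comp01} is only $\gamma\neq e_{\mathbf{n}'}$, not $\lnh\gamma\rnh>0$.
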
		
\begin{proof}
	By the finiteness property \eqref{sg43}, we may reduce the problem to a fixed element $\msfD_{(J,\mathbf{m})}$. The proof of the first item is the same as in Lemma \ref{lemind01}. For the second, it reduces to showing
	\begin{align}
		(\msfD_{(J,\mathbf{m})})_\beta^{e_\mathbf{n}} \neq 0 \implies J\leq e_{(\beta,\mathbf{n})}.\label{ind02}
	\end{align}
	We assume $J\neq 0$, otherwise there is nothing to show. It is easy to see, e.~g. inductively in $\length{J}$ via \eqref{sg40}, that $\rho(\msfD_{(J,\mathbf{m})})\z_\mathbf{n} = \sum_\beta \z^\beta \delta_{J}^{e_{(\beta,\mathbf{n})}}$; this shows \eqref{ind02}.
\end{proof}
The construction of \referee{$\Gamma_{xy} = \Gamma_{\bfpi_{xy}}^+$} follows a structure parallel to that of $\Pi_x$. We start by fixing $\bfpi_{xy i} = (y-x)_i$, $i=1,...,d$. By \eqref{com61}, this choice is already consistent with the recentering rule for the polynomial characters, since
\begin{equation*}
	\bfPi_{xi} = (\cdot-x)_i = (y-x)_i + (\cdot-y)_i = \bfpi_{xy i} + \bfPi_{yi}.
\end{equation*}
In addition, the composition rule for the polynomial characters follows from \eqref{cor02} and
\begin{equation*}
	\bfpi_{xy i} = (y-x)_i = (z-x)_i + (y-z)_i = \bfpi_{xz i} + \bfpi_{zy i}.
\end{equation*} 
For a multi-index $\beta\in\mcN$, each induction step consists of the following:
\begin{enumerate}
	\item Construction and estimates of $(\Gamma_{xy}^*)_\beta^{\gamma\in\mcNmin}$, which due to Lemma \ref{lemind02} only depends on $\bfpi_{xy \beta'}^{(\mathbf{n})}$ with $\beta'<\beta$.
	\item Construction and estimates of $\bfpi_{xy \beta}^{(\mathbf{n})}$ via the relation
	\begin{equation}\label{tpi}
		\Gamma_{xy}^*\proj_\mcN \Pi_y = \Pi_x - \sum_{\mathbf{n}\in\N_0^d} \bfpi_{xy}^\mathbf{(n)} (\cdot - y)^\mathbf{n}.
	\end{equation}
	This automatically shows the first item in \eqref{recentering}. In turn, it is a particular case of the composition rule \eqref{com60}, i.~e.
	\begin{equation}\label{ind07bis}
		\partial^\mathbf{n} \Pi_{x \beta} = \sum_{\gamma\in\mcN} (\Gamma_{xy}^*)_\beta^\gamma \partial^\mathbf{n} \Pi_{y \gamma} + \sum_{\substack{\mathbf{m}\geq\mathbf{n}\\ |\mathbf{m}|< |\beta|+\eta}} \tbinom{\mathbf{m}}{\mathbf{n}} \bfpi_{xy \beta}^{(\mathbf{m})} (\cdot-y)^{\mathbf{m}-\mathbf{n}},
	\end{equation}
	which we feed in the induction.
	\item Proof of the recentering property, which thanks to Lemma \ref{lemcomp02} reduces to showing
	\begin{equation}\label{ind07}
		\bfpi_{xy \beta}^{(\mathbf{n})} = \sum_{\gamma\in\mcN} (\Gamma_{xz}^*)_\beta^\gamma \bfpi_{zy \gamma}^{(\mathbf{n})} + \sum_{\substack{\mathbf{m}\geq\mathbf{n}\\ |\mathbf{m}|< |\beta|+\eta}} \tbinom{\mathbf{m}}{\mathbf{n}} \bfpi_{xz \beta}^{(\mathbf{m})} \referee{(y-z)^{\mathbf{m}-\mathbf{n}}},
	\end{equation}
	which we also feed in the induction.
\end{enumerate}

\medskip

As with $\Pi_x$ and $\Gamma_{\bfPi_x}$ before, although now not requiring qualitative smoothness since there are no singular products involved, the estimates of $(\Gamma_{xy}^*)_\beta^{\gamma}$ are a consequence of the estimates of its characters, namely \eqref{gam01} follows from
	\begin{equation}\label{gam04}
		|\bfpi_{xy \beta}^{(\mathbf{n})}|\lesssim |y-x|^{|\beta| + \eta - |\mathbf{n}|},
	\end{equation}
	which we also feed in the induction. The case $\gamma \in \mcP$ involves $\bfpi_{xy}^{(\mathbf{n})}$ at the current level, which is the last estimate obtained in each induction step. The construction and estimates of the characters $\bfpi_{xy}^{(\mathbf{n})}$, in turn, follow from the so-called \textit{three point argument}, cf. \cite[Proposition 4.4]{LOTT}, \cite[Lemma 4.6]{LO22}. This is based on identity \eqref{tpi}, which at level $\beta$ reads
	\begin{equation*}
		(\Pi_x - \Gamma_{xy}^*\proj_\mcN\Pi_y)_\beta = \sum_{|\mathbf{n}|<|\beta| + \eta} \bfpi_{xy \beta}^{(\mathbf{n})}(\cdot-y)^\mathbf{n};
	\end{equation*}
	the terms on the l.~h.~s. are provided by the induction hypothesis, whereas the r.~h.~s. is a polynomial of fixed degree which satisfies an $L^\infty$-bound on the ball of radius $1$, and by equivalence of norms we can deduce the estimate \eqref{gam04} of its coefficients. See \cite[Subsection 4.5]{LO22} for the details, which we will skip in the forthcoming proof.	

\medskip

\begin{proof}[of Theorem \ref{th:main}]
	We focus on the induction for multi-indices $\beta\in \mcN$. The base case is given by length-one non-polynomial multi-indices, which by conditions \eqref{bb06} and \eqref{bb07} are given by $\beta = e_{(\mfl,0)}$, $\mfl\in\mfL^-$. Then \eqref{est11} reduces to
	\begin{equation*}
		\Pi_{x e_{(\mfl,0)}}^- = \xi_\mfl.
	\end{equation*}
	Obviously, the smoothness assumption and the fact that $\alpha_\mfl <0$ imply for $|y-x|\leq 1$ the pointwise estimate
	\begin{equation*}
		|\Pi_{x e_{(\mfl,0)}}(y)^-|\lesssim |y-x|^{\alpha_\mfl} = |y-x|^{|e_{(\mfl,0)}|}.
	\end{equation*}
	We now look at $\Pi_{x e_{(\mfl,0)}}$. For simplicity, denote $\tilde{\Pi}_x := K*\Pi_x^-$. Take 
	\begin{equation*}
		\Pi_{x e_{(\mfl,0)}} = \tilde{\Pi}_{x e_{(\mfl,0)}} - \rmT_x^{\alpha_\mfl + \eta} \tilde{\Pi}_{x e_{(\mfl,0)}},
	\end{equation*}
	which is consistent with \eqref{est01}, and due to \eqref{schau01} gives the estimate
	\begin{equation}\label{con13}
		|\partial^\mathbf{n}\Pi_{x e_{(\mfl,0)}}(y)| \lesssim |y-x|^{\alpha_{\mfl} + \eta - |\mathbf{n}|}.
	\end{equation}
	We now build the $e_{(\mfl,0)}$-component of the characters for the structure group, so we implicitly assume $\alpha_\mfl + \eta >0$ (otherwise by \eqref{sg17} we have nothing to construct). Take a different base point $y$ and note that
	\begin{equation*}
		\Pi_{x e_{(\mfl,0)}} - \Pi_{y e_{(\mfl,0)}} = 0\,\,\textnormal{mod polynomial of deg}<\alpha_\mfl + \eta.
	\end{equation*}
	Expanding this polynomial around $y$, we obtain the coefficients $\bfpi_{xy e_{(\mfl,0)}}^{(\mathbf{n})}$ by
	\begin{equation}\label{con14}
		\Pi_{x e_{(\mfl,0)}}(z) = \Pi_{y e_{(\mfl,0)}}(z) + \sum_{|\mathbf{n}|<|e_{(\mfl,0)}| + \eta} \bfpi_{xy e_{(\mfl,0)}}^{(\mathbf{n})} (z-y)^\mathbf{n};
	\end{equation}
	this immediately shows the recentering rule \eqref{com60} at level $e_{(\mfl,0)}$. In addition, in \eqref{con14} we now use \eqref{con13} and the three-point argument of \cite[Subsection 4.5]{LO22}, yielding
	\begin{equation*}
		|\bfpi_{xy e_{(\mfl,0)}}^{(\mathbf{n})}| \lesssim |y-x|^{|e_{(\mfl,0)}| + \eta - |\mathbf{n}|},
	\end{equation*}
	which is \eqref{gam04} at this level, and therefore yields \eqref{gam01}. We now show the composition rule at level $e_{(\mfl,0)}$. For this we take a third base point $z$ and note that, adding and subtracting $\Pi_{z e_{(\mfl,0)}}$ in \eqref{con14}, we have that
		\begin{equation*}
			\sum_{|\mathbf{n}|<|e_{(\mfl,0)}| + \eta} \hspace*{-5pt}\bfpi_{xy e_{(\mfl,0)}}^{(\mathbf{n})} (\cdot-y)^\mathbf{n} = \sum_{|\mathbf{n}|<|e_{(\mfl,0)}| + \eta} \hspace*{-5pt}\bfpi_{xz e_{(\mfl,0)}}^{(\mathbf{n})} (\cdot-z)^\mathbf{n} + \sum_{|\mathbf{n}|<|e_{(\mfl,0)}| + \eta} \hspace*{-5pt}\bfpi_{zy e_{(\mfl,0)}}^{(\mathbf{n})} (\cdot-y)^\mathbf{n}.
		\end{equation*} 
	\referee{Reexpanding the sum around $y$ by the binomial formula, and via a change of variables, the r.~h.~s. can be rewritten as
	\begin{align*}
		\sum_{|\mathbf{n}|<|e_{(\mfl,0)}| + \eta} \Big( \sum_{\substack{\mathbf{m}\geq \mathbf{n} \\ |\mathbf{m}|<|e_{(\mfl,0)}| + \eta}} \bfpi_{xz e_{(\mfl,0)}}^{(\mathbf{m})} \tbinom{\mathbf{m}}{\mathbf{n}} (y-z)^{\mathbf{m} - \mathbf{n}} + \pi_{zy e_{(\mfl,0)}}^{(\mathbf{n})}\Big) (\cdot - y)^\mathbf{n}.
	\end{align*}
	Comparing coefficients we therefore obtain for every $\mathbf{n}$
	\begin{align*}
		\bfpi_{xy e_{(\mfl,0)}}^{(\mathbf{n})} = \sum_{\substack{\mathbf{m}\geq \mathbf{n} \\ |\mathbf{m}|<|e_{(\mfl,0)}| + \eta}} \bfpi_{xz e_{(\mfl,0)}}^{(\mathbf{m})} \tbinom{\mathbf{m}}{\mathbf{n}} (y-z)^{\mathbf{m} - \mathbf{n}} + \pi_{zy e_{(\mfl,0)}}^{(\mathbf{n})}.
	\end{align*}
	}
	Since $(\Gamma^* - \id)_{e_{(\mfl,0)}}^{\gamma\in\mcN}=0$, which can be seen from \eqref{e00}, this implies the equality
	\referee{\begin{equation*}
		\bfpi_{xy e_{(\mfl,0)}}^{(\mathbf{n})} = \sum_{\substack{\mathbf{m}\geq \mathbf{n} \\ |\mathbf{m}|<|e_{(\mfl,0)}| + \eta}} \bfpi_{xz e_{(\mfl,0)}}^{(\mathbf{m})} \tbinom{\mathbf{m}}{\mathbf{n}} (y-z)^{\mathbf{m} - \mathbf{n}} + \sum_{\gamma\in\mcN} (\Gamma_{xz}^*)_{e_{(\mfl,0)}}^{\gamma}\pi_{zy e_{\gamma}}^{(\mathbf{n})},
	\end{equation*}}
	which is \eqref{ind07}.
	
	\medskip
	
	We now turn to the induction step. We know by Lemma \ref{lemind01} that $(\Gamma_{\bfPi_x}^{*})_\beta^{\gamma\in\mcNmin}$ only depends on $\partial^\mathbf{n} \Pi_{x \beta'}$ with $\beta'<\beta$ and polynomials, and thus we can construct and estimate it by the induction hypothesis in form of \eqref{mod70} via \eqref{sg30}:
	\begin{equation*}
		|(\Gamma_{\bfPi_x(z)}^*)_\beta^{\gamma\in\mcNmin}| \lesssim |z-x|^{|\beta| - |\gamma|}.
	\end{equation*}
	Therefore,
	\begin{equation*}
		|(\Gamma_{\bfPi_x(z)}^* \xi_\mfl(z) \z_{(\mfl,0)})_\beta| \lesssim |z-x|^{|\beta| - |e_{(\mfl,0)}|}|\xi_\mfl(z)| \lesssim |z-x|^\beta,
	\end{equation*} 
	as well as
	\begin{equation*}
		|(\Gamma_{\bfPi_x(z)}^* c)_\beta| \lesssim \sum_\gamma |z-x|^{|\beta| - |\gamma|} |c_\gamma| \lesssim |z-x|^{|\beta|},
	\end{equation*}
	where we used \eqref{cou01b} and $|z-x|\leq 1$. Combining these two estimates we have the first item in \eqref{est50}, i.~e.
	\begin{equation*}
		|\Pi_{x \beta}^-(z)|\lesssim |z-x|^{|\beta|}.
	\end{equation*}
	We now consider
	\begin{align}\label{model}
		\tilde{\Pi}_{x\beta} = K*\Pi_{x\beta}^-,\,\Pi_{x \beta} = \tilde{\Pi}_{x \beta} - \rmT_x^{|\beta| + \eta} \tilde{\Pi}_{x \beta},		
	\end{align}
	which again by \eqref{schau01} implies \eqref{mod70}. Turning our attention to $\Gamma_{xy}$, we know by Lemma \ref{lemind02} that $(\Gamma_{xy}^*)_\beta^{\gamma\in\mcNmin}$ depends only on $\bfpi_{xy \beta'}^{(\mathbf{n})}$ with $\beta' < \beta$ and polynomials, so by the induction hypothesis \eqref{gam04} we have \eqref{gam01} for $\gamma\in \mcNmin$. Moreover, by the induction hypothesis in the form \eqref{ind07bis}, and \eqref{model}, it holds that
	\begin{equation*}
		(\Pi_x - \Gamma_{xy}^*\proj_\mcN\Pi_y)_\beta = 0\,\,\textnormal{mod polynomial of degree }<|\beta|+\eta.
	\end{equation*}
	Expanding this polynomial around $y$, we get the coefficients
	\begin{equation*}
		(\Pi_x - \Gamma_{xy}^*\proj_\mcN\Pi_y)_\beta = \sum_{|\mathbf{n}|<|\beta| + \eta} \bfpi_{xy \beta}^{(\mathbf{n})}(\cdot-y)^\mathbf{n}.
	\end{equation*}
	and, by the three-point argument, cf. \cite[Subsection 4.5]{LO22}, the corresponding estimate \eqref{gam04}, which in turn implies \eqref{gam01} for $\gamma\in\mcP$. It only remains to show the composition rule, i.~e. \eqref{ind07}. We first note that by the binomial formula
	\begin{align*}
		&\sum_{\mathbf{n}\in\N_0^d}\big(\bfpi_{xy}^{(\mathbf{n})} - \big(\Gamma_{xz}^* \bfpi_{zy}^{(\mathbf{n})} + \sum_{\mathbf{m}\geq \mathbf{n}} \tbinom{\mathbf{m}}{\mathbf{n}}\bfpi_{xz}^{(\mathbf{m})}(y-z)^{\mathbf{m}-\mathbf{n}}\big)\big)_\beta (\cdot - y)^\mathbf{n} \\&= \sum_{\mathbf{n}\in\N_0^d} \bfpi_{xy\beta}^{(\mathbf{n})} (\cdot-y)^\mathbf{n} - \sum_{\mathbf{n}\in\N_0^d} (\Gamma_{xz}^*\bfpi_{zy}^{(\mathbf{n})})_\beta (\cdot-y)^\mathbf{n} - \sum_{\mathbf{n}\in\N_0^d} \bfpi_{xz \beta}^{(\mathbf{n})} (\cdot - z)^\mathbf{n}.
	\end{align*}
	By construction, the r.~h.~s. is given by
	\begin{align*}
		&\Pi_{x\beta} - (\Gamma_{xy}^*\proj_\mcN\Pi_y)_\beta - (\Gamma_{xz}^*\proj_\mcN(\Pi_z - \Gamma_{zy}^*\proj_\mcN \Pi_y))_\beta - \Pi_{x\beta} + (\Gamma_{xz}^*\proj_\mcN\Pi_z)_\beta \\
		&= -(\Gamma_{xy}^*\proj_\mcN\Pi_y - \Gamma_{xz}^*\proj_\mcN \Gamma_{zy}^*\proj_\mcN \Pi_y)_\beta\\
		&= -((\Gamma_{xy}^* - \Gamma_{xz}^*\Gamma_{zy}^*)\proj_\mcN\Pi_y)_\beta,
	\end{align*}
	where in the last step we used $\proj_\mcN \Gamma_{zy}^* \proj_\mcN = \Gamma_{zy}^* \proj_\mcN$ as a consequence of \eqref{map32}. Thanks to Lemma \ref{lemind02} we can use the induction hypothesis \eqref{ind07} to conclude that this expression vanishes.
\end{proof}	

\medskip

\subsection{The induction for renormalization}\label{subsec::4.4}
The previous result allows to build a smooth model for a modified version of the original equation, but requires knowing \textit{a priori} the counterterm. This is of course not realistic, since one is expected to choose the renormalization constants while building and estimating the model, in a way that is stable when the regularization is removed. The component-wise order of multi-indices then has a flaw, which is that the constants $c_{\gamma}$ appearing on the r.~h.~s. of \eqref{est01} do not satisfy $\gamma<\beta$; indeed, for this we would need that $(\Gamma_{\bfPi_x}^{*})_\beta^\gamma$ $\neq$ $0$ implies $\gamma<\beta$, which is not true in general and can be easily seen computing 
\begin{equation*}
	(\Gamma_{\bfPi_x}^{*})_{e_{(\mfl',0)} + e_{(\mfl,e_\mathbf{0})}}^{e_{(\mfl,0)}} = \Pi_{x e_{(\mfl',0)}} \neq 0.
\end{equation*}
Thus, we need to accommodate our induction to this situation, finding a new ordering $\prec$ such that $\Pi_{x\beta}^-$ depends only on characters and renormalization constants for $\beta'\prec \beta$. The goal of this subsection is to describe a strategy which leads to such an ordering, but it is clearly not unique and might need modifications depending on the renormalization procedure; cf. e.~g. \cite[Subsection 3.5]{LOTT} for a suitable ordering in the quasi-linear case. We will focus on three quantities related to multi-indices:
\begin{itemize}
	\item The length $\length{\beta}$;
	\item the noise homogeneity $\lnh\beta\rnh\,$;
	\item the polynomial degree $\lpol\beta\rpol := \sum_\mathbf{n} |\mathbf{n}|\beta(\mathbf{n})$.
\end{itemize}
Note that we have the following lower bounds:
\begin{align*}
	&\length{\beta} \geq 1\,\mbox{ for all }\beta\in\populated\cup\mcNmin;\\
	&\lnh\beta\rnh \geq 0\,\mbox{ for all }\beta\in\populated\cup\mcNmin,\,\mbox{ and } \lnh\beta\rnh = 0 \iff \beta\in\mcP\cup\mcNmin;\\
	&\lpol\beta\rpol \geq 0\,\mbox{ for all }\beta\in\populated\cup\mcNmin.
\end{align*}
We look for a convex combination of the three quantities which allows us to get the right inductive structure; this means our goal is to find $\lambda_1$, $\lambda_2$, $\lambda_3$ $\referee{>} 0$ such that $\lambda_1+\lambda_2+\lambda_3 = 1$ and build
\begin{equation}\label{ord01}
	|\beta|_\prec := \lambda_1 \length{\beta} + \lambda_2\, \lnh\beta\rnh + \lambda_3 \lpol\beta\rpol
\end{equation}
so that the construction is triangular in $|\cdot|_\prec$.
\medskip

We start with the following observation. Let $(\gamma',\mathbf{n}')\in \mcM$, and let $\beta, \gamma\in M(\coord)$. Consider $(\z^{\gamma'}D^{(\mathbf{n}')})_\beta^\gamma$ and recall the two conditions under which the matrix entry is non-vanishing, cf. \eqref{pol05}. In the case $\beta = \gamma - e_{(\mfl,k)} + e_{(\mfl,k + e_{\mathbf{n}'})} + \gamma'$, we have
\begin{align}
	\length{\beta} &= \length{\gamma} + \length{\gamma'} \geq \length{\gamma}+1,\label{ord08}\\
	\lnh \beta\rnh &= \lnh\gamma\rnh + \lnh\gamma'\rnh \geq \lnh\gamma\rnh+1,\label{ord09}\\
	\lpol\beta\rpol &= \lpol\gamma\rpol + \lpol\gamma'\rpol \geq \lpol\gamma\rpol.\label{ord10}
\end{align}
Therefore any choice of $\lambda_i$ implies $|\gamma|_\prec<|\beta|_\prec$. In the case $\beta = \gamma-e_{\mathbf{n}'} + \gamma'$, we have
\begin{align}
	\length{\beta} &= \length{\gamma} -1 + \length{\gamma'} \geq \length{\gamma},\label{ord18}\\
	\lnh \beta\rnh &= \lnh\gamma\rnh + \lnh\gamma'\rnh \geq \lnh\gamma\rnh + 1,\label{ord19}\\
	\lpol\beta\rpol &= \lpol\gamma\rpol - |\mathbf{n}'| + \lpol\gamma'\rpol \geq \lpol\gamma\rpol-|\mathbf{n}'|.\label{ord20}
\end{align}
Thus,
\begin{equation*}
	|\beta|_\prec \geq |\gamma|_\prec + \lambda_2 - \lambda_3 |\mathbf{n}'|.
\end{equation*}
In order to get the desired triangularity property, we need
\begin{equation*}
	\lambda_2 > \lambda_3 |\mathbf{n}'|,
\end{equation*}
which of course cannot hold for every $\mathbf{n}'$. However, if we restrict to $(\gamma',\mathbf{n}')\in\mcMm$, cf. \eqref{mod01}, it is enough to have
\begin{equation}\label{ord02}
	\lambda_2 \geq \referee{\lambda_3 \eta}.
\end{equation}
We now turn to $\{\mbpartial_i\}_{i=1,...,d}$. We fix $\mathbf{n}\in\N_0^{d}$ and look at \eqref{pol05} for 
\begin{equation*}
	(\z_{\mathbf{n}+e_i} D^{(\mathbf{n})})_\beta^\gamma \neq 0.
\end{equation*} 
Similar arguments, replacing $\gamma'$ with $e_{\mathbf{n}+e_i}$, yield
\begin{align*}
	\length{\beta} &\geq \length{\gamma},\\
	\lnh\beta\rnh &= \lnh \gamma\rnh,\\
	\lpol\beta\rpol &\geq \lpol \gamma\rpol + |e_i|> \lpol \gamma \rpol,
\end{align*}
and thus any choice of $\lambda_i$'s is valid for the triangularity property. We summarize all of this in the following statement:
\begin{lemma}
	Let \eqref{ord01} be such that \eqref{ord02} holds. Then for all $D\in\mcD^-$, cf. \eqref{mod50},
	\begin{equation*}
		D_\beta^\gamma \neq 0 \implies |\beta|_\prec > |\gamma|_\prec.
	\end{equation*}
\end{lemma}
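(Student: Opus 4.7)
The plan is to verify the inequality case by case on the type of generator $D \in \mcD^-$, as defined in \eqref{mod50}, and to leverage the bookkeeping of length, noise homogeneity and polynomial degree that was carried out in the paragraphs immediately preceding the statement. Since $\lambda_1,\lambda_2,\lambda_3 > 0$ (a convex combination with strict weights, as suggested by the reference to ``$\referee{>}0$'' in the definition of $|\cdot|_\prec$), any case yielding a weak increase in one of the three quantities accompanied by a strict increase in another will suffice.

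First, I would treat the generators of the form $\z^{\gamma'} D^{(\mathbf{n}')}$ with $(\gamma',\mathbf{n}') \in \mcMm$. Recall \eqref{pol05} and split according to its two disjunctive conditions. If $\beta = \gamma - e_{(\mfl,k)} + e_{(\mfl,k+e_{\mathbf{n}'})} + \gamma'$ for some $(\mfl,k)$, then \eqref{ord08}, \eqref{ord09} and \eqref{ord10} give
\begin{equation*}
    |\beta|_\prec \geq |\gamma|_\prec + \lambda_1 + \lambda_2 > |\gamma|_\prec.
\end{equation*}
If instead $\beta = \gamma - e_{\mathbf{n}'} + \gamma'$, then \eqref{ord18}, \eqref{ord19} and \eqref{ord20} yield
\begin{equation*}
    |\beta|_\prec \geq |\gamma|_\prec + \lambda_2 - \lambda_3 |\mathbf{n}'|.
\end{equation*}
Here is where the restriction to $\mcMm$ is crucial: by \eqref{mod01} we have $|\mathbf{n}'| < \eta$ strictly, and combining with the standing hypothesis \eqref{ord02} we obtain
\begin{equation*}
    \lambda_2 \geq \lambda_3 \eta > \lambda_3 |\mathbf{n}'|,
\end{equation*}
which provides the desired strict inequality.

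Next, I would address the generators $\mbpartial_i$, $i=1,\ldots,d$. Expanding via \eqref{bb17}, each $\mbpartial_i$ is an effectively finite sum (after subcriticality is imposed, cf. the discussion after \eqref{sub13}) of elementary operators $(\mathbf{n}(i)+1)\z_{\mathbf{n}+e_i} D^{(\mathbf{n})}$. For each such operator, whichever case of \eqref{pol05} occurs, the computation performed right before the statement shows that length and noise homogeneity are weakly non-decreasing, whereas $\lpol \beta\rpol \geq \lpol \gamma \rpol + |e_i|$. Since $|e_i| = \mathfrak{s}_i \geq 1$ and $\lambda_3 > 0$, this yields $|\beta|_\prec \geq |\gamma|_\prec + \lambda_3 |e_i| > |\gamma|_\prec$.

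The only genuine subtle point — and in that sense the ``main obstacle'' — is the second case of the first family, where the sign $-\lambda_3 |\mathbf{n}'|$ must be absorbed by $+\lambda_2$; this is precisely what forces the restriction to $\mcMm$ (as opposed to the full $\mcM$) in the definition \eqref{mod50} of $\mcD^-$ and explains the role of \eqref{ord02}. All other cases follow by inspection of \eqref{ord08}--\eqref{ord10} and \eqref{ord18}--\eqref{ord20} together with the positivity of the $\lambda_i$'s.
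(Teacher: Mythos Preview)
Your proof is correct and follows exactly the same approach as the paper: the lemma is stated as a summary of the case analysis carried out in the paragraphs immediately preceding it, treating first $\z^{\gamma'}D^{(\mathbf{n}')}$ for $(\gamma',\mathbf{n}')\in\mcMm$ via the two alternatives in \eqref{pol05} and inequalities \eqref{ord08}--\eqref{ord20}, and then $\mbpartial_i$ via the $\z_{\mathbf{n}+e_i}D^{(\mathbf{n})}$ decomposition. Your handling of the only delicate point --- that the strict inequality $|\mathbf{n}'|<\eta$ in the definition of $\mcMm$ combined with \eqref{ord02} yields $\lambda_2 \geq \lambda_3\eta > \lambda_3|\mathbf{n}'|$ --- is exactly what the paper relies on.
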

As a consequence, this triangularity is inductively propagated to $\msfD_{(J,\mathbf{m})}$ for $(J,\mathbf{m})$ $\in$ $M(\mcMm)\times \N_0^{d}$, which in turn implies the following.
\begin{corollary}
	Let $\bff\in\textnormal{Alg}(T^-,\R)$, and let $\Gamma_\bff^-$ be given by \eqref{mod11}. Then
	\begin{equation*}
		(\Gamma_\bff^{-*}-\id)_\beta^\gamma \neq 0 \implies |\beta|_\prec > |\gamma|_\prec.
	\end{equation*}
\end{corollary}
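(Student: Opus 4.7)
The plan is to lift the strict triangularity property from the generators $\mcD^-$ to the operator $\Gamma_\bff^{-*}-\id$ via its explicit representation \eqref{mod33}, using only the algebraic fact that $\rho$ is an algebra morphism and that composition preserves triangularity.

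First, I will isolate the identity contribution in \eqref{mod33}. Since $\bff$ is a character it satisfies $\bff^{(0,0)} = \bff(1)=1$, and by construction $\msfD_{(0,0)} = 1 \in \rmU(L^-)$, hence $\rho(\msfD_{(0,0)}) = \id$. Thus
\begin{equation*}
    \Gamma_\bff^{-*} - \id \,=\, \sum_{(J,\mathbf{m})\neq (0,0)}\bff^{(J,\mathbf{m})}\rho(\msfD_{(J,\mathbf{m})}),
\end{equation*}
and by the finiteness property \eqref{mod06}, for every fixed $\beta$ only finitely many $(J,\mathbf{m})$ contribute a nonzero entry to the matrix component $(\Gamma_\bff^{-*}-\id)_\beta^\gamma$. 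It therefore suffices to prove that for every $(J,\mathbf{m})\neq (0,0)$ the endomorphism $\rho(\msfD_{(J,\mathbf{m})})$ is strictly triangular in $|\cdot|_\prec$, i.e.~$(\rho(\msfD_{(J,\mathbf{m})}))_\beta^\gamma\neq 0$ implies $|\beta|_\prec > |\gamma|_\prec$.

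Second, I will introduce the two subsets of $\textnormal{End}(T^*_{\populated\cup\mcNmin})$
\begin{align*}
    S &:= \{A\,|\, A_\beta^\gamma\neq 0 \implies |\beta|_\prec \geq |\gamma|_\prec\},\\
    S^* &:= \{A\,|\, A_\beta^\gamma\neq 0 \implies |\beta|_\prec > |\gamma|_\prec\},
\end{align*}
and observe by the matrix product formula $(AB)_\beta^\gamma = \sum_{\gamma'} A_\beta^{\gamma'}B_{\gamma'}^\gamma$ that $S$ is stable under sums and compositions, while any composition in which at least one factor belongs to $S^*$ lies in $S^*$. The previous lemma states precisely $\rho(\mcD^-) \subset S^*$.

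Finally, since $\mcD^-$ generates $L^-$ as a Lie algebra and hence $\rmU(L^-)$ as an algebra, every $\msfD_{(J,\mathbf{m})}$ with $(J,\mathbf{m})\neq (0,0)$ is a finite linear combination of nontrivial products (in $\rmU(L^-)$) of elements of $\mcD^-$. Applying $\rho$ and using that it is an algebra morphism, $\rho(\msfD_{(J,\mathbf{m})})$ becomes a finite linear combination of compositions of endomorphisms in $\rho(\mcD^-)\subset S^*$ with at least one factor each, hence lies in $S^*$ by the closure properties above. This yields the desired conclusion. The only point that requires care is the factorization of $\msfD_{(J,\mathbf{m})}$ as a sum of nontrivial monomials in $\mcD^-$; this can be established cleanly by induction on $\length{J}+\length{\mathbf{m}}$ using the recursion \eqref{sg40}, which expresses $\msfD_{(J,\mathbf{m})}$ as a combination of products of the form $\msfD_{(J',\mathbf{m}')}\cdot \z^{\tilde\gamma}D^{(\tilde{\mathbf{n}})}$ with $\length{(J',\mathbf{m}')}<\length{(J,\mathbf{m})}$, allowing the strict triangularity to propagate through every step.
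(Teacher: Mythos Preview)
Your proof is correct and takes essentially the same approach as the paper, which simply states that the triangularity ``is inductively propagated to $\msfD_{(J,\mathbf{m})}$'' without further detail; you have filled this in by separating the identity term in \eqref{mod33}, invoking the previous lemma for the generators, and propagating via composition. One small imprecision: the recursion \eqref{sg40} does not express $\msfD_{(J,\mathbf{m})}$ purely as sums of products $\msfD_{(J',\mathbf{m}')}\cdot \z^{\tilde\gamma}D^{(\tilde{\mathbf{n}})}$ --- the correction terms are bare basis elements $\msfD_{(J''+e_{(\tilde\beta,\tilde{\mathbf{n}})},\mathbf{m}'')}$ of strictly smaller length --- but these are covered directly by the induction hypothesis, so the argument goes through unchanged.
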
	

\medskip

However, this tells nothing about the triangular structure of the elements of $G$. The main difference now is that \eqref{ord02} is not enough by itself to guarantee the triangularity, since there is no maximal $\mathbf{n}$ in $\mcMp$; we need to exploit the condition in \eqref{sg17} instead. To this end, we rewrite the homogeneity $|\gamma'|$ using \eqref{bb06} as follows:
\referee{
\begin{align*}
	|\gamma'| = \sum_{(\mfl,k)} (\alpha_\mfl + \eta) \gamma'(\mfl,k) - \sum_{(\mfl,k)} \sum_\mathbf{n} |\mathbf{n}| k(\mathbf{n}) \gamma'(\mfl,k) + \lpol \gamma' \rpol - \eta.
\end{align*}
Let us denote $\alphamax := \max_{\mfl\in \mfL^-\cup \{0\}} \alpha_\mfl$. Assuming $\alpha_0 = 0$ we have\footnote{In fact, under the stronger assumption \eqref{ref:eqalpha} used in the analytic construction, $\alphamax=0$.} $\alphamax \geq0$ and we have the bound
\begin{align*}
	|\gamma'| \leq \eta \sum_{k} \gamma'(0,k) + (\eta + \alphamax) \lnh \gamma' \rnh + \lpol \gamma' \rpol - \eta.
\end{align*}
Thus for a pair $(\gamma',\mathbf{n}')\in\mcMp$ it holds
\begin{align*}
	|\mathbf{n}'| < |\gamma'| + \eta \leq \eta \length{\gamma'} + (\eta + \alphamax) \lnh \gamma' \rnh + \lpol \gamma' \rpol.
\end{align*}
We use this together with \eqref{ord08} to \eqref{ord20}, so that for $\beta,\gamma\in M(\coord)$ and $(\gamma',\mathbf{n}')\in \mcMp$ such that $(\z^{\gamma'} D^{(\mathbf{n}')})_\beta^\gamma \neq 0$ it holds
\begin{align*}
	|\beta|_\prec &\geq |\gamma|_\prec  + \lambda_1 \length{\gamma'} + \lambda_2\hspace*{2pt} \lnh\gamma'\rnh + \lambda_3 \lpol\gamma'\rpol - \lambda_1 - \lambda_3 |\mathbf{n}'|\\
	&> |\gamma|_\prec + (\lambda_1 - \lambda_3 \eta)\length{\gamma'} + (\lambda_2 - \lambda_3(\eta+\alphamax))\hspace*{2pt}\lnh\gamma'\rnh - \lambda_1.
\end{align*}
}
To get the desired inequality $|\beta|_\prec > |\gamma|_\prec$, it is enough to choose $\lambda_i$'s such that
\referee{\begin{equation}\label{ord11}
	\left\{\begin{array}{l}
		\lambda_2-\lambda_3\eta\geq\lambda_1\\
		\lambda_1 - \lambda_3(\eta + \alphamax)\geq 0,
		\end{array}\right.
\end{equation}}
which we can do, for instance by letting
\referee{\begin{align*}
	\lambda_1 &:= \lambda_3 \eta\\
	\lambda_2 &:= \lambda_3 (2\eta + \alphamax).
\end{align*}}
\begin{lemma}
	Let \eqref{ord01} be such that \eqref{ord11} holds. Then for all $D\in\mcD^+$ 
	\begin{equation*}
		D_\beta^\gamma \neq 0 \implies |\beta|_\prec > |\gamma|_\prec.
	\end{equation*}
\end{lemma}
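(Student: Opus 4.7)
The plan is to argue case by case according to the structure of $\mcD^+ = \{\z^{\gamma'} D^{(\mathbf{n}')}\}_{(\gamma',\mathbf{n}') \in \mcMp} \sqcup \{\mbpartial_i\}_{i=1,\dots,d}$, reusing the computations already done in the text just above the statement.

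For the generators $\mbpartial_i$, the argument from the preceding paragraph applies verbatim: after fixing $\mathbf{n}$ in \eqref{bb17}, one is reduced to $(\z_{\mathbf{n}+e_i} D^{(\mathbf{n})})_\beta^\gamma \neq 0$, and the analysis of \eqref{pol05} (with $\gamma' = e_{\mathbf{n}+e_i}$) yields $\length{\beta} \geq \length{\gamma}$, $\lnh \beta \rnh = \lnh \gamma \rnh$ and $\lpol \beta \rpol \geq \lpol \gamma \rpol + |e_i| > \lpol \gamma \rpol$, whence $|\beta|_\prec > |\gamma|_\prec$ regardless of the choice of $\lambda_i$'s.

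For a generator $\z^{\gamma'} D^{(\mathbf{n}')}$ with $(\gamma', \mathbf{n}') \in \mcMp$, I distinguish the two situations coming from \eqref{pol05}. In the first (there exists $(\mfl,k)$ with $\beta = \gamma - e_{(\mfl,k)} + e_{(\mfl, k + e_{\mathbf{n}'})} + \gamma'$), the inequalities \eqref{ord08}--\eqref{ord10} give
\[
|\beta|_\prec \geq |\gamma|_\prec + \lambda_1 \length{\gamma'} + \lambda_2 \lnh \gamma' \rnh + \lambda_3 \lpol \gamma' \rpol \geq |\gamma|_\prec + \lambda_1,
\]
which suffices since $\lambda_1 > 0$. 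In the second (namely $\beta = \gamma - e_{\mathbf{n}'} + \gamma'$), I combine \eqref{ord18}--\eqref{ord20} with the condition $|\mathbf{n}'| < |\gamma'| + \eta$ from \eqref{sg17}, bounded as in the paragraph preceding the statement by
\[
|\mathbf{n}'| < \eta \length{\gamma'} + (\eta + \alphamax) \lnh \gamma' \rnh + \lpol \gamma' \rpol,
\]
to obtain
\[
|\beta|_\prec > |\gamma|_\prec + (\lambda_1 - \lambda_3 \eta) \length{\gamma'} + (\lambda_2 - \lambda_3 (\eta + \alphamax)) \lnh \gamma' \rnh - \lambda_1.
\]
Condition \eqref{ord11} forces both bracketed coefficients to be $\geq \lambda_1$ and $\geq 0$ respectively, so that since $\length{\gamma'} \geq 1$ the whole right-hand side is strictly greater than $|\gamma|_\prec$.

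The main subtle point is the second subcase, where the large value of $|\mathbf{n}'|$ allowed by $\mcMp$ threatens to make $\lpol \beta \rpol$ much smaller than $\lpol \gamma \rpol$; the role of \eqref{ord11} is exactly to absorb this deficit through the positive contributions $\length{\gamma'}$ and $\lnh \gamma' \rnh$, using crucially that a pair in $\mcMp$ has its derivative order controlled by the homogeneity of $\gamma'$, and hence by its length, noise homogeneity, and polynomial degree. The remaining cases are immediate.
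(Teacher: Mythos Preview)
Your argument is essentially the same as the paper's: the lemma has no separate proof there, and the text immediately preceding it is precisely the case analysis you reproduce. Splitting off the first subcase of \eqref{pol05} is harmless and arguably cleaner than the paper's unified bound.

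There is, however, a slip in the last step of the second subcase. You write ``Condition \eqref{ord11} forces both bracketed coefficients to be $\geq \lambda_1$ and $\geq 0$ respectively'', and then invoke only $\length{\gamma'}\geq 1$. But the first bracketed coefficient is $\lambda_1 - \lambda_3\eta$, and $\lambda_1 - \lambda_3\eta \geq \lambda_1$ would force $\lambda_3\eta \leq 0$, which is false. What \eqref{ord11} actually yields is $\lambda_1 - \lambda_3\eta \geq \lambda_3\alphamax \geq 0$ (from its second line, since $\alphamax\geq 0$) and $\lambda_2 - \lambda_3(\eta+\alphamax) \geq \lambda_1 - \lambda_3\alphamax$ (from its first). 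Using \emph{both} $\length{\gamma'}\geq 1$ and $\lnh\gamma'\rnh\geq 1$ (the latter holds because $\gamma'\in\mcN$), these two bounds sum to at least $\lambda_1$, which is what is needed to cancel the trailing $-\lambda_1$. So the approach is right, but the final sentence needs this correction.
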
	
As before, this triangularity is inductively propagated to $G$.
\begin{corollary}
	For every $\Gamma\in G$,
	\begin{equation*}
		(\Gamma^* - \id)_\beta^\gamma\neq 0 \implies |\beta|_\prec > |\gamma|_\prec.
	\end{equation*}
\end{corollary}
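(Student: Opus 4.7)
The starting point is the representation \eqref{sg39}: since $\msfD_{(0,0)} = 1$ and $\bfpi^{(0,0)} = 1$ by convention, one may peel off the identity to write
\begin{equation*}
	\Gamma_{\bfpi}^{+*} - \id = \sum_{\substack{(J,\mathbf{m})\in M(\mcMp)\times \N_0^d \\ (J,\mathbf{m})\neq (0,0)}} \bfpi^{(J,\mathbf{m})} \rho(\msfD_{(J,\mathbf{m})}).
\end{equation*}
Hence a non-vanishing entry $(\Gamma_{\bfpi}^{+*}-\id)_\beta^\gamma$ forces a non-vanishing entry $(\msfD_{(J,\mathbf{m})})_\beta^\gamma$ for some $(J,\mathbf{m}) \neq (0,0)$, and the Corollary reduces to establishing the \emph{extended triangularity}
\begin{equation}\label{eq:plantrig}
	(\msfD_{(J,\mathbf{m})})_\beta^\gamma \neq 0 \implies |\beta|_\prec > |\gamma|_\prec
\end{equation}
for every $(J,\mathbf{m}) \in M(\mcMp)\times \N_0^d$ with $\length{(J,\mathbf{m})} \geq 1$.

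I will prove \eqref{eq:plantrig} by induction on $\length{(J,\mathbf{m})}$. The base case $\length{(J,\mathbf{m})} = 1$ amounts to $\msfD_{(J,\mathbf{m})}$ being (up to a nonzero scalar) an element of $\mcD^+$, so the preceding Lemma applies verbatim. For the inductive step with $\length{(J,\mathbf{m})} \geq 2$, if $J = 0$ one writes $\mbpartial^\mathbf{m} = \mbpartial^{\mathbf{m}-e_i}\mbpartial_i$ and concludes by the algebra morphism property of $\rho$ combined with the base case. Otherwise one picks $(\tilde{\gamma},\tilde{\mathbf{n}}) \in \mcMp$ with $J(\tilde{\gamma},\tilde{\mathbf{n}}) > 0$ and invokes the recursion \eqref{sg40}, which exhibits $\msfD_{(J,\mathbf{m})}$ as the composition $\msfD_{(J-e_{(\tilde{\gamma},\tilde{\mathbf{n}})},\mathbf{m})} \bullet (\z^{\tilde{\gamma}}D^{(\tilde{\mathbf{n}})})$ corrected by terms proportional to basis elements $\msfD_{(J''+e_{(\tilde{\beta},\tilde{\mathbf{n}})},\mathbf{m}'')}$ of length at most $\length{(J,\mathbf{m})}-1$.

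Each piece is handled separately. For the composition, a non-vanishing matrix entry factors through some $\delta$, and applying the induction hypothesis (or the identity if $J = e_{(\tilde{\gamma},\tilde{\mathbf{n}})}$ and $\mathbf{m} = 0$) to $\msfD_{(J-e_{(\tilde{\gamma},\tilde{\mathbf{n}})},\mathbf{m})}$ together with the base case applied to $\z^{\tilde{\gamma}}D^{(\tilde{\mathbf{n}})} \in \mcD^+$ yields $|\beta|_\prec \geq |\delta|_\prec > |\gamma|_\prec$. For the correction terms one applies the induction hypothesis directly, but one must first check that the index pairs $(\tilde{\beta},\tilde{\mathbf{n}})$ which appear with nonzero coefficient actually lie in $\mcMp$ (so that $J''+e_{(\tilde{\beta},\tilde{\mathbf{n}})} \in M(\mcMp)$ and the induction hypothesis is applicable). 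This is the only real obstacle and is resolved by the triangularity \eqref{sg44} already in hand: the coefficient of $\msfD_{(J''+e_{(\tilde{\beta},\tilde{\mathbf{n}})},\mathbf{m}'')}$ is proportional to $(\msfD_{(J',\mathbf{m}')})_{\tilde{\beta}}^{\tilde{\gamma}}$, and since $\tilde{\gamma} \in \mcN$ the mapping property \eqref{map13} forces $\tilde{\beta} \in \mcN$, while \eqref{sg44} forces $|\tilde{\beta}| > |\tilde{\gamma}|$; hence
\begin{equation*}
	|\tilde{\mathbf{n}}| < |\tilde{\gamma}| + \eta < |\tilde{\beta}| + \eta,
\end{equation*}
so $(\tilde{\beta},\tilde{\mathbf{n}}) \in \mcMp$. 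This closes the induction and proves \eqref{eq:plantrig}, which in turn establishes the Corollary.
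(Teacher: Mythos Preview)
Your proof is correct and follows the same approach as the paper, which simply states that ``this triangularity is inductively propagated to $G$'' without spelling out the details. You carry out precisely that induction via the recursion \eqref{sg40}, and your verification that the correction indices $(\tilde{\beta},\tilde{\mathbf{n}})$ remain in $\mcMp$ (using \eqref{map13} and \eqref{sg44}) is the one nontrivial point the paper leaves implicit---it is exactly what ensures that $\rmU(L^+)$ is closed under the product in this basis, so that the induction does not leave $M(\mcMp)\times\N_0^d$.
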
	
It only remains to show that the dependence on the characters for both $\Gamma_{\bfPi_x}$ and $\Gamma_{xy}$ is triangular with respect to $|\cdot|_\prec$. This is trivial, since
\begin{equation*}
	\beta > \gamma \implies |\beta|_\prec > |\gamma|_\prec,
\end{equation*}
and we established triangularity with respect to the component-wise ordering in lemmas \ref{lemind01} and \ref{lemind02}, respectively.
\section{\referee{Examples of renormalized equations}}\label{section::5}
In this last section we implement \referee{our top-down algebraic renormalization strategy based on multi-indices} in \referee{some} classical examples of singular SPDEs of the form \eqref{set01}, namely the $\Phi_3^4$ model, the multiplicative stochastic heat equation and the generalized KPZ equation, \referee{and show that we recover the same renormalized equations with fewer renormalization constants}. In all three cases, we \referee{will construct the corresponding index set following the procedure of \eqref{ref:subsecwarm},} identify the family of renormalized equations generated by admissible counterterms \referee{as well as the model equations which generate the renormalization constants}, and discuss some possible reductions \referee{based on preservation of symmetries}. \referee{For better comparison, we recommend the reader some familiarity with the aforementioned equations and their renormalized versions, see e.~g. \cite{Hairer16} for $\Phi_3^4$, \cite{HP15} for the stochastic heat equation, and \cite{BGHZ} for the generalized KPZ equation.}
\subsection{The dynamical $\Phi_3^4$ model}\label{subsec::5.1}
The dynamical $\Phi_d^4$ model \cite{PW}, where $d$ stands for the spatial dimension, is the SPDE
\begin{equation}\label{phi43}
	(\partial_t - \Delta)\Phi = -\lambda \Phi^3 + \xi;
\end{equation}
where $\lambda>0$ and $\xi$ is space-time white noise (thus in $1+d$ dimensions). It is known that this equation is singular for $d\geq 2$ and subcritical for $d=2,3$, so we focus on the latter. For our purposes it is better to consider a generic third order polynomial nonlinearity, i.~e.
\begin{equation*}
	(\partial_t - \Delta)\Phi = \sum_{j=0}^3 \lambda_j \Phi^j + \lambda_\xi \xi.
\end{equation*}
We fix the following:
\begin{itemize}
	\item Parabolic scaling, i.~e. we take \eqref{scal01} as
	\begin{equation*}
		|\mathbf{n}| = 2 \mathbf{n}(0) + \sum_{i=1}^{d} \mathbf{n}(i),
	\end{equation*}
	under which the heat operator is $2$-homogeneous.
	\item In line with the above, $\eta = 2$.
	\item We represent $\xi$ with the same symbol, i.~e. $\mfL^- = \{\xi\}$.
	\item We set $\alpha_\xi = -\frac{5}{2}\mhyphen$.
\end{itemize}
The set of $(\mfl,k)\in \{\xi,0\}\times \N_0^{4}$ to consider for the multi-index description then reduces to five, namely $\{(\xi,0)\}$ $\cup$ $\{(0,k)\}_{k=0,...,3}$. We follow \eqref{bb03} and identify
\begin{align*}
	\z_{(\xi,0)}[\bflambda,\Phi,\cdot] &= \lambda_\xi,\\
	\z_{(0,0)}[\bflambda,\Phi,\cdot] &= \lambda_0 + \lambda_1 \Phi + \lambda_2 \Phi^2 + \lambda_3 \Phi^3,\\
	\z_{(0,1)}[\bflambda,\Phi,\cdot] &= \lambda_1 + 2\lambda_2 \Phi + 3 \lambda_3 \Phi^2,\\
	\z_{(0,2)} [\bflambda,\Phi,\cdot] &= \lambda_2 + 3 \lambda_3 \Phi,\\
	\z_{(0,3)} [\bflambda,\Phi,\cdot] &= \lambda_3,
\end{align*}
so that for a generic $\beta$ we have
\begin{align}
	\z^\beta[\bflambda,\Phi,\cdot] = &\lambda_\xi^{\beta(\xi,0)} \big(\lambda_0 + \lambda_1 \Phi + \lambda_2 \Phi^2 + \lambda_3 \Phi^3\big)^{\beta(0,0)} \big(\lambda_1 + 2\lambda_2 \Phi + 3 \lambda_3 \Phi^2\big)^{\beta(0,1)}\nonumber\\
	& \times \big(\lambda_2 + 3 \lambda_3 \Phi\big)^{\beta(0,2)} \lambda_3^{\beta(0,3)} \prod_{\mathbf{n}\in\N_0^4} (\partial^\mathbf{n}\Phi)^{\beta(\mathbf{n})}.\label{flow01}
\end{align}
The set of multi-indices satisfying condition \eqref{bb06}  is characterized by the identity
\begin{equation}\label{phi01}
	\beta(\xi,0) + \beta(0,0) - \beta(0,2) - 2\beta(0,3) + \sum_{\mathbf{n}\in\N_0^4} \beta(\mathbf{n}) = 1.
\end{equation}
Condition \eqref{bb07} simply means that we restrict to $\beta$ such that either $\beta(\xi,0)>0$ or $\beta = e_\mathbf{n}$ for some $\mathbf{n}\in\N_0^4$. Since the expected regularity of the solution $\Phi$ is $-\frac{1}{2}\mhyphen <0$, there is no need to restrict the polynomial contributions as in \eqref{bb05}. We thus have
\begin{equation*}
	\mcN = \{\beta \in M(\{(\xi,0)\}\cup \{(0,k)\}_{k=0,...,3} \cup \N_0^4)\,|\, \beta(\xi,0)>0,\, \beta\,\mbox{satisfies }\eqref{phi01}\}.
\end{equation*}
The homogeneity of a multi-index $\beta$ is given according to \eqref{bb04} by
\begin{equation}\label{phi02}
	|\beta| = (-\frac{5}{2}\mhyphen)\beta(\xi,0) + 2\beta(0,1) + 4\beta(0,2) + 6\beta(0,3) + \sum_{\mathbf{n}\in\N_0^4} (|\mathbf{n}| - 2)\beta(\mathbf{n}).
\end{equation}
\begin{remark}
	This bookkeeping of the model components has some similarities with that of the effective force coefficients in the flow approach to singular SPDEs developed in \cite{Duch21}. To establish the connection more precisely, let us momentarily fix $\lambda_\xi =1$, $\lambda_3 = \lambda$ and $\lambda_i = 0$ for $i=0,1,2$, following \cite{Duch21}. The effective force coefficients $F_{\kappa,\nu,\mu}^{i,m}$ as defined in \cite[Section 7]{Duch21} are indexed by $i\in\N_0$ and $m\in\N_0$, representing the order in $\lambda$ and $\Phi$, respectively. Looking at the definition of $\z_{(0,3)}$ and $\z_0$, one may think that $i$ is $\beta(0,3)$ in our approach, whereas $m$ is given by $\beta(0)$. This is actually not entirely true in view of \eqref{flow01}: The order in $\lambda$ and $\Phi$ also involves contributions from $e_{(0,j)}$, $j=0,1,2$. However, such contributions are clearly redundant, and this has the effect of creating different model components which look the same (see e.~g. \eqref{flow02} and \eqref{flow04} below). Thus, the index set of \cite{Duch21} is more efficient than ours. Of course, one could remove the contributions from $e_{(0,j)}$, $j=0,1,2$ from the very beginning and start the multi-index description merely based on $e_{(0,3)}$ and $e_0$ (note that then the contributions from $e_{(\xi,0)}$ are fixed by condition \eqref{phi01}), making the connection with \cite{Duch21} clear. \referee{This is the approach taken in the more recent \cite{BOT24}, leading to a treatment of the equation which is more direct and similar to \cite{LOT,LOTT,OSSW}. Such} a priori reductions, which can be made for specific equations, typically generate smaller index sets and provide a more parsimonious bookkeeping, but are not so useful for the systematic approach adopted in this paper, which is developed to cover the larger class of equations of the form \eqref{set01}.	
\end{remark}	

\medskip

We now describe admissible counterterms. For this we first look at the set $\mcC$, cf. \eqref{setC}. As a consequence of \eqref{phi01} and \eqref{phi02}, multi-indices for which $|\beta|<0$ are characterized by
\begin{equation}\label{phi03}
	5\beta(0,0) + 4\beta(0,1) + 3\beta(0,2) + 2\beta(0,3) + 2\sum_{\mathbf{n}\in\N_0^4} |\mathbf{n}|\beta(\mathbf{n}) \leq 5 - \sum_{\mathbf{n}\in\N_0^4} \beta(\mathbf{n}).
\end{equation}
We already observe that  this forces $\beta(0,k)\leq 1$ for $k=0,1,2$ and $\beta(0,3)\leq 2$. Moreover, $\beta(0,0)=0$; indeed, if $\beta(0,0)=1$, then we need $\beta(0,k)=0$ for $k=1,2,3$, which by \eqref{phi01} leaves us with $\beta = e_{(0,0)}\notin \mcN$. For the polynomial contributions, we note the following:
\begin{itemize}
	\item If $\sum_{\mathbf{n}\in\N_0^4} \beta(\mathbf{n})\geq 6$, \eqref{phi03} is impossible.
	\item If $\sum_{\mathbf{n}\in\N_0^4} \beta(\mathbf{n}) = 4,5$, then \eqref{phi01} and \eqref{phi03} are not compatible.
	\item If $\sum_{\mathbf{n}\in\N_0^4} \beta(\mathbf{n})=3$, then \eqref{phi01} and \eqref{phi03} only allow for $\beta = e_{(0,3)} + 3e_0 \notin \mcN$.	
	\item  If $\sum_{\mathbf{n}\in\N_0^4} \beta(\mathbf{n})=2$, then \eqref{phi03} gives only two possibilities:
	\begin{itemize}
		\item $\beta(0,2)=1$, $\beta(0,k)=0$ for $k=0,1,3$. Then \eqref{phi01} and \eqref{phi03} only allow for $\beta = e_{(0,2)} + 2e_0\notin\mcN$.
		\item $\beta(0,3)=1$, $\beta(0,k)=0$ for $k=0,1,2$. Then \eqref{phi01} and \eqref{phi03} only allow for $\beta = e_{(\xi,0)} + e_{(0,3)} + 2e_0$, which does not satisfy \eqref{cou01c}.
	\end{itemize}
\end{itemize}
Thus, we only need to consider $\sum_{\mathbf{n}\in\N_0^4} \beta(\mathbf{n}) = 0,1$. We now distinguish the two possible cases.
\begin{enumerate}[label= \bf{\arabic*}. ]
	\item Case $\sum_{\mathbf{n}\in\N_0^4} \beta(\mathbf{n})=0$. If $\beta(0,1)=1$, \eqref{phi01} and \eqref{phi03} imply that the only possibility is $\beta=e_{(\xi,0)} + e_{(0,1)}$, which does not satisfy \eqref{cou01c}. If $\beta(0,1)=0$ and $\beta(0,2)=1$, then \eqref{phi01} and \eqref{phi03} allow for
	\begin{equation}\label{phi*1}
		2 e_{(\xi,0)} + e_{(0,2)},\, 4e_{(\xi,0)} + e_{(0,2)} + e_{(0,3)}.
	\end{equation}
	Finally, if $\beta(0,1)= \beta(0,2) = 0$, we have
	\begin{equation}\label{phi*2}
		3 e_{(\xi,0)} + e_{(0,3)},\, 5 e_{(\xi,0)} + 2 e_{(0,3)}.
	\end{equation}
	\item Case $\sum_{\mathbf{n}\in\N_0^4} \beta(\mathbf{n}) = 1$. If $\beta(0,1)=1$, only $e_{(0,1)} + e_0\notin \mcN$ is allowed by \eqref{phi01} and \eqref{phi03}. If $\beta(0,1)=0$ and $\beta(0,2)=1$, we can only take $e_{(\xi,0)} + e_{(0,2)} + e_0$, which does not satisfy \eqref{cou01c}. Finally, for $\beta(0,1)=\beta(0,2) =0$ we only have
	\begin{equation}\label{phi*3}
		2e_{(\xi,0)} + e_{(0,3)} + e_0,\,\,4e_{(\xi,0)} + 2e_{(0,3)} + e_0,\,\,\{2 e_{(\xi,0)} + e_{(0,3)}+e_{e_i}\}_{i=1,2,3}.
	\end{equation}
\end{enumerate}
We therefore have that $\mcC$ consists of the multi-indices \eqref{phi*1}, \eqref{phi*2} and \eqref{phi*3}. An admissible counterterm $c\in\counterterms$ then produces the renormalized equation
\begin{align*}
	(\partial_t - \Delta)\Phi = & \sum_{j=0}^3 \lambda_j \Phi^j + \lambda_\xi \xi \\
	&+ c_{2e_{(\xi,0)} + e_{(0,2)}} \lambda_\xi^2 (\lambda_2 + 3\lambda_3\Phi)\\
	&+ c_{4e_{(\xi,0)} + e_{(0,2)} + e_{(0,3)}} \lambda_\xi^4 \lambda_3(\lambda_2+3\lambda_3\Phi)  \\
	&+ c_{3 e_{(\xi,0)} + e_{(0,3)}} \lambda_\xi^3 \lambda_3\\
	&+ c_{5e_{(\xi,0)} + 2e_{(0,3)}} \lambda_\xi^5 \lambda_3^2 \\
	&+ c_{2e_{(\xi,0)} + e_{(0,3)} + e_0} \lambda_\xi^2 \lambda_3 \Phi\\
	&+ c_{4e_{(\xi,0)} + 2 e_{(0,3)} + e_0} \lambda_\xi^4 \lambda_3^2 \Phi\\
	&+ \sum_{i=1}^3 c_{2e_{(\xi,0)}+e_{(0,3)} + e_{e_i}} \lambda_\xi^2 \lambda_3\partial_i \Phi.
\end{align*}
We now note from the canonical model equations ($c=0$) that
\begin{equation}\label{flow02}
	(\partial_t -\Delta)\Pi_{2 e_{(\xi,0)} + e_{(0,3)}+ e_0} = 3\Pi_{e_{(\xi,0)}}^2 = (\partial_t -\Delta)3\Pi_{2 e_{(\xi,0)} + e_{(0,2)}},
\end{equation}
so we have $\Pi_{2 e_{(\xi,0)} + e_{(0,3)}+ e_0}$ $=$ $3\Pi_{2 e_{(\xi,0)} + e_{(0,2)}}$ and may assume $c_{2e_{(\xi,0)} + e_{(0,3)} + e_0} = 3 c_{2e_{(\xi,0)} + e_{(0,2)}}$. Similarly, 
\begin{align}
	(\partial_t -\Delta)\Pi_{4 e_{(\xi,0)} + 2e_{(0,3)}+ e_0} &= 6 \Pi_{3 e_{(\xi,0)} + e_{(0,3)}} \Pi_{e_{(\xi,0)}} + 3 \Pi_{2 e_{(\xi,0)} + e_{(0,3)} + e_0} \Pi_{e_{(\xi,0)}}^2\nonumber\\
	&= 6 \Pi_{3 e_{(\xi,0)} + e_{(0,3)}} \Pi_{e_{(\xi,0)}} + 9 \Pi_{2 e_{(\xi,0)} + e_{(0,2)}} \Pi_{e_{(\xi,0)}}^2\nonumber\\
	&= (\partial_t -\Delta)3\Pi_{4 e_{(\xi,0)} + e_{(0,2)} + e_{(0,3)}},\label{flow04}
\end{align}
so we have $\Pi_{4 e_{(\xi,0)} + 2e_{(0,3)}+ e_0} = 3\Pi_{4 e_{(\xi,0)} + e_{(0,2)} + e_{(0,3)}}$ and consequently assume $c_{4 e_{(\xi,0)} + 2e_{(0,3)}+ e_0}$ $=$ $3c_{4 e_{(\xi,0)} + e_{(0,2)} + e_{(0,3)}}$. The renormalized equation simplifies to
\begin{align*}
	(\partial_t - \Delta)\Phi = & \sum_{j=0}^3 \lambda_j \Phi^j + \lambda_\xi \xi \\
	&+ c_{2e_{(\xi,0)} + e_{(0,2)}} \lambda_\xi^2 (\lambda_2 + 6\lambda_3\Phi)\\
	&+ c_{4e_{(\xi,0)} + e_{(0,2)} + e_{(0,3)}} \lambda_\xi^4 \lambda_3(\lambda_2+6\lambda_3\Phi)  \\
	&+ c_{3 e_{(\xi,0)} + e_{(0,3)}} \lambda_\xi^3 \lambda_3\\
	&+ c_{5e_{(\xi,0)} + 2e_{(0,3)}} \lambda_\xi^5 \lambda_3^2 \\
	&+ \sum_{i=1}^3 c_{2e_{(\xi,0)}+e_{(0,3)} + e_{e_i}} \lambda_\xi^2 \lambda_3\partial_i \Phi.
\end{align*}
In addition, we now include a spatial symmetry assumption in our counterterm, since both $\xi$ and $(\partial_t - \Delta)$ are reflection-invariant in space; in that case, only even polynomial contributions are required in the counterterm, making the last summand disappear:
\begin{align}
	(\partial_t - \Delta)\Phi = & \sum_{j=0}^3 \lambda_j \Phi^j + \lambda_\xi \xi \nonumber\\
	&+ c_{2e_{(\xi,0)} + e_{(0,2)}} \lambda_\xi^2 (\lambda_2 + 6\lambda_3\Phi)\nonumber\\
	&+ c_{4e_{(\xi,0)} + e_{(0,2)} + e_{(0,3)}} \lambda_\xi^4 \lambda_3(\lambda_2+6\lambda_3\Phi)\nonumber  \\
	&+ c_{3 e_{(\xi,0)} + e_{(0,3)}} \lambda_\xi^3 \lambda_3\nonumber\\
	&+ c_{5e_{(\xi,0)} + 2e_{(0,3)}} \lambda_\xi^5 \lambda_3^2. \label{phi10}
\end{align}
We are left with four renormalization constants, fixed in the following model equations:
\begin{align}
	(\partial_t - \Delta)\Pi_{2e_{(\xi,0)} + e_{(0,2)}} &= \Pi_{e_{(\xi,0)}}^2 + c_{2e_{(\xi,0)} + e_{(0,2)}},\label{phi12}\\
	(\partial_t - \Delta) \Pi_{3e_{(\xi,0)} + e_{(0,3)}} &= \Pi_{e_{(\xi,0)}}^3 + c_{3e_{(\xi,0)} + e_{(0,3)}}\nonumber\\
	&\quad + 3 c_{2e_{(\xi,0)} + e_{(0,2)}} \Pi_{e_{(\xi,0)}},\label{phi13}\\
	(\partial_t - \Delta)\Pi_{4e_{(\xi,0)} + e_{(0,2)} + e_{(0,3)}} &= 2 \Pi_{e_{(\xi,0)}}\Pi_{3e_{(\xi,0)} + e_{(0,3)}} + 3\Pi_{e_{(\xi,0)}}^2 \Pi_{2e_{(\xi,0)} + e_{(0,2)}} \nonumber \\
	&\quad+ c_{4e_{(\xi,0)} + e_{(0,2)} + e_{(0,3)}}\nonumber \\
	&\quad+ 3 c_{2 e_{(\xi,0)} + e_{(0,2)}} \Pi_{2e_{(\xi,0)} + e_{(2,0)}}\label{phi14}\\
	(\partial_t - \Delta)\Pi_{5e_{(\xi,0)} + 2e_{(0,3)}} &= 3 \Pi_{e_{(\xi,0)}}^2\Pi_{3e_{(\xi,0)} + e_{(0,3)}} + c_{5e_{(\xi,0)} + 2e_{(0,3)}}\nonumber\\
	&\quad + 3c_{4 e_{(\xi,0)} + e_{(0,2)} + e_{(0,3)}} \Pi_{e_{(\xi,0)}}\nonumber\\
	&\quad +  3 c_{2 e_{(\xi,0)} + e_{(0,2)}}\Pi_{3e_{(\xi,0)} + e_{(0,3)}}.\label{phi15}	
\end{align}
\begin{remark}
	The reader is invited to compare these four constants with the five-dimensional group in \cite[Subsection 4.5]{Hairer16}, which is required for a general non-Gaussian stationary $\xi$ in the range of regularity of space-time white noise. Note that \eqref{phi14} contains two terms from the tree-based description, so the constant $c_{4e_{(\xi,0)} + e_{(0,2)} + e_{(0,3)}}$ renormalizes the linear combination at once (although in the space-time white noise case $\Pi_{e_{(\xi,0)}}\Pi_{3e_{(\xi,0)} + e_{(0,3)}}$ does not need to be renormalized, see the proof of \cite[Theorem 10.22]{reg}). 
\end{remark}	

\medskip

\referee{If we now incorporate the invariance in law $\xi \mapsto -\xi$, which is true for white noise}, the expected value of polynomial functionals of the noise of odd order do not require a counterterm (because they are already centered in expectation); this means we could disregard new constants coming from multi-indices with $\beta(\xi,0)=$odd, namely those fixed in equations \eqref{phi13} and \eqref{phi15} above. The final form of the renormalized equation is
\begin{align*}
	(\partial_t - \Delta)\Phi = & \sum_{j=0}^3 \lambda_j \Phi^j + \lambda_\xi \xi \\
	&+ c_{2e_{(\xi,0)} + e_{(0,2)}} \lambda_\xi^2 (\lambda_2 + 6\lambda_3\Phi)\\
	&+ c_{4e_{(\xi,0)} + e_{(0,2)} + e_{(0,3)}} \lambda_\xi^4 \lambda_3(\lambda_2+6\lambda_3\Phi);
\end{align*}
here the two constants are fixed in the model equations \eqref{phi12} and \eqref{phi14}, which now reduce to
\begin{align*}
	(\partial_t - \Delta)\Pi_{2e_{(\xi,0)} + e_{(0,2)}} &= \Pi_{e_{(\xi,0)}}^2 + c_{2e_{(\xi,0)} + e_{(0,2)}},\\
	(\partial_t - \Delta)\Pi_{4e_{(\xi,0)} + e_{(0,2)} + e_{(0,3)}} &= 2 \Pi_{e_{(\xi,0)}}\Pi_{3e_{(\xi,0)} + e_{(0,3)}} + 3\Pi_{e_{(\xi,0)}}^2 \Pi_{2e_{(\xi,0)} + e_{(0,2)}}  \\
	&\quad+ c_{4e_{(\xi,0)} + e_{(0,2)} + e_{(0,3)}} \\
	&\quad+ 3 c_{2 e_{(\xi,0)} + e_{(0,2)}} \Pi_{2e_{(\xi,0)} + e_{(2,0)}}.
\end{align*}
\begin{remark}
	Let us compare this to the original tree-based regularity structures, and more specifically to the algebraic renormalization described in \cite[Subsection 9.2]{reg} and \cite[Subsection 4.5]{Hairer16}. To this end, we go back to \eqref{phi43} and set $\lambda = 1$ (i.~e. $\lambda_3 = -1$), so that \eqref{phi10} reduces to
	\begin{align}
		(\partial_t - \Delta)\Phi = & -\Phi^3 + \xi \nonumber\\
		&+6 (-c_{2e_{(\xi,0)} + e_{(0,2)}} +  c_{4e_{(\xi,0)} + e_{(0,2)} + e_{(0,3)}})\Phi. \label{phi11}
	\end{align}
	If we identify $c_{2 e_{(\xi,0)} + e_{(0,2)}}$ $=$ $-\frac{1}{2}C_1$ and $c_{4e_{(\xi,0)} + e_{(0,2)} + e_{(0,3)}}$ $=$ $-\frac{3}{2}C_2$, where $C_1$ and $C_2$ are given in \cite[Subsection 9.2]{reg}, see \cite[(9.3)]{reg}, then \eqref{phi11} coincides with the renormalized equation \cite[(9.21)]{reg}. Recall that the constant $c_{2 e_{(\xi,0)} + e_{(0,2)}}$ renormalizes both  $\Pi_{2e_{(\xi,0)} + e_{(0,2)}}$ and $\Pi_{2 e_{(\xi,0)} + e_{(0,3)} + e_0}$, cf. \eqref{flow02}, thus justifying the presence of the factor $\frac{1}{2}$; analogously, \eqref{flow04} justifies the $\frac{3}{2}$ factor for $c_{4e_{(\xi,0)} + e_{(0,2)} + e_{(0,3)}}$, since it renormalizes both $3\Pi_{e_{(\xi,0)}}^2 \Pi_{2e_{(\xi,0)} + e_{(0,2)}}$ and $3\Pi_{e_{(\xi,0)}}^2 \Pi_{2e_{(\xi,0)} + e_{(0,3)} + e_0}$.
\end{remark}	
\subsection{The multiplicative stochastic heat equation}\label{subsec::5.2}
Consider the equation
\begin{equation}\label{she01}
	(\partial_t - \partial_x^2) u = \sigma(u) \xi,
\end{equation}
where $\xi$ is again space-time white noise. The same scaling set-up as in the previous example works here in $d=1$, and  $\alpha_\xi = -\tfrac{3}{2}\mhyphen$. Note that the expected regularity of the solution this time is $\tfrac{1}{2}\mhyphen>0$, and thus in line with \eqref{bb05}, the polynomial contributions from $\mathbf{n}=(0,0)$ will only appear as purely polynomial. This is possible if we mod out constants in \eqref{she01}, which would make us rewrite the equation formally as
\begin{equation*}
	(\partial_t-\partial_x^2)(u-u(x)) = \sigma(u)\xi = \sum_{k\in \N_0} \tfrac{1}{k!}\sigma^{(k)}(u(x)) (u-u(x))^k\xi.
\end{equation*}
We consider multi-indices over $\N_0$ $\cup$ $\N_0^2\setminus \{(0,0)\}$, where we identify $k\in \N_0$ with $(\xi, ke_0)$. This means
\begin{align*}
	\z_{k} [\bfsigma,u,\cdot] &= \tfrac{1}{k!}\sigma^{(k)}(u)\\
	\z_\mathbf{n} [\bfsigma,u,\cdot] &= \tfrac{1}{\mathbf{n}!} \partial^\mathbf{n} u
\end{align*}
and consequently
\begin{equation*}
	\z^\beta [\bfsigma,u,\cdot] = \prod_{k}\big(\tfrac{1}{k!}\sigma^{(k)}(u)\big)^{\beta(k)} \prod_{\mathbf{n}\in\N_0^2\setminus (0,0)} (\tfrac{1}{\mathbf{n}!}\partial^\mathbf{n}u)^{\beta(\mathbf{n})}.
\end{equation*}
Condition \eqref{bb06} reduces to
\begin{equation}\label{rew01}
	\sum_k (1-k)\beta(k) + \sum_\mathbf{n} \beta(\mathbf{n}) = 1,
\end{equation}	
and the homogeneity \eqref{bb04} is
\begin{equation*}
	|\beta| = \sum_k ((-\tfrac{3}{2}\mhyphen) + 2k)\beta(k) + \sum_{\mathbf{n}} (|\mathbf{n}| - 2)\beta(\mathbf{n}).
\end{equation*}
Inserting \eqref{rew01} we may rewrite it as
\begin{equation*}
	|\beta| = (\tfrac{1}{2}\mhyphen)\sum_k \beta(k) + \sum_\mathbf{n} |\mathbf{n}|\beta(\mathbf{n}) - 2.
\end{equation*}

\medskip

We now characterize $\mcC$. The condition $|\beta|<0$ reduces to
\begin{equation}\label{rew02}
	(\tfrac{1}{2}\mhyphen)\sum_k \beta(k) + \sum_\mathbf{n} |\mathbf{n}|\beta(\mathbf{n}) < 2.
\end{equation}
Clearly this forces $\sum_{\mathbf{n}} |\mathbf{n}|\beta(\mathbf{n})$ $=$ $0,1$. We distinguish the two cases:
\begin{enumerate}[label = \bf{\arabic*}.]
	\item Case $\sum_{\mathbf{n}} |\mathbf{n}|\beta(\mathbf{n}) = 0$. Since \eqref{bb05} removes $\mathbf{n}=(0,0)$, this implies that there are no contributions from $\{e_\mathbf{n}\}$. In addition, \eqref{cou01c} combined with \eqref{rew02} imply $\length{\beta} = 2,3,4$. This leaves us with the following six multi-indices:
	\begin{equation}\label{rew05}
		\{e_{0} + k e_{1}\}_{k=1,2,3} \cup \{2e_0 + k e_1 + e_2\}_{k=0,1} \cup \{3e_0 + e_3\}.
	\end{equation}
	\item Case $\sum_{\mathbf{n}} |\mathbf{n}|\beta(\mathbf{n}) = 1$. The parabolic scaling combined with \eqref{cou01c} and \eqref{rew02} implies that $\beta$ is of the form $\beta' + e_{(0,1)}$, where $\beta'$ does not contain contributions from $\{e_\mathbf{n}\}$ and is of length $2$. This leaves us with
	\begin{equation}\label{rew06}
		e_0 + e_2 + e_{(0,1)}, 2e_1 + e_{(0,1)}.
	\end{equation}
\end{enumerate}
The renormalized versions of \eqref{she01} take the form
\begin{align*}
	(\partial_t - \partial_x^2) u = &\sigma(u) \xi + c_{e_0 + e_1} \sigma'(u)\sigma(u) + c_{e_0 + 2e_1} \sigma'(u)^2 \sigma(u)\\
	&  + \tfrac{1}{2}c_{2e_0 + e_2} \sigma''(u) \sigma(u)^2 + c_{e_0 + 3e_1} \sigma'(u)^3 \sigma(u)\\
	& + \tfrac{1}{2}c_{2e_0 + e_1 + e_2} \sigma''(u)\sigma'(u)\sigma(u)^2 + \tfrac{1}{6}c_{3e_0 + e_3}\sigma'''(u)\sigma(u)^3\\
	&+ \tfrac{1}{2}c_{e_0 + e_2 + e_{(0,1)}} \sigma''(u)\sigma(u)\partial_x u + c_{2e_1 + e_{(0,1)}} \sigma'(u)^2 \partial_x u,
\end{align*}
and are fixed in the model equations
\begin{align*}
	(\partial_t - \partial_x^2) \Pi_{e_0 + e_1} = &\Pi_{e_0}\xi + c_{e_0 + e_1},\\
	(\partial_t - \partial_x^2) \Pi_{e_0 + 2e_1} = &\Pi_{e_0 + e_1}\xi + c_{e_0 + 2e_1} + \Pi_{e_0} c_{e_0 + e_1},\\
	(\partial_t - \partial_x^2) \Pi_{e_0 + 3e_1} = &\Pi_{e_0 + 2e_1}\xi + c_{e_0 + 3e_1} + \Pi_{e_0} c_{e_0 + 2e_1} + \Pi_{e_0 + e_1} c_{e_0 + e_1},\\
	(\partial_t - \partial_x^2) \Pi_{2 e_0 + e_2} = &\Pi_{e_0}^2 \xi + c_{2e_0 + e_2} + 2 \Pi_{e_0} c_{e_0 + e_1},\\
	(\partial_t - \partial_x^2) \Pi_{2 e_0 + e_1 + e_2} = &2 \Pi_{e_0} \Pi_{e_0 + e_1} \xi + \Pi_{2 e_0 + e_2} \xi + c_{2 e_0 + e_1 + e_2} + 2 \Pi_{e_0} c_{2 e_0 + e_2} \\
	&\quad+ 4 \Pi_{e_0} c_{e_0 + 2 e_1} + 2 \Pi_{e_0 + e_1} c_{e_0 + e_1} + 3 \Pi_{e_0}^2 c_{e_0 + e_1},\\
	(\partial_t - \partial_x^2)\Pi_{3 e_0 + e_3} =& \Pi_{e_0}^3 \xi + c_{3e_0 + e_3} + 3 \Pi_{e_0} c_{2 e_0 + e_2} + 3 \Pi_{e_0}^2 c_{e_0 + e_1},\\
	(\partial_t - \partial_x^2) \Pi_{e_0 + e_2 + e_{(0,1)}} = &2 \Pi_{e_0} {\rm X} \xi + c_{e_0 + e_2 + e_{(0,1)}} + 2 {\rm X} c_{e_0 + e_1},\\
	(\partial_t - \partial_x^2) \Pi_{2 e_1 + e_{(0,1)}} = &\Pi_{e_1 + e_{(0,1)}} \xi + c_{2 e_1 + e_{(0,1)}} + {\rm X} c_{e_0 + e_1},
\end{align*}
where we used the shorthand ${\rm X}:= \Pi_{e_{(0,1)}}$.
\begin{remark}
	The set of multi-indices \eqref{rew05}, \eqref{rew06} should be compared with the set of trees of negative homogeneity and at least two noise components of the table \cite[(4.1)]{HP15} (contributions with a single noise come in form of $e_0$ and $e_1+ e_{(0,1)}$). We have eight multi-indices versus nine trees: This is because our multi-index $2e_0 + e_1 + e_2$ encodes a linear combination of two trees according to Lemma \ref{lem:trees}. If we only look at the three-dimensional renormalization group required for the Wong-Zakai-type theorem of \cite{HP15}, namely the one described in \cite[Subsection 4.3]{HP15}, we can identify $c_{e_0 + e_1}$ with $-c$, $c_{e_0 + 3e_1}$ with $-c^{(1)}$ and $c_{2e_0 + e_1 + e_2}$ with $-2 c^{(2)}$, while the remaining constants are set to $0$. The renormalized equation then takes the form
	\begin{align*}
		(\partial_t - \partial_x^2) u = &\sigma(u) \xi + c_{e_0 + e_1} \sigma'(u)\sigma(u) + c_{e_0 + 3e_1} \sigma'(u)^3 \sigma(u)\\
		& + \tfrac{1}{2}c_{2e_0 + e_1 + e_2} \sigma''(u)\sigma'(u)\sigma(u)^2,
	\end{align*}
	to be compared with \cite[(1.4),(1.7)]{HP15}, where the constants are fixed in
	\begin{align*}
			(\partial_t - \partial_x^2) \Pi_{e_0 + e_1} = &\Pi_{e_0}\xi + c_{e_0 + e_1},\\
			(\partial_t - \partial_x^2) \Pi_{e_0 + 3e_1} = &\Pi_{e_0 + 2e_1}\xi + c_{e_0 + 3e_1} + \Pi_{e_0 + e_1} c_{e_0 + e_1},\\
			(\partial_t - \partial_x^2) \Pi_{2 e_0 + e_1 + e_2} = &2 \Pi_{e_0} \Pi_{e_0 + e_1} \xi + \Pi_{2 e_0 + e_2} \xi \\
			&\quad + c_{2 e_0 + e_1 + e_2} + 2 \Pi_{e_0 + e_1} c_{e_0 + e_1} + 3 \Pi_{e_0}^2 c_{e_0 + e_1}.
	\end{align*}
\end{remark}	
\subsection{The generalized KPZ equation}\label{subsec::5.3}
\referee{We conclude implementing the reductions based on symmetries in the case of the generalized KPZ equation \eqref{kpz01_example}. Recall that throughout the text, and particularly in Example \ref{example_9}, we were able to identify all possible counterterms for \eqref{kpz01_example}, cf. Table \ref{tab:kpz}. We will now write} the renormalized equations \referee{only after making such reductions}. First of all, \eqref{kpz01_example} is consistent with the spatial symmetry of the heat operator only if $g \equiv 0$, which we now assume. In order for the counterterm to respect this symmetry, by \eqref{kpz03_example}, contributions from $(0, k_\mathbf{0} e_\mathbf{0} + e_{(0,1)})$ need to come in pairs; looking at Table \ref{tab:kpz}, this forces all contributions of this form to be $0$. Similarly by symmetry, multi-indices with a contribution from $e_{(0,1)}$ should not come into the counterterm. All this reduces Table \ref{tab:kpz} to Table \ref{tab:kpz2} below. As in previous examples, \referee{the invariance $\xi\mapsto -\xi$} allows us to focus only on even functionals of the noise, so we remove the constants involving $\lnh \beta \rnh = 3$. Table \ref{tab:kpz2} is further reduced to Table \ref{tab:kpz3} below.
\begin{table}[h]
	\centering
	\begin{tabular}{c | c | c}
		$|\beta|$ & $\beta$ & $\#\{\beta\}$ \\
		\hline
		$-1\mhyphen$ & $e_{(\xi,0)} + e_{(\xi,e_\mathbf{0})}$, $2e_{(\xi,0)} + e_{(0,2 e_{(0,1)})}$ & 2 \\
		\hline
		& $e_{(\xi,0)} + 2 e_{(\xi, e_\mathbf{0})}$, $2 e_{(\xi,0)} + e_{(\xi, 2 e_\mathbf{0})}$, &  \\
		$-\frac{1}{2}\mhyphen$ &  $ 2 e_{(\xi,0)} + e_{(\xi, e_\mathbf{0})} + e_{(0, 2e_{(0,1)})}$, $3 e_{(\xi,0)} + 2 e_{(0, 2e_{(0,1)})}$,   & 5 \\
		& $3 e_{(\xi,0)} + e_{(0, e_\mathbf{0}+ 2 e_{(0,1)})}$ & \\
		\hline
		& $e_{(\xi,0)} + 3 e_{(\xi, e_\mathbf{0})}$, & \\
		& $2e_{(\xi,0)} + e_{(\xi, e_\mathbf{0})} + e_{(\xi, 2e_\mathbf{0})}$, $2 e_{(\xi,0)} + 2 e_{(\xi,e_\mathbf{0})} + e_{(0, 2 e_{(0,1)})}$, & \\
		$ 0\mhyphen$ & $ 3 e_{(\xi,0)} + e_{(\xi, e_\mathbf{0})} + 2 e_{(0, 2e_{(0,1)})}$, $3 e_{(\xi,0)} + e_{(\xi,e_\mathbf{0})} + e_{(0,e_\mathbf{0} + 2 e_{(0,1)})}$, & 8\\ 
		& $3 e_{(\xi,0)} + e_{(\xi, 2 e_\mathbf{0})} + e_{(0, 2e_{(0,1)})}$, $4 e_{(\xi,0)} + e_{(0, 2e_\mathbf{0} + 2e_{(0,1)})}$, & \\
		& $4 e_{(\xi,0)} + e_{(0,2 e_{(0,1)})} + e_{(0, e_\mathbf{0} + 2e_{(0,1)})}$  & \\
		\hline 
	\end{tabular}
	\caption{Spatially symmetric multi-indices of $\mcC$ for \eqref{kpz01_example}.}
	\label{tab:kpz2}
\end{table}
\begin{table}[h!]
	\centering
	\begin{tabular}{c | c | c}
		$|\beta|$ & $\beta$ & $\#\{\beta\}$ \\
		\hline
		$-1\mhyphen$ & $e_{(\xi,0)} + e_{(\xi,e_\mathbf{0})}$, $2e_{(\xi,0)} + e_{(0,2 e_{(0,1)})}$ & 2 \\
		\hline
		& $e_{(\xi,0)} + 3 e_{(\xi, e_\mathbf{0})}$, & \\
		& $2e_{(\xi,0)} + e_{(\xi, e_\mathbf{0})} + e_{(\xi, 2e_\mathbf{0})}$, $2 e_{(\xi,0)} + 2 e_{(\xi,e_\mathbf{0})} + e_{(0, 2 e_{(0,1)})}$, & \\
		$ 0\mhyphen$ & $ 3 e_{(\xi,0)} + e_{(\xi, e_\mathbf{0})} + 2 e_{(0, 2e_{(0,1)})}$, $3 e_{(\xi,0)} + e_{(\xi,e_\mathbf{0})} + e_{(0,e_\mathbf{0} + 2 e_{(0,1)})}$, & 8\\ 
		& $3 e_{(\xi,0)} + e_{(\xi, 2 e_\mathbf{0})} + e_{(0, 2e_{(0,1)})}$, $4 e_{(\xi,0)} + e_{(0, 2e_\mathbf{0} + 2e_{(0,1)})}$, & \\
		& $4 e_{(\xi,0)} + e_{(0,2 e_{(0,1)})} + e_{(0, e_\mathbf{0} + 2e_{(0,1)})}$  & \\
		\hline 
	\end{tabular}
	\caption{Spatially symmetric multi-indices of $\mcC$ for \eqref{kpz01_example} with even noise components.}
	\label{tab:kpz3}
\end{table}	

\medskip

Under these constraints, and according to \referee{\eqref{kpz02_example} to \eqref{kpz05_example}}, the renormalized versions of \eqref{kpz01_example} take the form
\begin{align*}
	(\partial_t - \partial_x^2) u &= f(u) + h(u)(\partial_x u)^2 + \sigma(u) \xi\\
	& \quad + c_{e_{(\xi,0)} + e_{(\xi, e_\mathbf{0})}} \sigma(u) \sigma'(u)\\
	& \quad + c_{2e_{(\xi,0)} + e_{(0, 2e_{(0,1)})}} \sigma(u)^2 h(u) \\
	& \quad + c_{e_{(\xi,0)} + 3e_{(\xi, e_\mathbf{0})}} \sigma(u) \sigma'(u)^3 \\
	& \quad + \tfrac{1}{2} c_{2e_{(\xi,0)} + e_{(\xi,e_\mathbf{0})} + e_{(\xi, 2e_\mathbf{0})}} \sigma(u)^2 \sigma'(u)\sigma''(u)\\
	& \quad + c_{2 e_{(\xi,0)} + 2e_{(\xi,e_\mathbf{0})} + e_{(0, 2e_{(0,1)})}} \sigma(u)^2 \sigma'(u)^2 h(u)\\
	& \quad + c_{3 e_{(\xi,0)} + e_{(\xi, e_\mathbf{0})} + 2 e_{(0, 2e_{(0,1)})}} \sigma(u)^3 \sigma'(u) h(u)^2\\
	& \quad + c_{3 e_{(\xi,0)} + e_{(\xi, e_\mathbf{0})} + e_{(0, e_\mathbf{0} + 2e_{(0,1)})}} \sigma(u)^3 \sigma'(u)h'(u)\\
	& \quad + \tfrac{1}{2}c_{3 e_{(\xi,0)} + e_{(\xi, 2e_\mathbf{0})}+ e_{(0, 2e_{(0,1)})}} \sigma(u)^3 \sigma''(u) h(u)\\
	& \quad + \tfrac{1}{2} c_{4 e_{(\xi,0)} + e_{(0, 2e_\mathbf{0} + 2e_{(0,1)})}} \sigma(u)^4 h''(u)\\
	& \quad + c_{4 e_{(\xi,0)} + e_{(0, 2e_{(0,1)})} + e_{(0, e_\mathbf{0} + 2e_{(0,1)})}} \sigma(u)^4 h(u) h'(u).
\end{align*}
The ten renormalization constants are fixed in the following model equations:
\begin{align*}
	(\partial_t - \partial_x^2)\Pi_{e_{(\xi,0)} + e_{(\xi, e_\mathbf{0})}} = \Pi_{e_{(\xi,0)}} \xi + c_{e_{(\xi,0)} + e_{(\xi, e_\mathbf{0})}},
\end{align*}
\begin{align*}
	(\partial_t - \partial_x^2)\Pi_{2e_{(\xi,0)} + e_{(0, 2e_{(0,1)})}} = (\partial_x \Pi_{e_{(\xi,0)}} )^2 + c_{2e_{(\xi,0)} + e_{(0, 2e_{(0,1)})}},
\end{align*}
\begin{align*}
	(\partial_t - \partial_x^2)\Pi_{e_{(\xi,0)} + 3 e_{(\xi, e_\mathbf{0})}}&= \Pi_{e_{(\xi,0)} + 2e_{(\xi,e_\mathbf{0})}} \xi\\
	& \quad + c_{e_{(\xi,0)}+ 3e_{(\xi, e_\mathbf{0})}}\\
	& \quad + c_{e_{(\xi,0)} + e_{(\xi, e_\mathbf{0})}} \Pi_{e_{(\xi,0)} + e_{(\xi, e_\mathbf{0})}},
\end{align*}
\begin{align*}
	(\partial_t - \partial_x^2)\Pi_{2e_{(\xi,0)} + e_{(\xi,e_\mathbf{0})} + e_{(\xi, 2 e_\mathbf{0})}}&= \Pi_{2 e_{(\xi,0)} + e_{(\xi, 2e_\mathbf{0})}} \xi \\
	& \quad + 2 \Pi_{e_{(\xi,0)}} \Pi_{e_{(\xi,0)} + e_{(\xi, e_\mathbf{0})}} \xi \\
	& \quad + c_{2e_{(\xi,0)} + e_{(\xi,e_\mathbf{0})} + e_{(\xi, 2 e_\mathbf{0})}} \\
	& \quad + c_{e_{(\xi,0)} + e_{(\xi, e_\mathbf{0})}} \Pi_{e_{(\xi,0)}}^2 \\
	& \quad + 2 c_{e_{(\xi,0)} + e_{(\xi, e_\mathbf{0})}} \Pi_{e_{(\xi,0)} + e_{(\xi,e_\mathbf{0})}},
\end{align*}
\begin{align*}
	(\partial_t - \partial_x^2)\Pi_{2 e_{(\xi,0)} + 2 e_{(\xi, e_\mathbf{0})} + e_{(0, 2e_{(0,1)})}}&=  \Pi_{2e_{(\xi,0)} + e_{(\xi, e_\mathbf{0})} + e_{(0, 2 e_{(0,1)})}} \xi \\
	& \quad + (\partial_x \Pi_{e_{(\xi,0)} + e_{(\xi, e_\mathbf{0})}})^2\\
	& \quad + c_{2e_{(\xi,0)} + 2e_{(\xi, e_\mathbf{0})} + e_{(0, 2e_{(0,1)})}}\\
	& \quad + c_{e_{(\xi, 0)} + e_{(\xi, e_\mathbf{0})}} \Pi_{2 e_{(\xi,0)} + e_{(0, 2 e_{(0,1)})}}\\
	& \quad + c_{2 e_{(\xi,0)} + e_{(0, 2e_{(0,1)})}} \Pi_{e_{(\xi,0)}}^2\\
	& \quad + 2 c_{2 e_{(\xi,0)} + e_{(0, 2e_{(0,1)})}}  \Pi_{e_{(\xi,0)} + e_{(\xi, e_\mathbf{0})}},
\end{align*}
\begin{align*}
	(\partial_t - \partial_x^2)\Pi_{3 e_{(\xi,0)} +  e_{(\xi, e_\mathbf{0})} + 2 e_{(0, 2e_{(0,1)})}}&= \Pi_{3 e_{(\xi,0)} + 2 e_{(0, 2 e_{(0,1)})}} \xi\\
	& \quad + 2 \partial_x \Pi_{e_{(\xi,0)}} \partial_x \Pi_{2 e_{(\xi,0)} + e_{(\xi, e_\mathbf{0})} + e_{(0, 2 e_{(0,1)})}}\\
	& \quad + 2 \partial_x \Pi_{e_{(\xi,0)} + e_{(\xi, e_\mathbf{0})}} \partial_x \Pi_{2 e_{(\xi,0)} + e_{(0, 2 e_{(0,1)})}}\\
	& \quad + c_{3 e_{(\xi,0)} + e_{(\xi, e_\mathbf{0})} + 2 e_{(0, 2 e_{(0,1)})}} \\
	& \quad +2 c_{2 e_{(\xi,0)} + e_{(0, 2e_{(0,1)})}} \Pi_{2 e_{(\xi,0)} + e_{(0, 2e_{(0,1)})}},
\end{align*}
\begin{align*}
	(\partial_t - \partial_x^2)\Pi_{3 e_{(\xi,0)} + e_{(\xi, e_\mathbf{0})} + e_{(0, e_\mathbf{0} + 2e_{(0,1)})}}&= \Pi_{3 e_{(\xi,0)} + e_{(0, e_\mathbf{0} + 2 e_{(0,1)})}} \xi\\
	& \quad + 2 \Pi_{e_{(\xi,0)}} \partial_x \Pi_{e_{(\xi,0)}} \partial_x \Pi_{e_{(\xi,0)} + e_{(\xi, e_\mathbf{0})}}\\
	& \quad + \Pi_{e_{(\xi,0)} + e_{(\xi,e_\mathbf{0})}} (\partial_x \Pi_{e_{(\xi,0)}})^2\\
	& \quad + c_{3 e_{(\xi,0)} + e_{(\xi, e_\mathbf{0})} + e_{(0, e_\mathbf{0} + 2 e_{(0,1)})}}\\
	& \quad + c_{2 e_{(\xi,0)} + e_{(0, 2 e_{(0,1)})}} \Pi_{e_{(\xi,0)} + e_{(\xi, e_\mathbf{0})}}\\
	& \quad+ 2 c_{2e_{(\xi,0)} + e_{(0, 2 e_{(0,1)})}} \Pi_{e_{(\xi,0)}}^2,
\end{align*}
\begin{align*}
	(\partial_t - \partial_x^2)\Pi_{3 e_{(\xi,0)} + e_{(\xi, 2 e_\mathbf{0})} + e_{(0, 2e_{(0,1)})}}&= 2 \Pi_{e_{(\xi,0)}} \Pi_{2 e_{(\xi,0)} + e_{(0, 2 e_{(0,1)})}} \xi\\
	& \quad + 2 \partial_x \Pi_{e_{(\xi,0)}} \partial_x \Pi_{2 e_{(\xi,0)} + e_{(\xi, 2 e_\mathbf{0})}}\\
	& \quad + c_{3 e_{(\xi,0)} + e_{(\xi, 2 e_\mathbf{0})} + e_{(0, 2 e_{(0,1)})}}\\
	& \quad +2 c_{e_{(\xi,0)} + e_{(\xi, e_\mathbf{0})}} \Pi_{2 e_{(\xi,0)} + e_{(0,2 e_{(0,1)})}}\\
	& \quad + 2 c_{2 e_{(\xi,0)} + e_{(0, 2 e_{(0,1)})}} \Pi_{e_{(\xi,0)}}^2,
\end{align*}
\begin{align*}
	(\partial_t - \partial_x^2)\Pi_{4 e_{(\xi,0)} + e_{(0, 2 e_\mathbf{0} + 2e_{(0,1)})}}&= \Pi_{e_{(\xi,0)}}^2 (\partial_x \Pi_{e_{(\xi,0)}})^2\\
	& \quad + c_{4 e_{(\xi,0)} + e_{(0, 2e_\mathbf{0} + 2 e_{(0,1)})}}\\
	& \quad + c_{2 e_{(\xi,0) + e_{(0, 2 e_{(0,1)})}}} \Pi_{e(\xi,0)}^2,
\end{align*}
\begin{align*}
	(\partial_t - \partial_x^2)\Pi_{4 e_{(\xi,0)} + e_{(0, 2e_{(0,1)})} +  e_{(\xi, e_\mathbf{0} + 2 e_{(0,1)})}} &= 2 \partial_x \Pi_{e_{(\xi,0)}} \partial_x \Pi_{3 e_{(\xi,0)} + e_{(0, e_\mathbf{0} + 2 e_{(0,1)})}}\\
	& \quad + 2 \Pi_{e_{(\xi,0)}} \partial_x \Pi_{e_{(\xi,0)}} \partial_x \Pi_{2 e_{(\xi,0)} + e_{(0, 2 e_{(0,1)})}}\\
	& \quad + \Pi_{2 e_{(\xi,0)} + e_{(0, 2 e_{(0,1)})}} (\partial_x \Pi_{e_{(\xi,0)}})^2\\
	& \quad + c_{4 e_{(\xi,0)} + e_{(0, 2 e_{(0,1)})} + e_{(0, e_\mathbf{0} + 2 e_{(0,1)})}}\\
	& \quad + c_{2 e_{(\xi,0)} + e_{(0, 2 e_{(0,1)})}} \Pi_{2 e_{(\xi,0)} + e_{(0, 2 e_{(0,1)})}}.		
\end{align*}
\begin{remark}These ten renormalization constants correspond to those of \cite[Proposition 6.2.2]{Bru}, where decorated trees sharing same coefficients  produce ten degrees of freedom at the level of the renormalized equation.  This shows that multi-indices provide in this case an optimal parameterization of the renormalized equations, and thus may also provide a good starting point for understanding the symmetries of the generalized KPZ equation: It was shown in \cite{BGHZ} that, for the geometric stochastic heat equation in sufficiently high spatial dimension, one gets a unique renormalized solution satisfying both the chain rule (Stratonovich) and the Itô isometry. So far, for the flat generalized KPZ, a careful study of the dimension of the space of solutions satisfying both symmetries is missing.
\end{remark}	

\end{document}